\newcommand{\un}{\underline}
\newcommand{\ov}{\overline}
\newcommand{\wt}{\widetilde}
\newcommand{\wh}{\widehat}
\newcommand{\bN}{\mathbb{N}}
\newcommand{\bZ}{\mathbb{Z}}
\newcommand{\bF}{\mathbb{F}}
\newcommand{\bE}{\mathbb{E}}
\newcommand{\bP}{\mathbb{P}}
\newcommand{\cE}{\mathcal{E}}
\newcommand{\cF}{\mathcal{F}}
\newcommand{\cG}{\mathcal{G}}
\newcommand{\cH}{\mathcal{H}}
\newcommand{\cI}{\mathcal{I}}
\newcommand{\cJ}{\mathcal{J}}
\newcommand{\cK}{\mathcal{K}}
\newcommand{\cL}{\mathcal{L}}
\newcommand{\cO}{\mathcal{O}}
\newcommand{\cT}{\mathcal{T}}
\newcommand{\cM}{\mathcal{M}}
\newcommand{\cN}{\mathcal{N}}
\newcommand{\cS}{\mathcal{S}}
\newcommand{\cD}{\mathcal{D}}
\newcommand{\sE}{{\mathscr E}}
\newcommand{\sF}{{\mathscr F}}
\newcommand{\sG}{{\mathscr G}}
\newcommand{\sI}{{\mathscr I}}
\newcommand{\sK}{{\mathscr K}}
\newcommand{\sN}{{\mathscr N}}
\newcommand{\coker}[1]{\operatorname{coker}(#1)}
\newcommand{\Deg}[1]{\operatorname{Deg}(#1)}
\newcommand{\Ext}[1]{\operatorname{Ext}^{#1}}
\newcommand{\cExt}[1]{\operatorname{{\mathcal Ext}}^{#1}}
\newcommand{\cEnd}{\operatorname{{\mathcal End}}}
\newcommand{\cHom}{\operatorname{{\mathcal Hom}}}
\renewcommand{\ker}[1]{\operatorname{ker}(#1)}
\newcommand{\Gr}[2]{\operatorname{Gr}_{#1}(#2)}
\newcommand{\Hom}{\operatorname{Hom}}
\newcommand{\im}[1]{\operatorname{im}(#1)}
\newcommand{\rk}[1]{\operatorname{rk}(#1)}
\newcommand{\Rk}[1]{\operatorname{R}(#1)}
\newcommand{\Spec}[1]{\operatorname{Spec}(#1)}
\newcommand{\supp}{\operatorname{supp}}
\newcommand{\red}{\operatorname{red}}
\newcommand{\Coh}[2]{{\operatorname{Coh}}_{#1}^{#2}}
\newcommand{\Gm}{\mathbb{G}_m}
\newcommand{\Bl}{\operatorname{Bl}}
\newcommand{\Tot}{\operatorname{Tot}}
\renewcommand{\Im}{\operatorname{Im}}
\newcommand{\Inf}{\operatorname{Inf}}
\newcommand{\Tors}{\operatorname{Tors}}
\newcommand{\res}{\operatorname{res}}
\newcommand{\sHom}[1]{\un{\operatorname{Hom}}(#1)}
\theoremstyle{plain}
\newtheorem{thm}{Theorem}[section]
\newtheorem{lemma}[thm]{Lemma}
\newtheorem{prop}[thm]{Proposition}
\newtheorem{cor}[thm]{Corollary}
\newtheorem{fact}[thm]{Fact}
\newtheorem*{ThmA}{Theorem A}
\newtheorem*{ThmB}{Theorem B}
\newtheorem*{ThmC}{Theorem C}
\newtheorem*{ThmD}{Theorem D}
\theoremstyle{remark}
\newtheorem{rmk}[thm]{Remark}
\newtheorem{ex}[thm]{Example}
\theoremstyle{definition}
\newtheorem{defi}[thm]{Definition}
\numberwithin{equation}{section}
\newenvironment{sis}{\left\{\begin{aligned}}{\end{aligned}\right.}
\newcommand{\surj}{\twoheadrightarrow}
\newcommand{\Xred}{X_{\mathrm{red}}}
\begin{document}

\title{Moduli of  sheaves on ribbons}

\author{Michele Savarese}
\email{micsavarese@libero.it}

\author{Filippo Viviani}
\address{Filippo Viviani, Dipartimento di Matematica, Universit\`a di Roma ``Tor Vergata'', Via della Ricerca Scientifica 1, 00133 Roma, Italy}
\email{viviani@mat.uniroma2.it}

\keywords{Ribbons, Coherent sheaves and their moduli} 
\subjclass[2010]{14D20; 14H60}

\begin{abstract}
We study the geometry of the moduli stack of torsion-free sheaves on ribbons. We introduce a stratification of the stack by the complete type of the sheaves, and we investigate the geometric properties of the strata and their closure relation, and which strata intersect the (semi)stable locus. Then we describe the irreducible components of the stack, by revealing an interesting trichotomy between Fano, Calabi-Yau and canonically polarized cases. Finally, we compute the tangent space of the moduli stack at a given sheaf. 
\end{abstract}

\maketitle


\section*{Introduction}

This paper is devoted to the study of moduli  of coherent sheaves on ribbons. A \emph{ribbon} $X$ over an algebraically closed field $k$ is an irreducible and projective curve that is locally (but not necessarily globally) isomorphic to $2C\subset S$, where $C$ is a smooth curve inside a smooth surface $S$ (see \cite{BE} for a classification of ribbons).

\subsection*{Motivation}

The motivation of this paper is to understand the geometry of the nilpotent cone of the stack of Higgs pairs, which we now briefly recall.

Let $C$ be an irreducible smooth projective curve over $k=\ov k$ and let $L$ be a line bundle on $C$. Fix two integers $r\geq 1$ and $d\in \bZ$. Consider the stack $\cM_{(C,L)}(r,d)$ parametrizing \emph{$L$-twisted Higgs pairs} on $C$ of rank $r$ and degree $d$, i.e. pairs $(E,\phi)$ consisting of a vector bundle of rank $r$ and degree $d$ on $C$ together with a twisted endomorphism $\phi:E\to E\otimes L$ (called the Higgs field). The \emph{nilpotent cone} stack $\cN_{(C,L)}(r,d)$ is the closed substack parametrizing Higgs pairs $(E,\phi)$ such that $\phi$ is a nilpotent twisted endomorphism, and it plays a crucial role in studying the geometry of the stack $\cM_{(C,L)}(r,d)$, see e.g. \cite{Hau1, Hau2} for surveys. Via the spectral correspondence, the nilpotent cone  stack $\cN_{(C,L)}(r,d)$ is isomorphic to the  moduli stack $\cM_{r C}(r,d)$  of torsion-free sheaves of generalized rank $r$ and generalized degree $d$ on the primitive multiple curve $rC\subset \rm{Tot}_C(L)$, where $\rm{Tot}_C(L)\to C$ is the total space of the line bundle $L$ and $C$ sits inside $\rm{Tot}_C(L)$ as the zero section. Moreover, the isomorphism $\cN_{(C,L)}(r,d)\cong \cM_{rC}(r,d)$ is compatible with the natural semistability conditions and hence induces an isomorphism of the corresponding moduli spaces.  In the important special case where $L=\omega_C$, the geometry of the nilpotent cone $\cN_{(C,\omega_C)}(r,d)$ has been investigated in \cite{Lau, Gin, Boz, HH}, but very little seem to be known for an arbitrary line bundle $L$. 

This paper is a first step towards understanding of the geometry of $\cM_{rC}(r,d)$. Indeed, by looking at the scheme-theoretic support of the sheaves, we get an increasing sequence of closed substacks
$$
\cM_{C}(r,d)\subset \cM_{2C}(r,d)\subset \ldots \subset \cM_{rC}(r,d).
$$
While the moduli stack $\cM_{C}(r,d)$ parametrizing vector bundles of rank $r$ and degree $d$ on the smooth curve $C$ is classically studied, the goal of this paper is to investigate in full generality the geometry of the moduli stack  $\cM_{2C}(r,d)$ and, more generally, the moduli stack of torsion-free sheaves on arbitrary ribbons. This is the first steps towards the study of the full moduli stack $ \cM_{rC}(r,d)$, which will be the subject of our future work \cite{SV}, and it already reveals the full richness of the geometry of these moduli stacks/spaces, although it is notationally easier to deal with respect to the higher multiplicity case.


\subsection*{Previous results}

There are in the literature some partial results on the moduli of some special types of sheaves on ribbons, which we now review. 

First of all, generalized line bundles (i.e. torsion-free sheaves which are generically invertible), has been introduced by Bayer-Eisenbud in \cite{BE} and then studied by Eisenbud-Green in \cite{EG} in order to deal with the Green conjecture about the Clifford index of smooth curves. 

The moduli space of semistable torsion-free sheaves of generalized rank $2$,  
which are either generalized line bundles or direct images of rank $2$ vector bundles on the underlying reduced curve, has been studied for some special ribbons (namely those whose normal bundle is the canonical line bundle of the reduced curve) by Donagi-Ein-Lazarsfeld in \cite[\S 3]{DEL} and for arbitrary ribbons by Yang \cite{Yan} and Chen-Kass \cite{CK}, with the addendum of \cite{S2} who solved a question left open in loc. cit. 


The study of arbitrary coherent sheaves on ribbons  (or more generally on primitive multiple curves of arbitrary multiplicity) has been undertaken systematically  by Dr\'{e}zet in the  papers \cite{DR1}, \cite{DR1bis}, \cite{DR2}, \cite{DR3}. In the same papers, Dr\'{e}zet studied the geometry (i.e. number of irreducible components, their dimension, computation of the Zariski tangent space) of the open subset of the moduli space of semistable torsion-free sheaves consisting of quasi-locally free sheaves of rigid type (see \ref{sub:qlf-rig}). In the follow-up papers \cite{DR4} and \cite{DR5}, Dr\'{e}zet investigated when the above mentioned components can be obtained as limits of sheaves under a partial smoothing of the ribbon to a nodal curves with two irreducible components. 
The local structure of the open subset of the moduli space of semistable torsion-free sheaves consisting of generalized vector bundles (i.e.  torsion-free sheaves which are locally free away from a finite set of points) has been studied by Inaba \cite{I}. 

Finally, Yuan proved in \cite{Yua} an upper bound for the dimension of the moduli stack of torsion-free sheaves of generalized rank $n$ on a primitive multiple curve of multiplicity $n$.


\subsection*{Our results}

Let $X$ be a ribbon over an algebraically closed field $k=\ov k$. The  underlying reduced curve $\Xred$ is an irreducible, projective smooth curve over $k$ whose genus will be denoted by $\ov g$. The ideal sheaf of $\Xred$ in $X$, called the nilradical and denoted by $\sN$, is (the pushforward of) a line bundle on $\Xred$. The dual line bundle $\sN^*$ is the normal bundle of $\Xred$ in $X$ and we set $\delta:=\deg \sN^*$.

Any coherent sheaf $\sF$ on $X$ comes with an exact sequence (called the first canonical filtration, see Definition \ref{D:CanFiltr})
$$
0 \to \sN \sF\to \sF \to \sF_{|\Xred}\to 0,
$$
where $\sN\sF$ and $\sF_{|\Xred}$ are both coherent sheaves on $\Xred$. 
We define the \emph{complete type} of $\sF$ by 
$$
(r_\bullet(\sF); d_\bullet(\sF))=(r_0(\sF),r_1(\sF);d_0(\sF),d_1(\sF)):=(\rk{\sF_{|\Xred}}, \rk{\sN\sF}; \deg(\sF_{|\Xred}),\deg(\sN\sF)).
$$
The complete type allows to define two numerical (i.e. invariant by deformations) invariants (see \ref{sub:R-D}):  the \emph{generalized rank} $\Rk{\sF}:=r_0(\sF)+r_1(\sF)$,  which is also equal to the length of its generic stalk $\sF_{\eta}$ as an $\cO_{X,\eta}$-module (where $\eta$ denotes the generic point of $X$), and the \emph{generalized degree} $\Deg{\sF}:=d_0(\sF)+d_1(\sF)$.

For any $(R, D)\in \bN_{>0} \times \bZ$,  we denote by  $\cM_X(R,D)$ the stack of torsion-free (i.e. pure of one-dimensional support) sheaves on a ribbon $X$ of generalized rank $R$ and generalized degree $D$. We can stratify $\cM_X(R,D)$ according to the complete type of the sheaves
$$
\cM_X(R,D)=\coprod_{(r_\bullet;d_\bullet)\in \cS(R,D)} \cM_X(r_{\bullet};d_{\bullet}),
$$
where each stratum $\cM_X(r_{\bullet};d_{\bullet})$ parametrizes torsion-free sheaves on $X$ of complete type $(r_\bullet;d_\bullet)$ and 
$$
\cS(R,D):=\{(r_0,r_1;d_0,d_1)\in \bN^2\times \bZ^2\: : r_0+r_1=R \text{ and  } d_0+d_1=D\}.
$$

\vspace{0.1cm}

Our first main result describes the properties of this stratification.

\begin{ThmA}\label{T:ThmA}(see Theorems \ref{T:non-empty}, \ref{T:irr-dim}, \ref{T:specia}) 
Let $X$ be a ribbon and set $\delta:=-\deg \sN$ and $\ov g$ be the genus of $\Xred$. Fix $(R, D)\in \bN_{>0} \times \bZ$. 
\begin{enumerate}[(1)]
\item For any  $(r_\bullet;d_\bullet)\in \cS(R,D)$, we have that 
\begin{equation*}
\cM_X(r_{\bullet};d_{\bullet})\neq \emptyset \Longleftrightarrow \begin{sis} 
&r_0\geq r_1, \\
&r_1=0\Rightarrow d_1=0,\\
& r_0=r_1 \Rightarrow d_0\geq d_1+r_1\delta.
\end{sis}
\end{equation*}
We say that a sequence $(r_\bullet;d_\bullet)\in \cS(R,D)$ is admissible if it is satisfies the above conditions and we denote by  $\cS_{adm}(R,D)$ the subset of admissible elements of $\cS(R,D)$.

\item For any  $(r_\bullet;d_\bullet)\in \cS_{adm}(R,D)$, the stratum $\cM_X(r_{\bullet};d_{\bullet})$ is a locally closed substack  which, endowed with its reduced stack structure, is  irreducible and smooth of dimension equal to 
$$
\dim \cM_X(r_{\bullet};d_{\bullet})=(r_0^2+r_1^2)(\ov g -1)+r_0r_1\delta=R^2(\ov g-1)+r_0r_1[\delta -2(\ov g-1)].
$$
\item Assume that $(r_\bullet;d_\bullet), (\wt r_\bullet;\wt d_\bullet)\in \cS_{adm}(R,D)$.  Then 
$$
\ov{\cM_X(r_\bullet; d_\bullet)}\cap \cM_X(\wt r_{\bullet};\wt d_{\bullet})\neq \emptyset \Longleftrightarrow (r_\bullet;d_\bullet)\geq (\wt r_\bullet;\wt d_\bullet), \text{ i.e. either } r_1> \wt r_1 \: \text{ or } r_1=\wt r_1 \: \text{ and } \: d_1\geq \wt d_1.
$$
\end{enumerate} 
\end{ThmA}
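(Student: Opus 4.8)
The plan is to prove the two implications separately: ($\Rightarrow$) by semicontinuity of the complete type in $T$-flat families (where $T$ is a DVR with generic point $\eta$ and closed point $0$), and ($\Leftarrow$) by constructing explicit one-parameter degenerations. By part (2) each stratum is irreducible, so for ($\Leftarrow$) it suffices to exhibit a single sheaf of type $(\wt r_\bullet;\wt d_\bullet)$ in the closure of $\cM_X(r_\bullet;d_\bullet)$. The order splits the task into two disjoint cases: $\wt r_1<r_1$ (a cross-level relation, where the degrees are unconstrained) and $\wt r_1=r_1$ with $\wt d_1\le d_1$ (a within-level relation); I will treat each by a single family, so no transitivity argument is needed.

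For ($\Rightarrow$) I first show $\wt r_1\le r_1$. The integer $r_0(\sF_t)=\rk{(\sF_t)_{|\Xred}}$ equals the minimal number of generators of the stalk $\sF_{t,\eta}$ over the local Artinian ring $\cO_{X,\eta}$, that is $\dim_{\kappa(t)}((\sF_t)_{|\Xred})_{\eta}$ at the generic point of $\Xred$; by Nakayama and upper semicontinuity of fibre dimension this is upper semicontinuous in $t$, so $r_0(0)\ge r_0(\eta)$, whence $\wt r_1=R-r_0(0)\le R-r_0(\eta)=r_1$. Suppose now $\wt r_1=r_1$. Let $\mathcal W:=\sN\sF\subseteq\sF$ be the relative image of $\sN\otimes\sF\to\sF$ and let $\mathcal W^{\mathrm{sat}}\subseteq\sF$ be its $T$-flat saturation, so that $\sF/\mathcal W^{\mathrm{sat}}$ is $T$-flat. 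Since $r_1$ is constant, $\mathcal W^{\mathrm{sat}}$ is $T$-flat of fibrewise rank $r_1$ with fibrewise degree equal to $d_1$; moreover $\mathcal W^{\mathrm{sat}}_0\hookrightarrow\sF_0$ and $\sN\sF_0=\im{\mathcal W_0\to\sF_0}\subseteq\mathcal W^{\mathrm{sat}}_0$. As these are torsion-free subsheaves of $\sF_0$ of the same rank $r_1$, comparing degrees gives $\wt d_1=\deg\sN\sF_0\le\deg\mathcal W^{\mathrm{sat}}_0=d_1$, as required.

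For ($\Leftarrow$) I produce the degenerations. For a cross-level relation I degenerate line bundles on $X$ to rank-two bundles on the reduced curve. Starting from a split sheaf $\bigoplus_{i=1}^{r_1}\cL_i\oplus\iota_*G$ of type $(r_\bullet;d_\bullet)$, where $\iota:\Xred\hookrightarrow X$, the $\cL_i$ are line bundles on $X$ and $G$ is a bundle on $\Xred$, I scale to zero the $\cO_X$-action $\sN\otimes\cL_{i|\Xred}\to\sN\cL_i$ on $k:=r_1-\wt r_1$ of the summands. The flat limit is $\bigoplus_{i>k}\cL_i\oplus\iota_*(E_1\oplus\cdots\oplus E_k\oplus G)$, where $E_i$ is the rank-two extension of $\cL_{i|\Xred}$ by $\sN\cL_i$ underlying the $\cO_X$-module $\cL_i$; this lowers $r_1$ by $k$, and since $\deg\cL_{i|\Xred}$ is a free integer parameter, the value of $\wt d_1$ can be prescribed arbitrarily, matching any admissible $C$ at level $\wt r_1$. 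For a within-level relation I perform, at a point $p\in\Xred$, a single elementary modification of colength $m:=d_1-\wt d_1$ that simultaneously raises $\deg\sF_{|\Xred}$ and lowers $\deg\sN\sF$: the resulting family has generic fibre of type $(r_0,r_1;d_0,d_1)$ and special fibre of type $(r_0,r_1;d_0+m,d_1-m)$. Direct-summing with a fixed $\iota_*(\text{bundle})$ and copies of line bundles on $X$ reduces this to the building block of type $(2,1)$.

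The main obstacle is the within-level construction, specifically controlling the torsion-free flat limit. The naive family of quotients $(\cO_X\otimes V)/K_\lambda$ obtained by degenerating the kernel subbundle $K_\lambda\subset\sN\otimes V$ acquires torsion in the special fibre exactly when $K_0$ fails to be saturated, so it leaves the locus of pure sheaves; the content is to show that after a Langton-type purification, equivalently after the correct simultaneous modification of $V=\sF_{|\Xred}$ described above, the limit is a pure sheaf lying in $\cM_X(r_0,r_1;d_0+m,d_1-m)$ and nowhere deeper. Equivalently, one must check that lowering $\deg\sN\sF$ at a point can be compensated by raising $\deg\sF_{|\Xred}$ while keeping the generic structure $\cO_{X,\eta}^{\,r_1}\oplus\cO_{\Xred,\eta}^{\,r_0-r_1}$ intact. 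This is a local computation in torsion-free modules over the ribbon local ring, and it is the step where the admissibility conditions of part (1) and the purity of the limit must be verified directly; everything else is bookkeeping with the additivity of $(r_\bullet;d_\bullet)$ along the canonical filtration.
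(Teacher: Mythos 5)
Your proposal addresses only part (3) of the theorem: part (1) (non-emptiness of exactly the admissible strata, which in the paper rests on Proposition \ref{P:ex-qufree} and Lemma \ref{L:genVB}) and part (2) (local closedness, smoothness, irreducibility and the dimension formula, proved in Theorem \ref{T:irr-dim} via two vector bundle stacks over a product of stacks of coherent sheaves on $\Xred$) are not argued at all, and you invoke part (2) as an input to part (3). Within part (3), your direction ($\Rightarrow$) is sound and is a clean alternative to the paper's: the paper gets $r_1\ge \wt r_1$ from Fact \ref{F:FiltrCan}\ref{F:FiltrCan:5} and $d_1\ge \wt d_1$ from lower semicontinuity of $s\mapsto \chi((\sN\cdot \bF)_s\otimes\cO(m))$ for the non-flat sheaf $\sN\cdot\bF$ (\cite[III.9.9]{Har}), whereas you pass to the flat saturation of $\sN\cdot\bF$ and compare degrees of two generalized-rank-$r_1$ subsheaves of $\sF_0$; both arguments work.

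The genuine gap is in ($\Leftarrow$). The within-level degeneration (fixed $r_1$, $d_1\mapsto d_1-m$) is the crux of the whole implication, and you explicitly leave it unverified, describing it as a Langton-type purification whose purity and type ``must be verified directly.'' That verification is not bookkeeping: the paper's Lemma \ref{L:def-glb} carries it out by degenerating, inside $\cI_{\cD}\otimes p_1^*(\cL)$, a length-two subscheme $C$ supported at $p$ and not contained in $\Xred$ to the subscheme $2p\subset\Xred$ --- precisely the local computation you defer --- so as written the within-level case is unproved. Two further concrete issues: (a) your cross-level family starts from $\bigoplus_i\cL_i\oplus\iota_*G$ with the $\cL_i$ honest line bundles on $X$; a direct sum of line bundles on $X$ has index zero, hence $d_0=d_1+r_1\delta$, so when the source stratum has $r_0=r_1$ and $d_0>d_1+r_1\delta$ no such split sheaf lies in $\cM_X(r_\bullet;d_\bullet)$ and you must replace the $\cL_i$ by generalized line bundles $\cI_{D_i}\otimes\cL_i$, as the paper does in its Cases III--IV; (b) the reduction of a general within-level pair to rank-$(1,1)$ building blocks requires choosing splittings $(d_0^j,d_1^j,\wt d_0^j,\wt d_1^j)$ satisfying the admissibility chain $\wt d_0^j\ge d_0^j\ge d_1^j+\delta\ge \wt d_1^j+\delta$ on every summand simultaneously (the paper's Cases I--II), which you do not address.
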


\vspace{0.1cm}

Our second main result is the description of the irreducible components of the stack $\cM_X(R,D)$. There are three cases depending on the comparison between the degree of the normal sheaf $\delta:=-\deg(\sN)$ and the canonical degree $2\ov g-2$ of $\Xred$.

\begin{ThmB}\label{T:ThmB}(see Proposition \ref{P:irrcomp1}, Corollary \ref{C:irrcomp2})
Notation as in Theorem A. 
\noindent
\begin{enumerate}[(1)]
\item If $\delta< 2\ov g-2$ (\emph{canonically polarized} case) then the irreducible components of $\cM_X(R,D)$ are
$$
\{\ov{\cM_X(r_{\bullet};d_{\bullet})}\}_{(r_{\bullet};d_{\bullet})\in \cS_{adm}(R,D)},
$$
and we have that
$$
(r_\bullet;d_\bullet)\geq (\wt r_\bullet;\wt d_\bullet)\Rightarrow \dim \ov{\cM_X(r_\bullet;d_\bullet)}\leq \dim \ov{\cM_X(\wt r_\bullet;\wt d_\bullet)}.
$$
In particular, $\cM_X(R,D)$ has dimension $R^2(\ov g-1)$ and the unique irreducible component of maximal dimension is $\cM_X(R,0;D,0)$, which parametrizes vector bundles on $\Xred$ of rank $R$ and degree $D$. 
\item If $\delta= 2\ov g-2$ (\emph{Calabi-Yau} case) then the irreducible components of $\cM_X(R,D)$ are
$$
\{\ov{\cM_X(r_{\bullet};d_{\bullet})}\}_{(r_{\bullet};d_{\bullet})\in \cS_{adm}(R,D)},
$$
and they have all dimension equal to $R^2(\ov g-1)$.
\item  If $\delta > 2\ov g-2$ (\emph{Fano} case) then the irreducible components of $\cM_X(R,D)$ are
$$
\{\ov{\cM_X(r_{\bullet};d_{\bullet})}\}_{(r_{\bullet};d_{\bullet})\in \cS_{rig}(R,D)},
$$
where the subset  $\cS_{rig}(R,D)\subset \cS_{adm}(R,D)$ of rigid sequences is defined by  
$$
(r_{\bullet};d_{\bullet})\in \cS_{rig}(R,D)\stackrel{def}{\Longleftrightarrow} 
\begin{cases}
r_0=r_1=\frac{R}{2} & \text{ if } R \: \text{ is even,}\\
r_0=r_1+1=\frac{R+1}{2} & \text{ if } R \: \text{ is odd.}\\
\end{cases}
$$
In particular, $\cM_X(R,D)$ has pure dimension equal to 
$$\dim \cM_X(R,D)=
\begin{cases}
\frac{R^2}{4}(2\ov g-2+\delta) & \text{ if } R \text{ is even,} \\
\frac{R^2+1}{4}(2\ov g-2)+\frac{R^2-1}{4}\delta  & \text{ if } R \text{ is odd.} 
\end{cases}
$$
\end{enumerate}
\end{ThmB}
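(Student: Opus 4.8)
The plan is to deduce Theorem B from Theorem A by analysing the inclusions among the closures $\ov{\cM_X(r_\bullet;d_\bullet)}$. By Theorem A(1)--(2) the stack $\cM_X(R,D)$ is the disjoint union of the irreducible locally closed strata indexed by $\cS_{adm}(R,D)$; hence every irreducible component is the closure of a unique stratum, and $\ov{\cM_X(r_\bullet;d_\bullet)}$ is a component precisely when it is maximal for inclusion. Two elementary observations control these inclusions. \emph{(a)} Distinct strata have distinct closures, and two strata of equal dimension have \emph{incomparable} closures: a proper inclusion of irreducibles strictly drops dimension, while an equality of closures would exhibit two disjoint dense open substacks of an irreducible stack. \emph{(b)} If $\ov{\cM_X(r_\bullet;d_\bullet)}\subseteq \ov{\cM_X(\wt r_\bullet;\wt d_\bullet)}$, then $\ov{\cM_X(\wt r_\bullet;\wt d_\bullet)}$ meets $\cM_X(r_\bullet;d_\bullet)$, so Theorem A(3) gives $(\wt r_\bullet;\wt d_\bullet)\geq (r_\bullet;d_\bullet)$, in particular $\wt r_1\geq r_1$. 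Combining \emph{(a)} and \emph{(b)}, a \emph{proper} inclusion forces both $\wt r_1> r_1$ and $\dim \cM_X(r_\bullet;d_\bullet)<\dim \cM_X(\wt r_\bullet;\wt d_\bullet)$.

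Next I read off the dimension formula of Theorem A(2): since $\dim\cM_X(r_\bullet;d_\bullet)=R^2(\ov g-1)+r_0r_1[\delta-2(\ov g-1)]$ depends on $(r_0,r_1)$ only through $r_0r_1=r_1(R-r_1)$, which is strictly increasing for $0\le r_1\le R/2$, the dimension is a strictly increasing, constant, or strictly decreasing function of $r_1$ according as $\delta>2\ov g-2$, $\delta=2\ov g-2$, or $\delta<2\ov g-2$. In the canonically polarized and Calabi--Yau cases one therefore never has $\dim \cM_X(r_\bullet;d_\bullet)<\dim\cM_X(\wt r_\bullet;\wt d_\bullet)$ together with $\wt r_1>r_1$, so by the last sentence of the previous paragraph \emph{no proper inclusion occurs}: every admissible stratum yields a component, which is statements (1) and (2). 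The dimension--monotonicity in (1) is the decreasing monotonicity in $r_1$ read against $(r_\bullet;d_\bullet)\ge(\wt r_\bullet;\wt d_\bullet)\Rightarrow r_1\ge \wt r_1$; the top-dimensional stratum is the one with $r_1=0$, which by admissibility forces $d_1=0$ and is thus the unique stratum $\cM_X(R,0;D,0)$ of vector bundles on $\Xred$, of dimension $R^2(\ov g-1)$. In the Fano case the same reasoning shows that each rigid stratum, having the maximal dimension, is maximal for inclusion (nothing of strictly larger dimension exists) and is therefore a component.

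It remains to prove, in the Fano case, that every \emph{non}-rigid stratum is absorbed, i.e.\ that $\cM_X(r_\bullet;d_\bullet)\subseteq\ov{\cM_X(\wt r_\bullet;\wt d_\bullet)}$ for some admissible $(\wt r_\bullet;\wt d_\bullet)$ with $\wt r_1=r_1+1$ whenever $r_1<\lfloor R/2\rfloor$; iterating raises $r_1$ to $\lfloor R/2\rfloor$ and places every stratum inside the closure of a rigid one, so that the components are exactly $\{\ov{\cM_X(r_\bullet;d_\bullet)}\}_{\cS_{rig}(R,D)}$. This containment is the crux, and it is genuinely stronger than Theorem A(3): that statement only asserts that the larger closure \emph{meets} the smaller stratum, and, as the equal-dimension case in \emph{(a)} already shows, ``meets'' does not upgrade to ``contains'' for free. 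I would establish it by an explicit smoothing. Encoding a torsion-free sheaf through its first canonical filtration and the induced structure map $u:\sN\otimes \sF_{|\Xred}\twoheadrightarrow \sN\sF$, I construct a flat one-parameter family $\{\sF_t\}$ of torsion-free sheaves on $X$ with $\sF_0\cong \sF$ whose generic member satisfies $\rk{\sN\sF_t}=r_1+1$ --- for instance through a family of elementary (Hecke) modifications at a point of $\Xred$ that transports one unit of generic rank from $\sF_{|\Xred}$ into $\sN\sF$. Because $\cM_X(r_\bullet;d_\bullet)$ is irreducible (Theorem A(2)) and such a family can be centred at an arbitrary sheaf of that type, all of $\cM_X(r_\bullet;d_\bullet)$ lies in $\ov{\cM_X(\wt r_\bullet;\wt d_\bullet)}$, giving the required inclusion.

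Finally, the pure-dimensionality and the dimension formulas in (3) are immediate once the components are identified: all rigid strata share the same $(r_0,r_1)$, hence the same dimension by Theorem A(2), and substituting $r_0=r_1=\tfrac{R}{2}$ for $R$ even, respectively $r_0=r_1+1=\tfrac{R+1}{2}$ for $R$ odd, into $R^2(\ov g-1)+r_0r_1[\delta-2(\ov g-1)]$ yields the stated values. The main obstacle throughout is the Fano containment of the third paragraph; everything else is a formal consequence of the stratification, its closure order, and the dimension count supplied by Theorem A.
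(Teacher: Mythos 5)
Your formal analysis of the stratification is correct and coincides with the paper's: cases (1) and (2), the identification of the top-dimensional component in (1), the maximality of the rigid strata in (3), and the final dimension count all follow exactly as you say from Theorem A together with the observation that a proper inclusion of stratum closures forces both $\wt r_1>r_1$ and a strict increase in dimension. You are also right to flag that the real content of (3) is the containment $\cM_X(r_\bullet;d_\bullet)\subseteq\ov{\cM_X(\wt r_\bullet;\wt d_\bullet)}$ with $\wt r_1=r_1+1$, and that this is strictly stronger than the ``meets'' statement of Theorem A(3).

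That containment, however, is exactly where your argument has a genuine gap. You propose ``a family of elementary (Hecke) modifications at a point of $\Xred$ that transports one unit of generic rank from $\sF_{|\Xred}$ into $\sN\sF$'', but no such construction is exhibited, and as described it makes no use whatsoever of the hypothesis $\delta>2\ov g-2$. That is fatal: if every torsion-free sheaf with $r_0\geq r_1+2$ could be deformed unconditionally into a stratum with larger $r_1$, the non-rigid strata would never give components, contradicting parts (1) and (2) of the very theorem you are proving. The existence of the deformation must therefore hinge on the numerical hypothesis. In the paper this is Theorem \ref{T:def-rig}: one takes a \emph{general} (hence quasi-locally free) $\sF$ in the stratum, chooses a corank-one subbundle $\ov K\subset \Gamma_\sF=\sF^{(1)}/\sN\sF$ of maximal degree, and computes via the Segre invariant that $\mu(K)-\mu\bigl(\tfrac{\sF}{K}\otimes\sN\bigr)=-\tfrac{s_{r_0-r_1-1}(\Gamma_\sF)}{r_0-r_1-1}+\delta$; the bounds on the generic Segre invariant together with $\delta>2\ov g-2$ (plus a separate treatment of the borderline case $\delta=2\ov g-1$) and the twisted Brill--Noether theory of \cite{RT} then produce the injection $\tfrac{\sF}{K}\otimes\sN\hookrightarrow K$ needed to apply the deformation criterion of Proposition \ref{P:def-seq}, yielding a deformation with second canonical filtration term equal to $K$ and hence complete rank $(r_0-1,r_1+1)$; irreducibility of the stratum then upgrades this to the containment. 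None of this machinery --- maximal subbundles, Segre invariants, the Brill--Noether existence of the twisted map, nor the deformation criterion tying an extension class to the second canonical filtration --- appears in your sketch, so the crux of part (3) remains unproved.
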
 
The above Theorem B answers completely the open question of \cite[Sec. 1.3.1]{DR2}, and it generalizes it from the semistable locus to the entire stack of torsion-free sheaves. 

The terminology of  the three cases of Theorem B is due to the following fact: if $X$ embeds into an irreducible smooth projective $S$ (so that $X=2\Xred\subset S$) and $S$ is canonically polarized (resp. Calabi-Yau, resp. Fano) then the adjunction formula $K_{\Xred}=(K_S+\Xred)_{|\Xred}$ gives that $\delta< 2\ov g-2$ (resp. $\delta= 2\ov g-2$, resp. $\delta> 2\ov g-2$).

For the special ribbon $X:=2\Xred\subset \Tot_{\Xred}(\omega_{\Xred})$ (for which the normal bundle is $\omega_{\Xred}$ and hence $\delta=2\ov g-2$), the result of the above Theorem B(2) can be deduced from \cite{Lau, Boz}.

\vspace{0.1cm}

We now determine which strata of $\cM_X(R,D)$ intersect the open substacks $\cM_X(R,D)^{ss}$ and $\cM_X(R,D)^s$ of, respectively, semistable and stable sheaves.  Recall that, by defining the slope of a torsion-free sheaf $\sF$ on $X$  to be 
$$\mu(\sF):= \frac{\Deg{\sF}}{\Rk{\sF}},$$
a torsion-free sheaf $\sF$ on $X$ is said to be semistable (resp. stable) if for any proper subsheaf  $0\neq \sG\subsetneq \sF$ we have that 
$$\mu(\sG)\le\mu(\sF) \: \text{ (resp. } \mu(\sG)<\mu(\sF)).$$
As we recall in Fact \ref{F:ModSpa}, the locus $\cM_X(R,D)^{ss}$ of semistable sheaves on $\cM_X(R,D)$ is an open substack of finite type over $k$ and its admits a projective adequate (or good if $\rm{char}(k)=0$) moduli space $\Phi^{ss}: \cM_X(R,D)^{ss}\to M_X(R,D)^{ss}$, that identifies semistable sheaves that have the same Jordan-Holder graduation. The restriction of $\Phi^{ss}$ to the  open substack $\cM_X(R,D)^{s}$ of stable sheaves is a $\Gm$-gerbe. 

\begin{ThmC}\label{T:ThmC}(see Theorem \ref{T:ss-strata})
Let $(r_\bullet;d_\bullet)\in \cS_{adm}(R,D)$.
\begin{enumerate}[(1)]
\item  If the stratum $\cM_X(r_\bullet; d_\bullet)$ intersects $\cM_X(R,D)^{ss}$ then one the following occurs:
\begin{enumerate}[(a)]
\item $r_1=0$;
\item $r_0>r_1>0$ and $\displaystyle \frac{d_0-(r_0+r_1)\delta}{r_0}\le\frac{d_1}{r_1}\le\frac{d_0}{r_0}$;
\item $r_0=r_1$ and $d_0\leq d_1+2r_0\delta$.
\end{enumerate}
\vspace{0.1cm} 
If $\ov g\geq 1$ then the above numerical conditions are also sufficient for the non-emptiness of $\cM_X(r_\bullet; d_\bullet)\cap \cM_X(R,D)^{ss}$.
\item  If the stratum $\cM_X(r_\bullet; d_\bullet)$ intersects $\cM_X(R,D)^{s}$ then one the following occurs:
\begin{enumerate}[(a)]
\item $r_1=0$;
\item $r_0>r_1>0$ and $\displaystyle \frac{d_0-(r_0+r_1)\delta}{r_0}<\frac{d_1}{r_1}<\frac{d_0}{r_0}$;
\item $r_0=r_1$ and $d_0< d_1+2r_0\delta$.
\end{enumerate}
\vspace{0.1cm} 
If $\ov g\geq 2$ then the above numerical conditions are also sufficient for the non-emptiness of $\cM_X(r_\bullet; d_\bullet)\cap \cM_X(R,D)^{s}$.
\end{enumerate}
\end{ThmC}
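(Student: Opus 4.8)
The plan is to obtain the numerical conditions in both parts as \emph{necessary} consequences of (semi)stability applied to two canonical sub/quotient sheaves attached to the first canonical filtration, and then to prove \emph{sufficiency} under the genus hypotheses by exhibiting a (semi)stable sheaf in each admissible stratum satisfying those conditions.

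For necessity I would use the two sheaves that come for free from the first canonical filtration $0\to \sN\sF\to \sF\to \sF_{|\Xred}\to 0$. On one hand, $\sN\sF\hookrightarrow \sF$ is a subsheaf of generalized rank $r_1$ and generalized degree $d_1$; when $r_1>0$ it is nonzero and proper (as $r_0>0$), so semistability forces $\mu(\sN\sF)\le \mu(\sF)$, which rearranges to $\frac{d_1}{r_1}\le \frac{d_0}{r_0}$. On the other hand, since $\sN\sF$ is by definition the image of the multiplication $\sN\otimes \sF_{|\Xred}\to \sF$, twisting by $\sN^*$ produces a surjection $\sF\twoheadrightarrow \sF_{|\Xred}\twoheadrightarrow \sN^*\otimes \sN\sF$ onto a sheaf of generalized rank $r_1$ and generalized degree $d_1+r_1\delta$; semistability then forces $\mu(\sN^*\otimes \sN\sF)\ge \mu(\sF)$, which rearranges to $\frac{d_0-R\delta}{r_0}\le \frac{d_1}{r_1}$. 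Combining the two inequalities gives case (b) when $r_0>r_1>0$, while for $r_0=r_1$ the lower bound rewrites precisely as $d_0\le d_1+2r_0\delta$, which is case (c); the case $r_1=0$ is (a), where both auxiliary objects vanish and no condition arises. For part (2) the same subsheaf and quotient are nonzero and proper, and stability yields the strict inequalities verbatim.

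For sufficiency I would reconstruct a sheaf of the prescribed complete type from its filtration data: choose a semistable (resp. stable) bundle $E$ on $\Xred$ of rank $r_1$ and degree $d_1$, a semistable (resp. stable) bundle $W$ of rank $r_0-r_1$ and degree $d_0-d_1-r_1\delta$ (only needed when $r_0>r_1$), and let $\sF$ be a general torsion-free sheaf with $\sN\sF\cong E$ and with $W$ the kernel of the induced surjection $\sF_{|\Xred}\twoheadrightarrow \sN^*\otimes E$. Such constituents exist exactly because $\ov g\ge 1$ (resp. $\ov g\ge 2$) guarantees semistable (resp. stable) bundles of every rank and degree on $\Xred$, and this is where the genus hypotheses enter. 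To test (semi)stability of $\sF$ I would take an arbitrary nonzero proper subsheaf $\sG$, form the refinement of the canonical filtration it induces, namely $\sG_1:=\sG\cap \sN\sF\subseteq E$ together with the image $\sG_0$ of $\sG$ in $\sF_{|\Xred}$ and its further image $\sG_{01}\subseteq \sN^*\otimes E$, and estimate $\mu(\sG)$ from the semistability of $E$, $W$ and $\sN^*\otimes E$.

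The main obstacle will be this last slope estimate, because the term coming from $\sG_{01}\subseteq \sN^*\otimes E$ carries slope $\ge \mu(\sF)$ and so, taken naively, could push $\mu(\sG)$ above $\mu(\sF)$. The point is that the $\cO_X$-module structure forces $\sN\otimes \sG_{01}\subseteq \sG_1$, so any contribution from $\sG_{01}$ must be matched by a contribution from $\sG_1\subseteq E$ of at least the same rank; for a general choice of the gluing and extension data, the presence of a nonzero $\sG_{01}$ forces $\sG_1$ to exceed this minimum strictly. Feeding these constraints into the slope inequality, the numerical conditions (b)/(c) are exactly what is needed to conclude $\mu(\sG)\le \mu(\sF)$ (resp. $<$), and the distinction between the semistable and stable statements is precisely the distinction between the non-strict and strict inequalities, together with the stronger genericity available when $\ov g\ge 2$. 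I would organize the verification by first treating the extremal subsheaves $\sN\sF$ and $W$ that realize the boundary inequalities, and then reducing a general $\sG$ to these by passing to its saturation and to the sub/quotient pieces of its canonical filtration.
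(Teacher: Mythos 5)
Your necessity argument is essentially the paper's: the paper tests semistability against the two canonical subsheaves $\sN\sF$ and $\sF^{(1)}$, and your pair (the subsheaf $\sN\sF$ and the quotient $\sF/\sF^{(1)}\cong \sN\sF\otimes\sN^{-1}$) is equivalent to that, with the same rearrangements yielding (a)--(c). That half is fine.

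The sufficiency half, however, has a genuine gap. Your plan --- fix semistable constituents $E$ and $W$, take a ``general'' extension, and bound $\mu(\sG)$ for every subsheaf $\sG$ by a uniform slope estimate on the induced filtration --- cannot work as a single argument over the whole range of condition (b), and the step where you claim that genericity of the gluing data forces the needed rank/degree constraints on $\sG_1$ and $\sG_{01}$ is exactly the unproved hard part. The paper is forced to split $r_0>r_1>0$ into three regimes according to the position of $d_1/r_1$ relative to $d_0/r_0-\delta$ and $(d_0-r_1\delta)/r_0$. In the regime $d_1/r_1\le d_0/r_0-\delta$ the graded pieces satisfy $\mu(\sF/\sF^{(1)})\le\mu(\sF^{(1)}/\sN\sF)$ and a direct subsheaf-by-subsheaf slope analysis (your plan) goes through, but only via two nontrivial numerical lemmas (Lemmas \ref{Lem:VarLemDr} and \ref{Lem:StimaPendenze}) whose hypotheses use that slope ordering; in the opposite regime that ordering fails, the estimate breaks, and the paper instead needs $\sF_{|\Xred}$ itself to be \emph{stable}, which requires Lange's conjecture on stability of general extensions of general stable bundles (\cite[Prop.~1.11]{RT}) together with Dr\'ezet's stability criterion \cite[Thm.~5.1.2]{DR3}; the remaining regime is handled by duality. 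Your sketch invokes none of these inputs. You also do not address the case $r_0=r_1$, where your $W$ degenerates to a torsion sheaf of length $d_0-d_1-r\delta$ and the construction requires the kernel of a general elementary transformation of a stable bundle to stay stable (Lemma \ref{L:VBsm}), again followed by Dr\'ezet's criterion. Finally, for $\ov g=1$ the existence statements you need are not the generic ones but Tu's and Ballico--Russo's results on elliptic curves, which is why the semistable and stable statements carry different genus thresholds. In short: the architecture of your sufficiency proof is plausible for one of the three sub-regimes, but the case division and the external stability inputs are not optional, and without them the central slope estimate is not established.
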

The above Theorem C answers completely the open question of \cite[Sec. 1.3.2]{DR2}.

As an example, we study in subsection \ref{sub:ss(n,1)} the semistability of torsion-free sheaves of type $(n,1)$: we show that these sheaves can be obtained as pushforward of quasi-locally free sheaves of the same type on some blowups of the ribbon (see Proposition \ref{P:BlowUp}) and we prove a refined criterion for (semi)stability (see Proposition \ref{P:ss(n,1)}).

\vspace{0.1cm}

Our final result is the computation of the dimension of the tangent space $T_{\sF}\cM_X(R,D)$ of $\cM_X(R,D)$ at a sheaf $\sF\in \cM_X(r_\bullet;d_\bullet)$. Recall that the tangent space $T_{\sF}\cM_X(R,D)$ parametrizes infinitesimal deformations of $\sF$ and it is therefore equal to $\Ext{1}(\sF,\sF)$. The tangent space  $T_{\sF}\cM_X(R,D)$ admits two notable subspaces
\begin{equation}\label{E:inc-T*}
H^1(X,\cExt{0}(\sF,\sF))\subseteq T_{\sF}\cM_X(r_\bullet;d_\bullet)\subseteq   T_{\sF}\cM_X(R,D)=\Ext{1}(\sF,\sF),
\end{equation}
where $H^1(X,\cExt{0}(\sF,\sF))$ parametrizes infinitesimal deformations of $\sF$ that are locally trivial and $T_{\sF}\cM_X(r_\bullet;d_\bullet)$ is the tangent space of $\sF$ at the stratum $\cM_X(r_\bullet;d_\bullet)$. 
Since the stratum $\cM_X(r_\bullet;d_\bullet)$ is smooth by Theorem A, the dimension of its tangent space at $\sF$ is equal to
\begin{equation}\label{E:Tan-strat}
\dim T_{\sF}\cM_X(r_\bullet;d_\bullet)=\dim \cM_X(r_\bullet;d_\bullet)+\dim \Inf_{\sF}\cM_X(r_\bullet;d_\bullet),
\end{equation}
where $ \Inf_{\sF}\cM_X(r_\bullet;d_\bullet)$ is the infinitesimal automorphism  space of $\sF$ which is equal to $\Ext{0}(\sF,\sF)$ and $\dim  \cM_X(r_\bullet;d_\bullet)$ is computed in Theorem A.
Therefore, we can regard the dimension of the space  $\dim T_{\sF}\cM_X(r_\bullet;d_\bullet)$ as "known" and we determine the codimension of the two inclusions in \eqref{E:inc-T*}. 

The answer depends on the local structure of $\sF$, as we now describe. Fact  \ref{F:structure} implies that the stalk of $\sF\in \cM_X(r_\bullet;d_\bullet)$ at a point $p$ is isomorphic to 
$$
\sF_p= \cO_{\Xred,p}^{\oplus (r_0-r_1)}\oplus  \bigoplus_{i=1}^{r_1} \cI_{n_ip},
$$
for some uniquely determined sequence of natural numbers $n_\bullet(\sF,p):=\{n_1=n_1(\sF,p)\geq \ldots \geq n_{r_1}=n_{r_1}(\sF,p)\}$, where $\cI_{n_ip}$ is the ideal sheaf of the $0$-dimensional subscheme $n_ip\subset \Xred$ in $X$. The sequence $n_\bullet(\sF,p)$ is different from the zero sequence only at the finitely many points of the support of the torsion subsheaf $\cT(\sF_{|\Xred})$ of the restriction $\sF_{|\Xred}$ and the sum (called the index of $\sF$) 
$$\iota(\sF)=\sum_{p\in X} \sum_{i=1}^{r_1} n_i(\sF,p)$$
is equal to the length of $\cT(\sF_{|\Xred})$.

\begin{ThmD}\label{T:ThmD}(see Corollaries \ref{C:Tan-lt}, \ref{C:2Tan}, \ref{C:Tan-qlf}, \ref{C:Tan-gvb})
Let $\sF\in \cM_X(r_\bullet;d_\bullet)\subset \cM_X(R,D)$ and set 
$$
\gamma(\sF)=(r_0-r_1)\iota(\sF)+\sum_{p\in X}\sum_{1\leq i\leq r_1(\sF)} (2i-1) n_i(\sF,p).
$$
\begin{enumerate}[(1)]
\item  The inclusion 
$
H^1(X,\cEnd(\sF))\subseteq T_{\sF}\cM_X(r_\bullet;d_\bullet)
$
has codimension equal to 
$$
\gamma(\sF):=(r_0-r_1)\iota(\sF)+\sum_{p\in X}\sum_{1\leq i\leq r_1(\sF)} (2i-1) n_i(\sF,p).
$$
\item  The inclusion  $T_{\sF}\cM_X(r_\bullet;d_\bullet)\subseteq T_{\sF}\cM_X(R,D)$ has codimension equal to 
$$
h^0(\Xred, \cEnd\left(\left(\frac{\sF^{(1)}}{\sN\sF}\right)^{**}\right)\otimes \sN^{-1})+\gamma(\sF),
$$
where $()^{**}$ denoted the reflexive hull. 
\end{enumerate}
In particular: 
\begin{itemize}
\item  If $\iota(\sF)=0$ (in which case $\sF$ is called quasi-locally free), then 
$$
H^1(X,\cEnd(\sF))=T_{\sF}\cM_X(r_\bullet;d_\bullet)\subseteq T_{\sF}\cM_X(R,D)
$$
and the last inclusion has codimension equal to 
$$
h^0(\Xred, \cEnd\left(\left(\frac{\sF^{(1)}}{\sN\sF}\right)^{**}\right)\otimes \sN^{-1}). 
$$
This is the case for the general element $\sF$ of any stratum $\cM_X(r_\bullet;d_\bullet)$ with $r_0>r_1$. 
 
\item  If $r_0=r_1=:r$ (in which case $\sF$ is called a generalized vector bundle of rank $r$), then the two inclusions in \eqref{E:inc-T*} have the same codimension equal to 
$$
\gamma(\sF)=\sum_{p\in X}\sum_{1\leq i\leq r_1(\sF)} (2i-1) n_i(\sF,p) \geq \iota(\sF).
$$
Furthermore, if $\sF$ is a general element of $\cM_X(r,r;d_\bullet)$ then $\gamma(\sF)=\iota(\sF)$. 
\end{itemize}
\end{ThmD}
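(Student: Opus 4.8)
The plan is to bracket the stratum tangent space between the two computable ends of \eqref{E:inc-T*} and to reduce everything to a local computation of $\cExt{1}$ and $\cEnd$ at the finitely many points where $\sF$ fails to be quasi-locally free. The starting point is the local-to-global spectral sequence $H^p(X,\cExt{q}(\sF,\sF))\Rightarrow \Ext{p+q}(\sF,\sF)$. Since $X$ is a projective curve, $H^2(X,-)=0$, so its five-term exact sequence collapses to
\[
0\to H^1(X,\cEnd(\sF))\to \Ext{1}(\sF,\sF)\to H^0(X,\cExt{1}(\sF,\sF))\to 0,
\]
together with the edge identification $\Ext{0}(\sF,\sF)=H^0(X,\cEnd(\sF))$. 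In particular the total codimension of $H^1(X,\cEnd(\sF))$ inside $T_{\sF}\cM_X(R,D)=\Ext{1}(\sF,\sF)$ equals $h^0(X,\cExt{1}(\sF,\sF))$. I would then prove part (1) directly and deduce part (2) by subtraction, once $h^0(X,\cExt{1}(\sF,\sF))$ is computed.

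For part (1), I use that the stratum is smooth of known dimension (Theorem A): by \eqref{E:Tan-strat} one has $\dim T_{\sF}\cM_X(r_\bullet;d_\bullet)=\dim\cM_X(r_\bullet;d_\bullet)+h^0(X,\cEnd(\sF))$, so combining with the sequence above,
\[
\operatorname{codim}\big(H^1(X,\cEnd(\sF))\subseteq T_{\sF}\cM_X(r_\bullet;d_\bullet)\big)=\dim\cM_X(r_\bullet;d_\bullet)+\chi(X,\cEnd(\sF)).
\]
Thus part (1) reduces to the numerical identity $\chi(X,\cEnd(\sF))=\gamma(\sF)-\dim\cM_X(r_\bullet;d_\bullet)$. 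I would compute $\chi(X,\cEnd(\sF))$ from the local structure of Fact \ref{F:structure}: writing $\sF_p\cong\cO_{\Xred,p}^{\oplus(r_0-r_1)}\oplus\bigoplus_i\cI_{n_ip}$ and working over $\cO_{X,p}\cong\cO_{\Xred,p}[y]/(y^2)$, one splits $\cEnd(\sF)$ into $\cHom$-blocks between the local summands and evaluates each $\chi$ by Riemann--Roch on $\Xred$ through the first canonical filtration. The key local inputs are: $\cHom(\cO_{\Xred},\cI_{np})$ is the $\sN$-torsion of $\cI_{np}$, which equals $\sN$ (no excess); $\cHom(\cI_{np},\cO_{\Xred})$ is the dual of the locally free part of $(\cI_{np})_{|\Xred}$, contributing $+n$; and $\cEnd(\cI_{np})$ is the idealizer, which contains $\cO_X$ with quotient of length $n$, contributing $+n$; more generally $\cHom(\cI_{mp},\cI_{np})$ contributes $\min(m,n)$. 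Summing the block excesses over the baseline quasi-locally-free value $-\dim\cM_X(r_\bullet;d_\bullet)$ gives $(r_0-r_1)\iota(\sF)+\sum_{i,j}\min(n_i,n_j)$, and the combinatorial identity $\sum_{i,j}\min(n_i,n_j)=\sum_i(2i-1)n_i$ then yields exactly $\gamma(\sF)$.

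For part (2) it remains to compute $h^0(X,\cExt{1}(\sF,\sF))$, again stalkwise. Over $\cO_{X,p}$ the module $\cO_{\Xred,p}$ has the $2$-periodic resolution $\cdots\xrightarrow{y}\cO_{X,p}\xrightarrow{y}\cO_{X,p}\to\cO_{\Xred,p}\to 0$, and $\cI_{np}$ is resolved through $0\to\cI_{np}\to\cO_X\to\cO_{np}\to 0$; this gives $\cExt{1}(\cO_{\Xred},\cO_{\Xred})\cong\sN^{-1}$ (the normal bundle, the only locally free block), $\cExt{1}(\cO_{\Xred},\cI_{np})\cong\cExt{1}(\cI_{np},\cO_{\Xred})\cong\cO_{\Xred}/(t^n)$ of length $n$, $\cExt{1}(\cI_{mp},\cI_{np})$ of length $2\min(m,n)$, and $\cExt{1}(\cO_X,-)=\cExt{1}(-,\cO_X)=0$. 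Adding up over ordered pairs of summands, the locally free part of $\cExt{1}(\sF,\sF)$ is identified globally with $\cEnd((\sF^{(1)}/\sN\sF)^{**})\otimes\sN^{-1}$ of rank $(r_0-r_1)^2$, while its torsion part is a skyscraper of length $2[(r_0-r_1)\iota(\sF)+\sum_i(2i-1)n_i]=2\gamma(\sF)$. Hence $h^0(X,\cExt{1}(\sF,\sF))=h^0(\Xred,\cEnd((\sF^{(1)}/\sN\sF)^{**})\otimes\sN^{-1})+2\gamma(\sF)$, and subtracting the codimension $\gamma(\sF)$ of part (1) gives part (2). The two special cases are then immediate: $\iota(\sF)=0$ forces $\gamma(\sF)=0$ and the first inclusion to be an equality, while $r_0=r_1$ makes $(\sF^{(1)}/\sN\sF)^{**}$ have rank $0$, so both inclusions have codimension $\gamma(\sF)=\sum_i(2i-1)n_i\geq\iota(\sF)$; the equality $\gamma(\sF)=\iota(\sF)$ for a general member follows because for a general generalized vector bundle the torsion of $\sF_{|\Xred}$ is reduced, so that at each $p$ at most one $n_i(\sF,p)$ is nonzero and equals $1$.

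The main obstacle is the local (co)homological algebra over the non-reduced, non-regular local ring $\cO_{X,p}\cong\cO_{\Xred,p}[y]/(y^2)$: pinning down $\cExt{1}(\cI_{mp},\cI_{np})$ and the Euler-characteristic excess of the $\cHom$-blocks requires controlling the $2$-periodic (matrix-factorization) resolutions, together with the delicate interplay between local lengths and global degrees on $\Xred$ when passing through the canonical filtration. A secondary subtlety is the global identification of the locally free part of $\cExt{1}(\sF,\sF)$ with the reflexive-hull bundle $\cEnd((\sF^{(1)}/\sN\sF)^{**})\otimes\sN^{-1}$, i.e.\ matching the stalkwise computation with the intrinsic sheaf built from the second canonical filtration.
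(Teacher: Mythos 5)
Your proposal is correct and follows essentially the same route as the paper: part (1) is reduced, exactly as in Corollary \ref{C:Tan-lt}, to the identity $\operatorname{codim}=\dim\cM_X(r_\bullet;d_\bullet)+\chi(\cEnd(\sF))$ via smoothness of the stratum and \eqref{E:Tan-strat}, and part (2) to computing $h^0(X,\cExt{1}(\sF,\sF))$ through the sequence \eqref{E:Ext}, with both Euler characteristics extracted from the local structure $\sF_p\cong\cO_{\Xred,p}^{\oplus(r_0-r_1)}\oplus\bigoplus_i\cI_{n_ip}$ and the $2$-periodic resolutions (your local values for $\cHom$ and $\cExt{1}$ between the summands, and the identity $\sum_{i,j}\min(n_i,n_j)=\sum_i(2i-1)n_i$, agree with Lemmas \ref{L:locHom} and \ref{L:locExt}). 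The one organizational difference is in part (1): where you compute $\chi(\cEnd(\sF))$ as a blockwise local excess over the quasi-locally free baseline, the paper first establishes the global four-term exact sequence of Theorem \ref{T:Ext0} for $\cEnd(\sF)$ and then reads off its complete type and $\chi$ by additivity (Corollary \ref{C:Ext0}); the two bookkeepings are equivalent, but the paper's version also identifies the second canonical filtration of $\cEnd(\sF)$, which you do not need here. The ``secondary subtlety'' you flag --- identifying the reflexive hull of $\cExt{1}(\sF,\sF)$ globally with $\cEnd\bigl((\sF^{(1)}/\sN\sF)^{**}\bigr)\otimes\sN^{-1}$ --- is precisely where the paper does extra work: it replaces $\sF$ by a quasi-locally free modification $\sE$ with $\sN\sE=\sN\sF$ (Fact \ref{Fact:Nonqll}), proves $\cExt{k}(\sF,\sF)^{**}\cong\cExt{k}(\sE,\sE)$ by two d\'evissage claims, and quotes the known formula for $\cExt{k}(\sE,\sE)$; some such global argument is needed, since the stalkwise computation alone only gives the rank. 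Finally, the two ``in particular'' items for general elements rest on Corollaries \ref{C:qlf} and \ref{C:gengvb}, the latter of which you correctly invoke in substance (reducedness of $\cT(\sF_{|\Xred})$ for the general generalized vector bundle).
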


\subsection*{Future plans}

We are planning to generalize in \cite{SV} the results of this paper to moduli of torsion-free sheaves on \emph{primitive multiple curves} of higher multiplicity $m$, i.e.  irreducible projective curves that locally looks like $mC\subset S$ for a smooth curve $C$ inside a smooth surface $S$. The basic properties of coherent sheaves on primitive multiple curves are established by Dr\'{e}zet in \cite{DR1}, \cite{DR2}, \cite{DR3}. The open subset of (semistable) generalized line bundles (i.e. sheaves that are line bundles away from a finite set of points) is studied by the first author in the unpublished draft \cite{S4}. 

\subsection*{Structure of the paper}

The paper is organized as follows. In Section \ref{Sec:FirstProp}, we collect the basic properties of coherent sheaves on ribbons that are scattered in the papers  \cite{DR1}, \cite{DR2}, \cite{DR3} of Dr\'{e}zet.
In Section \ref{Sec:stack} we introduce the stratification of the stack $\cM_X(R,D)$ by the complete type of the sheaves and study the basic properties of this stratification: non-emptiness in \S\ref{sub:nonempty}, irreducibility and dimension in \S\ref{sub:irr-dim}, specializations among strata in \S\ref{sub:speciali}.  In Section \ref{Sec:Irred}, we determine the irreducible components of the stack $\cM_X(R,D)$. In Section \ref{Sec:ss-Locus}, we investigate which strata of the stack intersect the open substacks of (semi)stable sheaves. As an example, we treat in \S\ref{sub:ss(n,1)} the semistability of sheaves of type $(n,1)$.
In Section \ref{Sec:tangent}, we study the tangent space of the stack $\cM_X(R,D)$ at a point $\sF$: we determine the endomorphism sheaf $\cEnd(\sF)$ in Theorem \ref{T:Ext0} and the higher Ext sheaves $\cExt{k}(\sF,\sF)$ for $k\geq 1$ in Theorem \ref{T:Ext1}.

\subsection*{Acknowledgements}

This paper has its origins in the PhD thesis of first author  \cite{S1} under the direction of the second author. An ancestor of this paper is the (non published) draft \cite{S3}, where the first author proved some partial results on the stratification of the moduli space of semistable torsion-free sheaves on ribbons and proposed several conjectures. In this paper, we prove those conjectures in full generality and we also extend them to the entire stack of torsion-free sheaves. 
The second author thanks Mirko Mauri for his interest in this work and for useful discussions.

 The second author is funded by the MUR Excellence Department Project MatMod@TOV awarded to the Department of Mathematics, University of Rome
Tor Vergata, CUP E83C23000330006, by the  PRIN 2022 ``Moduli Spaces and Birational Geometry''  funded by MIUR,  and he is a member of INdAM and of the Centre for Mathematics of the University of Coimbra (funded by the Portuguese Government through FCT/MCTES, DOI 10.54499/UIDB/00324/2020).

\emph{A.M.D.G.}

\section{Properties of sheaves on ribbons}\label{Sec:FirstProp}

This section collects from literature (mostly from the paper of Dr\'{e}zet \cite{DR1}, \cite{DR2}, \cite{DR3}) the properties of coherent sheaves on ribbons that we will need in the article. In this paper only coherent sheaves are considered, hence this attribute will be usually omitted in the following. Moreover, vector bundle will be used as a synonym of locally free sheaf of finite rank. 

\subsection{Ribbons}\label{sub:ribbons}

Let us begin by recalling the definition and the numerical invariants of a ribbon. We will be working over a fixed algebraically closed field $k$.

\begin{defi}\label{D:ribbon}
A \emph{ribbon} $X$ (over $k$) is a non-reduced projective $k$-scheme whose reduced subscheme $\Xred$  is a smooth connected $k$-curve and whose nilradical $\sN\subset\cO_X$, i.e. the ideal sheaf of $\Xred$ in $X$, is locally generated by a single non-zero square-zero element.
\end{defi}

 In particular, this definition implies immediately that $\sN$ can be seen as a line bundle on $\Xred$ and that it coincides with the conormal sheaf of $\Xred$ in $X$. Its dual line bundle $\sN^*$ is the normal sheaf of $\Xred$ in $X$. We will set
\begin{equation} \label{E:genus}
\begin{sis}
& \delta:=-\deg \sN, \\
& \ov g:=1-\chi(\cO_{\Xred})=\text{the genus of } \Xred,  \\
& g:=1-\chi(\cO_{X})=\text{the (arithmetic) genus of } X.
\end{sis}
\end{equation}
From the exact sequence
\[
0\to \sN \to \cO_{X}\to \cO_{\Xred}\to 0,
\]
we deduce that 
\begin{equation} \label{E:cohom}
\begin{sis}
& h^0(X,\cO_X)=1+ h^0(\Xred,\sN), \\
& h^1(X,\cO_X)=\ov g+ h^1(\Xred,\sN), \\
& g=2\ov g-1+\delta.
\end{sis}
\end{equation}




\subsection{Canonical filtrations}\label{sub:filtr}

A (coherent) sheaf  on a ribbon $X$ has two canonical filtrations, that we now recall.

\begin{defi}\label{D:CanFiltr}
Let $\sF$ be a  sheaf on $X$.
\begin{enumerate}
\item\label{D:CanFiltr:1} 
The \emph{first canonical filtration} of $\sF$ is
\[
0\subseteq\sN\sF\subseteq\sF.
\]
It is immediate to check that $\sF/(\sN\sF)=\sF|_{\Xred}$.


We define
\begin{equation}\label{E:types}
\begin{sis}
& r_{\bullet}(\sF):=(r_0(\sF),r_1(\sF)):=(\rk{\sF|_{\Xred}},\rk{\sN\sF})=\text{\emph{complete rank} of } \sF, \\
& d_{\bullet}(\sF):=(d_0(\sF),d_1(\sF)):=(\deg(\sF|_{\Xred}),\deg(\sN\sF))=\text{\emph{complete degree} of } \sF, \\
& (r_\bullet(\sF);d_\bullet(\sF)):=(r_0(\sF),r_1(\sF);d_0(\sF),d_1(\sF))= \text{\emph{complete type} of } \sF.
\end{sis}
\end{equation}


\item\label{D:CanFiltr:2}

The \emph{second canonical filtration} of $\sF$ is defined as 
\[
0\subseteq\sF^{(1)}\subseteq\sF,
\]
where $\sF^{(1)}$ is the subsheaf of $\sF$ annihilated by $\sN$.

\end{enumerate}
\end{defi}

The complete type of $\sF$ can be determined by looking at the structure of the generic stalk $\sF_{\eta}$, where $\eta$ is the generic point of $X$. 

\begin{rmk}\label{R:type}
If $\eta$ is the generic point of $X$, then we have that 
$$\sF_\eta\cong\cO_{\Xred,\eta}^{\oplus a(\sF)}\oplus\cO_{X,\eta}^{\oplus b(\sF)},$$
for some unique non-negative integers $(a(\sF),b(\sF))$, called the \emph{type} of $\sF$.
The complete rank of $\sF$ is determined by the type of $\sF$ (and vice-versa) by virtue of the following formulas: 
$$r_0(\sF)=a(\sF)+b(\sF) \quad \text{ and } \quad r_1(\sF)=b(\sF).$$

\end{rmk}

The following fact collects the main properties of the two canonical filtrations and their mutual relations.

\begin{fact}\label{F:FiltrCan}
Let $\sF$ be a sheaf on  $X$.
\begin{enumerate}
\item\label{F:FiltrCan:1} (see \cite[\S 3.1.4]{DR2}, \cite[\S 3.2.3]{DR4}) There is a canonical isomorphism
\[
\sN\sF=(\sF/\sF^{(1)})\otimes\sN.
\]
Moreover there is a canonical exact sequence on $\Xred$ (called the \emph{canonical exact sequence of} $\sF$):
\begin{equation}\label{Eq:ExSeq1}
0\to\sN\sF\to\sF^{(1)}\to\sF|_{\Xred}\to\sN\sF\otimes\sN^{-1}\to 0.
\end{equation}

\item\label{F:FiltrCan:2} (see \cite[\S 3.2.4]{DR4})
Let $\cF$ be a subsheaf of $\sF$. Then:
\begin{enumerate}[(a)]
\item $\cF$ is defined on $\Xred$ if and only if $\cF\subseteq \sF^{(1)}$;
\item $\sF/\cF$ is defined on $\Xred$ if and only if $\sN\sF\subseteq\cF$.
\end{enumerate}
Moreover,  in the latter case, there is a canonical morphism $\sF/\cF\otimes\sN\to\cF$, which is surjective if and only if $\cF=\sN\sF$, while it is injective if and only if $\cF=\sF^{(1)}$.

\item\label{F:FiltrCan:3} (see \cite[\S 3.4.2, \S 3.4.3]{DR2}, \cite[\S 3.4]{DR4}) Let $\cF$ be a sheaf on $\Xred$ and let $\cE$ be a vector bundle on it. Then there exists the following canonical exact sequence:
\begin{equation}\label{E:sucExt}
\phantom{00000.}0\to \Ext{1}_{\cO_{\Xred}}(\cE,\cF)\to \Ext{1}_{\cO_{X}}(\cE,\cF)\overset{\pi}{\longrightarrow} \Hom(\cE\otimes\sN,\cF)\to 0.  
\end{equation}
By \ref{F:FiltrCan:2}, if $\sF$ is a sheaf on $X$ sitting in a short exact sequence $0\to\cF\to\sF\to\cE\to 0$ represented by $\sigma\in\Ext{1}_{\cO_{X}}(\cE,\cF)$, then $\cF=\sN\sF$ if and only if $\pi(\sigma)$ is surjective, while $\cF=\sF^{(1)}$ if and only if $\pi(\sigma)$ is injective.

\item\label{F:FiltrCan:4}(see \cite[Prop. 3.5]{DR2}) Let $\varphi:\sF\to\sG$ be a morphism of sheaves on $X$. Then:
\begin{enumerate}

\item\label{F:FiltrCan:4:1} $\varphi$ is surjective if and only if the induced morphism $\varphi|_{\Xred}:\sF|_{\Xred}\to \sG|_{\Xred}$ is surjective. If this is case,  the induced morphism $\sN\sF\to \sN\sG$ is also surjective.

\item\label{F:FiltrCan:4:2} $\varphi$ is injective if and only if the induced morphism $\sF^{(1)}\to \sG^{(1)}$ is injective. If this is the case, the induced morphism $\sF/\sF^{(1)}\to \sG/\sG^{(1)}$ is also injective.
\end{enumerate}
\item\label{F:FiltrCan:5}(see \cite[Prop. 7.3.1]{DR1}) $r_0(\sF)$ is upper semicontinuous, while $r_1(\sF)$ is lower semicontinuous.
\end{enumerate}
\end{fact}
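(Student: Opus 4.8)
The plan is to derive all five items from the single structural fact that, locally, $\cO_X$ is a square-zero extension $0\to\sN\to\cO_X\to\cO_{\Xred}\to0$ with $\sN$ an invertible $\cO_{\Xred}$-module satisfying $\sN^2=0$; these statements are due to Dr\'ezet, but I would give self-contained arguments along these lines. The organising object for any $\sF$ is the multiplication morphism $\mu_{\sF}\colon\sN\otimes_{\cO_X}\sF\to\sF$: its image is $\sN\sF$, and since $\sN$ is itself annihilated by $\sN$ it factors through $\sN\otimes_{\cO_{\Xred}}\sF|_{\Xred}$, while its ``kernel'' is encoded by $\sF^{(1)}$, the largest subsheaf killed by $\sN$. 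First I would fix a local generator $\epsilon$ of $\sN$ and record the local model, since every assertion in (1)--(4) is local or checkable on stalks.

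For (1), multiplication by $\epsilon$ realises locally the first isomorphism theorem for the endomorphism ``$\cdot\,\epsilon$'' of the stalk: its kernel is $\sF^{(1)}$ and its image is $\sN\sF$, so $\sF/\sF^{(1)}\xrightarrow{\sim}\sN\sF$ after correcting the trivialisation, that is, canonically $\sN\sF\cong(\sF/\sF^{(1)})\otimes\sN$. The four-term sequence then comes from splicing $0\to\sN\sF\to\sF^{(1)}\to\sF^{(1)}/\sN\sF\to0$ with the inclusion $\sF^{(1)}/\sN\sF\hookrightarrow\sF/\sN\sF=\sF|_{\Xred}$, whose cokernel is $\sF/\sF^{(1)}\cong\sN\sF\otimes\sN^{-1}$ by the isomorphism just proved. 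Statements (2)(a) and (2)(b) are immediate unwindings of the definitions of $\sF^{(1)}$ and of $\sN\sF$; the canonical morphism $\sF/\cF\otimes\sN\to\cF$ is the one induced by $\mu_{\sF}$ (landing in $\sN\sF\subseteq\cF$), and its behaviour in the two extremal cases is read off from (1): for $\cF=\sN\sF$ it specialises to the multiplication $\sF|_{\Xred}\otimes\sN\twoheadrightarrow\sN\sF$, hence is surjective, while for $\cF=\sF^{(1)}$ it specialises to the isomorphism $\sF/\sF^{(1)}\otimes\sN\xrightarrow{\sim}\sN\sF$, hence is injective.

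For (3) I would combine the local-to-global spectral sequence $H^p\bigl(X,\cExt{q}_{\cO_X}(\cE,\cF)\bigr)\Rightarrow\Ext{p+q}_{\cO_X}(\cE,\cF)$ with a local computation of the sheaf-Ext: resolving $\cO_{\Xred}$ over $\cO_X$ by the ($2$-periodic) locally free resolution determined by multiplication by $\sN$, and using that $\cE$ is locally free on $\Xred$, gives $\cExt{0}_{\cO_X}(\cE,\cF)=\cHom(\cE,\cF)$ and $\cExt{1}_{\cO_X}(\cE,\cF)=\cHom(\cE\otimes\sN,\cF)$. Since these sheaves are supported on the smooth curve $\Xred$, all $H^{\geq2}$ vanish, so the only contributions in total degree $1$ are $H^1(\cExt{0})=\Ext{1}_{\cO_{\Xred}}(\cE,\cF)$ and $H^0(\cExt{1})=\Hom(\cE\otimes\sN,\cF)$, and the edge sequence is exactly $0\to\Ext{1}_{\cO_{\Xred}}(\cE,\cF)\to\Ext{1}_{\cO_X}(\cE,\cF)\xrightarrow{\pi}\Hom(\cE\otimes\sN,\cF)\to0$, the surjectivity of $\pi$ being precisely the vanishing $H^2(\Xred,\cHom(\cE,\cF))=0$. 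Identifying $\pi(\sigma)$ with the multiplication $\cE\otimes\sN\to\cF$ of the extension $\sG$ makes the criteria transparent: $\sN\sG$ is its image and $\sG^{(1)}/\cF\subseteq\cE$ is its kernel, so $\cF=\sN\sG\Leftrightarrow\pi(\sigma)$ surjective and $\cF=\sG^{(1)}\Leftrightarrow\pi(\sigma)$ injective.

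For (4), surjectivity is Nakayama for the nilpotent ideal $\sN$: $\coker{\varphi}|_{\Xred}=0$ forces $\coker{\varphi}=\sN\coker{\varphi}=\sN^2\coker{\varphi}=0$, after which $\sN\sG=\varphi(\sN\sF)$ is surjective; injectivity uses the dual remark that a nonzero sheaf on $X$ has a nonzero subsheaf annihilated by $\sN$, so that $\ker{\varphi}^{(1)}=\ker{\varphi}\cap\sF^{(1)}=0$ forces $\ker{\varphi}=0$, and then the injectivity of $\sN\sF\to\sN\sG$ together with (1) propagates injectivity to $\sF/\sF^{(1)}\to\sG/\sG^{(1)}$. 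Finally (5) is the only item that is not formal square-zero algebra: writing $r_1=\rk{\sN\sF}=b(\sF)$ as the generic number of $\cO_X$-summands, I would obtain its lower semicontinuity (equivalently the upper semicontinuity of $r_0=R-r_1$) from the semicontinuity of the generic rank of the image of the universal multiplication map over a flat family. This last step, which genuinely needs the relative/flatness framework rather than the pointwise analysis used for (1)--(4), is the one I expect to be the main obstacle, and for it I would rely on Dr\'ezet's Prop.~7.3.1.
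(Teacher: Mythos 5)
The paper offers no proof of Fact \ref{F:FiltrCan}: it is a compendium of statements imported verbatim from Dr\'ezet's papers, with a citation attached to each item and nothing more. So there is no in-text argument to compare yours against; what your proposal does is supply a self-contained derivation, and that derivation is essentially correct and follows the same lines as the cited sources. Your treatment of (1), (2) and (4) by pure square-zero algebra is sound: the multiplication map $\sN\otimes\sF\to\sF$ kills $\sN\otimes\sF^{(1)}$ and has image $\sN\sF$, giving the canonical isomorphism $(\sF/\sF^{(1)})\otimes\sN\cong\sN\sF$ and the four-term sequence by splicing; surjectivity in (4) is Nakayama for the nilpotent ideal $\sN$, and injectivity uses correctly that any nonzero $\cK\subseteq\sF$ has $\cK^{(1)}\neq 0$ because $\sN\cK$ is annihilated by $\sN$. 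For (3), the local-to-global spectral sequence together with the local $2$-periodic resolution of $\cO_{\Xred}$ over $\cO_X$ does yield $\cExt{0}_{\cO_X}(\cE,\cF)=\cHom(\cE,\cF)$ and $\cExt{1}_{\cO_X}(\cE,\cF)=\cHom(\cE\otimes\sN,\cF)$, and the edge sequence is exactly \eqref{E:sucExt}, with surjectivity of $\pi$ coming from $H^2$-vanishing on a curve; the identification of $\pi(\sigma)$ with the multiplication map of the extension then makes the two criteria immediate. Deferring (5) to Dr\'ezet is exactly what the paper does, so no loss there. The only benefit of the paper's choice is brevity; the benefit of yours is that the reader sees that everything except (5) is formal consequence of $\sN^2=0$ and $\sN$ being invertible on $\Xred$.

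One small caveat, which concerns the statement of (2) as much as your proof: the multiplication $\sF\otimes\sN\to\sF$ descends to a map $(\sF/\cF)\otimes\sN\to\cF$ only if it kills $\cF\otimes\sN$, i.e.\ only if $\sN\cF=0$, i.e.\ $\cF\subseteq\sF^{(1)}$; the hypothesis $\sN\sF\subseteq\cF$ alone only gives a map to $\cF/\sN\cF$. So the ``canonical morphism'' of (2) should be understood for $\sN\sF\subseteq\cF\subseteq\sF^{(1)}$. Since the two extremal cases you actually analyse, $\cF=\sN\sF$ and $\cF=\sF^{(1)}$, both satisfy $\cF\subseteq\sF^{(1)}$, nothing in your argument breaks, but you should state the well-definedness hypothesis explicitly.
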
 

\begin{rmk}\label{R:inv-F1}
Thanks to Fact \ref{F:FiltrCan}\ref{F:FiltrCan:1}, the complete type of a sheaf $\sF$ can be characterized also in terms of the second canonical filtration of $\sF$:
\begin{equation}\label{E:typebis}
\begin{sis}
& (r_0(\sF),r_1(\sF))=(\rk{\sF^{(1)}}, \rk{\sF/\sF^{(1)}}), \\
& (d_0(\sF),d_1(\sF))=(\deg(\sF^{(1)})+r_1\delta,\deg(\sF/\sF^{(1)})-r_1\delta). 
\end{sis}
\end{equation}
\end{rmk}

\subsection{Generalized rank and degree}\label{sub:R-D}

Now we introduce two fundamental invariants of a (coherent) sheaf on a ribbon $X$: the generalized rank and the generalized degree. 
\begin{defi}\label{D:GenRkDeg}
Let $\sF$ be a sheaf on $X$. 
\begin{enumerate}
\item  The \emph{generalized rank} $\Rk{\sF}$ of $\sF$ is its generic length, i.e. the length of its generic stalk $\sF_{\eta}$ as an $\cO_{X,\eta}$-module (here and throughout the paper $\eta$ denotes the generic point of $X$).
\item  The \emph{generalized degree} $\Deg{\sF}$ of $\sF$ is 
$$\Deg{\sF}=d_0(\sF)+d_1(\sF).$$

\end{enumerate}
\end{defi}

\begin{rmk}\label{R:GenRkDeg} \noindent
\begin{enumerate}
\item\label{R:GenRkDeg:2} If $\cF$ is defined on $\Xred$, meaning that it is the direct image on $X$ of a sheaf on $\Xred$, then
$$
\Rk{\cF}=\rk{\cF} \quad \text{ and } \quad \Deg{\cF}=\deg(\cF),
$$
where $\rk{\cF}$ and $\deg(\cF)$ are the usual rank and degree of a sheaf on $\Xred$. 
 



\item\label{R:GenRkDeg:1} 
If we have an exact sequence
$$0\to \cF\to \sF\to \cE\to 0,$$
with $\cF$ and $\cE$ sheaves on $\Xred$, then it holds that 
$$
\Rk{\sF}=\rk{\cF}+\rk{\cE} \quad \text{ and } \quad \Deg{\sF}=\deg(\cF)+\deg(\cE).
$$
In particular, the generalized rank of a sheaf is determined by its complete rank:
$$
\Rk{\sF}=r_0(\sF)+r_1(\sF)=a(\sF)+2b(\sF).
$$
\end{enumerate}
\end{rmk}

The following fact collects the basic properties of the generalized rank and degree. 
\begin{fact}\label{F:GenRkDeg}
\noindent
\begin{enumerate}
\item\label{F:GenRkDeg:1} (see \cite[\S 4.2.2]{DR1})
Let $\cO_{X}(1)$ be a very ample line bundle on $X$, let $\cO_{\Xred}(1)$ be its restriction to $\Xred$ and let $d=\deg{\cO_{\Xred}(1)}$. If $\sF$ is a sheaf on $X$, its Hilbert polynomial with respect of $\cO_{X}(1)$ is
\begin{equation}\label{Eq:HilbPol}
P_{\sF}(T)=\Deg{\sF}+\Rk{\sF}(1-\ov g)+\Rk{\sF}dT. 
\end{equation}

\item\label{F:GenRkDeg:2}
(see \cite[Cor. 4.3.2]{DR1}) The generalized rank and degree are additive: if 
$$0\to \sF'\to \sF\to \sF''\to 0$$ 
is an exact sequence of sheaves on $X$, then 
$$\Rk{\sF}=\Rk{\sF'}+\Rk{\sF''} \quad \text{ and } \quad \Deg{\sF}=\Deg{\sF'}+\Deg{\sF''}.$$

\item\label{F:GenRkDeg:3} (see \cite[Prop. 4.3.3]{DR1})
 The generalized rank and degree of sheaves on $X$ are invariant by deformation.

\item \label{F:GenRkDeg:4} (see \cite[Thm. 4.2.1]{DR1})
The following Riemann-Roch formula holds
$$
\chi(\sF)=\Deg{\sF}+\Rk{\sF}(1-\ov g).
$$

\end{enumerate}
\end{fact}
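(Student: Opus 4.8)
The plan is to reduce all four assertions to the classical Riemann--Roch theorem on the smooth curve $\Xred$ by means of the first canonical filtration, and to observe that the last three statements are formal consequences of the Hilbert polynomial computation in part (1). So the only substantive step will be part (1), and parts (2)--(4) will be derived from it.

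First I would prove the Hilbert polynomial formula. Writing $P_\sF(T)=\chi(\sF\otimes\cO_X(T))$, I tensor the first canonical filtration
\[
0\to \sN\sF\to \sF\to \sF|_{\Xred}\to 0
\]
by the line bundle $\cO_X(T)$, which keeps it exact, and use additivity of the Euler characteristic to get
\[
P_\sF(T)=\chi(\sN\sF\otimes\cO_X(T))+\chi(\sF|_{\Xred}\otimes\cO_X(T)).
\]
Both $\sN\sF$ and $\sF|_{\Xred}$ are (pushforwards of) sheaves on $\Xred$, so by the projection formula $\cG\otimes_{\cO_X}\cO_X(T)=\cG\otimes_{\cO_{\Xred}}\cO_{\Xred}(T)$ for any $\cG$ on $\Xred$, and by exactness of the closed pushforward each Euler characteristic computed on $X$ equals the corresponding one on $\Xred$. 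Applying Riemann--Roch on the smooth curve $\Xred$ to each summand, with $d=\deg\cO_{\Xred}(1)$, yields $\chi(\sN\sF\otimes\cO_{\Xred}(T))=d_1+r_1dT+r_1(1-\ov g)$ and likewise $d_0+r_0dT+r_0(1-\ov g)$ for the restriction. Summing and using $\Rk{\sF}=r_0+r_1$, $\Deg{\sF}=d_0+d_1$ from Remark \ref{R:GenRkDeg} gives exactly the claimed formula.

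The remaining parts then follow formally. Part (4) is the case $T=0$ of part (1). For part (2), additivity of $\Rk{-}$ is immediate by localizing a short exact sequence at the generic point $\eta$ and using that length is additive over the Artinian local ring $\cO_{X,\eta}$; additivity of $\Deg{-}$ then follows by writing $\Deg{\sF}=\chi(\sF)-\Rk{\sF}(1-\ov g)$ (from part (4)) as a difference of two additive quantities, since $\chi$ is additive on short exact sequences. For part (3), in a flat family over a connected base the Hilbert polynomial is locally constant, hence its coefficients $\Rk{-}$ and $\Deg{-}$ are constant, i.e. invariant under deformation.

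I expect no serious obstacle here: the single piece of non-formal input is Riemann--Roch on the smooth curve $\Xred$, while everything else is the bookkeeping of passing from $X$ to $\Xred$. The one point that requires a little care is the compatibility of the twist, namely that $\cO_X(1)$ restricts to $\cO_{\Xred}(1)$ so that the projection formula genuinely converts twists on $X$ into twists on $\Xred$; granting this, parts (2)--(4) are purely formal consequences of part (1) together with the additivity of $\chi$ and of generic length.
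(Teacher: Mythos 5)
Your proposal is correct. Note that the paper does not prove this statement at all: it is recorded as a ``Fact'' imported from Dr\'ezet \cite{DR1} with pointers to \S 4.2.2, Cor.~4.3.2, Prop.~4.3.3 and Thm.~4.2.1, so there is no in-paper argument to compare against. Your derivation is the standard one and matches the paper's definitions: part (1) follows from the first canonical filtration, the projection formula for the closed immersion $\Xred\hookrightarrow X$, and Riemann--Roch on the smooth curve $\Xred$, using $\Rk{\sF}=r_0+r_1$ and $\Deg{\sF}=d_0+d_1$ as in Remark \ref{R:GenRkDeg}; parts (2)--(4) then do follow formally as you say (additivity of generic length over the Artinian ring $\cO_{X,\eta}$ and of $\chi$, local constancy of the Hilbert polynomial in flat proper families, noting that $\Rk{\sF}$ is recoverable from the leading coefficient because $d=\deg\cO_{\Xred}(1)>0$ by very ampleness). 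No gaps.
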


\subsection{Purity and duality}\label{sub:pure-dual}

We now introduce the notions of pure, torsion-free and reflexive sheaves. 

\begin{defi}\label{D:pure}
Let $\sF$ be a sheaf on $X$.
\begin{enumerate}[(i)]
\item 
$\sF$ is said to be  \emph{pure} of dimension $d(=0 \text{ or } 1)$ if the dimension of any non-zero subsheaf (i.e. the dimension of its support) is equal to $d$.
\item 
 $\sF$ is said to be \emph{torsion-free} if every non-zerodivisor of $\cO_X(U)$ is a non-zerodivisor also of $\sF(U)$ for any open $U\subseteq X$ (recall that $f\in H^0(U,\cO_X)$ is a non-zerodivisor of $\sF$ if the multiplication map $f \cdot \_ :\sF|_U\to\sF|_U$ is injective).
\item 
$\sF$ is \emph{reflexive} if the canonical morphism $\sF\to\sF^{\vee\vee}$ is an isomorphism, where $\sF^\vee:=\sHom{\sF,\cO_{X}}$ is the \emph{dual} of $\sF$. 

\end{enumerate}
\end{defi}
Note that $\sF$ is pure of dimension $0$ if and only if it is a torsion sheaf, i.e. it has finite support. 

\begin{rmk}\label{Rmk:Dual} \noindent
If $\cF$ is a sheaf on $\Xred$, then its dual $\cF^{\vee}$ on $X$ is related to its dual $\cF^{*}:=\sHom{\cF,\cO_{\Xred}}$ on $\Xred$ by a canonical isomorphism (see \cite[Lemme 4.1]{DR2}):
$$\cF^{\vee}\simeq\cF^{*}\otimes\sN.$$ 
Hence, $\cF$ is  reflexive on $X$ if and only if it is reflexive on $\Xred$, i.e. it is a vector bundle on $\Xred$. 
\end{rmk}

The relationship between the above  notions is described in the following fact.

\begin{fact}\label{F:Dual} Let $\sF$ be a sheaf on $X$. Then: 
\begin{enumerate}

\item\label{F:Dual:2} (see \cite[Cor. 4.6]{DR2}) It holds that $\cExt{i}(\sF,\cO_{X})=0$ for any $i\ge 2$. 

\item\label{F:Dual:1} (see \cite[Lemma 2.2]{CK} and \cite[Prop. 3.8 and Thm. 4.4]{DR2}) The following are equivalent:
\begin{enumerate}
\item $\sF$ is pure of dimension one;

\item $\sF$ is torsion-free;

\item $\sF$ is reflexive;

\item $\sF^{(1)}$ is a vector bundle on $\Xred$;

\item it holds that $\cExt{1}(\sF,\cO_{X})=0$.
\end{enumerate}
Moreover, if the above conditions hold, then $\sF/\sF^{(1)}$ and $\sN\sF$ are vector bundles on $\Xred$. 
 

\item\label{F:Dual:4} (see \cite[Prop. 4.4.1]{DR3}) We have that 
$$
\Rk{\sF^\vee}=\Rk{\sF} \quad \text{ and } \quad \Deg{\sF^\vee}=-\Deg{\sF}+\Rk{\sF}\deg{\sN}+h^0(\cT(\sF)),$$ 
where $\cT(\sF)$ is the torsion subsheaf of $\sF$, i.e. its greatest subsheaf with finite support.

\item\label{F:Dual:4bis} (see \cite[Cor. 4.5]{DR2}) If $0\to\sE\to\sF\to\sG\to 0$ be a short exact sequence of sheaves on $X$ with $\sG$ torsion-free, then also the dual sequence $0\to\sG^\vee\to\sF^\vee\to\sE^\vee\to 0$ is exact.

\item\label{F:Dual:5} Assume that $\sF$ is torsion-free. 
\begin{enumerate}
\item\label{F:Dual:5:1} (see \cite[Prop. 4.3.1(i)]{DR3}) There is a canonical isomorphism 
$$\cT(\sF^{\vee}|_{\Xred})\cong \cExt{1}(\cT(\sF|_{\Xred}),\cO_X)\otimes\sN,$$ 
where $\cT(\sF^{\vee}|_{\Xred})$ and $\cT(\sF|_{\Xred})$ are, respectively, the torsion subsheaves of $\sF^{\vee}|_{\Xred}$ and $\sF|_{\Xred}$. 
\item\label{F:Dual:5:2} (see \cite[Prop. 4.2]{DR2} and \cite[Prop. 4.3.1(ii)]{DR3}) 
The dual of the  exact sequence 
$$
0\to \sF^{(1)} \to \sF\to \sN\sF\otimes \sN^{-1}\to 0
$$
is equal to 
$$0\to (\sN\sF^\vee)^{\rm sat} \to \sF^{\vee} \to  (\sF^{\vee}_{|\Xred})^{\vee\vee} \to 0$$
where $(\sN\sF^\vee)^{\rm sat}$ is the saturation of $\sN\sF^\vee$ in  $\sF^{\vee}$.  
\end{enumerate}

\item\label{F:Dual:6} (see \cite[Thm. 4.7]{DR2}) Assume that $\sF$ is torsion-free. Then there are functorial Serre-duality isomorphisms 
$$
H^i(X,\sF)\cong H^{1-i}(X,\sF^\vee\otimes \omega_X) \quad \text{ for  } i=0,1. 
$$
\end{enumerate}
\end{fact}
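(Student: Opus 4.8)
The plan is to exploit that a ribbon $X$ is a projective \emph{Gorenstein} curve. Zariski-locally $X$ is $2\Xred\subset S$ with $\Xred$ smooth inside a smooth surface $S$, so at each closed point $p$ we have $\wh{\cO}_{X,p}\cong k[[t,u]]/(u^2)$ with $u$ a local generator of $\sN$; this is a hypersurface in the regular ring $k[[t,u]]$, hence Cohen--Macaulay of dimension one with an invertible dualizing sheaf $\omega_X$. Every assertion then becomes an instance of duality on such a curve combined with the canonical filtrations of Fact \ref{F:FiltrCan}. I would first prove \ref{F:Dual:2}: a Gorenstein local ring $R$ satisfies $\operatorname{injdim}_R R=\dim R$, so over $R=k[[t,u]]/(u^2)$ one has $\operatorname{Ext}^i_R(M,R)=0$ for every finitely generated $M$ and every $i\geq 2$, and sheafifying gives $\cExt{i}(\sF,\cO_X)=0$ for $i\geq 2$.

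For the equivalences \ref{F:Dual:1}, the implication (a)$\Leftrightarrow$(b) is formal: on a one-dimensional scheme the maximal $0$-dimensional subsheaf is exactly the torsion subsheaf $\cT(\sF)$, and purity of dimension one is its vanishing. Any local section killed by a non-zerodivisor is killed by $\sN$, so $\cT(\sF)\subseteq\sF^{(1)}$; as $\sF^{(1)}$ is a sheaf on the smooth curve $\Xred$, it is a vector bundle iff it is torsion-free iff $\cT(\sF)=0$, giving (b)$\Leftrightarrow$(d). The remaining equivalences reduce to the local statement over $R$: a finitely generated $R$-module is torsion-free iff it is maximal Cohen--Macaulay iff $\operatorname{Ext}^1_R(M,R)=0$ (using $\operatorname{injdim}_R R=1$) iff it is reflexive, since over a Gorenstein ring the maximal Cohen--Macaulay modules are precisely the reflexive ones. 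Globalizing via \ref{F:Dual:2} and biduality yields (b)$\Leftrightarrow$(c)$\Leftrightarrow$(e), as well as the concluding assertion that $\sF/\sF^{(1)}$ and $\sN\sF$ are vector bundles on $\Xred$.

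The two homological items follow next. For \ref{F:Dual:4bis} I apply $\sHom{-,\cO_X}$ to $0\to\sE\to\sF\to\sG\to 0$; since $\sG$ is torsion-free, $\cExt{1}(\sG,\cO_X)=0$ by \ref{F:Dual:1}, so together with \ref{F:Dual:2} the long exact sequence collapses to the short exact dual sequence. For \ref{F:Dual:6} I invoke Grothendieck--Serre duality on the projective Gorenstein curve $X$: when $\sF$ is torsion-free, $R\sHom{\sF,\omega_X}$ is concentrated in degree zero and equals $\sF^\vee\otimes\omega_X$ (as $\omega_X$ is invertible and $\cExt{\geq 1}(\sF,\omega_X)=0$ by \ref{F:Dual:1} and \ref{F:Dual:2}), so duality gives $H^i(X,\sF)^\ast\cong H^{1-i}(X,\sF^\vee\otimes\omega_X)$. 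With these in hand \ref{F:Dual:4} is numerical: $\cO_{X,\eta}$ is Gorenstein Artinian, so dualizing preserves generic length and $\Rk{\sF^\vee}=\Rk{\sF}$; comparing the Riemann--Roch expressions of Fact \ref{F:GenRkDeg}\ref{F:GenRkDeg:4} for $\sF$ and $\sF^\vee$ through \ref{F:Dual:6}, while accounting for the length of $\cT(\sF)$ (the discrepancy measured by $h^0(\cT(\sF))$), produces the degree formula.

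I would treat \ref{F:Dual:5} last, as it is the most delicate. For the torsion identification I dualize the canonical exact sequence \eqref{Eq:ExSeq1} and compute the torsion of $\sF^\vee|_{\Xred}$ by a local $\operatorname{Ext}^1_R$-calculation supported on $\cT(\sF|_{\Xred})$, using Remark \ref{Rmk:Dual} to pass between duals on $X$ and on $\Xred$, which is where the twist by $\sN$ appears. For the dual of the second canonical sequence I apply \ref{F:Dual:4bis} to $0\to\sF^{(1)}\to\sF\to\sN\sF\otimes\sN^{-1}\to 0$ and recognize the two terms via the characterizations in Fact \ref{F:FiltrCan}\ref{F:FiltrCan:2}: the subsheaf is the saturation $(\sN\sF^\vee)^{\mathrm{sat}}$ and the quotient is the reflexive hull $(\sF^\vee_{|\Xred})^{\vee\vee}$. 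The main obstacle throughout is the explicit local bookkeeping over the non-reduced ring $R=k[[t,u]]/(u^2)$: one must compute $\operatorname{Ext}$-modules and torsion for the building-block stalks $\cO_{\Xred,p}$ and ideals $\cI_{np}$ and carry the twist by the line bundle $\sN$ correctly through every duality isomorphism.
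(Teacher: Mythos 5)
The paper offers no proof of this Fact: each item is simply quoted from Dr\'ezet \cite{DR2}, \cite{DR3} and Chen--Kass \cite{CK}, with a citation attached. Your reconstruction via the Gorenstein structure of a ribbon (locally $\wh{\cO}_{X,p}\cong k[[s,\epsilon]]/(\epsilon^2)$, a hypersurface, hence of injective dimension one over itself) is the right organizing principle and is essentially what underlies the cited proofs; items \ref{F:Dual:2}, \ref{F:Dual:4bis}, \ref{F:Dual:6}, and the rank and degree formulas of \ref{F:Dual:4} are correctly argued along these lines (for \ref{F:Dual:4} you should make explicit the reduction to the torsion-free case via $\sF^{\vee}=(\sF/\cT(\sF))^{\vee}$ and $\Deg{\sF/\cT(\sF)}=\Deg{\sF}-h^0(\cT(\sF))$, which is exactly where the term $h^0(\cT(\sF))$ enters; your parenthetical remark only gestures at this).

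There is, however, one step that fails as written. In your argument for (b)$\Leftrightarrow$(d) you assert that any local section killed by a non-zerodivisor is killed by $\sN$, hence $\cT(\sF)\subseteq\sF^{(1)}$. This is false: for the finite-length module $M=\wh{\cO}_{X,p}/(s^n)$ the class of $1$ is killed by the non-zerodivisor $s^n$, yet $\epsilon\cdot 1\neq 0$, so for the corresponding torsion sheaf one has $\cT(\sF)=\sF\not\subseteq\sF^{(1)}$. The equivalence you are after survives, but the correct argument is: if $\sF$ is torsion-free then $\sF^{(1)}$ is a torsion-free coherent $\cO_{\Xred}$-module on a smooth curve, hence locally free; conversely, if $\cT(\sF)\neq 0$ then $\cT(\sF)\cap\sF^{(1)}=\cT(\sF^{(1)})\neq 0$, because any nonzero module over $k[[s,\epsilon]]/(\epsilon^2)$ has a nonzero $\epsilon$-annihilated submodule (either $\epsilon M=0$, or $0\neq\epsilon M$ is annihilated by $\epsilon$), so $\sF^{(1)}$ cannot be a vector bundle. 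Separately, item \ref{F:Dual:5} is a plan rather than a proof: identifying the subobject and quotient of the dualized second canonical sequence with $(\sN\sF^{\vee})^{\rm sat}$ and $(\sF^{\vee}_{|\Xred})^{\vee\vee}$, and the local $\operatorname{Ext}^1$ computation behind \ref{F:Dual:5:1} together with the twist by $\sN$, constitute the actual content of \cite[Prop. 4.3.1]{DR3}, and your sketch does not supply them.
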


\subsection{Quasi-locally free sheaves and sheaves of rigid type}\label{sub:qlf-rig}

In this subsection, we introduce two special classes of torsion-free sheaves, namely quasi-locally free sheaves and sheaves of rigid type. 

We begin by a structure result for the stalks of a torsion-free sheaf on $X$.

\begin{fact}\label{F:structure}(see \cite[Cor. 6.5.5]{DR1})
Let $\sF$ be a torsion free sheaf of type $(a,b)=(a(\sF), b(\sF))$. Then  for any closed point $p\in X$ there exists a unique sequence of natural numbers $n_\bullet(\sF,p):=\{n_1=n_1(\sF,p)\geq \ldots \geq n_b=n_b(\sF,p)\}$ such that 
$$
\sF_p= \cO_{\Xred,p}^{\oplus a}\oplus  \bigoplus_{i=1}^b \cI_{n_ip},
$$
where $\cI_{n_ip}$ is the ideal sheaf of the $0$-dimensional subscheme $n_ip\subset \Xred$ in $X$. 
\end{fact}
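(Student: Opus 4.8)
The plan is to reduce the statement to the elementary divisor theorem over a discrete valuation ring. First I would pass to the local ring $R:=\cO_{X,p}$ and its reduction $\ov R:=\cO_{\Xred,p}$, which is a DVR with uniformizer $x$, and write $M:=\sF_p$, a finitely generated torsion-free $R$-module. Since $\ov R$ is a regular (hence formally smooth) local $k$-algebra and $0\to\sN_p\to R\to\ov R\to 0$ is a square-zero extension with $\sN_p=tR$ free of rank one over $\ov R$, formal smoothness provides a ring section $\ov R\hookrightarrow R$, so that $R\cong\ov R[t]/(t^2)$ with $t$ a local generator of $\sN$. Thus an $R$-module is precisely the datum of an $\ov R$-module equipped with an $\ov R$-linear endomorphism $t$ satisfying $t^2=0$, and an $\ov R$-linear, $t$-equivariant map is automatically $R$-linear.

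Next I would use the canonical filtrations to set up the linear algebra. Because $\sF$ is torsion-free, Fact~\ref{F:Dual}\ref{F:Dual:1} guarantees that $M^{(1)}=\ker(t\colon M\to M)$ and $M/M^{(1)}$ (hence also $tM$) are free $\ov R$-modules, of ranks $r_0=a+b$ and $r_1=b$ respectively; being an extension of free modules over a DVR, $M$ is itself free over $\ov R$. Since $t^2=0$ we have $tM\subseteq M^{(1)}$, so $t$ factors as $M\twoheadrightarrow M/M^{(1)}\overset{\sim}{\longrightarrow}tM\overset{\iota}{\hookrightarrow}M^{(1)}$. I would then apply the elementary divisor theorem to the inclusion $\iota$ of free $\ov R$-modules of ranks $b$ and $a+b$: there exist bases $(e_1,\dots,e_{a+b})$ of $M^{(1)}$ and $(g_1,\dots,g_b)$ of $tM$, together with integers $n_1\geq\cdots\geq n_b\geq 0$, such that $\iota(g_i)=x^{n_i}e_i$.

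To conclude, I would lift the isomorphism $M/M^{(1)}\cong tM$ to vectors $f_1,\dots,f_b\in M$ with $tf_i=x^{n_i}e_i$; then $(e_1,\dots,e_{a+b},f_1,\dots,f_b)$ is an $\ov R$-basis of $M$ on which $t$ acts by $te_j=0$ and $tf_i=x^{n_i}e_i$. The $R$-submodule generated by $e_i,f_i$ is isomorphic to $\cI_{n_ip}$ (locally the ideal $(x^{n_i},y)$) via $f_i\mapsto x^{n_i}$ and $e_i\mapsto t$, a map that is $\ov R$-linear and $t$-equivariant, hence $R$-linear; the remaining vectors $e_{b+1},\dots,e_{a+b}$ each span a copy of $\cO_{\Xred,p}$ on which $t$ acts as $0$. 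Summing yields $M\cong\cO_{\Xred,p}^{\oplus a}\oplus\bigoplus_{i=1}^b\cI_{n_ip}$. Uniqueness of $n_\bullet(\sF,p)$ then follows from uniqueness of the elementary divisors of $\iota$, equivalently by reading off the torsion of $\operatorname{coker}(\iota)=M^{(1)}/tM$ together with the fixed invariant $b=r_1(\sF)$.

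The main obstacle, and the only place where genuine content enters beyond bookkeeping, is the reduction step: one must secure the local splitting $R\cong\ov R[t]/(t^2)$, which is what upgrades $\ov R$-linear isomorphisms to $R$-linear ones and thereby allows the whole classification to be phrased in terms of the single nilpotent operator $t$, and one must invoke Fact~\ref{F:Dual}\ref{F:Dual:1} to ensure $M^{(1)}$ and $M/M^{(1)}$ are free, without which the elementary divisor theorem would not apply. Once these are in place, the decomposition and its uniqueness are a routine application of the structure theory of modules over a DVR; the only subtlety demanding care is checking that the basis produced by the elementary divisor theorem truly assembles into the prescribed ideal summands $\cI_{n_ip}$, rather than merely yielding an abstract isomorphism type.
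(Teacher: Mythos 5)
Your proof is correct. Note, however, that the paper does not prove this Fact itself: it quotes Dr\'ezet's structure theorem \cite[Cor. 6.5.5]{DR1}, which is established there for primitive multiple curves of arbitrary multiplicity, and merely adds that $\cI_{n_ip}$ has type $(1,0)$. Your argument is therefore a genuinely different, self-contained route for the ribbon case: the local splitting $\cO_{X,p}\cong\cO_{\Xred,p}[t]/(t^2)$ (correctly obtained from formal smoothness of $\cO_{\Xred,p}$ over $k$ applied to the square-zero extension with kernel $\sN_p$) turns the stalk into a free module over the DVR $\cO_{\Xred,p}$ equipped with a square-zero operator whose kernel $M^{(1)}$ and image $tM$ are free by Fact \ref{F:Dual}\ref{F:Dual:1}, and the classification then collapses to the elementary divisor theorem for the inclusion $tM\hookrightarrow M^{(1)}$. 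This buys a short, transparent proof together with an intrinsic characterization of the $n_i$ as the invariant factors of the torsion of $(\sF^{(1)}/\sN\sF)_p$, padded by zeros up to length $b=r_1(\sF)$, which is exactly what makes the uniqueness immediate; what it does not give is the higher-multiplicity statement contained in Dr\'ezet's corollary, which the authors will need elsewhere. Your identification of each summand $\langle e_i,f_i\rangle$ with the ideal $(t,x^{n_i})=(\cI_{n_ip})_p$ --- both free of rank two over $\cO_{\Xred,p}$ with $t$ sending the second basis vector to $x^{n_i}$ times the first and killing the first --- is the right way to pass from the abstract normal form to the stated decomposition, and it is carried out correctly.
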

This follows from \cite[Cor. 6.5.5]{DR1} together with the fact that $\cI_{n_ip}$ has type $(1,0)$.

\begin{defi}\label{Def:qll} (see \cite[\S 5.1]{DR1}.)
Let $\sF$ be a sheaf on $X$ of type $(a,b)=(a(\sF), b(\sF))$.  We say that:
\begin{enumerate}
\item $\sF$ is \emph{quasi-locally free in a closed point} $p\in X$ if 
$$\sF_p\cong \cO_{\Xred,p}^{\oplus a}\oplus \cO_{X,p}^{\oplus b},$$
i.e. if $n_i(\sF,p)=0$ for any $1\leq i \leq b$.  
\item $\sF$ is \emph{quasi-locally free} if it is quasi-locally free in any closed point of $X$. 
\end{enumerate}
\end{defi}


The following fact contains the main properties of quasi-locally free sheaves.

\begin{fact}\label{Fact:qll} Let $\sF$ be a sheaf on $X$.
\begin{enumerate}

\item\label{Fact:qll:1}(see \cite[Thm. 3.9 and Cor. 3.10]{DR2})
Let $p$ be a closed point of $X$. The following assertions are equivalent:
\begin{enumerate}
\item $\sF$ is quasi-locally  free (resp. quasi-locally  free in $p$);

\item $\sN\sF$ and $\sF|_{\Xred}$ are locally free on $\Xred$ (resp. are locally free in $p$);

\item $\sF^{(1)}/\sN\sF$ and $\sF|_{\Xred}$ are locally free on $\Xred$ (resp. are locally free in $p$);
\item $\sF$ and $\sF|_{\Xred}$ are torsion-free (resp. are torsion-free in $p$).
\end{enumerate}

\item\label{Fact:qll:2}(see \cite[\S 6.5.2]{DR1}) If $\sF$ is quasi-locally free, then the dual of the canonical sequence of $\sF$ is the canonical sequence of $\sF^{\vee}$.
In particular, we have that 
$$
(\sN\sF)^{\vee}=\sN\sF^{\vee} \otimes\sN^{-1}, \quad (\sF^{(1)})^{\vee}=(\sF^{\vee})_{|\Xred}, \quad (\sF_{|\Xred})^{\vee}=(\sF^\vee)^{(1)}. 
$$

\item\label{Fact:qll:3}(see \cite[Thm. 5.1.6]{DR1}) $\sF$ is generically quasi-locally free, i.e. there exists an open $\emptyset\neq U\subseteq X$ such that $\sF$ is quasi-locally free in each point of $U$.

\item\label{Fact:qll:4}(see \cite[\S 5.1]{DR2}) If $\sF$ is quasi-locally free, then there exists a vector bundle $\bE$ on $X$ and a surjective morphism 
$$
\phi:\bE\twoheadrightarrow \sF
$$ 
such that $\phi_{|\Xred}:\bE_{|\Xred}\to \sF_{|\Xred}$ is an isomorphism. In particular, $\bE$ has generalized rank and degree equal to 
$$
\begin{sis}
& \Rk{\bE}=2r_0(\sF),\\
& \Deg{\bE}=2d_0(\sF)-\delta r_0(\sF),
\end{sis}
$$ 
and $\ker{\phi}$ is a vector bundle on $\Xred$ of rank and degree equal to 
$$
\begin{sis}
& \rk{\ker{\phi}}=r_0(\sF)-r_1(\sF),\\
& \deg(\ker{\phi})=d_0(\sF)-d_1(\sF)-\delta r_0(\sF).
\end{sis}
$$ 

\end{enumerate}
\end{fact}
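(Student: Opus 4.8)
The plan is to reduce all four assertions to statements on the smooth curve $\Xred$ through the two canonical filtrations (Definition \ref{D:CanFiltr}), and to obtain the numerical content of item (4)---the part with genuine computational substance---directly from additivity of the generalized rank and degree (Fact \ref{F:GenRkDeg}\ref{F:GenRkDeg:2}), once the qualitative structure is in place. Items (1)--(3) are the structural results recalled from Dr\'{e}zet, and I indicate how each follows from the material already collected.

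For item (1) I would use the structure theorem (Fact \ref{F:structure}): a torsion-free $\sF$ has stalks $\sF_p\cong\cO_{\Xred,p}^{\oplus a}\oplus\bigoplus_{i=1}^{b}\cI_{n_ip}$, and it is quasi-locally free at $p$ precisely when all the $n_i$ vanish. Since the restriction of $\cI_{np}$ to $\Xred$ acquires torsion exactly when $n>0$, the vanishing of the $n_i$ is equivalent to torsion-freeness of $\sF|_{\Xred}$; as quasi-locally free stalks are themselves torsion-free, this yields the equivalence (a)$\Leftrightarrow$(d). Feeding this into the canonical isomorphism $\sN\sF=(\sF/\sF^{(1)})\otimes\sN$ and the four-term sequence \eqref{Eq:ExSeq1} (Fact \ref{F:FiltrCan}\ref{F:FiltrCan:1}), and using that local freeness on the smooth curve $\Xred$ is detected by torsion-freeness, gives the equivalences with (b) and (c). Item (3) then drops out: the type is generically constant and, by the semicontinuity of Fact \ref{F:FiltrCan}\ref{F:FiltrCan:5}, the torsion of $\sF|_{\Xred}$ is supported on a proper closed subset, off which $\sF$ is quasi-locally free by the criterion just obtained. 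For item (2), a quasi-locally free sheaf has $\sN\sF$, $\sF^{(1)}/\sN\sF$ and $\sF|_{\Xred}$ all locally free on $\Xred$, so its canonical sequence dualizes term by term without higher $\cExt{1}$-obstructions; combining the duality of Fact \ref{F:Dual}\ref{F:Dual:1} with the identity $\cF^{\vee}\cong\cF^{*}\otimes\sN$ for bundles on $\Xred$ (Remark \ref{Rmk:Dual}) then produces the three displayed identifications.

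The heart of the matter is item (4). Granting a locally free $\bE$ on $X$ together with a surjection $\phi\colon\bE\twoheadrightarrow\sF$ restricting to an isomorphism on $\Xred$, I extract the four invariants as follows. Since $\phi|_{\Xred}$ is an isomorphism, $r_0(\bE)=r_0(\sF)$ and $d_0(\bE)=d_0(\sF)$; and since $\bE$ is locally free, $\sN\bE\cong(\bE|_{\Xred})\otimes\sN$, whence $r_1(\bE)=r_0(\sF)$ and $d_1(\bE)=d_0(\sF)+r_0(\sF)\deg\sN=d_0(\sF)-\delta r_0(\sF)$. Summing, $\Rk{\bE}=2r_0(\sF)$ and $\Deg{\bE}=2d_0(\sF)-\delta r_0(\sF)$. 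For the kernel, restricting $0\to\ker{\phi}\to\bE\to\sF\to 0$ to $\Xred$ and using that $\bE|_{\Xred}\to\sF|_{\Xred}$ is injective forces $\ker{\phi}\subseteq\sN\bE$; as $\sN\bE$ is a vector bundle on the smooth curve $\Xred$, its subsheaf $\ker{\phi}$ is locally free on $\Xred$. Finally, additivity (Fact \ref{F:GenRkDeg}\ref{F:GenRkDeg:2}) together with $\Rk{\cF}=\rk{\cF}$ and $\Deg{\cF}=\deg(\cF)$ for a sheaf $\cF$ on $\Xred$ (Remark \ref{R:GenRkDeg}\ref{R:GenRkDeg:2}) gives $\rk{\ker{\phi}}=2r_0(\sF)-(r_0(\sF)+r_1(\sF))=r_0(\sF)-r_1(\sF)$ and $\deg(\ker{\phi})=\big(2d_0(\sF)-\delta r_0(\sF)\big)-\big(d_0(\sF)+d_1(\sF)\big)=d_0(\sF)-d_1(\sF)-\delta r_0(\sF)$.

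The main obstacle is the existence of the pair $(\bE,\phi)$ itself. Locally the surjection $\cO_{X,p}^{\oplus(a+b)}\twoheadrightarrow\cO_{\Xred,p}^{\oplus a}\oplus\cO_{X,p}^{\oplus b}$ is transparent, and once a locally free $\bE$ on $X$ with $\bE|_{\Xred}\cong\sF|_{\Xred}$ and a lift $\bE\to\sF$ of the identity are produced, surjectivity comes for free from Fact \ref{F:FiltrCan}\ref{F:FiltrCan:4:1}, the restriction to $\Xred$ being an isomorphism. Thus the genuine content is the global lifting of the bundle $\sF|_{\Xred}$ from the reduced curve to the non-reduced scheme $X$, together with the lift of the map to $\sF$; this is exactly what is established in \cite[\S 5.1]{DR2}, which I would either invoke directly or reprove by a deformation-theoretic argument, the relevant obstructions lying in $H^2$-groups on a curve and hence vanishing.
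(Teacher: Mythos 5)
The paper offers no proof of this statement: it is a \emph{Fact} recalled verbatim from Dr\'ezet (\cite[\S 5.1, \S 6.5.2, Thm. 5.1.6]{DR1}, \cite[Thm. 3.9, Cor. 3.10, \S 5.1]{DR2}), so there is no in-paper argument to compare yours against. Judged on its own terms, your proposal is sound where it does actual work and thin where it defers to the literature. The numerical half of item (4) is correct and complete: $r_\bullet(\bE)=(r_0(\sF),r_0(\sF))$ and $d_\bullet(\bE)=(d_0(\sF),d_0(\sF)-\delta r_0(\sF))$ follow from $\bE|_{\Xred}\cong\sF|_{\Xred}$ and $\sN\bE\cong\bE|_{\Xred}\otimes\sN$, the inclusion $\ker{\phi}\subseteq\sN\bE$ correctly shows that $\ker{\phi}$ is a vector bundle on $\Xred$, and additivity gives the displayed rank and degree. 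The existence of $(\bE,\phi)$ --- the only substantive claim of item (4) --- is simply outsourced to \cite[\S 5.1]{DR2}; if you wanted to reprove it, the obstruction to kill is $H^2(\Xred,\cEnd(\sF|_{\Xred})\otimes\sN)=0$, not an $H^2$ or $\Ext{2}$ on $X$ itself (the latter do not vanish on a ribbon), and you must also arrange, via the criterion of Fact \ref{F:FiltrCan}\ref{F:FiltrCan:3}, that the extension you build is locally free and maps to $\sF$ compatibly with the given extension class.

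There are two genuine gaps. First, in item (1) the equivalence with (c) does not ``follow'' from the four-term sequence as you assert: from \eqref{Eq:ExSeq1}, local freeness of $\sF^{(1)}/\sN\sF$ and of $\sF|_{\Xred}$ only exhibits $\sN\sF\otimes\sN^{-1}$ as the cokernel of a map of vector bundles, which need not be locally free. Concretely, a sheaf $\sF$ with $\sF_p\cong\cO_{X,p}/(\epsilon s^n)\cong k[[s]]\oplus \bigl(k[[s]]/(s^n)\bigr)\epsilon$ has $\sF|_{\Xred}$ and $\sF^{(1)}/\sN\sF$ locally free of rank one near $p$ but is not quasi-locally free there; so (c)$\Rightarrow$(a) needs either an additional argument or the reading $\sF/\sF^{(1)}$ (equivalently $\sN\sF$, by Fact \ref{F:FiltrCan}\ref{F:FiltrCan:1}) in place of $\sF|_{\Xred}$ in condition (c), as in Dr\'ezet's formulation. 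Second, in item (3) you invoke Fact \ref{F:FiltrCan}\ref{F:FiltrCan:5}, which is semicontinuity of $r_0,r_1$ in flat families, not over the points of $X$; what you actually need is only that the torsion subsheaves of $\sF$ and of $\sF|_{\Xred}$ have finite support, after which your criterion (1)(d) applies on the complement. Finally, item (2) is asserted rather than proved: identifying the dual of the first canonical sequence with the \emph{canonical} sequence of $\sF^{\vee}$ (rather than merely producing an exact sequence with $(\sF|_{\Xred})^{\vee}\subseteq(\sF^{\vee})^{(1)}$) requires Fact \ref{F:Dual}\ref{F:Dual:4bis} to dualize the sequence and the injectivity/surjectivity criteria of Fact \ref{F:FiltrCan}\ref{F:FiltrCan:3} to recognize the two subsheaves, none of which appears in your sketch.
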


The failure of a torsion-free sheaf to be quasi-locally free is measured by the local and global index.

\begin{defi}\label{Def:index} (see \cite[\S 6.3.7]{DR1},  \cite[Def. 2.7]{CK})
Let $\sF$ be a torsion-free sheaf on $X$ and let $\cT:=\cT(\sF|_{\Xred})$ be the torsion subsheaf of $\sF|_{\Xred}$. 
\begin{enumerate}
\item For any closed point $p\in X$, the \emph{local index} of $\sF$ at $p$ is
$$\iota_p(\sF):= \text{length of } \cT_p \text{ as an } \cO_{\Xred,p}-\text{module}.$$ 
\item The \emph{index} of $\sF$ is 
$$\iota(\sF):=h^0(\cT)=\sum_{p\in X} \iota_p(\sF).$$ 
\end{enumerate}
\end{defi}

\begin{rmk}\label{Rmk:Def:index}
Let $\sF$ be a torsion-free sheaf. For any closed point $p\in X$, we have that 
$$
\iota_p(\sF)=\sum_{i=1}^{b(\sF)}n_i(\sF, p).
$$
This follows from the fact that $\cT((\cI_{n_ip})_{|\Xred})=\cO_{\Xred,p}/{\mathfrak m}_p^{n_i}=\cO_{n_ip}$ (see \cite[Exa. 6.3.10]{DR1}).
In particular,  $\iota(\sF)=0$ if and only if $\sF$ is quasi-locally free.
\end{rmk}

The following fact relates torsion-free sheaves with positive index to quasi-locally free ones.

\begin{fact}\label{Fact:Nonqll}(see \cite[\S 6.3, \S 6.4]{DR1})
Let $\sF$ be a torsion-free sheaf on $X$ and let $\cT$ be the torsion subsheaf of $\sF|_{\Xred}$.
There exist two quasi-locally free sheaves $\sE$ and $\sG$ on $X$ (not necessarily unique) 
and two exact sequences:
\begin{align*}
0\longrightarrow\sE\longrightarrow\sF\longrightarrow\cT\longrightarrow 0,\\
0\longrightarrow\sF\longrightarrow\sG\longrightarrow\cT\longrightarrow 0,
\end{align*}
such that 
$\sN\sE=\sN\sF$ and $\sG^{(1)}=\sF^{(1)}$.
\end{fact}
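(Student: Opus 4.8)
The plan is to pinpoint exactly where the torsion $\cT$ sits inside the canonical filtration data of $\sF$, and then to obtain $\sG$ and $\sE$ by, respectively, \emph{enlarging} and \emph{shrinking} $\sF$ along a saturation, the two constructions being interchanged by duality. First I would locate $\cT$. Since $\sF$ is torsion-free, Fact \ref{F:Dual}\ref{F:Dual:1} ensures that $\sF^{(1)}$, $\sN\sF$ and $\sF/\sF^{(1)}\cong\sN\sF\otimes\sN^{-1}$ are vector bundles on $\Xred$. Breaking the canonical sequence \eqref{Eq:ExSeq1} of Fact \ref{F:FiltrCan}\ref{F:FiltrCan:1} into
$$0\to\sF^{(1)}/\sN\sF\to\sF|_{\Xred}\to\sN\sF\otimes\sN^{-1}\to 0,$$
whose cokernel is a vector bundle, shows that $\cT=\cT(\sF|_{\Xred})$ coincides with the torsion of $\sF^{(1)}/\sN\sF$, i.e. with the failure of $\sN\sF\subseteq\sF^{(1)}$ to be saturated. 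Writing $(\sN\sF)^{\rm sat}$ for the saturation of $\sN\sF$ inside the bundle $\sF^{(1)}$, this gives the key short exact sequence $0\to\sN\sF\to(\sN\sF)^{\rm sat}\to\cT\to 0$, with $(\sN\sF)^{\rm sat}$ a subbundle of $\sF^{(1)}$ and $\sF^{(1)}/(\sN\sF)^{\rm sat}$ locally free.

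\textbf{Construction of $\sG$.} I would form the pushout $\sG:=\sF\sqcup_{\sN\sF}(\sN\sF)^{\rm sat}$ of the two inclusions $\sN\sF\hookrightarrow\sF$ and $\sN\sF\hookrightarrow(\sN\sF)^{\rm sat}$, i.e. the cokernel of $\sN\sF\to\sF\oplus(\sN\sF)^{\rm sat}$. As $\sN\sF\hookrightarrow(\sN\sF)^{\rm sat}$ is injective, so is $\sF\to\sG$, and a direct cokernel computation gives $\sG/\sF\cong(\sN\sF)^{\rm sat}/\sN\sF\cong\cT$, which is the desired second sequence. It then remains to show that $\sG$ is quasi-locally free with $\sG^{(1)}=\sF^{(1)}$ and $\sN\sG=(\sN\sF)^{\rm sat}$. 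Since both saturation and the pushout commute with localization, I can check this on stalks: by Fact \ref{F:structure} one has $\sF_p\cong\cO_{\Xred,p}^{\oplus(r_0-r_1)}\oplus\bigoplus_i\cI_{n_ip}$, and on each ideal summand the construction reduces to the natural inclusion $\cI_{n_ip}\hookrightarrow\cO_{X,p}$ with cokernel $\cO_{\Xred,p}/{\mathfrak m}_p^{n_i}$ (the free summands $\cO_{\Xred,p}$ being unaffected). Hence $\sG_p\cong\cO_{\Xred,p}^{\oplus(r_0-r_1)}\oplus\cO_{X,p}^{\oplus r_1}$ is quasi-locally free, and reading off the canonical filtrations in this normal form confirms $\sG^{(1)}=\sF^{(1)}$.

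\textbf{Construction of $\sE$ by duality.} Applying the previous step to the torsion-free sheaf $\sF^\vee$ produces a quasi-locally free $\sG'$ together with $0\to\sF^\vee\to\sG'\to\cT'\to 0$, where $\cT'=\cT(\sF^\vee|_{\Xred})$. Applying $\cHom(-,\cO_X)$ and using that $\cExt{i}(-,\cO_X)=0$ for $i\ge 2$ (Fact \ref{F:Dual}\ref{F:Dual:2}), that $\cHom(\cT',\cO_X)=0$, and that $\sF$ and $\sG'$ are reflexive (Fact \ref{F:Dual}\ref{F:Dual:1}, whence $\cExt{1}(\sG',\cO_X)=0$ and $\sF^{\vee\vee}=\sF$), I obtain $0\to(\sG')^\vee\to\sF\to\cExt{1}(\cT',\cO_X)\to 0$. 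Setting $\sE:=(\sG')^\vee$, this sheaf is quasi-locally free because the dual of a quasi-locally free sheaf is (Fact \ref{Fact:qll}\ref{Fact:qll:2}); the quotient $\cExt{1}(\cT',\cO_X)$ is a torsion sheaf of length $\iota(\sF)$ isomorphic to $\cT$ (via Fact \ref{F:Dual}\ref{F:Dual:5:1} together with biduality for $0$-dimensional sheaves); and the relations between the canonical filtrations of $\sG'$ and its dual (Fact \ref{Fact:qll}\ref{Fact:qll:2}) turn the equality $(\sG')^{(1)}=(\sF^\vee)^{(1)}$ coming from the $\sG$-step into $\sN\sE=\sN\sF$.

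\textbf{Main obstacle.} The delicate point is the verification in the $\sG$-step that the pushout carries precisely the predicted canonical filtrations and is genuinely quasi-locally free: this forces one to pass to stalks, invoke the explicit local normal form of Fact \ref{F:structure}, and check that the operations $\sN(-)$ and $(-)^{(1)}$ commute with the pushout, so that no unexpected torsion is introduced. The corresponding difficulty in the $\sE$-step is of a purely bookkeeping nature, namely tracking $\sN(-)$ through duality and pinning down the isomorphism $\cExt{1}(\cT',\cO_X)\cong\cT$; alternatively, one may bypass duality and build $\sE$ directly by the dual (pullback) modification, at the price of the same local verification.
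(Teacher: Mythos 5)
The paper does not prove this statement — it is quoted from Dr\'ezet \cite[\S 6.3, \S 6.4]{DR1} — so your argument has to stand on its own, and it does not: the central pushout step fails. The point you overlook is that $(\sN\sF)^{\rm sat}$, being the saturation of $\sN\sF$ \emph{inside} $\sF^{(1)}$, is already a subsheaf of $\sF^{(1)}\subseteq \sF$. Hence the pushout $P$ of $\sF \hookleftarrow \sN\sF \hookrightarrow (\sN\sF)^{\rm sat}$, i.e. the cokernel of the antidiagonal map $\sN\sF\to \sF\oplus(\sN\sF)^{\rm sat}$, carries the surjection $P\to\sF$, $(x,y)\mapsto x+y$, whose kernel is $(\sN\sF)^{\rm sat}/\sN\sF\cong\cT$. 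Thus $P$ sits in an exact sequence $0\to\cT\to P\to\sF\to 0$ and contains the torsion sheaf $\cT$ as a subsheaf; whenever $\cT\neq 0$ it is not torsion-free, hence certainly not quasi-locally free. The stalkwise check you propose would in fact expose this rather than confirm it: with $\wh\cO_{X,p}=k[[s,\epsilon]]/(\epsilon^2)$ and $\sF_p=\cI_{np}=(\epsilon,s^n)$ one has $\sN\cI_{np}=(s^n\epsilon)$ and $(\sN\cI_{np})^{\rm sat}=(\epsilon)\subseteq\cI_{np}$, and the class of $(s^{n-1}\epsilon,\,-s^{n-1}\epsilon)$ in the pushout is a nonzero element annihilated by $s$; in particular the pushout is not $\cO_{X,p}$. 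Equivalently, the square with $(s^n\epsilon)\hookrightarrow(\epsilon)$ on top and $\cI_{np}\hookrightarrow\cO_{X,p}$ on the bottom is \emph{not} cocartesian, because $\cI_{np}+(\epsilon)=\cI_{np}\neq\cO_{X,p}$. So the desired $\sG$ exists but is not the pushout, and no purely canonical amalgamation of $\sF$ with $(\sN\sF)^{\rm sat}$ can produce it.

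The correct route (and, in substance, Dr\'ezet's) is to build $\sG$ as a genuine extension: one seeks a class $\sigma\in\Ext{1}(\cT,\sF)$ whose germ at each $p\in\supp(\cT)$ is the class of $\bigoplus_i\,[0\to\cI_{n_ip}\to\cO_{X,p}\to\cO_{n_ip}\to 0]$ in the stalk of $\cExt{1}(\cT,\sF)$; such a $\sigma$ exists because $\Ext{1}(\cT,\sF)\to H^0(X,\cExt{1}(\cT,\sF))$ is surjective ($X$ is a curve, so $H^2(X,\cHom(\cT,\sF))=0$ in the local-to-global sequence), and for the resulting $\sG$ the local normal form and the equality $\sG^{(1)}=\sF^{(1)}$ do hold; $\sE$ is obtained dually, or directly as the kernel of a surjection $\sF\to\cT$ lifting the local ones. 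Your first paragraph — identifying $\cT$ with $\cT(\sF^{(1)}/\sN\sF)$, i.e. with the failure of $\sN\sF\subseteq\sF^{(1)}$ to be saturated — is correct and genuinely useful, and the reduction of the $\sE$-statement to the $\sG$-statement for $\sF^\vee$ by dualizing is sound modulo routine bookkeeping (including the harmless identification $\cExt{1}(\cT',\cO_X)\cong\cT\otimes\sN^{-1}\cong\cT$); but as written both constructions rest on the broken pushout.
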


Now we introduce sheaves of rigid type.

\begin{defi}\label{D:rigid}
A torsion-free sheaf $\sF$ on $X$ is said to be \emph{of rigid type} if:
\begin{enumerate}
\item either $\Rk{\sF}$ is even and $(a(\sF),b(\sF))=\left(0,\frac{\Rk{\sF}}{2}\right)$ (or equivalently $r_0(\sF)=r_1(\sF)=\frac{\Rk{\sF}}{2}$);
\item or $\Rk{\sF}$ is odd and $(a(\sF),b(\sF))=\left(1,\frac{\Rk{\sF}-1}{2}\right)$ (or equivalently $r_0(\sF)=\frac{\Rk{\sF}+1}{2}>\frac{\Rk{\sF}-1}{2}=r_1(\sF)$).
\end{enumerate}
Torsion-free sheaves of rigid type and even generalized rank are called \emph{generalized vector bundles}.  The rank of a generalized vector bundle $\sF$ is the number $r_0(\sF)=r_1(\sF)$. 
\end{defi}


\begin{rmk}\label{R:rigid}
\noindent 
\begin{enumerate}
\item \label{R:rigid1} The condition of being of rigid type is open in families by Fact \ref{F:FiltrCan}\ref{F:FiltrCan:5}.
\item \label{R:rigid3} $\sF$ is a generalized vector bundle of index zero (or equivalently quasi-locally free) if and only if  $\sF$ is a vector bundle (see Fact \ref{Fact:qll}\ref{Fact:qll:4}).
\end{enumerate}
\end{rmk}

\section{The stack of torsion-free sheaves}\label{Sec:stack}

The aim of this section is to study the stack of torsion-free sheaves on a ribbon $X$, which we now introduce. 

\begin{defi}\label{D:stack}
Let $X$ be a ribbon.  
\begin{enumerate}
\item We denote by $\cM_X$ the \emph{stack of torsion-free sheaves} on $X$, i.e. the (algebraic) stack whose fiber over a $k$-scheme $S$ is the groupoid of 
coherent sheaves $\mathfrak{F}$ on $X\times_{k} S$ that are flat over $S$ and whose restriction to the geometric fibers of $X\times_{k} S\to S$  are torsion-free sheaves on $X$.  

\item Since the generalized degree and degree are invariant under deformations (see Fact \ref{F:GenRkDeg}\ref{F:GenRkDeg:3}), we have disjoint decompositions into open and closed substacks
\begin{equation}\label{E:decMR}
\cM_X=\coprod_{R\in \bN_{>0}} \cM_X(R)=\coprod_{(R,D)\in \bN_{>0}\times \bZ} \cM_X(R,D),
\end{equation}
where $\cM_X(R)$ (resp. $\cM_X(R, D)$) the stack parametrizing torsion-free sheaves on $X$ of generalized rank $R$ (resp. and generalized degree $D$). 
\end{enumerate}
\end{defi}

We now want to define a stratification of $\cM_X(R,D)$ according to the complete type of the sheaves. For that purpose, we introduce the following 

\begin{defi}\label{D:seq}
Fix a pair $(R, D)\in \bN_{>0} \times \bZ$.
\begin{enumerate}
\item A \emph{sequence of type} $(R,D)$ is a sequence  $(r_\bullet;d_\bullet)=(r_0,r_1;d_0,d_1)\in \bN^2\times \bZ^2$ such that $r_0+r_1=R$ and $d_0+d_1=D$.
The collection of all sequences of type $(R,D)$ is denoted by $\cS(R,D)$.
\item A sequence $(r_\bullet;d_\bullet)\in \cS(R,D)$ is called \emph{admissible} if it satisfies the following conditions
$$
\begin{sis} 
&r_0\geq r_1, \\
&r_1=0\Rightarrow d_1=0,\\
& r_0=r_1 \Rightarrow d_0\geq d_1+r_1\delta.
\end{sis}
$$
The collection of all admissible sequences of type $(R,D)$ is denoted by $\cS_{adm}(R,D)$. 
\item Given $(r_\bullet;d_\bullet), (\wt r_\bullet;\wt d_\bullet)\in \cS(R,D)$, we say that 
$$(r_\bullet;d_\bullet)\geq (\wt r_\bullet;\wt d_\bullet) 
\stackrel{def}{\Longleftrightarrow} \text{either } r_1> \wt r_1 \: \text{ or } r_1=\wt r_1 \: \text{ and } \: d_1\geq \wt d_1.$$
\item A sequence $(r_\bullet;d_\bullet)\in \cS(R,D)$ is called \emph{of rigid type} if either $r_0=r_1$ (which forces $R$ even) or $r_0=r_1+1$ (which forces $R$ odd). 
The collection of all rigid  sequences of type $(R,D)$ is denoted by $\cS_{rig}(R,D)$. 
\end{enumerate}
\end{defi}

We can now introduce the desired stratification of $\cM_X(R,D)$ by complete type. 
 
\begin{defi}\label{D:strata}
Given a sequence $(r_\bullet;d_\bullet)\in \cS(R,D)$, we define the following locus 
$$
\cM_X(r_{\bullet};d_{\bullet}):=\{\cF\in \cM_X(R,D)\: : \: (r_\bullet(\cF);d_\bullet(\cF))=(r_{\bullet};d_{\bullet})\}\subseteq \cM_X(R,D).
$$ 
\end{defi}

\begin{rmk}\label{R:strata}
\noindent
\begin{enumerate}
\item \label{R:strata1} The strata $\cM_X((r_0,0);d_{\bullet}))$ parametrize torsion-free sheaves  (and hence locally free)  on $\Xred$ of rank $r_0$. Hence it is non-empty if and only if $d_{\bullet}=(d_0,d_1)=(d_0,0)$ in which case $d_0$ is equal to the degree of  the sheaves on $\Xred$. 
\item \label{R:strata2} The strata $\cM_X((r,r);d_{\bullet}))$ parametrize generalized vector bundles on $\Xred$ of  rank $r$. 
\item  \label{R:strata3} The stratum $\cM_X(r_{\bullet};d_{\bullet})$ parametrizes rigid torsion-free sheaves on $X$ if and only if $(r_{\bullet};d_{\bullet})$ is of rigid type. 
\end{enumerate} 
\end{rmk}

In the next subsections, we investigate the properties of the stratification of $\cM_X(R,D)$ by complete types.

\subsection{Non-emptiness}\label{sub:nonempty}

In this subsection, we determine which strata are non-empty.

\begin{thm}\label{T:non-empty}
Let $X$ be a ribbon and fix any $(R, D)\in \bN_{>0} \times \bZ$. For any  $(r_\bullet;d_\bullet)\in \cS(R,D)$, we have that 
\begin{equation}\label{E:adm-seq}
\cM_X(r_{\bullet};d_{\bullet})\neq \emptyset \Longleftrightarrow (r_\bullet;d_\bullet)\in \cS_{adm}(R,D).
\end{equation}
\end{thm}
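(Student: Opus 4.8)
The plan is to prove both implications of the equivalence \eqref{E:adm-seq}, handling the three admissibility conditions through the two canonical filtrations of Fact \ref{F:FiltrCan}. For the forward direction I would fix a torsion-free sheaf $\sF$ of complete type $(r_\bullet;d_\bullet)$ and verify each condition in turn; for the converse I would construct, for every admissible sequence, an explicit torsion-free sheaf realizing it, uniformly via extensions governed by the sequence \eqref{E:sucExt}.

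For necessity, the inequality $r_0\ge r_1$ is automatic: writing the generic stalk as $\sF_\eta\cong\cO_{\Xred,\eta}^{\oplus a}\oplus\cO_{X,\eta}^{\oplus b}$ (Remark \ref{R:type}) gives $r_0-r_1=a\ge 0$. If $r_1=0$, then $\sN\sF$ is a vector bundle on $\Xred$ (Fact \ref{F:Dual}\ref{F:Dual:1}) of rank $0$, hence zero, so $d_1=\deg(\sN\sF)=0$. For the third condition I would use the canonical injection $(\sF/\sF^{(1)})\otimes\sN\hookrightarrow\sF^{(1)}$ of Fact \ref{F:FiltrCan}\ref{F:FiltrCan:2}, whose image is $\sN\sF$ by Fact \ref{F:FiltrCan}\ref{F:FiltrCan:1}. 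When $r_0=r_1$ this is an injection of vector bundles of equal rank $r_1$, so $\deg(\sN\sF)\le\deg(\sF^{(1)})$; rewriting both degrees through Remark \ref{R:inv-F1} converts this into exactly $d_0\ge d_1+r_1\delta$.

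For sufficiency, given an admissible $(r_\bullet;d_\bullet)$ I would build $\sF$ from its intended second canonical filtration. Choose vector bundles $\cE,\cF$ on $\Xred$ of ranks $r_1,r_0$ and degrees $d_1+r_1\delta$ and $d_0-r_1\delta$ (to play the roles of $\sF/\sF^{(1)}$ and $\sF^{(1)}$), together with an injective bundle map $\psi\colon\cE\otimes\sN\hookrightarrow\cF$; lifting $\psi$ through the surjection $\pi$ of \eqref{E:sucExt} yields a class $\sigma\in\Ext{1}_{\cO_X}(\cE,\cF)$ whose extension $\sF$ satisfies $\sF^{(1)}=\cF$ by Fact \ref{F:FiltrCan}\ref{F:FiltrCan:3}. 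Then $\sF$ is torsion-free because $\sF^{(1)}$ is a vector bundle (Fact \ref{F:Dual}\ref{F:Dual:1}), and its complete type is $(r_\bullet;d_\bullet)$ by Remark \ref{R:inv-F1}. The only point to arrange is the existence of $\psi$: when $r_0>r_1$ I would take $\cF=(\cE\otimes\sN)\oplus\cG$ with $\cG$ any bundle of rank $r_0-r_1$ and the forced degree, so that $\psi$ is the inclusion of the first summand; when $r_0=r_1$ an injection of equal-rank bundles exists precisely when $\deg(\cE\otimes\sN)=d_1\le d_0-r_1\delta=\deg\cF$, which is the admissibility inequality, and one can take $\cF=(\cE\otimes\sN)(E)$ for an effective divisor $E$ of degree $d_0-d_1-r_1\delta$. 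The degenerate case $r_1=0$ simply gives $\sF=\cF$, a vector bundle on $\Xred$.

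The main obstacle is the boundary stratum $r_0=r_1$: both directions rest on the same mechanism, namely that the comparison map $\sN\sF\hookrightarrow\sF^{(1)}$ (respectively $\cE\otimes\sN\hookrightarrow\cF$) is a monomorphism between bundles of equal rank, so its mere existence forces $d_0\ge d_1+r_1\delta$. This is exactly the content of the third admissibility condition, and isolating this equal-rank injection as the governing phenomenon is the crux of the argument; the regimes $r_0>r_1$ and $r_1=0$ are comparatively routine.
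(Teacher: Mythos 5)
Your proof is correct, and while the necessity direction matches the paper's (your equal-rank injection $\sN\sF\hookrightarrow\sF^{(1)}$ is just the degree-dual of the paper's Lemma \ref{L:genVB}, which instead uses the surjection $\sF|_{\Xred}\twoheadrightarrow \sF/\sF^{(1)}$ with torsion kernel of length $\iota(\sF)\ge 0$), your sufficiency argument takes a genuinely lighter route. The paper proves a stronger realization statement (Proposition \ref{P:ex-qufree}): given any four-term exact sequence \eqref{Eq:ExSeq*} on $\Xred$ with $\cF$ locally free, there is a sheaf on $X$ whose entire canonical exact sequence \eqref{Eq:ExSeq1} is the prescribed one; this requires a commutative diagram of $\Ext{1}$-groups and a simultaneous choice of the class $\sigma_\sF$ matching both $\pi(\sigma_\sF)=f$ and the extension class of $\cG$. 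You instead prescribe only the second canonical filtration $0\to\cF\to\sF\to\cE\to 0$ by lifting an injective $\psi:\cE\otimes\sN\hookrightarrow\cF$ through the surjection $\pi$ of \eqref{E:sucExt} and invoking the injectivity criterion of Fact \ref{F:FiltrCan}\ref{F:FiltrCan:3}; since the complete type is determined by the second canonical filtration alone (Remark \ref{R:inv-F1}), and torsion-freeness follows from $\sF^{(1)}=\cF$ being a bundle (Fact \ref{F:Dual}\ref{F:Dual:1}), this suffices. What you lose relative to the paper is control over $\sF|_{\Xred}$ and hence over the torsion subsheaf $\cT(\sF|_{\Xred})$ and the local invariants $n_\bullet(\sF,p)$: the paper needs that finer control later (Corollary \ref{C:gengvb} and the stability constructions in Theorem \ref{T:ss-strata} all reuse Proposition \ref{P:ex-qufree}), which is why it proves the stronger statement up front. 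For the non-emptiness theorem in isolation, your argument is complete and more economical.
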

\begin{proof}
First of all, let us prove that if $\cM_X(r_{\bullet};d_{\bullet})\neq \emptyset $ then $(r_\bullet, d_\bullet)$ is admissible. Indeed, the fact that $r_0\geq r_1$ follows from Remark \ref{R:type}, while the implication $r_1=0\Rightarrow d_1=0$ follows from Remark \ref{R:strata}\ref{R:strata1}. 
Finally, if $\sF$ is a torsion-free sheaf of complete rank  $(r_0,r_0)$, then $\sF$ is a generalized vector bundle of rank $r_0$, and the inequality $d_0(\sF)\geq d_1(\sF)+r\delta$ follows from Lemma \ref{L:genVB}.

Assume now that $(r_\bullet, d_\bullet)$ is admissible and let us show that $\cM_X(r_{\bullet};d_{\bullet})\neq \emptyset $. If $r_1=0$, then the non-emptiness of $\cM_X(r_0,0;d_0, 0)$ follows from 
Remark \ref{R:strata}\ref{R:strata1}. If $r_0>r_1>0$ then the non-emptiness of $\cM_X(r_{\bullet};d_{\bullet})$ follows from Proposition \ref{P:ex-qufree} applied to an exact sequence \eqref{Eq:ExSeq*}
of vector bundles on $\Xred$ such that 
$$
\deg(\cF)=d_1,  \deg(\cE)=d_0-r_1\delta,  \deg(\cG)=d_0.
$$
 If $r_0=r_1>0$ then the non-emptiness of $\cM_X(r_{\bullet};d_{\bullet})$ follows from Proposition \ref{P:ex-qufree} applied to an exact sequence  \eqref{Eq:ExSeq*}
such that $\cF$ is a vector bundle of degree $d_1$, $\cE$ is vector bundle of degree $d_0-r_1\delta$, $\cG$ is a coherent sheaf with torsion subsheaf $\cT(\cG)=\Im(e)$ of degree 
$d_0-d_1-r_1\delta\geq 0$ and locally free quotient equal to $\cF\otimes \sN^{-1}$. 
\end{proof}

\begin{lemma}\label{L:genVB}
If $\sF$ is a generalized vector bundle of  rank $r$, generalized degree $D$ and index $\iota(\sF)$, then the complete degree of $\sF$ is given by 
\begin{equation}\label{E:b->d}
d_0(\sF)=\frac{D+\iota(\sF)+r\delta}{2} \quad \text{ and } \quad d_1(\sF)=\frac{D-\iota(\sF)-r\delta}{2}.
\end{equation}
In particular, we have that 
\begin{equation}\label{E:d->b}
\iota(\sF)=d_0(\sF)-d_1(\sF)-r\delta. 
\end{equation}
\end{lemma}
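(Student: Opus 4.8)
The plan is to prove the single identity \eqref{E:d->b}, namely $\iota(\sF) = d_0(\sF) - d_1(\sF) - r\delta$; the two formulas in \eqref{E:b->d} then follow immediately by solving the linear system consisting of this identity together with $d_0(\sF) + d_1(\sF) = D$. So the whole content is to express the index as a difference of two degrees that are already pinned down by the complete type.

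First I would collect the degrees of the two vector bundles canonically attached to $\sF$. Since $\sF$ is torsion-free, both $\sN\sF$ and $\sF^{(1)}$ are vector bundles on $\Xred$ by Fact \ref{F:Dual}\ref{F:Dual:1}, of rank $r_1 = r$ and $r_0 = r$ respectively; moreover $\deg(\sN\sF) = d_1(\sF)$ by definition of the complete degree, while $\deg(\sF^{(1)}) = d_0(\sF) - r\delta$ by Remark \ref{R:inv-F1} specialized to $r_1 = r$. The key geometric step is then to identify the torsion subsheaf of $\sF|_{\Xred}$: splitting the canonical exact sequence \eqref{Eq:ExSeq1} of Fact \ref{F:FiltrCan}\ref{F:FiltrCan:1} at its middle term yields
\[
0 \to \sF^{(1)}/\sN\sF \to \sF|_{\Xred} \to \sN\sF\otimes\sN^{-1} \to 0,
\]
whose right-hand term is a vector bundle and hence torsion-free, so $\cT(\sF|_{\Xred}) \subseteq \sF^{(1)}/\sN\sF$. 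Crucially, for a generalized vector bundle one has $r_0 = r_1$, so $\sF^{(1)}/\sN\sF$ is a quotient of two vector bundles of the same rank $r$ and is therefore itself a torsion sheaf; combined with the previous inclusion this forces the equality $\cT(\sF|_{\Xred}) = \sF^{(1)}/\sN\sF$.

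Finally I would read off the lengths. By Definition \ref{Def:index}, $\iota(\sF) = h^0(\cT(\sF|_{\Xred})) = \deg(\sF^{(1)}/\sN\sF)$, and additivity of degree in the short exact sequence $0 \to \sN\sF \to \sF^{(1)} \to \sF^{(1)}/\sN\sF \to 0$ gives
\[
\deg(\sF^{(1)}/\sN\sF) = \deg(\sF^{(1)}) - \deg(\sN\sF) = (d_0(\sF) - r\delta) - d_1(\sF),
\]
which is exactly \eqref{E:d->b}. The only non-formal point, and the one I would expect to require care, is the identification $\cT(\sF|_{\Xred}) = \sF^{(1)}/\sN\sF$: it rests entirely on the rank-zero observation special to the case $r_0 = r_1$, and once that is in hand everything else is degree bookkeeping.
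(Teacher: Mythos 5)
Your proposal is correct and follows essentially the same route as the paper: both identify the kernel of the surjection $\sF|_{\Xred}\twoheadrightarrow \sN\sF\otimes\sN^{-1}$ with $\cT(\sF|_{\Xred})$ via the equal-rank observation (you phrase the kernel as $\sF^{(1)}/\sN\sF$ using the canonical exact sequence, the paper leaves it implicit), and then conclude by additivity of degree. The degree bookkeeping via $\deg(\sF^{(1)})-\deg(\sN\sF)$ is equivalent to the paper's computation of $d_0$.
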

\begin{proof}
We have a surjection 
$$\sF|_{\Xred}\twoheadrightarrow \sF/\sF^{(1)}\cong \sN\sF\otimes \sN^{-1}$$ 
between sheaves on $\Xred$ of the same rank $r$, and such that $\sF/\sF^{(1)}$ is a vector bundle.
This implies that we have an exact sequence 
$$0\to \cT(\sF|_{\Xred})\to \sF|_{\Xred}\to \sF/\sF^{(1)}\cong \sN\sF\otimes \sN^{-1}\to 0,$$
which implies that 
$$
d_0(\sF)=\iota(\sF)+d_1(\sF)-r\deg \sN=\iota(\sF)+d_1(\sF)+r\delta.
$$
We conclude using that $D=d_0(\sF)+d_1(\sF)$.
\end{proof}

\begin{ex}\label{Ex:genlb}
Any \emph{generalized line bundle}, i.e. a generalized vector bundle of   rank $1$, is of the form $\cI_D\otimes \cL$ for a unique $0$-dimensional subscheme $D\subset \Xred$ and a (not necessarily unique) line bundle $\cL$ on $X$ (see \cite[Prop. 8.2.1]{DR1}).
The restriction of  $\cI_D\otimes \cL$ to $\Xred$ is given by (see \cite[Ex. 6.3.10]{DR1})
$$(\cI_D\otimes \cL)_{|\Xred}=\cL_{|\Xred}(-D)\oplus \cO_D.$$ 
Hence, using Fact \ref{F:FiltrCan}\ref{F:FiltrCan:1}, we deduce that the canonical exact sequence of $\cI_D\otimes \cL$ is equal to 
$$
0\to \cL_{|\Xred}(-D)\otimes \sN\to \cL_{|\Xred}\otimes \sN \to \cL_{|\Xred}(-D)\oplus \cO_D\to  \cL_{|\Xred}(-D)\to 0
$$
where the middle homomorphism  factors through $\cO_D$. It follows that 
$$
\begin{sis}
& b(\cI_D\otimes \cL)=l(D), \\
& d_0(\cI_D\otimes \cL)=\deg \cL_{|\Xred}, \\
& d_1(\cI_D\otimes \cL)=\deg \cL_{|\Xred}-l(D)-\delta,\\
& \Deg{\cI_D\otimes \cL}=2\deg \cL_{|\Xred}-l(D)-\delta.
\end{sis}
$$
where $l(D)$ is the length of $D$. 
\end{ex}

\begin{prop}\label{P:ex-qufree}
Let $(r_0,r_1)$ be a pair of positive integers with $r_0\geq r_1$ and let 
\begin{equation}\label{Eq:ExSeq*}
0\to \cF \overset{f}{\longrightarrow} \cE\overset{e}{\longrightarrow}\cG\overset{g}{\longrightarrow} \cF\otimes\sN^{-1}\to 0
\end{equation}
 be an exact sequence of coherent sheaves on $\Xred$, with $\rk{\cF}=r_1$, $\rk{\cG}=\rk{\cE}=r_0$ and  $\cF$ locally free. Then there exists a coherent sheaf $\sF$ on $X$ such that its associated canonical exact sequence \eqref{Eq:ExSeq1} is isomorphic to \eqref{Eq:ExSeq*}. 
\end{prop}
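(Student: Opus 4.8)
Given an exact sequence of coherent sheaves on $\Xred$
$$0\to \cF \overset{f}{\longrightarrow} \cE\overset{e}{\longrightarrow}\cG\overset{g}{\longrightarrow} \cF\otimes\sN^{-1}\to 0$$
with $\rk\cF=r_1$, $\rk\cG=\rk\cE=r_0\ge r_1$ and $\cF$ locally free, produce a sheaf $\sF$ on $X$ whose canonical exact sequence (\ref{Eq:ExSeq1}) is isomorphic to it.

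The plan is to reverse-engineer $\sF$ from the data already carried by the sequence. The sequence (\ref{Eq:ExSeq1}) reads $0\to\sN\sF\to\sF^{(1)}\to\sF|_{\Xred}\to\sN\sF\otimes\sN^{-1}\to 0$, so the target sequence forces me to set $\sN\sF=\cF$ and $\sF^{(1)}=\cE$, with $\sF|_{\Xred}=\cG$; the identification $\sF/\sF^{(1)}\cong\sN\sF\otimes\sN^{-1}=\cF\otimes\sN^{-1}$ from Fact \ref{F:FiltrCan}\ref{F:FiltrCan:1} is exactly the last map $g$. So the real content is to build the middle extension $0\to\cE\to\sF\to\cF\otimes\sN^{-1}\to 0$ on $X$ realizing $\cE$ as $\sF^{(1)}$ and $\cF$ as $\sN\sF$. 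First I would split (\ref{Eq:ExSeq*}) into the short exact sequences $0\to\cF\to\cE\to\cim(e)\to 0$ and $0\to\cim(e)\to\cG\to\cF\otimes\sN^{-1}\to 0$, noting $\cim(e)=\ker(g)$.

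The heart of the argument is to choose the right extension class. By Fact \ref{F:FiltrCan}\ref{F:FiltrCan:3}, there is a surjection $\pi:\Ext{1}_{\cO_X}(\cF\otimes\sN^{-1},\cE)\twoheadrightarrow\Hom(\cF\otimes\sN^{-1}\otimes\sN,\cE)=\Hom(\cF,\cE)$. I would take the element $f\in\Hom(\cF,\cE)$ (the first map of the given sequence, viewing $\cF$ as a subsheaf of $\cE$) and lift it to a class $\sigma\in\Ext{1}_{\cO_X}(\cF\otimes\sN^{-1},\cE)$ with $\pi(\sigma)=f$. Since $f$ is injective (it is part of an exact sequence with $\cF$ locally free), the criterion in Fact \ref{F:FiltrCan}\ref{F:FiltrCan:3} guarantees that the resulting extension $0\to\cE\to\sF\to\cF\otimes\sN^{-1}\to 0$ has $\cE=\sF^{(1)}$. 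The defining sheaf $\sF$ is then the one built from this class, and I would need to check $\sN\sF=\cF$: this follows because $\sN\sF=(\sF/\sF^{(1)})\otimes\sN=(\cF\otimes\sN^{-1})\otimes\sN=\cF$, again by Fact \ref{F:FiltrCan}\ref{F:FiltrCan:1}, and the canonical map $\sF/\sF^{(1)}\otimes\sN\to\sF^{(1)}$ of Fact \ref{F:FiltrCan}\ref{F:FiltrCan:2} recovers $f$ as the inclusion $\cF\hookrightarrow\cE$.

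The main obstacle I anticipate is not the existence of the lift $\sigma$ (the surjectivity of $\pi$ hands it over), but verifying that the canonical four-term sequence of the $\sF$ so constructed is \emph{isomorphic} to the original (\ref{Eq:ExSeq*}), i.e. that all four terms and all three maps match up functorially, not merely up to the same invariants. Concretely I must confirm: (i) $\sF$ is torsion-free, which by Fact \ref{F:Dual}\ref{F:Dual:1} is equivalent to $\sF^{(1)}=\cE$ being a vector bundle on $\Xred$ — but $\cE$ need not be locally free in general, so here I expect to either impose that $\cE$ inherits local freeness from the construction or argue purity directly from the extension; and (ii) that the connecting map $\cG\to\cF\otimes\sN^{-1}$ induced on restrictions coincides with $g$. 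I would handle (ii) by a diagram chase comparing the extension-theoretic canonical sequence of $\sF$ with the pushout/pullback presentation of (\ref{Eq:ExSeq*}), using the uniqueness statements in Fact \ref{F:FiltrCan}\ref{F:FiltrCan:2} (surjectivity/injectivity of the comparison map pins down $\sN\sF$ and $\sF^{(1)}$ unambiguously). The torsion-freeness point (i) is where I would spend the most care, likely reducing to the equivalence (a)$\Leftrightarrow$(d) of Fact \ref{F:Dual}\ref{F:Dual:1}.
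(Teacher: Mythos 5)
Your first step---lifting $f\in\Hom(\cF,\cE)$ along the surjection $\pi:\Ext{1}_{\cO_X}(\cF\otimes\sN^{-1},\cE)\twoheadrightarrow\Hom(\cF,\cE)$ of Fact \ref{F:FiltrCan}\ref{F:FiltrCan:3} and using the injectivity of $f$ to conclude $\sF^{(1)}=\cE$ and $\sN\sF=\cF$---is exactly how the paper begins. The genuine gap is in what you defer to ``a diagram chase'': an \emph{arbitrary} lift $\sigma$ of $f$ does not produce the right middle term. Two lifts of $f$ differ by an element of $\Ext{1}_{\cO_{\Xred}}(\cF\otimes\sN^{-1},\cE)$, and varying the lift changes the extension class of $0\to\cK\to\sF_{|\Xred}\to\cF\otimes\sN^{-1}\to 0$, where $\cK=\coker{f}=\im{e}$; for a generic lift $\sF_{|\Xred}$ will not be isomorphic to $\cG$ at all (already for $\cF=\cO$, $\cE=\cO^{\oplus 2}$ one can land on the split extension when $[G]$ is non-split, or vice versa), let alone compatibly with $e$ and $g$. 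The uniqueness statements of Fact \ref{F:FiltrCan}\ref{F:FiltrCan:2} only pin down the subsheaves $\sN\sF$ and $\sF^{(1)}$; they say nothing about which extension of $\cF\otimes\sN^{-1}$ by $\cK$ the restriction $\sF_{|\Xred}$ realizes, so no after-the-fact verification can rescue a bad choice of $\sigma$.

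What is needed---and what the paper does---is to impose a second constraint on the lift: besides $\pi(\sigma)=f$ one demands $p(\sigma)=\iota([G])$, where $p:\Ext{1}_{\cO_X}(\cF\otimes\sN^{-1},\cE)\to\Ext{1}_{\cO_{X}}(\cF\otimes\sN^{-1},\cK)$ is induced by $\cE\twoheadrightarrow\cK$, the map $\iota$ is the inclusion of $\Ext{1}_{\cO_{\Xred}}(\cF\otimes\sN^{-1},\cK)$ into $\Ext{1}_{\cO_{X}}(\cF\otimes\sN^{-1},\cK)$, and $[G]$ is the class of $0\to\cK\to\cG\to\cF\otimes\sN^{-1}\to 0$. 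The simultaneous solvability of the two conditions is not formal: it uses $\wh{p}(f)=0$ (since $\cK=\coker{f}$) together with the surjectivity of $\wt{p}:\Ext{1}_{\cO_{\Xred}}(\cF\otimes\sN^{-1},\cE)\to\Ext{1}_{\cO_{\Xred}}(\cF\otimes\sN^{-1},\cK)$, whose cokernel sits in $\Ext{2}_{\cO_{\Xred}}(\cF\otimes\sN^{-1},\cF)$, and this vanishes because $\Xred$ is a curve and $\cF$ is locally free. This is the missing idea. (Your worry (i) about torsion-freeness, on the other hand, is moot: the proposition only asserts that $\sF$ is coherent; torsion-freeness is arranged in the applications by taking $\cE$ locally free, via Fact \ref{F:Dual}\ref{F:Dual:1}.)
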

\begin{proof}
Using Fact \ref{F:FiltrCan}\ref{F:FiltrCan:3} and the surjection $\cE\twoheadrightarrow \cK:=\ker{g}=\im{e}$, we get a commutative diagram 
 \[
\begin{tikzcd}
0 \ar[r] & \Ext{1}_{\cO_{\Xred}}(\cF\otimes \sN^{-1},\cE) \ar[d, twoheadrightarrow, "\wt{p}"] \ar[r] &\Ext{1}_{\cO_X}(\cF\otimes \sN^{-1},\cE) \ar[r, "\pi"] \ar[d, "p"] &\Hom(\cF,\cE) \ar[d, "\wh{p}"] \ar[r] & 0 \\
0 \ar[r] & \Ext{1}_{\cO_{\Xred}}(\cF\otimes \sN^{-1},\cK) \ar[r, "\iota"] &\Ext{1}_{\cO_{X}}(\cF\otimes \sN^{-1},\cK)\ar[r]  &\Hom(\cF,\cK)\ar[r] & 0
\end{tikzcd}
\] 
whose rows are exact by Fact \ref{F:FiltrCan}\ref{F:FiltrCan:3} (using that $\cF$ is locally free) and such that $\wt{p}$ is surjective because its cokernel lives inside $\Ext{2}_{\cO_{\Xred}}(\cF\otimes \sN^{-1},\cF)=H^2(\Xred,\sN)=0$ (using again that  $\cF$ is locally free). 

By the above diagram and using the fact that $\wh{p}(f)=0$ because $\cK=\coker{f}$, 
there exists a $\sF$ be a sheaf over $X$ corresponding to an element $\sigma_{\sF}\in\Ext{1}_{\cO_X}(\cG,\cF)$ such that 
\begin{enumerate}[(a)]
\item $\pi(\sigma_{\sF})=f$;
\item $p(\sigma_{\sF})=\iota([G])$, where $[G]=[0\to \cK\to \cG\to \cF\otimes \sN^{-1}\to 0]$. 
\end{enumerate}

Property (a), together with the injectivity of $f$ and  Fact \ref{F:FiltrCan}\ref{F:FiltrCan:3}, implies that we have canonical identifications 
$$ 
\sF^{(1)}=\cE \text{ and } \sF/\sF^{(1)}=\cF\otimes \sN^{-1},
$$
and that, moreover, the injective map $f:\cF\to \cE$ is identified with the injective map $\sN\sF\hookrightarrow \sF^{(1)}$. It follows that we have also a canonical 
identification
$$\cK=\coker{\sN\sF\hookrightarrow \sF^{(1)}}=\ker{\sF_{|\Xred}\twoheadrightarrow \sF/\sF^{(1)}}.$$ 

Property (b) now implies that we have a canonical identification of extensions
$$
[0\to \cK\to \cG\to \cF\otimes \sN^{-1}\to 0]= [0\to \cK\to \sF_{|\Xred}\rightarrow \sF/\sF^{(1)}=\cF\otimes \sN^{-1} \to 0].
$$
This shows that the canonical exact sequence \eqref{Eq:ExSeq1} associated to $\sF$ is isomorphic to $\eqref{Eq:ExSeq*}$, and we are done. 
\end{proof}

\begin{rmk}\label{R:exis-qlf}
It follows from the proof of Theorem \ref{T:non-empty} together with Fact \ref{Fact:qll}\ref{Fact:qll:1} that if $r_0>r_1>0$ then $\cM_X(r_0,r_1;d_\bullet)$ contains quasi-locally free sheaves. 
\end{rmk}

\subsection{Irreducibility and dimension}\label{sub:irr-dim}

In this subsection, we prove that the strata are smooth and irreducible and we determine their dimensions.

\begin{thm}\label{T:irr-dim}
Let $X$ be a ribbon and fix any $(R, D)\in \bN_{>0} \times \bZ$. 
If $(r_\bullet;d_\bullet)\in \cS_{adm}(R,D)$ then $\cM_X(r_{\bullet};d_{\bullet})$ is a locally closed substack  which, endowed with its reduced stack structure, is smooth and irreducible of dimension equal to 
$$
\dim \cM_X(r_{\bullet};d_{\bullet})=(r_0^2+r_1^2)(\ov g -1)+r_0r_1\delta=R^2(\ov g-1)+r_0r_1[\delta -2(\ov g-1)].
$$
\end{thm}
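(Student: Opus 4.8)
The plan is to prove the three assertions—local closedness, smoothness with irreducibility, and the dimension formula—by constructing a nice parametrization of the stratum $\cM_X(r_\bullet;d_\bullet)$ in terms of the canonical exact sequence \eqref{Eq:ExSeq1}, and then reducing everything to statements about sheaves on the \emph{smooth} curve $\Xred$, where such moduli questions are classical. The key structural tool is the bijection, essentially established in Proposition \ref{P:ex-qufree}, between a torsion-free sheaf $\sF$ on $X$ of complete type $(r_\bullet;d_\bullet)$ and the data of its canonical four-term exact sequence
\[
0\to \sN\sF \to \sF^{(1)} \to \sF|_{\Xred} \to \sN\sF\otimes\sN^{-1}\to 0
\]
on $\Xred$, where $\sN\sF$ and $\sF^{(1)}$ are vector bundles (by Fact \ref{F:Dual}\ref{F:Dual:1}) of ranks $r_1$ and $r_0$ respectively, together with the extension class in $\Ext{1}_{\cO_X}$ that glues this data into a genuine $\cO_X$-module. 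My first step is to make this correspondence into a morphism of stacks relating $\cM_X(r_\bullet;d_\bullet)$ to a stack built out of the pieces appearing in the canonical sequence, so that local closedness in $\cM_X(R,D)$ follows from the semicontinuity of $r_0,r_1$ (Fact \ref{F:FiltrCan}\ref{F:FiltrCan:5}).

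Next I would set up the parametrizing family explicitly. The recipe of Proposition \ref{P:ex-qufree} shows that giving $\sF$ is equivalent to giving a four-term exact sequence of sheaves on $\Xred$ of the prescribed ranks and degrees (with $\cF=\sN\sF$ locally free), which in turn decomposes into: a short exact sequence $0\to \cK\to \sF|_{\Xred}\to \sF/\sF^{(1)}\to 0$ recording the torsion, an inclusion $\sN\sF\hookrightarrow \sF^{(1)}$ of bundles on $\Xred$, and the lifting class $\sigma_\sF\in \Ext{1}_{\cO_X}(\cF\otimes\sN^{-1},\cE)$. I would organize this as a tower of affine-bundle-type or vector-bundle-stack fibrations over the product of the classical stacks $\operatorname{Bun}_{\Xred}(r_1,d_1)$ and $\operatorname{Bun}_{\Xred}(r_0,\,d_0-r_1\delta)$ parametrizing the bundles $\sN\sF$ and $\sF^{(1)}$. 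Since these stacks of vector bundles on the smooth irreducible curve $\Xred$ are themselves smooth and irreducible of dimension $r_i^2(\ov g-1)$, and each successive fibration has fibers that are open subschemes of affine spaces (the $\Hom$ and $\Ext{1}$ groups appearing in \eqref{E:sucExt}) of locally constant dimension over the locus of fixed complete type, smoothness and irreducibility of the total stratum propagate up the tower.

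For the dimension count I would add up the contributions: the two base stacks contribute $r_1^2(\ov g-1)+r_0^2(\ov g-1)$, and the fibration data (the inclusion of bundles, the extension class, and the choice of the torsion quotient) contributes the remaining $r_0r_1\delta$ after all cancellations; concretely the relevant fiber dimensions are read off from Euler characteristics via Riemann--Roch on $\Xred$ using $\deg\sN=-\delta$, which is exactly where the term $r_0r_1\delta$ enters. I expect the main obstacle to be twofold: first, ensuring that over the fixed-complete-type locus the fiber dimensions really are \emph{constant} (one must rule out jumps in $h^0$/$h^1$ by working over the open stratum where the type is fixed and invoking the semicontinuity already quoted), and second, bookkeeping the automorphisms correctly so that the stacky dimension—accounting for the $(-1)$-shift coming from infinitesimal automorphisms $\Ext{0}(\sF,\sF)$—yields precisely $(r_0^2+r_1^2)(\ov g-1)+r_0r_1\delta$ rather than its naive sheaf-count analogue. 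The cleanest route to both is probably a deformation-theoretic check: compute $\dim\Ext{1}(\sF,\sF)-\dim\Ext{0}(\sF,\sF)$ restricted to type-preserving deformations and match it against the claimed formula, which simultaneously confirms smoothness (by exhibiting the stratum as cut out by equations whose expected and actual codimensions agree) and the dimension.
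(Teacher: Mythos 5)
Your overall strategy---realize the stratum as a tower of fibrations over moduli of sheaves on $\Xred$ built out of the canonical exact sequence, and add up dimensions---is the same as the paper's. But the specific fibration you choose has a genuine problem. You propose to fiber $\cM_X(r_\bullet;d_\bullet)$ over the stack of pairs $(\sN\sF,\sF^{(1)})$, i.e.\ over $\Coh{r_1}{d_1}\times\Coh{r_0}{d_0-r_1\delta}$. The fiber over a fixed pair $(\cF,\cE)$ is (an open substack of) the stack of extensions of $\cF\otimes\sN^{-1}$ by $\cE$ as $\cO_X$-modules, whose relative dimension is $\dim\Ext{1}_{\cO_X}(\cF\otimes\sN^{-1},\cE)-\dim\Hom_{\cO_X}(\cF\otimes\sN^{-1},\cE)$. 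Using the sequence \eqref{E:sucExt} this equals $-\chi\big((\cF\otimes\sN^{-1})^{*}\otimes\cE\big)+\dim\Hom(\cF,\cE)$: the first term is constant by Riemann--Roch, but the second term jumps as $(\cF,\cE)$ varies. So the fiber dimension is \emph{not} locally constant, and neither smoothness nor irreducibility of the total space propagates from the base. You flag this worry yourself, but the remedy you propose (semicontinuity of $r_0,r_1$ from Fact \ref{F:FiltrCan}\ref{F:FiltrCan:5}) is about the stratification of $\cM_X(R,D)$, not about the jumping of $\Hom$-groups on $\Xred$; it does not address the issue. Your fallback of computing $\dim\Ext{1}(\sF,\sF)-\dim\Ext{0}(\sF,\sF)$ on type-preserving deformations is not carried out, and in the paper's logic those $\Ext$ computations (Section \ref{Sec:tangent}) are \emph{derived from} the smoothness and dimension formula of this theorem, so one would have to establish them independently to avoid circularity.

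The paper's fix is to choose a different base: it fibers over the pair of \emph{graded pieces of the second canonical filtration}, namely $(\sF^{(1)}/\sN\sF,\,\sF/\sF^{(1)})\in\Coh{r_0-r_1}{d_0-r_1\delta-d_1}\times\Coh{r_1}{d_1+r_1\delta}$, in two stages: first the map $\chi$ sending $\sF$ to the extension $[0\to\sF^{(1)}/\sN\sF\to\sF|_{\Xred}\to\sF/\sF^{(1)}\to 0]$ on $\Xred$ (a vector bundle stack by the proof of \cite[Prop.\ 5.2]{MS}, recording the $\cO_X$-module structure), then the map $\rho$ from the extension stack $\un{\operatorname{Ext}}$ to the product of $\Coh{}{}$-stacks (a vector bundle stack by \cite[Cor.\ 3.2]{GPHS}). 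Both relative dimensions are pure Euler characteristics, hence constant, so smoothness and irreducibility descend from the smooth irreducible base and the dimensions $(\ov g-1)[r_1(r_0-r_1)]+r_0d_1-r_1d_0+r_0r_1\delta$ and $(\ov g-1)r_0r_1-r_0d_1+r_1d_0$ add to the stated formula. This choice of base is not cosmetic: it is exactly what makes the fiber dimensions constant, and it is the missing idea in your proposal.
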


The above Theorem generalizes some known special cases:  \cite[Prop. G]{DR2} for quasi-locally free sheaves of complete rank $(r_0,r_1)=(a+1,a)$ for $a\geq 1$ (extending the case $a=1$ treated in \cite[Prop. 9.2.1]{DR1}); \cite[Prop. 2.1]{I} for generalized vector bundles of  rank $r$; \cite[Thm. 4.6, 4.7]{CK} for $R=2$ and the semistable locus. The irreducibility was also shown for the open subset of quasi-locally free sheaves of fixed complete type by \cite[Thm. C]{DR2}.

\begin{proof}
Fix $(r_\bullet;d_\bullet)\in \cS_{adm}(R,D)$. The locus  $\cM_X(r_{\bullet};d_{\bullet})$ (which is non-empty by Theorem \ref{T:non-empty}) is clearly constructible and it is locally closed by Fact \ref{F:FiltrCan}\ref{F:FiltrCan:5}. We endow the locally closed substack $\cM_X(r_{\bullet};d_{\bullet})$ with its reduced stack structure. 
Consider the following morphisms
$$
\xymatrix{
\cM_X(r_{\bullet};d_{\bullet}) \ar[r]^(0.35){\chi} & \un{\operatorname{Ext}}(\Coh{r_1}{d_1+r_1\delta}, \Coh{r_0-r_1}{d_0-r_1\delta-d_1}) \ar[r]^{\rho} & \Coh{r_0-r_1}{d_0-r_1\delta-d_1}\times \Coh{r_1}{d_1+r_1\delta}\\
\sF \ar[r] & [0\to \frac{\sF^{(1)}}{\sN\sF}\to \sF_{|\Xred}\to \frac{\sF}{\sF^{(1)}}\to 0] & \\
& [0\to \cH'\to \cH \to \cH''\to 0] \ar[r] & (\cH'',\cH'),
}
$$
where $\Coh{n}{e}$ is the stack of rank $r$ and degree $e$ sheaves on $\Xred$, and $\un{\operatorname{Ext}}(\Coh{n''}{e''}, \Coh{n'}{e'})$ is the stack classifying extensions $[0\to \cH'\to \cH \to \cH''\to 0]$ on $\Xred$ with $\cH'\in \Coh{n'}{e'}$ and $\cH''\in \Coh{n''}{e''}$. 

It is well-known that the stack $\Coh{n}{e}$ is smooth and irreducible of dimension equal to $n^2(\ov g-1)$. Hence  $\Coh{r_0-r_1}{d_0-r_1\delta-d_1}\times \Coh{r_1}{d_1+r_1\delta}$ is smooth and irreducible of dimension equal to
\begin{equation}\label{E:dim1}
\dim \left(\Coh{r_0-r_1}{d_0-r_1\delta-d_1}\times \Coh{r_1}{d_1+r_1\delta}\right)=(\ov g-1)[r_1^2+(r_0-r_1)^2].
\end{equation}

By \cite[Cor. 3.2]{GPHS}, the morphism $\rho$ is a vector bundle stack of relative dimension equal to 
\begin{equation}\label{E:dim2}
\dim \rho=(\ov g-1)[r_1(r_0-r_1)]+r_0d_1-r_1d_0+r_0r_1\delta.
\end{equation}

By the proof of \cite[Prop. 5.2]{MS}, the morphism $\chi$ is a vector bundle stack of relative dimension equal to 
\begin{equation}\label{E:dim3}
\dim \chi=(\ov g-1)[r_0r_1]-r_0d_1+r_1d_0.
\end{equation}

By combining \eqref{E:dim1}, \eqref{E:dim2} and \eqref{E:dim3}, we conclude that $\cM_X(r_{\bullet};d_{\bullet})$ is smooth and irreducible of dimension 
$$
\dim \cM_X(r_{\bullet};d_{\bullet})=\dim\left( \Coh{r_0-r_1}{d_0-r_1\delta-d_1}\times \Coh{r_1}{d_1+r_1\delta}\right)+\dim \rho+\dim \chi=(r_0^2+r_1^2)(\ov g -1)+r_0r_1\delta. 
$$
\end{proof}

\begin{cor}\label{C:qlf}
Let $r_0>r_1>0$ and $(d_0,d_1)\in \bZ^2$. Then the general element of $\cM_X(r_0,r_1;d_0,d_1)$ is a quasi-locally free sheaf. 
\end{cor}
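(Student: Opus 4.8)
The plan is to prove that the quasi-locally free locus $\mathcal{U}\subseteq \cM_X(r_0,r_1;d_0,d_1)$ is a non-empty open substack. Since the stratum is irreducible by Theorem \ref{T:irr-dim}, a non-empty open substack is automatically dense, and the general element of the stratum is then quasi-locally free. Non-emptiness of $\mathcal{U}$ is already recorded in Remark \ref{R:exis-qlf}, which applies precisely in the regime $r_0>r_1>0$; so the only real content is the \emph{openness} of $\mathcal{U}$.

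First I would reduce the quasi-locally free condition to a single locally-free condition on $\Xred$. For any torsion-free $\sF$ the quotient $\sF/\sF^{(1)}\cong \sN\sF\otimes\sN^{-1}$ is a vector bundle on $\Xred$ by Fact \ref{F:Dual}\ref{F:Dual:1} (and Fact \ref{F:FiltrCan}\ref{F:FiltrCan:1}). Hence in the exact sequence $0\to \sF^{(1)}/\sN\sF\to \sF|_{\Xred}\to \sF/\sF^{(1)}\to 0$ the torsion of $\sF|_{\Xred}$ agrees with the torsion of $\sF^{(1)}/\sN\sF$, so $\sF|_{\Xred}$ is locally free if and only if $\sF^{(1)}/\sN\sF$ is locally free. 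By Fact \ref{Fact:qll}\ref{Fact:qll:1} this is equivalent to $\sF$ being quasi-locally free. Thus $\mathcal{U}$ is exactly the locus where the sheaf $\sF^{(1)}/\sN\sF\in \Coh{r_0-r_1}{d_0-r_1\delta-d_1}$ is a vector bundle.

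Then I would reuse the morphisms constructed in the proof of Theorem \ref{T:irr-dim}. Composing $\chi$ and $\rho$ with the projection onto the factor $\Coh{r_0-r_1}{d_0-r_1\delta-d_1}$ yields a morphism $\cM_X(r_\bullet;d_\bullet)\to \Coh{r_0-r_1}{d_0-r_1\delta-d_1}$ sending $\sF$ to $\sF^{(1)}/\sN\sF$. Both $\chi$ and $\rho$ are vector bundle stacks, hence smooth, so this composite is smooth and in particular open. The vector bundle locus $\mathcal{V}\subseteq \Coh{r_0-r_1}{d_0-r_1\delta-d_1}$ is a dense open substack, since on the smooth curve $\Xred$ being locally free is the same as being torsion-free, an open condition. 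Therefore $\mathcal{U}$, being the preimage of $\mathcal{V}$ under an open morphism, is open, which completes the argument.

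The one delicate point is the identification $\mathcal{U}=(\text{preimage of }\mathcal{V})$, i.e. verifying that passing through the stacks $\chi$ and $\rho$ genuinely reads off the isomorphism class of the sub-object $\sF^{(1)}/\sN\sF$; this is built into their construction in Theorem \ref{T:irr-dim}, where $\rho$ records the two graded pieces of the canonical extension. Should one wish to avoid the stacky morphisms entirely, an equivalent route is to observe that, on the stratum, the complete type is constant and hence the Hilbert polynomials of $\sF|_{\Xred}$ and $\sN\sF$ are fixed; standard semicontinuity then shows that the index $\iota(\sF)=h^0\!\big(\cT(\sF|_{\Xred})\big)$ is upper semicontinuous, so $\{\iota=0\}=\mathcal{U}$ is open, which by Remark \ref{Rmk:Def:index} again characterizes the quasi-locally free locus.
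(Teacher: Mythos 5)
Your proof is correct and follows the same overall skeleton as the paper's: non-emptiness from Remark \ref{R:exis-qlf}, irreducibility from Theorem \ref{T:irr-dim}, and then openness of the quasi-locally free locus. The only divergence is in how openness is established. The paper argues directly with the universal sheaf $\mathfrak{F}$: its restriction $\mathfrak{F}_{\Xred}$ is flat over the (reduced) stratum because its fibers have constant rank and degree, and the locus where a flat family on $\Xred$ is fiberwise locally free is open; by Fact \ref{Fact:qll}\ref{Fact:qll:1} (equivalence of quasi-local freeness with local freeness of $\sF|_{\Xred}$, given that $\sF$ is torsion-free) this is exactly the quasi-locally free locus. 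You instead route through the morphisms $\chi$ and $\rho$ built in the proof of Theorem \ref{T:irr-dim}, pulling back the open vector-bundle locus of $\Coh{r_0-r_1}{d_0-d_1-r_1\delta}$ along $\sF\mapsto \sF^{(1)}/\sN\sF$; your preliminary reduction (torsion of $\sF|_{\Xred}$ equals torsion of $\sF^{(1)}/\sN\sF$ since $\sF/\sF^{(1)}$ is a vector bundle) is accurate, and the delicate point you flag --- that $\sF\mapsto \sF^{(1)}/\sN\sF$ is a genuine morphism of stacks on the stratum --- is indeed what the construction of $\chi$ in Theorem \ref{T:irr-dim} provides. Two cosmetic remarks: the openness of the morphism is not needed (preimages of open substacks under any morphism are open), and your fallback argument via upper semicontinuity of $\iota(\sF)$ is essentially a rephrasing of the paper's flatness argument. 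Either way the proof goes through.
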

\begin{proof}
We have already observed in Remark \ref{R:exis-qlf} that the locus of quasi-locally free sheaves inside $\cM_X(r_0,r_1;d_0,d_1)$ is non-empty. Hence, since $\cM_X(r_0,r_1;d_0,d_1)$ is irreducible by 
Theorem \ref{T:irr-dim}, it is enough to show that the locus of quasi-locally free sheaves inside $\cM_X(r_0,r_1;d_0,d_1)$ is open. Consider the universal sheaf $\mathfrak{F}$ over $\cM_X(r_0,r_1;d_0,d_1)\times X$, which is flat over $\cM_X(r_0,r_1;d_0,d_1)$. Its restriction $\mathfrak{F}_{\Xred}$ to  $\cM_X(r_0,r_1;d_0,d_1)\times X$ is again flat over $\cM_X(r_0,r_1;d_0,d_1)$,
since it has fiberwise constant rank and degree. Therefore, the locus in $\cM_X(r_0,r_1;d_0,d_1)$ over which  $\mathfrak{F}_{\Xred}$ is fiberwise locally free is open. But this is exactly the locus over which $\mathfrak{F}$ is quasi-locally free by Fact \ref{Fact:qll}\ref{Fact:qll:1}, hence we are done. 
\end{proof}

\begin{cor}\label{C:gengvb}
Let $r>0$ and $(d_0,d_1)\in \bZ^2$ such that $d_0\geq d_1+r\delta$. Then the general element $\sF$ of $\cM_X(r,r;d_0,d_1)$ is such that 
\begin{equation}\label{E:gentors}
\begin{sis}
& \cT(\sF_{|\Xred})=\cO_E  \text{ for a reduced divisor } E\subset \Xred \text{ of length } \iota(\sF)=d_0-d_1-r\delta, \\
& n_\bullet(\sF,p)=\{n_1(\sF,p)=1>n_2(\sF,p)=\ldots=n_r(\sF,p)=0\} \quad  \text{ for any } p\in \supp \cT(\sF_{|\Xred}). 
\end{sis}
\end{equation}
\end{cor}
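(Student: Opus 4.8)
The plan is to reduce the statement to a genericity property of the torsion sheaf $\cT(\sF_{|\Xred})$ and then to read that property off from the parametrisation of the stratum used in the proof of Theorem \ref{T:irr-dim}. First I would record the local structure. Since $r_0=r_1=r$, the canonical exact sequence \eqref{Eq:ExSeq1} breaks into $0\to \sN\sF\to \sF^{(1)}\to \sF^{(1)}/\sN\sF\to 0$ together with $0\to \sF^{(1)}/\sN\sF\to \sF_{|\Xred}\to \sN\sF\otimes\sN^{-1}\to 0$, where $\sN\sF\otimes\sN^{-1}$ is locally free by Fact \ref{F:Dual}\ref{F:Dual:1}. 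As $\sF^{(1)}/\sN\sF$ has rank $r_0-r_1=0$, it is torsion and hence equals the torsion subsheaf of $\sF_{|\Xred}$; thus $\cT(\sF_{|\Xred})=\sF^{(1)}/\sN\sF$ is exactly the cokernel of the inclusion $\sN\sF\hookrightarrow\sF^{(1)}$ of two rank-$r$ vector bundles, of length $\iota(\sF)=d_0-d_1-r\delta$ by Lemma \ref{L:genVB}. By Fact \ref{F:structure} and the identity $\cT((\cI_{np})_{|\Xred})=\cO_{np}$ of Example \ref{Ex:genlb} one has $\cT(\sF_{|\Xred})_p=\bigoplus_{i=1}^{r}\cO_{n_i(\sF,p)p}$, so the two conditions in \eqref{E:gentors} are together equivalent to the single assertion that $\cT(\sF_{|\Xred})$ is a \emph{reduced} torsion sheaf of length $\iota$: the equality $\cT(\sF_{|\Xred})_p=\cO_p$ forces exactly one $n_i(\sF,p)$ to equal $1$ and the others to vanish. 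Hence it suffices to prove that the general $\sF$ in the stratum, which is irreducible by Theorem \ref{T:irr-dim}, has reduced torsion.

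To this end I would invoke the morphisms $\chi$ and $\rho$ built in the proof of Theorem \ref{T:irr-dim}. When $r_0=r_1=r$ the factor $\Coh{r_0-r_1}{d_0-r_1\delta-d_1}$ is the stack $\Coh{0}{\iota}$ of length-$\iota$ torsion sheaves on $\Xred$, and composing $\rho\circ\chi$ with the projection onto that factor yields the morphism $\Psi\colon \cM_X(r,r;d_0,d_1)\to \Coh{0}{\iota}$ given by $\sF\mapsto \sF^{(1)}/\sN\sF=\cT(\sF_{|\Xred})$. Since $\chi$ and $\rho$ are vector bundle stacks, hence smooth and surjective, and the projection is smooth, $\Psi$ is smooth, open and dominant. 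Consequently the preimage under $\Psi$ of any dense open substack of $\Coh{0}{\iota}$ is a dense open substack of the irreducible stratum $\cM_X(r,r;d_0,d_1)$.

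It then remains to check that reducedness is the generic behaviour inside $\Coh{0}{\iota}$. This stack is irreducible, and its locus of reduced sheaves $\cO_E$, with $E\subset\Xred$ a reduced divisor of length $\iota$, is open (its complement being the closed locus of sheaves admitting a stalk of length $\geq 2$) and dense (every non-reduced length-$\iota$ sheaf is supported on fewer than $\iota$ points, hence lies in a lower-dimensional stratum). Pulling this dense open substack back along $\Psi$, the general $\sF\in\cM_X(r,r;d_0,d_1)$ satisfies $\cT(\sF_{|\Xred})=\cO_E$ with $E$ reduced of length $\iota=d_0-d_1-r\delta$, which by the first paragraph is precisely \eqref{E:gentors}.

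The main obstacle I anticipate is the genericity input of the second and third steps: one must be certain that $\Psi$ genuinely dominates the torsion factor $\Coh{0}{\iota}$ — equivalently, that the torsion part of a general sheaf in the stratum is itself general — which is where the smoothness and surjectivity of the vector bundle stacks $\chi$ and $\rho$ are essential, and one must justify that the reduced sheaves form a dense open substack of $\Coh{0}{\iota}$. Once these two points are secured, the passage from reducedness of $\cT(\sF_{|\Xred})$ to the sequence $n_\bullet(\sF,p)=\{1>0=\dots=0\}$ is the routine local computation recorded in the first paragraph.
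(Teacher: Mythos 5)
Your reduction of \eqref{E:gentors} to the single condition that $\cT(\sF_{|\Xred})=\sF^{(1)}/\sN\sF$ be the structure sheaf of a reduced divisor is exactly the reduction the paper makes (the length being automatically $\iota(\sF)=d_0-d_1-r\delta$ by Lemma \ref{L:genVB}), and your local translation into the vanishing pattern of $n_\bullet(\sF,p)$ is the same routine computation. Where you diverge is in how genericity is established. The paper observes that \eqref{E:red-tors} is an open condition on the (irreducible) stratum and then proves non-emptiness by an explicit construction: it feeds Proposition \ref{P:ex-qufree} a surjection $q:\sE\twoheadrightarrow\cO_E$ from a rank-$r$ vector bundle onto the structure sheaf of a reduced divisor $E$ of the right length, producing a concrete $\sF$ with reduced torsion. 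You instead package the assignment $\sF\mapsto\sF^{(1)}/\sN\sF$ as the composite of the vector-bundle-stack morphisms $\chi$ and $\rho$ from the proof of Theorem \ref{T:irr-dim} with the projection onto the factor $\Coh{0}{\iota}$, note that this composite is smooth and surjective (hence open and dominant), and pull back the dense open locus of reduced torsion sheaves in $\Coh{0}{\iota}$. Both arguments are correct. Your route buys you non-emptiness for free (no explicit example needed) and in fact shows the stronger statement that the torsion part of the general $\sF$ is a general point of $\Coh{0}{\iota}$; the price is that you must justify openness and density of the reduced locus inside $\Coh{0}{\iota}$ (your density argument via a dimension count is a little loose, but openness of the reduced locus plus irreducibility of $\Coh{0}{\iota}$ already gives density), whereas the paper's openness claim lives directly on the stratum and its non-emptiness is witnessed by a two-line construction. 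Since the machinery of $\chi$ and $\rho$ is already in place from Theorem \ref{T:irr-dim}, neither route incurs real extra cost.
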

\begin{proof}
Observe that condition \eqref{E:gentors} is equivalent to the fact that  
\begin{equation}\label{E:red-tors}
\cT(\sF_{|\Xred})=\cO_E \quad \text{ for a reduced divisor } E\subset \Xred,
\end{equation} 
because $\cT(\sF_{|\Xred})$ is torsion sheaf on $\Xred$ of length equal to $ \iota(\sF)=d_0-d_1-r\delta$ by Definition \ref{Def:index} and Lemma \ref{L:genVB}, and Remark \ref{Rmk:Def:index} implies that 
$$
\iota(\sF)=\sum_{p\in \supp \cT(\sF_{|\Xred})} \sum_{i=1}^r n_i(\sF,p).
$$
Condition \eqref{E:gentors} is clearly an open condition. The fact that it is also non-empty follows by applying Proposition \ref{P:ex-qufree} in the following way: we take a surjection  $q: \sE\twoheadrightarrow\cO_E $ on $\Xred$  from a rank $r$ and degree $d_0-r\delta$ vector bundle $\sE$ to the structure sheaf of a reduced divisor $E\subset \Xred$ of length $d_0-d_1-r\delta$, and we set  $f: \ker{q}=:\sF\hookrightarrow \sE$ and 
$g:\cO_D\oplus \sF\otimes \sN^{-1}=:\sG\twoheadrightarrow \sF\otimes \sN^{-1}$.
\end{proof}

\subsection{Specializations}\label{sub:speciali}

In this section by determining when the closure of a stratum of $\cM_X(R;D)$ intersects another stratum.

\begin{thm}\label{T:specia}
Let $X$ be a ribbon and fix any $(R, D)\in \bN_{>0} \times \bZ$. 
Assume that $(r_\bullet;d_\bullet), (\wt r_\bullet;\wt d_\bullet)\in \cS_{adm}(R,D)$.  Then 
$$
\ov{\cM_X(r_\bullet; d_\bullet)}\cap \cM_X(\wt r_{\bullet};\wt d_{\bullet})\neq \emptyset \Longleftrightarrow (r_\bullet;d_\bullet)\geq (\wt r_\bullet;\wt d_\bullet).
$$
\end{thm}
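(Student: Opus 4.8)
=== BEGIN LATEX ===

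\textbf{Overall strategy.} The plan is to prove both implications of the equivalence by analyzing what happens to the invariants $r_1$ and $d_1$ (equivalently, the generalized degree $d_1(\sF)$ of the sub $\sN\sF$ and the rank $r_1(\sF)$ of the quotient $\sF/\sF^{(1)}$) under specialization, i.e. in flat families over a DVR or a smooth curve. The partial order $(r_\bullet;d_\bullet)\ge(\wt r_\bullet;\wt d_\bullet)$ is governed entirely by $(r_1,d_1)$, so the content is to show that the pair $(r_1,d_1)$ can only \emph{weakly increase} under specialization, and conversely that every such increase is actually realized by some family.

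\textbf{Necessity (specialization forces $\ge$).} For the forward implication, suppose $\cF_0\in\cM_X(\wt r_\bullet;\wt d_\bullet)$ lies in $\ov{\cM_X(r_\bullet;d_\bullet)}$. First I would invoke the semicontinuity in Fact \ref{F:FiltrCan}\ref{F:FiltrCan:5}: since $r_1(\sF)$ is lower semicontinuous, the special fiber satisfies $\wt r_1\ge r_1$. If the inequality is strict, then $(r_\bullet;d_\bullet)\ge(\wt r_\bullet;\wt d_\bullet)$ holds by definition of the order (first clause $r_1>\wt r_1$)—wait, the direction matters, so I must be careful: the generic member has type $(r_\bullet;d_\bullet)$ and specializes to type $(\wt r_\bullet;\wt d_\bullet)$, and lower semicontinuity of $r_1$ gives $r_1\le \wt r_1$. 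When $r_1<\wt r_1$ we get $\wt r_1>r_1$, which is exactly the condition $(\wt r_\bullet;\wt d_\bullet)\ge(r_\bullet;d_\bullet)$ reversed—so I must instead prove $(r_\bullet;d_\bullet)\ge(\wt r_\bullet;\wt d_\bullet)$, meaning $r_1\ge\wt r_1$. Thus I would use instead the \emph{upper} semicontinuity of $r_0(\sF)$, which gives $\wt r_0\ge r_0$, hence $\wt r_1\le r_1$ since $r_0+r_1=R$ is constant. So in the equal-rank case $r_1=\wt r_1$ I then need $d_1\ge\wt d_1$, and this is the crux: I would argue via a flat family $\mathfrak{F}$ over a DVR with generic type $(r_\bullet;d_\bullet)$ and special type $(\wt r_\bullet;\wt d_\bullet)$, restrict to $\Xred\times\operatorname{Spec}$ to get the family of sheaves $\mathfrak{F}/\sN\mathfrak{F}$, and use semicontinuity of the torsion length (equivalently the index $\iota$, via Definition \ref{Def:index} and Lemma \ref{L:genVB}) to conclude that $d_0-d_1$ can only jump up, i.e. $\wt d_0-\wt d_1\ge d_0-d_1$; combined with $D=d_0+d_1$ constant this yields $d_1\ge\wt d_1$.

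\textbf{Sufficiency (constructing the degeneration).} For the converse, given $(r_\bullet;d_\bullet)\ge(\wt r_\bullet;\wt d_\bullet)$, I would build an explicit flat family realizing the specialization, treating the two cases of the order separately. When $r_1=\wt r_1$ and $d_1\ge\wt d_1$, the ranks $r_0,r_1$ are constant and only the torsion of the restriction to $\Xred$ grows; I would use the stratum description of Theorem \ref{T:irr-dim} together with Proposition \ref{P:ex-qufree} to degenerate the canonical exact sequence \eqref{Eq:ExSeq*}, letting a locally free quotient acquire torsion so that $d_1$ drops to $\wt d_1$ while $d_0$ correspondingly rises. When $r_1>\wt r_1$ the harder construction is needed: I would degenerate a sheaf so that part of $\sN\sF$ (the rank-$r_1$ locally free sub) absorbs into $\sF^{(1)}$, lowering $r_1$; concretely one takes a family of extension classes in $\Ext{1}_{\cO_X}$ whose image under $\pi$ in \eqref{E:sucExt} degenerates from an injective map to one with smaller-rank image, using Fact \ref{F:FiltrCan}\ref{F:FiltrCan:3}. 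Transitivity of the closure relation lets me reduce the general case to these two elementary moves.

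\textbf{Main obstacle.} The genuinely delicate step is the \emph{necessity in the equal-rank case}, tracking $d_1$ under specialization. Semicontinuity gives the rank inequalities for free, but $d_1=\deg(\sN\sF)$ is not obviously monotone; its behavior is controlled by how the torsion of $\sF_{|\Xred}$ specializes, and one must convert the semicontinuity of torsion-length of the flat family $\mathfrak{F}_{|\Xred\times\operatorname{Spec}}$ into the degree inequality via the exact relation $\iota(\sF)=d_0-d_1-r_1\delta$ from Lemma \ref{L:genVB} (valid once $r_0=r_1$). Ensuring the relevant family restricted to $\Xred$ stays flat—so that the index is genuinely semicontinuous—is where the care lies.

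=== END LATEX ===
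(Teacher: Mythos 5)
Your overall skeleton — semicontinuity of the ranks for necessity, explicit flat families for sufficiency — is the same as the paper's, and your reduction of the rank inequality to the upper semicontinuity of $r_0$ (Fact \ref{F:FiltrCan}\ref{F:FiltrCan:5}) is exactly what the paper does. However, both halves of your argument have a genuine gap. For necessity in the equal-rank case $r_1=\wt r_1$, your mechanism rests on the identity $\iota(\sF)=d_0(\sF)-d_1(\sF)-r_1\delta$ of Lemma \ref{L:genVB}, which — as you yourself note parenthetically — holds only when $r_0=r_1$. When $r_0>r_1=\wt r_1$ the index is \emph{not} determined by the complete type (it vanishes on the dense open locus of quasi-locally free sheaves in every such stratum, by Corollary \ref{C:qlf}, while $d_1$ still varies with the stratum), so semicontinuity of the torsion length of $\mathfrak{F}_{|\Xred}$ says nothing about $d_1$; moreover $\mathfrak{F}_{|\Xred}$ is not flat over the base (its fibres have degrees $d_0\neq\wt d_0$), so that semicontinuity is not even well-posed as stated. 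The paper's fix is to work instead with $\sN\mathfrak{F}\subset\mathfrak{F}$: over a DVR this subsheaf of a flat sheaf is torsion-free, hence flat, so $\chi((\sN\mathfrak{F})_s\otimes\cO(m))$ is controlled, and comparing with $\sN(\mathfrak{F}_o)$, which has rank $\wt r_1$ and degree $\wt d_1$, gives $\wt d_1\le d_1$ once $r_1=\wt r_1$. This works uniformly in $r_0$ and is the step your proposal is missing.

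For sufficiency, the appeal to ``transitivity of the closure relation'' does not go through. From $\ov{\cM_X(r_\bullet;d_\bullet)}\cap\cM_X(r'_\bullet;d'_\bullet)\neq\emptyset$ and $\ov{\cM_X(r'_\bullet;d'_\bullet)}\cap\cM_X(\wt r_\bullet;\wt d_\bullet)\neq\emptyset$ one cannot deduce $\ov{\cM_X(r_\bullet;d_\bullet)}\cap\cM_X(\wt r_\bullet;\wt d_\bullet)\neq\emptyset$ unless the whole stratum $\cM_X(r'_\bullet;d'_\bullet)$ lies in $\ov{\cM_X(r_\bullet;d_\bullet)}$ — and that containment is false in general: by Theorem \ref{T:irr-dim} the dimension of a stratum depends only on $r_\bullet$, so for $r_\bullet=r'_\bullet$ the set $\ov{\cM_X(r_\bullet;d_\bullet)}\cap\cM_X(r'_\bullet;d'_\bullet)$ is contained in $\ov{\cM_X(r_\bullet;d_\bullet)}\setminus\cM_X(r_\bullet;d_\bullet)$, which has strictly smaller dimension than $\cM_X(r'_\bullet;d'_\bullet)$ and hence cannot contain it. So your two elementary moves cannot simply be composed. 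The paper avoids composition altogether: for each pair $(r_\bullet;d_\bullet)\geq(\wt r_\bullet;\wt d_\bullet)$ it builds a single flat family over a DVR as a direct sum of rank-one families — generalized line bundles $\cI_{\cD}\otimes\cL$ whose defining subscheme $\cD$ collapses into $\Xred$ (Lemma \ref{L:def-glb}) or which degenerate to a split rank-two bundle on $\Xred$ (Lemma \ref{L:def-VB2}) — plus a constant vector bundle summand. Your second move (letting $\pi(\sigma)$ in \eqref{E:sucExt} drop rank in a family of extension classes) is the right heuristic for lowering $r_1$, but as stated it controls neither the degrees nor the torsion-freeness of the special fibre, so it does not yet land in the prescribed stratum $\cM_X(\wt r_\bullet;\wt d_\bullet)$.
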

\begin{proof}
Assume first that $\ov{\cM_X(r_\bullet; d_\bullet)}\cap \cM_X(\wt r_{\bullet};\wt d_{\bullet})\neq \emptyset$, i.e. that there exists a torsion-free sheaf of complete type $(r_\bullet; d_\bullet)$ that specializes to a torsion-free sheaf of complete type $(\wt r_\bullet; \wt d_\bullet)$. 
Fact \ref{F:FiltrCan}\ref{F:FiltrCan:5} implies that $r_1\geq \wt r_1$. It remains to show that if $r_1=\wt r_1$ (or equivalently if $r_{\bullet}=\wt r_{\bullet}$) then $d_1\geq \wt d_1$ (or equivalently that $d_0\leq \wt d_0$). 

 By assumption, there exists a coherent sheaf $\bF$ on $X\times \Spec R$, where $\Spec R$ is a trait (i.e. $R$ is a DVR) with special point $o$ and  generic point $ \eta$, flat over $\Spec R$, such that  
$\bF_{\eta}\in \cM_X(r_\bullet; d_\bullet)$ and $\bF_o\in \cM_X(\wt r_{\bullet};\wt d_{\bullet})$. Consider the coherent (not necessarily flat) sheaf $\sN\cdot \bF$ on $\Xred\times \Spec R$, which has the property that $(\sN\cdot \bF)_o$ is a vector bundle on $\Xred$ of rank $\wt r_1$ and degree $\wt d_1$, while  $(\sN\cdot \bF)_\eta$ is a vector bundle on $\Xred$ of rank $ r_1$ and degree $d_1$. Pick an ample line bundle $\cO(1)$ on $\Xred$ and set $d:=\deg \cO(1)$. It follows from the proof of \cite[III.9.9]{Har} that the function $s\to \chi((\sN\cdot \bF)_s\otimes \cO(m))$ is lower semicontinuous for $m\gg 0$. Hence we compute
$$
(\wt d_1+\wt r_1dm)+\wt r_1(1-\ov g)= \chi((\sN\cdot \bF)_o\otimes \cO(m))\leq \chi((\sN\cdot \bF)_\eta\otimes \cO(m))=(d_1+r_1dm)+r_1(1-\ov g).
$$
Since $r_1=\wt r_1$ by assumption, we conclude that $\wt d_1\leq d_1$, as required.

Assume now that $(r_\bullet;d_\bullet)\geq (\wt r_\bullet;\wt d_\bullet)$.  We want to show that there exists a coherent sheaf $\bF$ on $X\times \Spec R$, where $\Spec R$ is a trait (i.e. $R$ is a DVR) with special point $o$ and  generic point $ \eta$, flat over $\Spec R$, such that  
$\bF_{\eta}\in \cM_X(r_\bullet; d_\bullet)$ and $\bF_o\in \cM_X(\wt r_{\bullet};\wt d_{\bullet})$. We will distinguish four cases.

\un{Case I:} $\wt r_0=r_0=r_1=\wt r_1$.

Since $(r_{\bullet},d_{\bullet})$ is admissible and $r:=r_0=r_1$, we have that $d_0\geq d_1+r\delta$. Moreover, by the assumption that $(r_\bullet;d_\bullet)\geq (\wt r_\bullet;\wt d_\bullet)$ and the fact that $r_{\bullet}=\wt r_{\bullet}$, we have that 
$$
\wt d_0\geq d_0\geq d_1+r\delta\geq \wt d_1+r\delta.
$$
Hence, we can find integers $\{(d_0^j,d_1^j, \wt d_0^j, \wt d_1^j)\}_{1\leq j \leq r}$ such that 
$$
\begin{sis}
& \wt d_0^j\geq d_0^j\geq d_1^j+\delta \geq \wt d_1^j+\delta \quad \text{ for every } 1\leq j \leq r, \\
& d_0^j+d_1^j=\wt d_0^j+\wt d_1^j \quad \text{ for every } 1\leq j \leq r, \\
&\sum_{j=1}^r d_0^j=d_0 \quad \text{ and } \quad \sum_{j=1}^r \wt{d_0^j}=\wt{d_0}. 
\end{sis}
$$
By a re-iterated application of Lemma \ref{L:def-glb} below, we can find coherent sheaves $\{\bF^j\}_{1\leq j \leq r}$ on $X\times \Spec R$, flat over a trait $\Spec R$, such that  
$\bF^j_{\eta}\in \cM_X(1,1; d_0^j,d_1^j)$ and $\bF^j_o\in \cM_X(1,1;\wt{d_0^j}, \wt{d_1^j})$. The desired family is given by $\bF=\bigoplus_{j=1}^r \bF^j$. 

\un{Case II:} $\wt r_0=r_0>r_1=\wt r_1$.

Set $r:=r_1=\wt r_1$.  We can find integers  $\{(d_0^j,d_1^j, \wt d_0^j, \wt d_1^j)\}_{1\leq j \leq r}$ and $\{e,\wt e\}$ such that 
$$
\begin{sis}
& \wt d_0^j\geq d_0^j\geq d_1^j+\delta \geq \wt d_1^j+\delta \quad \text{ for every } 1\leq j \leq r, \\
& d_0^j+d_1^j=\wt d_0^j+\wt d_1^j \quad \text{ for every } 1\leq j \leq r, \\
&\sum_{j=1}^r d_0^j+e=d_0 \quad \text{ and } \quad \sum_{j=1}^r \wt{d_0^j}+\wt e=\wt{d_0}. 
\end{sis}
$$
By a re-iterated application of Lemma \ref{L:def-glb} below, we can find coherent sheaves $\{\bF^j\}_{1\leq j \leq r}$ on $X\times \Spec R$, flat over a trait $\Spec R$, such that  
$\bF^j_{\eta}\in \cM_X(1,1; d_0^j,d_1^j)$ and $\bF^j_o\in \cM_X(1,1;\wt{d_0^j}, \wt{d_1^j})$. Moreover, take a vector bundle $\cE$ on $\Xred$ of rank $r_0-r_1=\wt r_0-\wt r_1$ and degree $e$. 
The desired family is given by $\bF=\bigoplus_{j=1}^r \bF^j\oplus p_1^*(\cE)$.

\un{Case III:} $\wt r_0>r_0= r_1>\wt r_1$.

Since $(r_{\bullet},d_{\bullet})$ is admissible and $r_0=r_1$, we have that $d_0\geq d_1+r\delta$. Hence, we can find integers $\{(d_0^j,d_1^j)\}_{1\leq j \leq r_1}$ such that 
$$
\begin{sis}
& d_0^j\geq d_1^j+\delta  \quad \text{ for every } 1\leq j \leq r_1, \\
& \sum_{j=r_1-\wt r_1+1}^{r_1} d_1^j=\wt d_1, \\
&\sum_{j=1}^{r_1} d_0^j=d_0. 
\end{sis}
$$
By a re-iterated application of Lemma \ref{L:def-VB2} below, we can find coherent sheaves $\{\bF^j\}_{1\leq j \leq r_1-\wt r_1}$ on $X\times \Spec R$, flat over a trait $\Spec R$, such that  
$\bF^j_{\eta}\in \cM_X(1,1; d_0^j,d_1^j)$ and $\bF^j_o\in \cM_X(2,0;d_0^j+d_1^j,0)$. Moreover, take generalized line bundles $\{\cF^j\}_{j=r_1-\wt r_1+1}^{r_1}$ on $X$ of complete types $(1,1;d_0^j,d_1^j)$. 
The desired family is given by $\bF=\bigoplus_{j=1}^{r_1-\wt r_1} \bF^j\oplus \bigoplus_{j=r_1-\wt r_1+1}^{r_1} p_1^*(\cF^j)$.

\un{Case IV:} $\wt r_0>r_0> r_1>\wt r_1$.

We can find integers  $\{(d_0^j,d_1^j)\}_{1\leq j \leq r_1}$ and $e$ such that 
$$
\begin{sis}
& d_0^j\geq d_1^j+\delta  \quad \text{ for every } 1\leq j \leq r_1, \\
& \sum_{j=r_1-\wt r_1+1}^{r_1} d_1^j=\wt d_1, \\
&\sum_{j=1}^r d_0^j+e=d_0. 
\end{sis}
$$
By a re-iterated application of Lemma \ref{L:def-VB2} below, we can find coherent sheaves $\{\bF^j\}_{1\leq j \leq r_1-\wt r_1}$ on $X\times \Spec R$, flat over a trait $\Spec R$, such that  
$\bF^j_{\eta}\in \cM_X(1,1; d_0^j,d_1^j)$ and $\bF^j_o\in \cM_X(2,0;d_0^j+d_1^j,0)$. Moreover, take generalized line bundles $\{\cF^j\}_{j=r_1-\wt r_1+1}^{r_1}$ on $X$ of complete types $(1,1;d_0^j,d_1^j)$ and 
 a vector bundle $\cE$ on $\Xred$ of rank $r_0-r_1$ and degree $e$. 
 The desired family is given by $\bF=\bigoplus_{j=1}^{r_1-\wt r_1} \bF^j\oplus \bigoplus_{j=r_1-\wt r_1+1}^{r_1} p_1^*(\cF^j)\oplus p_1^*(\cE)$.

\end{proof}

\begin{lemma}\label{L:def-glb}
Let $(d_0,d_1)\in \bZ^2$ such that $d_0\geq d_1+\delta$. Then 
$$\ov{\cM_X((1,1); (d_0,d_1))}\cap \cM_X((1,1); (d_0+1,d_1-1))\neq \emptyset.$$
\end{lemma}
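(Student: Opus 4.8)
My plan is to exhibit an explicit one–parameter degeneration. Since a generalized line bundle of rank $1$ has its complete type determined by its generalized degree together with its index through the identity $\iota=d_0-d_1-\delta$ (Lemma~\ref{L:genVB}), and since $\Deg$ is a deformation invariant (Fact~\ref{F:GenRkDeg}), it suffices to construct a flat family over a trait whose generic fibre lies in $\cM_X(1,1;d_0,d_1)$ and whose special fibre has the same generalized degree but index raised by $2$: such a special fibre is automatically of type $(d_0+1,d_1-1)$, because $\Deg=d_0+d_1$ and $\iota=d_0-d_1-\delta+2$ force $d_0^{\mathrm{sp}}=d_0+1$ and $d_1^{\mathrm{sp}}=d_1-1$ by Lemma~\ref{L:genVB}.

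First I would fix, using Theorem~\ref{T:non-empty} (the hypothesis $d_0\ge d_1+\delta$ is exactly admissibility of $(1,1;d_0,d_1)$), a generalized line bundle $\sF_0\in\cM_X(1,1;d_0,d_1)$. As $\cT((\sF_0)_{|\Xred})$ has finite support, $\sF_0$ is quasi-locally free---hence locally free, being of type $(1,1)$---at all but finitely many points (Fact~\ref{Fact:qll}), so I may pick $p\in\Xred$ with $\sF_{0,p}\cong\cO_{X,p}$ and local coordinates with $\cO_{X,p}\cong k[[x,y]]/(y^2)$, $\sN=(y)$, $\Xred=\{y=0\}$ (Definition~\ref{D:ribbon}). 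Over the trait $\Spec R$, $R=k[[t]]$, I define near $p$ the coherent sheaf
$$\mathfrak N:=\coker{\cO^{2}\xrightarrow{\;\psi\;}\cO^{2}},\qquad \psi=\begin{pmatrix} y & 0\\ -(x^{2}+t) & y\end{pmatrix},$$
presented by the relations $y\bar e_1=(x^2+t)\bar e_2$ and $y\bar e_2=0$. Two local facts are then checked. When $t$ is inverted, $x^2+t$ is a unit and $\mathfrak N\cong\cO_X$ is locally free at $p$; at $t=0$ the relations identify $\mathfrak N_0\cong(x^2,y)=\cI_{2p}$, which has index $2$ at $p$ (Fact~\ref{F:structure} and Remark~\ref{Rmk:Def:index}). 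Moreover $\mathfrak N$ is flat over $R$: this is equivalent to $t$ acting injectively on the cokernel, which follows by a direct manipulation of the two relations.

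Since $x^2+t=x^2(1+t\,x^{-2})$ is a unit on $(U\smallsetminus p)\times\Spec R$ (using that $R$ is $t$-adically complete), $\mathfrak N$ is free there, and I glue it to the constant family $\sF_0\boxtimes\cO_{\Spec R}$ over $(X\smallsetminus p)\times\Spec R$ along a common trivialization; this produces a coherent sheaf $\bF$ on $X\times\Spec R$, flat over $R$. By construction $\bF_\eta\cong\sF_0\otimes_k K$ is torsion-free of type $(d_0,d_1)$, while $\bF_o$ coincides with $\sF_0$ away from $p$ and with $\cI_{2p}$ at $p$, hence is torsion-free with $\iota(\bF_o)=\iota(\sF_0)+2=(d_0-d_1-\delta)+2$; as $\Deg{\bF_o}=\Deg{\bF_\eta}=d_0+d_1$, Lemma~\ref{L:genVB} yields $\bF_o\in\cM_X(1,1;d_0+1,d_1-1)$, whence $\bF_o\in\ov{\cM_X(1,1;d_0,d_1)}\cap\cM_X(1,1;d_0+1,d_1-1)$. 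The crux---and the only non-formal point---is the flatness of $\mathfrak N$: the matrix $\psi$ is not fibrewise injective, so flatness is not automatic and must be extracted from the explicit presentation; once it is in hand, the identification of the two fibres is mere bookkeeping of index and degree.
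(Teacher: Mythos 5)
Your overall strategy (an explicit flat family over a trait, constant away from a point $p$, degenerating a locally free stalk to $\cI_{2p}$) is the same as the paper's, but the key local deformation you write down does not do what you claim, and this is a genuine gap rather than a slip of bookkeeping. The cokernel of $\psi=\begin{pmatrix} y & 0\\ -(x^2+t) & y\end{pmatrix}$ is (by the same presentation as in \eqref{E:pres-Inp}) the ideal $(x^2+t,\,y)$, i.e.\ the ideal sheaf of the closed subscheme $W=V(x^2+t,y)\subset \Xred\times\Spec R$, which is finite flat of degree $2$ over $R$. On the special fibre $W_o=2p$ and you indeed get $\cI_{2p}$; but on the generic fibre $W_\eta$ is a nonempty length-two subscheme of $\Xred$ supported where $x^2=-t$. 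So $\mathfrak N_\eta$ is \emph{not} locally free on a neighbourhood of $p$ in $X_K$: it is the ideal of a length-two divisor of $\Xred$, and the index of the glued family would be $\iota(\sF_0)+2$ on \emph{both} fibres. Your deformation merely moves the torsion of $\sF_{|\Xred}$ from the non-reduced divisor $2p$ to two reduced points of $\Xred$; it never leaves the stratum $\cM_X(1,1;d_0+1,d_1-1)$. Relatedly, the assertion that $x^2+t=x^2(1+tx^{-2})$ is a unit on $(U\smallsetminus p)\times\Spec R$ is false: the would-be inverse $\sum_n(-t)^nx^{-2n}$ does not lie in $\cO(U\smallsetminus p)\otimes_k R$, and the generic point of $V(x^2+t)$ lies over the generic point of $U$, hence inside $(U\smallsetminus p)\times\Spec R$, so the gluing with the constant family is not even defined there.

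The point you are missing, and the one the paper's proof turns on, is that to lower the index one must deform the subscheme $2p\subset\Xred$ to a length-two subscheme $C\subset X$ that is \emph{not contained in} $\Xred$, because only then does $\cI_C$ become invertible. Locally the correct family is $(x^2,\,y-tx)$ rather than $(x^2+t,\,y)$: at $t=0$ this is $\cI_{2p}=(x^2,y)$, while for $t\neq 0$ one has $x^2=(y-tx)(-y-tx)t^{-2}$, so the ideal is principal, generated by the non-zerodivisor $y-tx$, hence invertible near $p$ with restriction to $\Xred$ equal to $\cO_{\Xred}(-2p)$ up to the unit; this is exactly the paper's family $\cD$ with $\cD_o=2p+E$ and $\cD_\eta=C+E$, $C\not\subset\Xred$. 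With this replacement (and the corresponding repair of the gluing step; your degree and index bookkeeping via Lemma \ref{L:genVB} is otherwise correct) the argument goes through and essentially coincides with the paper's proof.
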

\begin{proof}
This is well-known, see e.g. \cite[Lemma 4.5]{CK}. Let us sketch a proof. 

By our assumptions on $(d_0,d_1)$, we have that 
$$b:=d_0-d_1+\delta+2\geq 2.$$
 Consider an effective divisor $D=2p+E$ on $\Xred$ of length $b$ with $p\not \in \supp E$ and a line bundle $\cL$ on $X$ such that $\deg \cL_{|\Xred}=d_0+1$.
By Example \eqref{Ex:genlb}, the generalized line bundle $\cI_D\otimes \cL$ belongs to  $\cM_X((1,1); (d_0+1,d_1-1))$. There exists a family $\cD\subset X\times \Spec R$ of $0$-dimensional subschemes of $X$ of length $b$ 
over a trait $\Spec R$ such that the special fiber $\cD_o$ is equal to $D=2p+E$ and the general fiber $\cD_{\eta}$ is equal to $E+C$ where $C$ is a  $0$-dimensional subscheme of $X$ of length two supported at $p$ and not entirely contained in $\Xred$. 
It follows that $\cI_{\cD_{\eta}}=\cI_E\otimes \cI_C$, where $\cI_C$ is a line bundle on $\Xred$ such that $(\cI_C)_{|\Xred}=\cO_{\Xred}(-p)$.
 Therefore, the family of torsion-free sheaves $\cI_{\cD}\otimes p_1^*(\cL)$ on $X\times \Spec R$ is such that $(\cI_{\cD}\otimes p_1^*(\cL))_o=\cI_D\otimes \cL$ while the generic fiber $(\cI_{\cD}\otimes p_1^*(\cL))_{\eta}=\cI_E\otimes \cI_C\otimes \cL$ belongs 
 to $\cM_X((1,1); (d_0,d_1))$ by Example \eqref{Ex:genlb}. This concludes the proof.
 \end{proof}

\begin{lemma}\label{L:def-VB2}
Let $(d_0,d_1)\in \bZ^2$ such that $d_0\geq d_1+\delta$. Then 
$$\ov{\cM_X((1,1); (d_0,d_1))}\cap \cM_X((2,0); (d_0+d_1,0))\neq \emptyset.$$
\end{lemma}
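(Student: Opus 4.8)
The plan is to realize the required degeneration as a one--parameter family of extensions on $X$ in which the extension class is linearly scaled to $0$: for $t\neq 0$ the middle term is a generalized line bundle of type $(1,1;d_0,d_1)$, while at $t=0$ the extension splits into a vector bundle on $\Xred$.

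\textbf{The generic sheaf.} First I would choose line bundles $\cE,\cG$ on $\Xred$ with $\deg\cE=d_1+\delta$ and $\deg\cG=d_0-\delta$, arranged so that $\cG\otimes(\cE\otimes\sN)^{-1}$ is effective; this is possible because $\deg\cG-\deg(\cE\otimes\sN)=(d_0-\delta)-d_1=d_0-d_1-\delta\ge 0$ by hypothesis. A nonzero section of that line bundle is a nonzero, hence injective, homomorphism $u\colon\cE\otimes\sN\to\cG$. Since the map $\pi$ in the sequence \eqref{E:sucExt} of Fact \ref{F:FiltrCan}\ref{F:FiltrCan:3} is surjective, I can choose $\sigma\in\Ext{1}_{\cO_X}(\cE,\cG)$ with $\pi(\sigma)=u$, and let $0\to\cG\xrightarrow{\alpha}\sF\to\cE\to 0$ be the corresponding extension of sheaves on $X$. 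Because $\pi(\sigma)=u$ is injective, Fact \ref{F:FiltrCan}\ref{F:FiltrCan:3} gives $\sF^{(1)}=\cG$; as $\cG$ is a line bundle, $\sF$ is torsion-free by Fact \ref{F:Dual}\ref{F:Dual:1}, and \eqref{E:typebis} computes its complete type as $(1,1;\deg\cG+\delta,\deg\cE-\delta)=(1,1;d_0,d_1)$. Thus $\sF\in\cM_X((1,1);(d_0,d_1))$.

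\textbf{The family and its fibres.} Over $X\times\Spec{k[t]}$, with projection $p$ to $X$, I would form
$$\bF:=\coker{\psi},\qquad \psi\colon p^*\cG\longrightarrow p^*\cG\oplus p^*\sF,\quad g\longmapsto(tg,-\alpha(g)),$$
that is, the pushout of $p^*\sigma$ along multiplication by $t$. The map $\psi$ is injective because its second component $-\alpha$ is, so $\bF$ is the cokernel of an injection of pullback sheaves. A direct fibrewise computation gives $\bF_{t_0}\cong\sF$ for $t_0\ne 0$ (pushout along the isomorphism $t_0\cdot\mathrm{id}$), and $\bF_0\cong\cE\oplus\cG$ at $t=0$ (the split extension), which is a rank-$2$ vector bundle on $\Xred$, hence torsion-free on $X$ of type $(2,0;d_0+d_1,0)$; the generalized degree $\deg\cE+\deg\cG=d_0+d_1$ is of course forced by Fact \ref{F:GenRkDeg}\ref{F:GenRkDeg:3}.

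\textbf{Flatness and conclusion.} The one point that genuinely needs checking is that $\bF$ is flat over $k[t]$, i.e. has no $t$-torsion, and I expect this to be the main (though routine) obstacle. I would verify it by a $t$-adic expansion: if a local section $(a,s)$ satisfies $t\cdot(a,s)\in\Im\psi$, then writing $a=\sum_j a_jt^j$ and using the injectivity of $\alpha$ forces $a_0=0$, so $a=tg'$ and $s=-\alpha(g')$, whence $(a,s)=\psi(g')$. Restricting $\bF$ to the local ring $R$ of $\Spec{k[t]}$ at $0$ then gives a sheaf on $X\times\Spec R$, flat over the trait $\Spec R$, with generic fibre $\cong\sF\otimes k(t)$ of type $(1,1;d_0,d_1)$ and special fibre $\cE\oplus\cG$ of type $(2,0;d_0+d_1,0)$. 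This produces a point of $\ov{\cM_X((1,1); (d_0,d_1))}\cap \cM_X((2,0); (d_0+d_1,0))$, proving the lemma.
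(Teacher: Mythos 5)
Your proposal is correct and follows essentially the same route as the paper: both realize the degeneration inside $\Ext{1}_{\cO_X}(\cE,\cG)$ for line bundles on $\Xred$ of degrees $d_1+\delta$ and $d_0-\delta$, using the surjection $\pi$ from \eqref{E:sucExt} and the degree hypothesis to produce a class with injective $\pi(\sigma)$ (hence a sheaf of complete type $(1,1;d_0,d_1)$ by Fact \ref{F:FiltrCan}\ref{F:FiltrCan:3} and \eqref{E:typebis}), which specializes to the split extension, a rank-$2$ vector bundle on $\Xred$. The only difference is presentational: you scale the class linearly to $0$ and check flatness of the resulting family by hand, whereas the paper argues by genericity in the irreducible affine space $\Ext{1}(L_1,L_0)$ containing the origin.
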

\begin{proof}
By our assumptions on $(d_0,d_1)$, we can find two line bundles $L_0$ and $L_1$ on $\Xred$ of degree, respectively, $d_0-\delta$ and $d_1+\delta$ such that there exists a injective map $\phi:L_1\otimes \sN\hookrightarrow L_2$. 
Consider the rank two vector bundle $\cE=L_1\oplus L_2$ on $\Xred$, which has degree $d_0+d_1$ and sits in the split exact sequence
$$
0\to L_0\to \cE\to L_1\to 0.
$$
Since there exists an injective map $\phi:L_1\otimes \sN\hookrightarrow L_2$ and the condition of being injective is open for families of morphisms, the generic element $[\cF]\in \Ext{1}(L_1,L_0)$ will have an associated injective morphism $\pi([\cF]): L_1\otimes \sN\hookrightarrow L_2$  (with the notation of Fact \ref{F:FiltrCan}\ref{F:FiltrCan:3}). Therefore, the same Fact \ref{F:FiltrCan}\ref{F:FiltrCan:3} implies that, for the general element  $[\cF]\in \Ext{1}(L_1,L_0)$, the extension
$$
0\to L_0 \to \cF \to L_1\to 0
$$
is the second canonical filtration. Using Remark \ref{R:inv-F1}, we get that the generic element $[\cF]\in \Ext{1}(L_1,L_0)$ has complete type equal to 
$$
(r_0(\cF),r_1(\cF);d_0(\cF),d_1(\cF))=(1,1;d_0, d_1),
$$
and this concludes the proof. 
\end{proof}

\section{Irreducible components}\label{Sec:Irred}

In this section, we determine the irreducible components of $\cM_X(R,D)$. There are two different cases, according to whether $\delta\leq 2\ov g-2$ or $\delta> 2\ov g-2$.
The first case is easier. 

\begin{prop}\label{P:irrcomp1}
 If $\delta \leq 2(\ov g-1)$ then the irreducible components of $\cM_X(R,D)$ are
$$
\{\ov{\cM_X(r_{\bullet};d_{\bullet})}\}_{(r_{\bullet};d_{\bullet})\in \cS_{adm}(R,D)}.
$$
Moreover:
\begin{enumerate}
\item If $\delta < 2(\ov g-1)$ then $\cM_X(R,D)$ has dimension $R^2(\ov g-1)$ and the unique irreducible component of maximal dimension is $\cM_X(R,0;D,0)$, i.e. the component that parametrizes vector bundles on $\Xred$ of rank $R$ and degree $D$. 
\item If $\delta= 2\ov g-2$ then $\cM_X(R,D)$ is pure of dimension equal to $R^2(\ov g-1)$. 
\end{enumerate}
\end{prop}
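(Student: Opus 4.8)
The plan is to combine the stratification into irreducible smooth strata (Theorems~\ref{T:non-empty} and \ref{T:irr-dim}) with the specialization criterion (Theorem~\ref{T:specia}) and a dimension count. Since every admissible stratum is nonempty and the strata partition the stack, we have
$$\cM_X(R,D)=\bigcup_{(r_\bullet;d_\bullet)\in\cS_{adm}(R,D)}\ov{\cM_X(r_\bullet;d_\bullet)},$$
a (generally infinite) union of irreducible closed substacks. The stack is locally of finite type over $k$, hence locally Noetherian, so every irreducible closed substack $Z$ has a generic point $\xi$; as $\xi$ lies in a unique stratum $\cM_X(r_\bullet;d_\bullet)$, we get $Z=\ov{\{\xi\}}\subseteq\ov{\cM_X(r_\bullet;d_\bullet)}$. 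Thus \emph{every} irreducible closed substack, in particular every irreducible component, is contained in one of the closures above. Granting that no such closure is properly contained in another, it follows at once that each $\ov{\cM_X(r_\bullet;d_\bullet)}$ is maximal among irreducible closed substacks, hence an irreducible component, and conversely that every component is of this form; this is exactly the first assertion.

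The heart of the matter is therefore to rule out proper inclusions $\ov{\cM_X(r_\bullet;d_\bullet)}\subsetneq\ov{\cM_X(\wt r_\bullet;\wt d_\bullet)}$ between closures of distinct admissible types. Such an inclusion would give $\cM_X(r_\bullet;d_\bullet)\cap\ov{\cM_X(\wt r_\bullet;\wt d_\bullet)}\neq\emptyset$, so Theorem~\ref{T:specia} forces $(\wt r_\bullet;\wt d_\bullet)>(r_\bullet;d_\bullet)$, i.e.\ $\wt r_1>r_1$, or $\wt r_1=r_1$ with $\wt d_1>d_1$. On the other hand a proper inclusion of finite-dimensional irreducible closed substacks strictly drops the dimension, so $\dim\cM_X(r_\bullet;d_\bullet)<\dim\cM_X(\wt r_\bullet;\wt d_\bullet)$. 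I would then derive a contradiction from the dimension formula of Theorem~\ref{T:irr-dim}, written as
$$\dim\cM_X(r_\bullet;d_\bullet)=R^2(\ov g-1)+r_1(R-r_1)\big[\delta-2(\ov g-1)\big].$$
Indeed the bracket is $\le 0$ by hypothesis, while $r_1\mapsto r_1(R-r_1)$ is nondecreasing on the admissible range $0\le r_1\le R/2$ and the dimension does not depend on $d_\bullet$; hence $(\wt r_\bullet;\wt d_\bullet)>(r_\bullet;d_\bullet)$ yields $\dim\cM_X(\wt r_\bullet;\wt d_\bullet)\le\dim\cM_X(r_\bullet;d_\bullet)$, contradicting the strict inequality.

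The two refinements are then read off from the same formula. If $\delta<2(\ov g-1)$ the bracket is strictly negative and $r_1(R-r_1)$ is $\ge 0$, vanishing on $0\le r_1\le R/2$ only at $r_1=0$; hence the dimension attains its maximum $R^2(\ov g-1)$ precisely at the unique admissible type with $r_1=0$, namely $(R,0;D,0)$ (admissibility forcing $d_1=0$), which parametrizes rank-$R$ degree-$D$ vector bundles on $\Xred$ by Remark~\ref{R:strata}\ref{R:strata1}. If $\delta=2(\ov g-1)$ the bracket vanishes, so every stratum, and hence every component, has dimension exactly $R^2(\ov g-1)$, which is purity.

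The step I expect to be the main obstacle is the dimension comparison in the second paragraph: one must use simultaneously that the specialization order only increases $r_1$ (or leaves $r_1$ fixed while changing only $d_\bullet$, on which the dimension is constant) and that the sign $\delta-2(\ov g-1)\le 0$ makes the dimension nonincreasing in $r_1$, so that the two inequalities genuinely collide. A secondary technical point, stemming from the fact that $\cM_X(R,D)$ is not of finite type (so that it carries infinitely many components), is to justify the existence of generic points and the ``proper inclusion lowers dimension'' principle in the locally Noetherian stacky setting, rather than invoking quasi-compactness.
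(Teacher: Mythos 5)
Your proposal is correct and follows essentially the same route as the paper: cover $\cM_X(R,D)$ by the closures of the admissible strata, use Theorem~\ref{T:specia} to see that a containment between two such closures forces the specialization order, and then use the dimension formula of Theorem~\ref{T:irr-dim} together with $\delta-2(\ov g-1)\le 0$ to show the dimension is nonincreasing along that order, ruling out proper containments; the two refinements are read off from the same formula exactly as in the paper. Your proposal merely spells out the locally Noetherian/generic-point bookkeeping that the paper leaves implicit.
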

This was proved for  $R=2$ (for the semistable locus) in \cite[Thm. B]{CK}. 
\begin{proof}
The first assertion  follows from Theorem \ref{T:specia} together with the fact that, for any $(r_\bullet;d_\bullet), (\wt r_\bullet;\wt d_\bullet)\in \cS_{adm}(R,D)$ and 
 $\delta \leq 2(\ov g-1)$, we have that 
$$(r_\bullet;d_\bullet)\geq (\wt r_\bullet;\wt d_\bullet)\Rightarrow \wt r_0\geq r_0\geq r_1 \geq \wt r_1\Rightarrow \dim \cM_X(r_\bullet; d_\bullet)\leq \dim \cM_X(\wt r_{\bullet};\wt d_{\bullet}),$$
as it follows from Theorem \ref{T:irr-dim}. 

The second assertion follows from the first one together with Theorem \ref{T:irr-dim}.
\end{proof}

In order to determine the irreducible components of $\cM_X(R,D)$ in the case where $\delta>2\ov g-2$,  the key ingredient that we will use is the following deformation criterion.

\begin{prop}\label{P:def-seq}
Let $\sF$ be a torsion-free sheaf on $X$. Assume that there exists a vector bundle $K$ on $\Xred$ such that
\begin{enumerate}[(i)]
\item \label{Ipo1} $\sN\sF\subseteq K\subseteq \sF^{(1)}$; 
\item \label{Ipo2} there exists an injective morphism 
$$\phi: \frac{\sF}{K}\otimes \sN \hookrightarrow K.$$
\end{enumerate}
Then $\sF$ can be deformed to a torsion-free sheaf $\cE$ on $X$ such that 
\begin{equation}\label{E:IIcan-E}
\sE^{(1)}=K \quad \text{ and } \quad \frac{\sE}{\sE^{(1)}}= \frac{\sF}{K}.
\end{equation}
\end{prop}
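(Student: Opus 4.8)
The plan is to realize $\sE$ as the generic member of an explicit one-parameter family of extensions whose special member is $\sF$, using the extension-theoretic description of the second canonical filtration from Fact \ref{F:FiltrCan}. First I set $\sG:=\sF/K$. Since $\sN\sF\subseteq K$ by hypothesis \eqref{Ipo1}, Fact \ref{F:FiltrCan}\ref{F:FiltrCan:2}(b) shows that $\sG$ is defined on $\Xred$ (it is annihilated by $\sN$), so $\sF$ fits in a short exact sequence $0\to K\to \sF\to \sG\to 0$ on $X$. I would first observe that hypothesis \eqref{Ipo2} forces $\sG$ to be a \emph{vector bundle}: an injective map $\phi:\sG\otimes\sN\hookrightarrow K$ into the locally free sheaf $K$ can exist only if $\sG\otimes\sN$, hence $\sG$, is torsion-free, and a torsion-free sheaf on the smooth curve $\Xred$ is locally free. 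This is exactly the regularity needed to invoke Fact \ref{F:FiltrCan}\ref{F:FiltrCan:3} with $\cE=\sG$ and $\cF=K$.

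Next I build the family. The sequence $0\to K\to\sF\to\sG\to 0$ is classified by some $\sigma_\sF\in\Ext{1}_{\cO_X}(\sG,K)$, and by Fact \ref{F:FiltrCan}\ref{F:FiltrCan:3} (using that $\sG$ is a vector bundle) the map $\pi:\Ext{1}_{\cO_X}(\sG,K)\twoheadrightarrow\Hom(\sG\otimes\sN,K)$ is surjective, so I may choose a lift $\widetilde\phi\in\Ext{1}_{\cO_X}(\sG,K)$ with $\pi(\widetilde\phi)=\phi$. Over $\mathbb{A}^1_t=\Spec k[t]$ I then form the class $\sigma_t:=\sigma_\sF+t\,\widetilde\phi$, i.e. the corresponding extension
$$
0\to p^*K\to \mathfrak{F}\to p^*\sG\to 0
$$
on $X\times\mathbb{A}^1$, where $p$ is the projection to $X$. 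As an extension of the pullbacks of the fixed sheaves $\sG$ and $K$, the sheaf $\mathfrak{F}$ is flat over $\mathbb{A}^1$, its generalized rank and degree are constant by additivity (Fact \ref{F:GenRkDeg}\ref{F:GenRkDeg:2}), and its fiber at $t=0$ is exactly $\sF$. Every fiber is torsion-free: any finite-support subsheaf of $\mathfrak{F}_t$ would map to zero in the torsion-free quotient $\sG$ and hence lie in the vector bundle $K$, forcing it to vanish (alternatively, $\mathfrak{F}_t^{(1)}$ will be shown to be locally free, and one applies Fact \ref{F:Dual}\ref{F:Dual:1}).

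Finally I identify the generic fiber. Since $\pi$ is $k$-linear, $\pi(\sigma_t)=\pi(\sigma_\sF)+t\,\phi$ as maps $\sG\otimes\sN\to K$ of vector bundles on $\Xred$. Because $\phi$ is injective, its fiber at the generic point $\eta$ of $\Xred$ is injective, so some maximal minor of $\phi_\eta$ is nonzero; the corresponding minor of $(\pi(\sigma_\sF)+t\,\phi)_\eta$ is then a polynomial in $t$ with nonzero leading coefficient, whence $\pi(\sigma_t)$ is injective for all but finitely many $t$. For such generic $t$, Fact \ref{F:FiltrCan}\ref{F:FiltrCan:3} gives $\mathfrak{F}_t^{(1)}=K$ and therefore $\mathfrak{F}_t/\mathfrak{F}_t^{(1)}=\mathfrak{F}_t/K=\sG=\sF/K$, so $\sE:=\mathfrak{F}_t$ satisfies \eqref{E:IIcan-E}; restricting $\mathfrak{F}$ to a trait through $0$ and such a generic $t$ exhibits $\sF$ as a specialization of $\sE$. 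I expect the only genuinely delicate points to be the genericity argument for injectivity of $\pi(\sigma_t)$ (i.e. the maximal-minor computation at $\eta$) together with the verification that $\sG$ is forced to be locally free; the flatness and torsion-freeness of the family are then routine.
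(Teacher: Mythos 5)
Your proposal is correct and follows essentially the same route as the paper: both deform the extension class of $0\to K\to\sF\to\sF/K\to 0$ inside $\Ext{1}_{\cO_X}(\sF/K,K)$ to a generic one whose image under $\pi$ is injective, and then apply Fact \ref{F:FiltrCan}\ref{F:FiltrCan:3} to identify the second canonical filtration of the generic member. Your explicit pencil $\sigma_\sF+t\widetilde\phi$ and the observation that hypothesis \eqref{Ipo2} forces $\sF/K$ to be locally free are just more detailed versions of steps the paper leaves implicit.
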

\begin{proof}
By Fact \ref{F:FiltrCan}\ref{F:FiltrCan:3} and the assumption \eqref{Ipo2}, the generic extension 
$$
\sigma=[0\to K\to \sE \to \frac{\sF}{K}\ \to 0]\in \Ext{1}_{\cO_{X}}\left(\frac{\sF}{K},K\right)
$$
is such that the associated morphism $\displaystyle \pi(\sigma):\frac{\sF}{K}\otimes \sN \rightarrow K$ is injective. 
Hence, Fact \ref{F:FiltrCan}\ref{F:FiltrCan:3} implies that the generic such extension satisfies properties \eqref{E:IIcan-E}. 
We conclude by taking a generic deformation of $[0\to K\to \sF \to \frac{\sF}{K}\to 0]$ inside  $\Ext{1}_{\cO_{X}}\left(\frac{\sF}{K},K\right)$.
\end{proof}

\begin{thm}\label{T:def-rig}
Let $X$ be a ribbon such that $\delta>2\ov g-2$.  For any $ (\wt r_\bullet;\wt d_\bullet)\in \cS_{adm}(R,D)$, there exists $(r_\bullet;d_\bullet)\in \cS_{rig}(R,D)$ such that 
$$  \cM_X(\wt r_{\bullet};\wt d_{\bullet})\subseteq \ov{\cM_X(r_\bullet; d_\bullet)}.$$
\end{thm}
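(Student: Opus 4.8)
The plan is to realize the admissible stratum as a degeneration of a rigid one by means of the deformation criterion of Proposition \ref{P:def-seq}, applied to a \emph{general} sheaf of the stratum, and then to upgrade this to the claimed containment of substacks via the irreducibility of $\cM_X(\wt r_\bullet;\wt d_\bullet)$ (Theorem \ref{T:irr-dim}). First I would dispose of the trivial case: if $\wt r_0=\wt r_1$ or $\wt r_0=\wt r_1+1$, then $(\wt r_\bullet;\wt d_\bullet)$ is already rigid and there is nothing to prove, so I may assume $\wt r_0>\wt r_1+1$.

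By Corollary \ref{C:qlf} a general $\sF\in\cM_X(\wt r_\bullet;\wt d_\bullet)$ is quasi-locally free, so that $\sN\sF\subseteq\sF^{(1)}$ are vector bundles on $\Xred$ with $\sF^{(1)}/\sN\sF$ again a vector bundle (Fact \ref{Fact:qll}, Fact \ref{F:Dual}). Set $\rho:=\lceil R/2\rceil$; since $\wt r_1\le\rho\le\wt r_0=\rk{\sF^{(1)}}$, I would choose a subbundle $K$ with $\sN\sF\subseteq K\subseteq\sF^{(1)}$ and $\rk K=\rho$, obtained by pulling back a rank-$(\rho-\wt r_1)$ subbundle of $\sF^{(1)}/\sN\sF$. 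Because $\sN\sF\subseteq K$, Fact \ref{F:FiltrCan} shows that $\sF/K$ is a sheaf on $\Xred$, and being an extension of the vector bundle $\sF/\sF^{(1)}$ by the vector bundle $\sF^{(1)}/K$ it is itself a vector bundle, of rank $\lambda:=R-\rho=\lfloor R/2\rfloor\le\rho$. If I can produce an injection $\phi:\frac{\sF}{K}\otimes\sN\hookrightarrow K$, then Proposition \ref{P:def-seq} deforms $\sF$ to a torsion-free sheaf $\cE$ with $\cE^{(1)}=K$ and $\cE/\cE^{(1)}=\sF/K$; by Remark \ref{R:inv-F1} the complete type $(r_\bullet;d_\bullet)$ of $\cE$ then has $(r_0,r_1)=(\rho,\lambda)$, hence is rigid, and it lies in $\cS_{rig}(R,D)$ since $\cE$ has generalized rank $R$ and generalized degree $D$ (both deformation invariants) and is admissible as an honest torsion-free sheaf (Theorem \ref{T:non-empty}). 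This exhibits $\sF\in\ov{\cM_X(r_\bullet;d_\bullet)}$.

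The heart of the matter, and the only place where the Fano hypothesis $\delta>2\ov g-2$ is used, is the existence of the injection $\phi$ for a general $\sF$. The space of candidates is $\Hom\big(\frac{\sF}{K}\otimes\sN,K\big)=H^0\big((\sF/K)^\vee\otimes K\otimes\sN^{-1}\big)$, and the twist by $\sN^{-1}$, of degree $\delta$, raises the relevant slopes by $\delta$; injectivity is an open condition on this space. I would establish the existence of an injective $\phi$ (for a suitable $K$ and general $\sF$) by a dimension count on the incidence locus $\{(\sF,K,\phi)\}$ fibred over $\cM_X(\wt r_\bullet;\wt d_\bullet)$: its projection is dominant precisely when the rigid stratum is at least as large as $\cM_X(\wt r_\bullet;\wt d_\bullet)$, and by Theorem \ref{T:irr-dim} one has $\dim\cM_X(r_\bullet;d_\bullet)-\dim\cM_X(\wt r_\bullet;\wt d_\bullet)=(r_0r_1-\wt r_0\wt r_1)\,[\delta-2(\ov g-1)]\ge 0$, since $r_0r_1$ is maximized exactly by rigid sequences and the bracket is non-negative exactly in the Fano range (and strictly positive unless $(\wt r_\bullet;\wt d_\bullet)$ is already rigid). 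I expect verifying this dominance — equivalently, that an injective $\phi$ persists for general $\sF$, and not merely for special $\sF$ with very unstable $\sF^{(1)}$ — to be the main obstacle, as it requires controlling the slopes of $K$ and $\sF/K$ as $\sF$ ranges over the stratum and choosing $K$ of maximal admissible degree inside $\sF^{(1)}$.

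Finally I would globalize: carrying out the construction in a family over a dense open $U\subseteq\cM_X(\wt r_\bullet;\wt d_\bullet)$ (over which the resulting rigid type of $\cE$ is constant) shows $U\subseteq\ov{\cM_X(r_\bullet;d_\bullet)}$ for a single $(r_\bullet;d_\bullet)\in\cS_{rig}(R,D)$. Since $\cM_X(\wt r_\bullet;\wt d_\bullet)$ is irreducible (Theorem \ref{T:irr-dim}) and $\ov{\cM_X(r_\bullet;d_\bullet)}$ is closed, the closure of the dense subset $U$ is all of $\cM_X(\wt r_\bullet;\wt d_\bullet)$, which yields the desired containment $\cM_X(\wt r_\bullet;\wt d_\bullet)\subseteq\ov{\cM_X(r_\bullet;d_\bullet)}$.
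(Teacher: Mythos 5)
Your skeleton is the right one and matches the paper's: reduce to the non-rigid case, take a general (hence quasi-locally free) $\sF$, choose an intermediate bundle $\sN\sF\subseteq K\subseteq\sF^{(1)}$, produce an injection $\phi:\frac{\sF}{K}\otimes\sN\hookrightarrow K$, apply Proposition \ref{P:def-seq}, and conclude by irreducibility of the stratum. But the step you yourself flag as ``the heart of the matter'' --- the existence of the injective $\phi$ --- is exactly the content of the theorem, and your proposal does not prove it. The dimension comparison $\dim\cM_X(r_\bullet;d_\bullet)\ge\dim\cM_X(\wt r_\bullet;\wt d_\bullet)$ in the Fano range is only a necessary condition for the containment of closures; it gives no information about whether your incidence locus $\{(\sF,K,\phi)\}$ dominates the stratum, since $\Hom\bigl(\frac{\sF}{K}\otimes\sN,K\bigr)$ could a priori be zero, or consist entirely of non-injective maps, for every choice of $K$ inside a general $\sF$. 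This is a genuine Brill--Noether-type existence problem, not a dimension count on the target strata.

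The paper closes this gap differently from what you sketch, and in a way you should note. First, it does \emph{not} jump directly to the rigid type: it deforms one step at a time, from $(r_0,r_1)$ to $(r_0-1,r_1+1)$, taking $\ov K\subset\Gamma_\sF:=\sF^{(1)}/\sN\sF$ of corank one and of \emph{maximal} degree, so that $\deg\ov K$ is governed by the Segre invariant $s_{r_0-r_1-1}(\Gamma_\sF)$; lower semicontinuity of the Segre invariant plus its known value for a general bundle (\cite[Thm.~0.1]{RT}) then yield the slope estimate $\mu(K)-\mu\bigl(\frac{\sF}{K}\otimes\sN\bigr)\ge\delta-\ov g\ge\ov g-1$. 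Second, the existence of an injective $\phi$ is then extracted from the twisted Brill--Noether theorem \cite[Thm.~0.3]{RT} when the slope gap is strictly larger than $\ov g-1$, with a separate argument in the boundary case $\delta=2\ov g-1$ with maximal Segre invariant, where one uses that the family of maximal subbundles $\ov K$ is at least $(r_0-r_1-1)$-dimensional (\cite[Thm.~4.4]{BPL}, \cite[Thm.~0.2]{RT}) to find one admitting the injection. If you insist on your one-shot choice of $K$ of rank $\lceil R/2\rceil$, you would have to redo this slope computation with a higher-corank Segre invariant, and it is not clear the resulting gap still clears the Brill--Noether threshold; the corank-one iteration is what keeps the numerology within reach of the cited results.
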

The above result was proved for $R=2$ in \cite[Thm. 1]{S2}.
\begin{proof}
We can assume that $ (\wt r_\bullet;\wt d_\bullet)$ is not of rigid type, i.e. $r_0\geq r_1+2$, for otherwise there is nothing to prove. 
Take a general element $\sF\in \cM_X(\wt r_\bullet;\wt d_\bullet)$, which is in particular quasi-locally free by Corollary \ref{C:qlf}.
We will use Proposition \ref{P:def-seq} to deform $\sF$ to a torsion-free sheaf $\sE$ on $X$ of complete rank $(r_0-1,r_1+1)$, which is enough to conclude the proof (by iterating the argument). 

In order to show this, take a subvector bundle $\ov K$ of rank $r_0-r_1-1$ of the rank $r_0-r_1$ and degree $d_0-d_1-r_1\delta$ vector bundle $\Gamma_{\sF}:=\frac{\sF^{(1)}}{\sN\sF}$, which has 
 maximal degree among the rank $r_0-r_1-1$  subvector bundles of $\Gamma_{\sF}$. By definition, we have that 
 \begin{equation}\label{E:SegreG}
 (r_0-r_1-1)[d_0-d_1-r_1\delta]-(r_0-r_1)\deg \ov K=s_{r_0-r_1-1}(\Gamma_{\sF}),
 \end{equation}
where $s_{r_0-r_1-1}(\Gamma_{\sF})$ is, by definition, the $(r_0-r_1-1)$-Segre invariant of $\Gamma_{\sF}$  (see \cite{BPL, RT}). Since the Segre invariant is lower semicontinuous (see \cite{BPL, RT}), we have that 
\begin{equation}\label{E:Seg-lower}
s_{r_0-r_1-1}(\Gamma_{\sF})\leq s_{r_0-r_1-1}(r_0-r_1,d_0-d_1-r_1\delta), 
\end{equation}
where $s_{r_0-r_1-1}(r_0-r_1,d_0-d_1-r_1\delta)$ is the $(r_0-r_1-1)$-Segre invariant of the general rank $r_0-r_1$ and degree $d_0-d_1-r_1\delta$ vector bundle on $\Xred$.
Moreover, the general such Segree invariant is the unique integer such that  (see e.g. \cite[Thm. 0.1]{RT}\footnote{This is proved in loc. cit. under the assumption that $\ov g\geq 2$. However, the result is easily seen to be true also for $\ov g=0, 1$ by using the Segre-Grothendieck splitting theorem for vector bundles on $\bP^1$ and the Atiyah's classification of vector bundles on elliptic curves.}) 
 \begin{equation}\label{E:Seg-gen}
 \begin{sis}
& s_{r_0-r_1-1}(r_0-r_1,d_0-d_1-r_1\delta)\in [(r_0-r_1-1)(\ov g-1),(r_0-r_1-1)\ov g], \\
& s_{r_0-r_1-1}(r_0-r_1,d_0-d_1-r_1\delta)\equiv -(d_0-d_1-r_1\delta)\mod r_0-r_1.
\end{sis}
\end{equation}

Let $\sN \sF\subseteq K\subseteq \sF^{(1)}$ be the inverse image of $\ov K$. We have two exact sequences of vector bundles on $\Xred$
\begin{equation}\label{E:2seq-N}
\begin{aligned}
& 0 \to \sN\sF \to K \to \ov K\to 0,\\
& 0\to \frac{\Gamma_{\sF}}{\ov K}\otimes \sN \to \frac{\sF}{K}\otimes \sN\to \frac{\sF}{\sF^{(1)}}\otimes \sN \to 0.
\end{aligned}
\end{equation}
Since we have an isomorphism $\frac{\sF}{\sF^{(1)}}\otimes \sN= \sN\sF$ (see Fact \ref{F:FiltrCan}\ref{F:FiltrCan:1}), we can compute the difference between the slopes of $K$ and of 
$\frac{\sF}{K}\otimes \sN$ as it follows (using \eqref{E:SegreG})
\begin{equation}\label{E:difslopes}
\mu(K)-\mu\left(\frac{\sF}{K}\otimes \sN\right)=\mu(\ov K)-\mu\left(\frac{\Gamma_{\sF}}{\ov K}\otimes \sN\right)=\frac{\deg \ov K}{r_0-r_1-1}-[d_0-d_1-r_1\delta-\deg \ov K-\delta]=
\end{equation}
$$
=\frac{(r_0-r_1)\deg \ov K}{r_0-r_1-1}-[d_0-d_1-r_1\delta]+\delta=-\frac{s_{r_0-r_1-1}(\Gamma_{\sF})}{r_0-r_1-1}+\delta.
$$
Now, using \eqref{E:Seg-lower} and \eqref{E:Seg-gen} and our assumption that $\delta > 2\ov g-2$ , we distinguish two cases.

\un{Case I:} Either $s_{r_0-r_1-1}(\Gamma_{\sF})<(r_0-r_1-1)\ov g$ or $\delta\geq 2\ov g$. 

In this case, \eqref{E:difslopes} implies that 
$$
\mu(K)-\mu\left(\frac{\sF}{K}\otimes \sN\right)>\ov g-1.
$$
By the twisted higher Brill-Noether rank theory (see \cite[Thm. 0.3]{RT}), this implies that there exists an injective morphism 
$$
\phi: \frac{\sF}{K}\otimes \sN\hookrightarrow K.
$$
Hence Proposition \ref{P:def-seq} implies that the sheaf $\sF$ can be deformed to a torsion free sheaf $\sE$ such that 
$$\sE^{(1)}=K \quad \text{ and } \quad \frac{\sE}{\sE^{(1)}}= \frac{\sF}{K}.$$
In particular, $\sE$ will have complete rank $(r_0-1,r_1+1)$, as required. 

\un{Case II:} $s_{r_0-r_1-1}(\Gamma_{\sF})=(r_0-r_1-1)\ov g$
and $\delta=2\ov g-1$.

In this case, \eqref{E:difslopes} implies that 
$$
\mu(K)-\mu\left(\frac{\sF}{K}\otimes \sN\right)=\ov g-1.
$$
Moreover, it follows from \cite[Thm. 4.4]{BPL} or \cite[Thm. 0.2]{RT} that 
$$
\dim \left\{\ov K\subset \Gamma_{\sF}: \rk{\ov K}=r_0-r_1-1 \text{ and } \deg(\ov K)=\frac{(r_0-r_1-1)(d_0-d_1-r_1\delta-\ov g)}{r_0-r_1} \right\}\geq r_0-r_1-1.
$$
Again by the twisted higher Brill-Noether rank theory (see \cite[Thm. 0.3]{RT}), this implies that  we can choose such a $K$ with the property that there exists an injective morphism 
$$
\phi: \frac{\sF}{K}\otimes \sN\hookrightarrow K.
$$
Hence, we conclude as in the previous Case. 
\end{proof}

\begin{cor}\label{C:irrcomp2}
 If $\delta > 2(\ov g-1)$ then the irreducible components of $\cM_X(R,D)$ are
$$
\{\ov{\cM_X(r_{\bullet};d_{\bullet})}\}_{(r_{\bullet};d_{\bullet})\in \cS_{rig}(R,D)}.
$$
In particular $\cM_X(R,D)$ has pure dimension equal to 
$$\dim \cM_X(R,D)=
\begin{cases}
\frac{R^2}{4}(2\ov g-2+\delta)= \frac{R^2}{4}(g-1) & \text{ if } R \text{ is even,} \\
\frac{R^2+1}{4}(2\ov g-2)+\frac{R^2-1}{4}\delta  & \text{ if } R \text{ is odd.} 
\end{cases}
$$
\end{cor}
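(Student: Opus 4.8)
The plan is to deduce the statement formally from the stratification results of Section \ref{Sec:stack} together with the deformation Theorem \ref{T:def-rig}. By Theorem \ref{T:irr-dim} each non-empty stratum $\cM_X(r_\bullet;d_\bullet)$ (i.e.\ the one attached to an admissible sequence) is irreducible and locally closed, so that
\[
\cM_X(R,D)=\bigcup_{(r_\bullet;d_\bullet)\in \cS_{adm}(R,D)} \ov{\cM_X(r_\bullet;d_\bullet)}
\]
is a (possibly infinite) union of irreducible closed substacks. I would take as the starting reformulation that the irreducible components of $\cM_X(R,D)$ are precisely the closures $\ov{\cM_X(r_\bullet;d_\bullet)}$ that are maximal for inclusion. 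Note also that non-admissible rigid sequences index empty strata by Theorem \ref{T:non-empty}, so effectively the index set $\cS_{rig}(R,D)$ in the statement is $\cS_{rig}(R,D)\cap\cS_{adm}(R,D)$.

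First I would invoke Theorem \ref{T:def-rig}: since $\delta>2\ov g-2$, every admissible stratum $\cM_X(\wt r_\bullet;\wt d_\bullet)$ is contained in $\ov{\cM_X(r_\bullet;d_\bullet)}$ for some rigid $(r_\bullet;d_\bullet)\in\cS_{rig}(R,D)$, whence $\ov{\cM_X(\wt r_\bullet;\wt d_\bullet)}\subseteq \ov{\cM_X(r_\bullet;d_\bullet)}$. Therefore every maximal stratum closure is the closure of a rigid stratum, and the irreducible components are to be found among $\{\ov{\cM_X(r_\bullet;d_\bullet)}\}_{(r_\bullet;d_\bullet)\in\cS_{rig}(R,D)}$.

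It then remains to verify that each rigid stratum closure is genuinely maximal. Here I would argue by dimension: rewriting the formula of Theorem \ref{T:irr-dim} as
\[
\dim\cM_X(r_\bullet;d_\bullet)=R^2(\ov g-1)+r_0r_1\,[\delta-2(\ov g-1)],
\]
the hypothesis $\delta>2(\ov g-1)$ makes this a strictly increasing function of $r_0r_1$ (subject to $r_0+r_1=R$, $r_0\ge r_1$). Since $r_0r_1$ is maximised strictly at the balanced splittings $r_0=r_1=R/2$ ($R$ even) and $r_0=r_1+1=(R+1)/2$ ($R$ odd), that is exactly on $\cS_{rig}(R,D)$, all rigid strata share one and the same maximal dimension, strictly larger than that of every non-rigid stratum. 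Consequently, if $\ov{\cM_X(r_\bullet;d_\bullet)}\subseteq\ov{\cM_X(\wt r_\bullet;\wt d_\bullet)}$ with both sequences rigid, these irreducible closed substacks have equal dimension, forcing equality of the closures, and hence (as locally closed strata are open and dense in their closures and are pairwise disjoint) equality of the sequences. Thus no rigid closure is properly contained in another, so each is maximal, i.e.\ a component; combined with the previous paragraph this identifies the components with $\{\ov{\cM_X(r_\bullet;d_\bullet)}\}_{(r_\bullet;d_\bullet)\in\cS_{rig}(R,D)}$ and shows purity of dimension.

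Finally I would evaluate the common dimension by substituting $r_0r_1=R^2/4$ ($R$ even) and $r_0r_1=(R^2-1)/4$ ($R$ odd) into the displayed formula, obtaining $\tfrac{R^2}{4}(2\ov g-2+\delta)$ and $\tfrac{R^2+1}{4}(2\ov g-2)+\tfrac{R^2-1}{4}\delta$ respectively, the even case rewritten as $\tfrac{R^2}{4}(g-1)$ via the identity $g-1=2\ov g-2+\delta$ from \eqref{E:cohom}. The only genuine geometric input beyond bookkeeping is Theorem \ref{T:def-rig}; the point I expect to require care is precisely the maximality argument for rigid strata. Indeed, distinct rigid sequences share the same $r_\bullet$ and are therefore comparable in the order $\ge$, so their closures do meet by Theorem \ref{T:specia}; it is only the equality of dimensions that rules out any containment, so these intersections are proper and each rigid stratum still contributes a separate component.
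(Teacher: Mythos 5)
Your proposal is correct and follows exactly the route the paper intends (its proof is the one-liner "combine Theorem \ref{T:def-rig} and Theorem \ref{T:irr-dim}"): Theorem \ref{T:def-rig} places every admissible stratum inside the closure of a rigid one, and the dimension formula of Theorem \ref{T:irr-dim}, which under $\delta>2(\ov g-1)$ is strictly increasing in $r_0r_1$ and hence maximal exactly on the rigid splittings, rules out any containment among the rigid closures. Your side remarks (that non-admissible rigid sequences index empty strata, and that distinct rigid closures do meet by Theorem \ref{T:specia} without being nested) are accurate and only sharpen the bookkeeping.
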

The above result was proved for $R=2$ (for the semistable locus) in \cite[Cor. 1]{S2} (completing \cite[Thm. B]{CK}).
\begin{proof}
This follows by combining Theorem \ref{T:def-rig} and Theorem \ref{T:irr-dim}. 
\end{proof}

\section{The (semi)stable locus}\label{Sec:ss-Locus}

The aim of this section is to study the open substack of the stack $\cM_X$ parametrizing (slope-) stable or semistable torsion-free sheaves, whose definition we now recall.

\begin{defi}\label{Def:SemiStab}
Let $\sF$ be a torsion-free sheaf on a ribbon $X$ with $R(\sF)>0$ (or, equivalently, non zero) and define its   \emph{slope} by
$$\mu(\sF)= \frac{\Deg{\sF}}{\Rk{\sF}}.$$ 
 \begin{enumerate}[(1)]
 \item We say that:
\begin{itemize}
\item  $\sF$ is \emph{semistable} if $\mu(\sG)\le\mu(\sF)$ for any proper subsheaf $0\neq \sG\subsetneq \sF$; 
\item $\sF$ is \emph{stable} if $\mu(\sG)<\mu(\sF)$ for any proper subsheaf $0\neq \sG\subsetneq \sF$; 
\item $\sF$ is \emph{polystable} if it is equal to the direct sum of stable sheaves of the same slopes. 
\end{itemize}
\item 
Let $\sF$ be a semistable sheaf, then a \emph{Jordan-Holder filtration} of $\sF$ is a filtration whose associated graded object $\Gr{\text{JH}}{\sF}$ is polystable of the same slope of $\sF$. It is well known that any semistable sheaf admits a (non-necessarily unique) Jordan-Holder filtration and that $\Gr{\text{JH}}{\sF}$ is independent of the choice of the filtration (see e.g. \cite[Proposition 1.5.2]{HL}). Clearly, if $\sF$ is stable, $\Gr{\text{JH}}{\sF}=\sF$.
Two semistable sheaves $\sF$ and $\sG$ are said to be \emph{S-equivalent} if $\Gr{\text{JH}}{\sF}$ and $\Gr{\text{JH}}{\sG}$ are isomorphic.  
\end{enumerate}
\end{defi}

\begin{rmk}\label{R:SStab}
\noindent
\begin{enumerate}
\item\label{R:SStab:1} As usual, it is possible to check semistability considering only saturated subsheaves of $\sF$ (i.e. subsheaves $\sG$ such that the quotient $\sF/\sG$ is torsion-free) and it is equivalent to the reverse inequalities for quotients of $\sF$ (these are well-known basic properties of semistability, see e.g. \cite[Proposition 1.2.6]{HL}).

\item\label{R:SStab:2} It follows from Facts \ref{F:Dual}\ref{F:Dual:4} and \ref{F:Dual}\ref{F:Dual:4bis} and from the previous point that a torsion-free sheaf $\sF$ is (semi)stable if and only if its dual $\sF^{\vee}$ is (semi)stable.

\item\label{R:SStab:3} By Fact \ref{F:GenRkDeg}\ref{F:GenRkDeg:1}, slope-semistability is equivalent, on a ribbon $X$, to Gieseker semistability with respect to any ample line bundle on $X$ (which is defined in terms of the leading coefficient of the reduced Hilbert polynomial, for its precise definition see \cite{HL}).

\end{enumerate}

\end{rmk}

It is well-known that stability and semistability are open conditions, so they determine open substacks of the stack of torsion-free sheaves on $X$. 

\begin{defi}\label{D:sslocus} 
Denote by 
$$\cM_X(R,D)^s\subseteq \cM_X(R,D)^{ss}\subseteq \cM_X(R,D)$$ 
 the open substack parametrizing stable (resp. semistable) sheaves on $X$ of generalized rank $R$ and generalized degree $D$.
\end{defi}

Using that slope-(semi)stability is equivalent on a ribbon to Gieseker (semi)-stability, the stacks of (semi)stable torsion-free sheaves admit suitable moduli spaces.

\begin{fact}(see \cite{Si}, \cite{HL})\label{F:ModSpa}
There exists a Cartesian diagram 
$$
\xymatrix{
\cM_X(R,D)^s\ar@{^{(}->}[r] \ar^{\Phi^s}[d] &  \cM_X(R,D)^{ss}\ar^{\Phi^{ss}}[d] \\
M_X(R,D)^s\ar@{^{(}->}[r] & M_X(R,D)^{ss}
} 
$$
with the following properties
\begin{enumerate}
\item the horizontal arrows are open embeddings.
\item the stacks $\cM_X(R,D)^s$ and $\cM_X(R,D)^{ss}$ are of finite type over $k$.
\item the morphism $\Phi^{ss}$ is an adequate moduli space (and a good moduli space if $\rm{char}(k)=0$) and given two points $\sF,\sG\in  \cM_X(R,D)^{ss}(k)$ we have that $\Phi^{ss}(\sF)=\Phi^{ss}(\sG)$ if and only if $\sF$ and $\sG$ are S-equivalent. In particular, the $k$-points of $M_X(R,D)^{ss}$ are in bijection with polystable sheaves of generalized rank $R$ and generalized degree $D$.
\item the morphism $\Phi^s$ is a trivial $\Gm$-gerbe, i.e.  $\cM_X(R,D)^{s}\cong M_X(R,D)^s\times B\Gm$. 
\item $M_X(R,D)^{ss}$ is a projective $k$-scheme (and hence $M_X(R,D)^{s}$ is a quasi-projective $k$-scheme).
\end{enumerate}
\end{fact}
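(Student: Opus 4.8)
The statement is recalled from the general theory of moduli of pure sheaves on projective schemes, so the plan is to verify that the hypotheses of Simpson's construction \cite{Si} (see also \cite[Ch.~4]{HL}) are met for the ribbon $X$ and then to invoke it. First I would record that $X$ is a projective $k$-scheme and that, by Definition \ref{D:pure}, the torsion-free sheaves on $X$ are exactly the pure sheaves of dimension one; by Fact \ref{F:GenRkDeg}\ref{F:GenRkDeg:1} the Hilbert polynomial of such a sheaf with respect to a fixed very ample $\cO_X(1)$ is $P(T)=\Deg{\sF}+\Rk{\sF}(1-\ov g)+\Rk{\sF}\,dT$, so fixing $(R,D)$ is the same as fixing $P$. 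Finally, by Remark \ref{R:SStab}\ref{R:SStab:3}, slope-(semi)stability on $X$ agrees with Gieseker (semi)stability with respect to $\cO_X(1)$, so $\cM_X(R,D)^{ss}$ and $\cM_X(R,D)^s$ are precisely the stacks of Gieseker-(semi)stable pure sheaves with Hilbert polynomial $P$, to which the cited theory applies.

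With these identifications, I would run the standard Geometric Invariant Theory argument. Boundedness of the semistable sheaves of Hilbert polynomial $P$ yields an $m\gg 0$ such that every semistable $\sF$ is $m$-regular: then $\sF(m)$ is globally generated, $h^1(\sF(m))=0$, and $h^0(\sF(m))=P(m)=:N$. Choosing a basis of $H^0(\sF(m))$ realizes $\cM_X(R,D)^{ss}$ as the quotient stack $[Q^{ss}/GL_N]$, where $Q^{ss}\subseteq\operatorname{Quot}(\cO_X(-m)^{\oplus N},P)$ is the locally closed, $GL_N$-invariant subscheme of quotients inducing an isomorphism on $H^0$ with semistable target. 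The GIT quotient $Q^{ss}/\!\!/GL_N$ is then the adequate (good in characteristic zero) moduli space $M_X(R,D)^{ss}$, which is projective; finite type of the stacks, openness of the inclusion $\cM_X(R,D)^s\hookrightarrow\cM_X(R,D)^{ss}$, and the identification of closed points of $M_X(R,D)^{ss}$ with closed $GL_N$-orbits, i.e.\ with $S$-equivalence classes of semistable sheaves represented by polystable ones, are all part of this package.

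For the stable locus the one point that genuinely uses the geometry of $X$ is the computation of the automorphisms of a stable sheaf. Here I would argue directly that $\Hom(\sF,\sF)=k$ for $\sF$ stable: given a nonzero $\varphi\colon\sF\to\sF$, if $\ker\varphi\neq 0$ then, since $\sF$ is torsion-free, $\ker\varphi$ has positive generalized rank, so by additivity (Fact \ref{F:GenRkDeg}\ref{F:GenRkDeg:2}) the sheaf $\im{\varphi}$ is simultaneously a proper subsheaf and a proper quotient of $\sF$, forcing the contradictory inequalities $\mu(\im{\varphi})<\mu(\sF)<\mu(\im{\varphi})$; hence $\varphi$ is injective, $\im{\varphi}\cong\sF$ has the same generalized rank and degree, so its cokernel is a torsion sheaf of generalized degree $0$ and therefore vanishes, making $\varphi$ an isomorphism. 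Thus $\operatorname{End}(\sF)$ is a finite-dimensional division algebra over $k=\ov k$, hence equals $k$, and every point of $Q^s$ has stabilizer the central $\Gm\subset GL_N$. Consequently $\Phi^s\colon\cM_X(R,D)^s\to M_X(R,D)^s$ is a $\Gm$-gerbe, and its triviality — equivalently, the existence of a universal sheaf on $M_X(R,D)^s\times X$ — is furnished by the tautological quotient on $Q^s$ as in the cited references.

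I expect the main (though modest) obstacle to be precisely this last circle of ideas on the non-reduced base: one must ensure that stable sheaves remain simple when $X$ carries nilpotents, which amounts to ruling out nonzero nilpotent endomorphisms, and this is exactly what the vanishing of the generalized degree of a nonzero torsion quotient achieves above. Everything else is a matter of transporting Simpson's construction, which is already formulated for arbitrary projective schemes and hence applies verbatim to the ribbon $X$.
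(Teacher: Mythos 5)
Your proposal follows the same route as the paper: the statement is a ``Fact'' imported wholesale from Simpson and Huybrechts--Lehn, and the paper's entire justification is the observation (Remark \ref{R:SStab}\ref{R:SStab:3}, resting on the Hilbert polynomial formula of Fact \ref{F:GenRkDeg}\ref{F:GenRkDeg:1}) that on a ribbon slope-(semi)stability coincides with Gieseker (semi)stability for pure one-dimensional sheaves, so the general GIT construction on an arbitrary projective scheme applies. Your first paragraph is exactly this check, and your sketch of the Quot-scheme construction and of the simplicity of stable sheaves --- injectivity of a nonzero endomorphism via the sub/quotient slope contradiction, then surjectivity because the cokernel would be a torsion sheaf of generalized degree zero, hence zero by Riemann--Roch --- correctly fills in what the citation covers. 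The one place where your write-up overreaches is the triviality of the gerbe in item (iv): the tautological quotient on $Q^s$ carries weight one under the central $\Gm\subset GL_N$ and therefore does \emph{not} descend to $M_X(R,D)^s\times X$; it only exhibits $\Phi^s$ as a $\Gm$-gerbe, and the triviality of that gerbe (equivalently, the existence of a Poincar\'e family on the stable locus) is a genuinely separate assertion that does not follow from the tautological family and is not addressed in the cited references in the generality claimed. Since the paper also states this point without argument, the caveat concerns the statement as much as your proof; apart from it, the proposal matches the paper's intended argument.
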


We now determine which strata of $\cM_X(R,D)$ intersect the semistable or stable locus.

\begin{thm}\label{T:ss-strata}
Let $(r_\bullet;d_\bullet)\in \cS_{adm}(R,D)$.
\begin{enumerate}[(1)]
\item \label{T:ss-strata1} If the stratum $\cM_X(r_\bullet; d_\bullet)$ intersects $\cM_X(R,D)^{ss}$ then one the following occurs:
\begin{enumerate}[(a)]
\item $r_1=0$;
\item $r_0>r_1>0$ and $\displaystyle \frac{d_0-(r_0+r_1)\delta}{r_0}\le\frac{d_1}{r_1}\le\frac{d_0}{r_0}$;
\item $r_0=r_1$ and $d_0\leq d_1+2r_0\delta$.
\end{enumerate}
\vspace{0.1cm} 
If $\ov g\geq 1$ then the above numerical conditions are also sufficient for the non-emptiness of $\cM_X(r_\bullet; d_\bullet)\cap \cM_X(R,D)^{ss}$.
\item \label{T:ss-strata2} If the stratum $\cM_X(r_\bullet; d_\bullet)$ intersects $\cM_X(R,D)^{s}$ then one the following occurs:
\begin{enumerate}[(a)]
\item $r_1=0$;
\item $r_0>r_1>0$ and $\displaystyle \frac{d_0-(r_0+r_1)\delta}{r_0}<\frac{d_1}{r_1}<\frac{d_0}{r_0}$;
\item $r_0=r_1$ and $d_0< d_1+2r_0\delta$.
\end{enumerate}
\vspace{0.1cm} 
If $\ov g\geq 2$ then the above numerical conditions are also sufficient for the non-emptiness of $\cM_X(r_\bullet; d_\bullet)\cap \cM_X(R,D)^{s}$.
\end{enumerate}
\end{thm}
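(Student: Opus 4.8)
The plan is to separate the necessity of the numerical conditions, which is elementary and uniform across both parts, from their sufficiency, which is the substantial part and where the genus hypotheses enter.

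\textbf{Necessity.} Let $\sF\in\cM_X(r_\bullet;d_\bullet)$ be semistable (resp. stable); if $r_1=0$ we are in case (a), so assume $r_1>0$. I would test (semi)stability against the two canonical objects of the first canonical filtration. On one hand, $\sN\sF\subseteq\sF$ is a nonzero subsheaf, proper since $\sF_{|\Xred}=\sF/\sN\sF$ has positive rank $r_0$; by Remark \ref{R:inv-F1} it satisfies $\Rk{\sN\sF}=r_1$ and $\Deg{\sN\sF}=d_1$, so $\mu(\sN\sF)\le\mu(\sF)$ (resp. $<$) becomes $\tfrac{d_1}{r_1}\le\tfrac{d_0}{r_0}$ (resp. $<$). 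On the other hand, $\sF\twoheadrightarrow\sF/\sF^{(1)}$ is a nonzero quotient, proper since $r_0>0$, with $\Rk{\sF/\sF^{(1)}}=r_1$ and $\Deg{\sF/\sF^{(1)}}=d_1+r_1\delta$; checking semistability on quotients (Remark \ref{R:SStab}\ref{R:SStab:1}) gives $\mu(\sF)\le\mu(\sF/\sF^{(1)})$ (resp. $<$), which simplifies to $\tfrac{d_0-(r_0+r_1)\delta}{r_0}\le\tfrac{d_1}{r_1}$ (resp. $<$). When $r_0>r_1$ these are exactly (b); when $r_0=r_1$ the second is exactly (c), while the first imposes no new constraint. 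As a consistency check, the two inequalities are interchanged by the duality $\sF\leftrightarrow\sF^\vee$ of Remark \ref{R:SStab}\ref{R:SStab:2}, so one may be deduced from the other.

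\textbf{Sufficiency.} Since (semi)stability is an open condition and each stratum is irreducible (Theorem \ref{T:irr-dim}), it suffices to produce a single (semi)stable sheaf of the prescribed type, or equivalently to show that the general member of the stratum is (semi)stable. For case (a) the sheaf is a bundle on $\Xred$ and, as every subsheaf is again supported on $\Xred$, (semi)stability on $X$ coincides with (semi)stability on $\Xred$; a suitable bundle exists because $\ov g\ge1$ (resp. $\ov g\ge2$). For cases (b) and (c) I would build examples of slope $\mu=D/R$ by assembling \emph{stable pieces of slope $\mu$}: stable bundles on $\Xred$ of type $(n,0)$, together with low-rank stable sheaves on $X$ with $r_1>0$, namely generalized line bundles of type $(1,1;e_0,e_1)$ with $e_0+e_1=2\mu$, which are stable exactly when $e_1<e_0<e_1+2\delta$. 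A direct sum of such pieces of the common slope $\mu$ is polystable, hence semistable; distributing the degrees shows that the required pieces exist precisely when the (strict) inequalities (b)/(c) hold, the bracketing $\tfrac{d_1}{r_1}\le\mu\le\tfrac{d_0}{r_0}$ being equivalent to the right inequality and the lower bound $\mu-\delta\le\tfrac{d_1}{r_1}$ to the left one. For the \emph{stable} locus a direct sum never suffices, so instead I would show that the general element of the stratum is stable by a dimension estimate on the locus of sheaves admitting a subsheaf of slope $\ge\mu$.

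\textbf{Main obstacle.} The crux is this verification of (semi)stability of the general element together with the requirement that it actually land in the prescribed stratum. The difficulty is that, by the necessity computation, $\mu(\sF/\sF^{(1)})\ge\mu\ge\mu(\sF^{(1)})$, so the canonical pieces straddle the slope and a subsheaf with large image in $\sF/\sF^{(1)}$ cannot be excluded by a naive slope estimate; one must use the genericity of the extension class defining $\sF$ to force any such subsheaf to meet $\sF^{(1)}$ in correspondingly high rank. I would control this by bounding the dimension of the Quot/Ext loci of potential destabilizing subsheaves against the stratum dimension of Theorem \ref{T:irr-dim}, exactly as the injectivity of twisted morphisms is controlled in the proof of Theorem \ref{T:def-rig}; the twisted higher Brill--Noether theory of \cite{RT} supplies the existence of the maps and the expected dimensions, and it is here that the thresholds $\ov g\ge1$ for semistability and $\ov g\ge2$ for stability, as well as the passage from non-strict to strict inequalities, emerge. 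The residual boundary cases, where equality holds in (b) or (c) and the general element is only strictly semistable, would be treated by realizing such sheaves as extensions of stable pieces of slope exactly $\mu$.
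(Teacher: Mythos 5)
Your necessity argument is correct and coincides with the paper's: one tests (semi)stability against the subsheaf $\sN\sF$ and the quotient $\sF/\sF^{(1)}$ and translates the resulting slope inequalities. The genuine gap is in sufficiency. For the semistable case your plan is to take direct sums of stable pieces of the common slope $\mu=D/R$, using as building blocks only stable bundles on $\Xred$ and generalized line bundles of type $(1,1)$. This fails for two concrete reasons. First, integrality: every summand of a polystable sum must have slope exactly $\mu$, so a type-$(1,1)$ piece needs total degree $2\mu\in\bZ$ and a rank-$n$ bundle on $\Xred$ needs degree $n\mu\in\bZ$; for $(r_0,r_1)=(2,1)$ with $D=1$, $R=3$, $\mu=1/3$, no such decomposition exists at all, although semistable (indeed stable) sheaves of that type do exist by the theorem. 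Second, range: a semistable generalized line bundle of slope $\mu$ and type $(1,1;e_0,e_1)$ has $e_0-e_1\in[\delta,2\delta]$, hence $e_1\le\mu-\delta/2$, so any direct sum of such pieces satisfies $d_1/r_1\le\mu-\delta/2$, whereas condition (b) is equivalent to $\mu-\delta\le d_1/r_1\le\mu$ and the upper half of this interval is unreachable by your construction. For the stable case you only sketch a dimension count on the locus of destabilizing subsheaves and explicitly defer the ``main obstacle''; but that obstacle is precisely the content of the theorem, and no workable bound is supplied.

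For comparison, the paper produces the required sheaves explicitly via Proposition \ref{P:ex-qufree} applied to four-term exact sequences of stable bundles on $\Xred$, obtained from elementary transformations and from Lange's conjecture (Lemma \ref{L:VBsm}, which is where the hypotheses $\ov g\ge 2$, resp.\ $\ov g\ge 1$, enter). Stability is then verified either by Dr\'ezet's criterion \cite[Thm. 5.1.2]{DR3} (the cases $r_0=r_1$ and $d_0/r_0-\delta<d_1/r_1<(d_0-r_1\delta)/r_0$), or by a direct slope estimate on an arbitrary saturated subsheaf using the combinatorial Lemmas \ref{Lem:VarLemDr} and \ref{Lem:StimaPendenze}, with the remaining numerical range handled by dualizing. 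None of this machinery, nor a substitute for it, appears in your proposal, so the sufficiency direction remains unproved.
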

The above Theorem generalizes and improves some known special cases: \cite[\S 5.2]{DR3} for vector bundles; \cite[\S 5.3]{DR3} for quasi-locally free sheaves of complete rank $(r_0,r_1)=(a+1,a)$ with $a\geq 1$ (extending the case $(r_0,r_1)=(2,1)$ treated in \cite[\S 9.1]{DR1}); \cite[Thm. A]{CK} for $R=2$.

We will separated the proof in three parts.

\begin{proof}[Proof of the first statements in \eqref{T:ss-strata1} and \eqref{T:ss-strata2}]
The first statement in \eqref{T:ss-strata1} follows from the fact that:
\begin{itemize}
\item if $\sF$ is a semistable sheaf of complete type $(r_\bullet; d_\bullet)$ with $r_0>r_1>0$ then we must have (using Definition \ref{D:CanFiltr} and Remark \ref{R:inv-F1}) 
 \begin{equation}\label{E:ineq1}
 \begin{sis}
 & \frac{d_1}{r_1}=\mu(\sN\sF) \leq \mu(\sF)=\frac{d_0+d_1}{r_0+r_1} \left(\Longleftrightarrow \frac{d_1}{r_1}\geq \frac{d_0}{r_0}\right), \\
 & \frac{d_0-r_1\delta}{r_0}=\mu(\sF^{(1)})\leq \mu(\sF)=\frac{d_0+d_1}{r_0+r_1} \left(\Longleftrightarrow  \frac{d_0-(r_0+r_1)\delta}{r_0}\le\frac{d_1}{r_1}\right). 
 \end{sis}
 \end{equation}
\item if $\sF$ is a semistable sheaf of complete type $(r_\bullet; d_\bullet)$ with $r_0=r_1>0$ then we must have 
 \begin{equation}\label{E:ineq2}
 \frac{d_0-r_1\delta}{r_0}=\mu(\sF^{(1)})\leq \mu(\sF)=\frac{d_0+d_1}{r_0+r_1} \left(\Longleftrightarrow d_0\leq d_1+2r_0\delta\right). 
 \end{equation}
 [Note that in this case the inequality $\mu(\sN\sF) \leq \mu(\sF)$ is superfluous since $\sF^{(1)}$ is the saturation of $\sN\sF$ inside $\sF$.]
\end{itemize}

The first statement in \eqref{T:ss-strata2} is proved similarly using strict inequalities in \eqref{E:ineq1} and \eqref{E:ineq2}. 

\end{proof}

\begin{proof}[Proof of the second part of\eqref{T:ss-strata2}] 
We will distinguishing three cases.

$\fbox{Case 1: $r_1=0$}$ This follows from Lemma \ref{L:VBsm}\eqref{L:VBsm2:1}. 

$\fbox{Case 2: $r_0=r_1:=r$}$ 

Assuming that  the strict inequalities in \eqref{E:ineq2} are satisfied, we want to construct a stable generalized vector bundle of complete type $(r,r;d_0,d_1)$.

Lemma \ref{L:VBsm}\eqref{L:VBsm2:3} implies that there exists a stable vector bundle $\cE$ on $\Xred$ of rank $r$ and degree $d_0-r\delta$,  an effective divisor  $Z$ on $\Xred$ of degree $b:=d_0-d_1-r\delta\geq 0$ and a surjection $\phi:E\to \cO_Z$ such that $\cF:=\ker{\phi}$ is a stable vector bundle on $\Xred$ of rank $r_1$ and degree $d_1$.
Hence, we get an exact sequence on $\Xred$
\begin{equation}\label{E:ex-stab}
0\to \cF\to \cE\to \cG:=\cO_Z\oplus (\cF\otimes \sN^{-1})\to \cF\otimes \sN^{-1}\to 0,
\end{equation}
such that $\cE$ and $\cF$ are stable vector bundles. 

Proposition \ref{P:ex-qufree} implies that there exists a coherent sheaf $\sF$ on $\Xred$ such that  its associated canonical exact sequence \eqref{Eq:ExSeq1} is isomorphic to \eqref{E:ex-stab}.
By construction and Fact \ref{F:Dual}\ref{F:Dual:1},  $\sF$ is a generalized vector bundle of complete type $(r,r;d_0,d_1)$. By the numerical assumption \eqref{E:ineq2} and the stability of $\cF$ and $\cE$, we get 
\begin{equation}\label{E:propVB1}
\begin{sis}
& (\sN\sF)^{\rm sat}=(\sF)^{(1)}=\cE \text{ is stable, }\\
& (\sF_{|\Xred})^{\vee\vee}=\sF/\sF^{(1)}=\sN\sF\otimes \sN=\cF\otimes \sN \text{ is stable, }\\
& \mu(\sF^{(1)})<\mu(\sF).
\end{sis}
\end{equation} 
By Fact \ref{F:Dual}\ref{F:Dual:5}, the dual sheaf $\sF^{\vee}$ is again a generalized vector bundle of generalized rank $2r$ and it satisfies the following properties
\begin{equation}\label{E:propVB2}
\begin{sis}
& (\sN\sF^\vee)^{\rm sat}=(\sF^\vee)^{(1)}=(\sF/\sF^{(1)})^\vee=(\cF\otimes \sN^{-1})^\vee=\cF^*\otimes \sN^2 \text{ is stable, }\\
& (\sF^\vee_{|\Xred})^{\vee\vee}=\sF^\vee/(\sF^\vee)^{(1)}=(\sF^{(1)})^{\vee}=\cE^\vee=\cE^*\otimes \sN \text{ is stable, }\\
& \mu((\sF^\vee)^{(1)})=\mu((\sF/\sF^{(1)})^\vee)<\mu(\sF^\vee),
\end{sis}
\end{equation} 
where the last inequality is the dual of the inequality  \eqref{E:ineq2}. We can now apply \cite[Thm. 5.1.2]{DR3}, whose hypothesis are satisfied by \eqref{E:propVB1} and \eqref{E:propVB2}, 
in order to conclude that $\sF$ is stable. 

$\fbox{Case 3: $r_0>r_1>0$}$ 

Assuming that  the strict inequalities in \eqref{E:ineq1} are satisfied, we want to construct a stable torsion-free sheaf of complete type $(r_0,r_1;d_0,d_1)$.

We will distinguish three cases.


\un{Case A:} $d_0/r_0-\delta<d_1/r_1<(d_0-r_1\delta)/r_0$.

Because of these numerical assumptions, we can apply  Lemma \ref{L:VBsm}\eqref{L:VBsm2:2} in order to find  
a stable vector bundle $\cF$ (resp. $\cK$) on $\Xred$ of rank $r_1$ (resp. $r_0-r_1$) and degree $d_1$ (resp. $d_0-d_1-r_1\delta$)
together with two extensions 
$$
\begin{sis}
& 0\to \cF\to \cE \to \cK\to 0, \\
& 0 \to \cK\to \cG\to \cF\otimes \sN^{-1}\to 0,
\end{sis}
$$
such that $\cE$ and $\cG$ are stable vector bundles of rank $r_0$ and degree, respectively, $d_0-r_1\delta$ and $d_0$. 
Hence we get the following exact sequence of stable vector bundles on $\Xred$:
\begin{equation}\label{E:seq-stab}
0\to \cF\to \cE \to \cG\to \cF\otimes \sN^{-1}\to 0. 
\end{equation}
Proposition \ref{P:ex-qufree} implies that there exists a coherent sheaf $\sF$ on $\Xred$ such that  its associated canonical exact sequence \eqref{Eq:ExSeq1} is isomorphic to $\eqref{E:seq-stab}$.
By construction and  Fact \ref{Fact:qll}\ref{Fact:qll:1},  $\sF$ is quasi-locally free and of complete type $(r_0,r_1;d_0,d_1)$.

Observe that, by our assumptions of the stability of the vector bundles in \eqref{E:seq-stab} together with the strict inequalities of \eqref{E:ineq1} and Fact \ref{Fact:qll}\ref{Fact:qll:2}, we have that 
\begin{equation}\label{E:many-in}
\begin{sis}
& \sN\sF=\cF \: \text{ is stable,} \\
& \sF_{|\Xred}=\cG\:  \text{ is stable,}\\
& \sN\sF^{\vee}=(\sN\sF)^{\vee}\otimes \sN=\cF^{\vee}\otimes \sN\: \text{ is stable,} \\
& (\sF^{\vee})_{|\Xred}=(\sF^{(1)})^\vee=\cE^{\vee} \: \text{ is stable, }\\
& \mu(\sN\sF) < \mu(\sF), \\
 & \mu(\sF^{(1)})< \mu(\sF) \Rightarrow  \mu(\sN\sF^\vee) < \mu(\sF^\vee).
\end{sis}
\end{equation}
 We can now apply \cite[Thm. 5.1.2]{DR3}, whose hypothesis are satisfied by \eqref{E:many-in}, in order to conclude that $\sF$ is stable.

\un{Case B:} $[d_0-(r_0+r_1)\delta]/r_0< d_1/r_1\le d_0/r_0-\delta$.

Because of these numerical assumptions, we can apply  Lemma \ref{L:VBsm}\eqref{L:VBsm2:2} in order to find  
a stable vector bundle $\cF$ (resp. $\cK$) on $\Xred$ of rank $r_1$ (resp. $r_0-r_1$) and degree $d_1$ (resp. $d_0-d_1-r_1\delta$)
together with an extension on $\Xred$ 
$$
0\to \cF\to \cE \to \cK\to 0
$$
such that $\cE$ is a stable vector bundles of rank $r_0$ and degree $d_0-r_1\delta$. 
Hence we get the following exact sequence of  vector bundles on $\Xred$:
\begin{equation}\label{E:seq-stab2}
0\to \cF\to \cE \to \cG:=\cK\oplus  (\cF\otimes \sN^{-1}) \to \cF\otimes \sN^{-1}\to 0,
\end{equation}
where $\cF$, $\cE$ and $\cK$ are stable vector bundles on $\Xred$ of ranks, respectively, $r_1$, $r_0$ and $r_0-r_1$ and of degree, respectively, $d_1$, $d_0-r_1\delta$ and $d_0-d_1-r_1\delta$. 

Proposition \ref{P:ex-qufree} implies that there exists a coherent sheaf $\sF$ on $\Xred$ such that  its associated canonical exact sequence \eqref{Eq:ExSeq1} is isomorphic to \eqref{E:seq-stab2}.
By construction and Fact \ref{Fact:qll}\ref{Fact:qll:1},  $\sF$ is quasi-locally free and of complete type $(r_0,r_1;d_0,d_1)$.

 Let us prove that $\sF$ is stable. To this aim, consider  a saturated subsheaf $0\neq  \sG\subsetneq \sF$ and let us show that $\mu(\sG)< \mu(\sF)$, or equivalently that 
 $\mu(\sF)< \mu(\sF/\sG)$.
 
If $\sG\subseteq\sF^{(1)}=\cE$ then we conclude since
$$\mu(\sG)\le \mu(\cE)=\mu(\sF^{(1)}) < \mu(\sF)$$
by the stability of $\cE$ and the numerical assumption \eqref{E:ineq1}.
Analogously, if $\cK\oplus (\cF\otimes \sN^{-1})=\sF|_{\Xred}\surj \sF/\sG$ then we conclude since 
$$
\mu(\sF)> \mu(\sF|_{\Xred}) \geq \mu(\cF\otimes \sN^{-1}) \geq \mu(\sF/\sG)
$$
where the first inequality follows from the numerical assumption \eqref{E:ineq1}, and the last two inequalities follow from the fact that $\cK$ and $\cF\otimes \sN^{-1}$ are stable with slopes 
$\mu(\cK)\geq \mu(\cF\otimes \sN^{-1})$. 

So, we may assume (see Fact \ref{F:FiltrCan}\ref{F:FiltrCan:2}) that nor $\sG$ neither $\sF/\sG$ are defined over $\Xred$. We now distinguish two cases:

$\bullet$ Assume that $\displaystyle \frac{r_0+r_1}{r_0(\sG)+r_1(\sG)}\ge \frac{r_0}{r_0(\sG)}$ (which is equivalent to $r_1r_0(\sG)\geq r_0r_1(\sG)$).

 Fact \ref{F:FiltrCan}\ref{F:FiltrCan:4:2} implies that we have injections $0\subsetneq\sG^{(1)}\subseteq\sF^{(1)}=\cE$ and $0\subsetneq\sG/\sG^{(1)}\subseteq\sF/\sF^{(1)}=\cF\otimes \sN^{-1}$.
 By the stability of $\cE$ and of $\cF$, we get that  $\mu(\sG^{(1)})\le\mu(\sF^{(1)})$ and $\mu(\sG/\sG^{(1)})\le\mu(\sF/\sF^{(1)})$. 
 Moreover we have that $\mu(\sF^{(1)})<\mu (\sF)<\mu(\sF/\sF^{(1)})$ by the numerical assumption \eqref{E:ineq1}. Therefore we conclude that 
 $\mu(\sG)< \mu(\sF)$ by applying Lemma \ref{Lem:VarLemDr} with 
 $$
 \begin{sis}
 & (R_2,R_3,R_5,R_6)=(\rk{\sG/\sG^{(1)}}=r_1(\sG), \rk{\sG^{(1)}}=r_0(\sG), \rk{\sF/\sF^{(1)}}=r_1, \rk{\sF^{(1)}}=r_0), \\
 & (D_2,D_3,D_5,D_6)=(\deg(\sG/\sG^{(1)}), \deg(\sG^{(1)}), \deg(\sF/\sF^{(1)}), \deg(\sF^{(1)})). 
 \end{sis}
 $$

$\bullet$ Assume that $\displaystyle \frac{r_0+r_1}{r_0(\sG)+r_1(\sG)}< \frac{r_0}{r_0(\sG)}$ (which is equivalent to $r_1r_0(\sG)< r_0r_1(\sG)$).

Consider the following commutative diagram:
\[
\begin{tikzcd}
\sN\sG \ar[d, hook] \ar[r, hook] & \sG^{(1)}\ar[d, hook]\ar[r, twoheadrightarrow] &\sG^{(1)}/\sN\sG \ar[d, "\varphi"]\\
\sN\sF=\cF \ar[r, hook] & \sF^{(1)}=\cE \ar[r, twoheadrightarrow] &\sF^{(1)}/\sN\sF=\cK
\end{tikzcd}
\]
It induces the following exact sequence
$$
0\to \cJ:=\sG^{(1)}\cap \sN\sF\to \sG^{(1)} \to \cH:=\im{\varphi}\to 0. 
$$
Therefore we can compute the slope of $\sG$ as
\begin{equation}\label{E:slG}
\mu(\sG)=\frac{\rk{\cJ}\mu(\cJ)+[\rk{\sG^{(1)}}-\rk{\cJ}]\mu(\cH)+[R(\sG)-\rk{\sG^{(1)}}]\mu(\sG/\sG^{(1)})}{\Rk{\sG}}
\end{equation}
and similarly the slope of $\mu(\sF)$ is given by 
\begin{equation}\label{E:slF}
\mu(\sF)=\frac{r_1\mu(\sN\sF)+[r_0-r_1]\mu(\sF^{(1)}/\sN\sF)+[R(\sF)-r_0]\mu(\sF/\sF^{(1)})}{\Rk{\sF}}
\end{equation}
We now conclude that $\mu(\sG)< \mu(\sF)$ by applying Lemma  \ref{Lem:StimaPendenze} to the two expressions \eqref{E:slG} and \eqref{E:slF} and using that 
$$
\rk{\cJ}\leq r_1=\rk{\sN\sF}, \quad \rk{\cG^{(1)}}\leq r_0=\rk{\sF^{(1)}}, \quad \Rk{\sG}\leq \Rk{\sF},
$$
$$
\frac{d_1}{r_1}=\mu(\sN\sF)< \mu(\sF^{(1)}/\sN\sF)=\frac{d_0-d_1-r_1\delta}{r_0-r_1} \quad \text{ because }  \frac{d_1}{r_1}\leq \frac{d_0}{r_0}-\delta<\frac{d_0}{r_0}-\frac{r_1}{r_0}\delta,
$$
$$
\frac{d_1+r_1\delta}{r_1}=\mu(\sF/\sF^{(1)})\leq \mu(\sF^{(1)}/\sN\sF)=\frac{d_0-d_1-r_1\delta}{r_0-r_1} \quad \text{ because } \frac{d_1}{r_1}\leq \frac{d_0}{r_0}-\delta,
$$
$$
r_0(\sG)\Rk{\sF}-r_0\Rk{\sG}=r_1r_0(\sG)-r_0r_1(\sG)< 0,
$$
$$
\Rk{\sG}r_1-\Rk{\sF}\rk{\cJ}\leq \Rk{\sG}r_1-\Rk{\sF}r_1(\sG)=r_1r_0(\sG)-r_0r_1(\sG)< 0,
$$
where in the last two inequalities we have used that  $\rk{\cJ}\ge r_1(\sG)=\rk{\sN\sG}$ since $\sN\sG\subseteq \cJ$ and our assumption $r_1r_0(\sG)< r_0r_1(\sG)$.

\un{Case C:} $(d_0-r_1\delta)/r_0\le d_1/r_1 < d_0/r_0$.

Define the integers 
$$
\begin{sis}
& \wt d_0:=-d_0-r_0\delta+r_1\delta, \\
& \wt d_1:=-d_1-2r_1\delta. 
\end{sis}
$$
It is straightforward to check that the complete type $(r_0,r_1;d_0,d_1)$ verifies the numerical assumption in Case C if and only if $(r_0,r_1;\wt d_0,\wt d_1)$ verifies the numerical assumption in Case B.
Hence, by what proved in Case B, there exists a semistable quasi-locally free $\wt \sF$ of complete type $(r_0,r_1;\wt d_0,\wt d_1)$. The dual sheaf $(\wt\sF)^{\vee} $ is a semistable (by Fact \ref{R:SStab}\ref{R:SStab:2}) quasi-locally free (by Fact \ref{F:Dual}\ref{F:Dual:5}) sheaf of complete type $(r_0,r_1;d_0,d_1)$ by Fact \ref{F:Dual}. 
\end{proof}

\begin{proof}[Proof of the second part of \eqref{T:ss-strata1}] 
This proof proceeds along the same lines of the proof of the second part \eqref{T:ss-strata1}, with the only modification that for $\ov g=1$ we replace Lemma \ref{L:VBsm}\eqref{L:VBsm2} with  Lemma \ref{L:VBsm}\eqref{L:VBsm1}.

\end{proof}

\begin{lemma}\label{L:VBsm}
Let $C$ be a smooth projective irreducible curve of genus $g(C)$.
\begin{enumerate}[(1)]
\item \label{L:VBsm2} Assume that $g(C)\geq 2$. Then
\begin{enumerate}[(a)]
\item  \label{L:VBsm2:1} For any $(r,d)\in \bN_{>0}\times \bZ$, the moduli space of stable vector bundles on $C$ of rank $r>0$ and degree $d\in \bZ$ is non-empty and irreducible.
\item  \label{L:VBsm2:2}  Fix two pairs $(r_1,d_1),(r_2,d_2)\in \bN_{>0}\times \bZ$ such that $\displaystyle \frac{d_1}{r_1}<\frac{d_2}{r_2}$. Then given two general stable vector bundles $E_1$ (resp. $E_2$) of rank
$r_1$ (resp. $r_2$) and degree $d_1$ (resp. $d_2$), the general extension 
$$
0\to E_1\to E\to E_2\to 0
$$   
is a stable vector bundle. 
\item  \label{L:VBsm2:3}  Fix $(r,d)\in \bN_{>0}\times \bZ$. For any general vector bundle $E$ of rank $r$ and degree $d$, and any general point $p\in C$, the general elementary transformation of $E$ at $p$
$$
0\to \ker{\phi}\to E\xrightarrow{\phi} \cO_p\to 0
$$ 
is a stable vector bundle. 
\end{enumerate}
\item \label{L:VBsm1} Assume that $g(C)=1$. Then
\begin{enumerate}[(a)]
\item  \label{L:VBsm1:1} For any $(r,d)\in \bN_{>0}\times \bZ$, the moduli space of semistable vector bundles on $C$ of rank $r>0$ and degree $d\in \bZ$ is non-empty and irreducible.
\item  \label{L:VBsm1:2}  Fix two pairs $(r_1,d_1),(r_2,d_2)\in \bN_{>0}\times \bZ$ such that $\displaystyle \frac{d_1}{r_1}<\frac{d_2}{r_2}$. Then given two semistable vector bundles $E_1$ (resp. $E_2$) of rank $r_1$ (resp. $r_2$) and degree $d_1$ (resp. $d_2$), any general extension 
$$
0\to E_1\to E\to E_2\to 0
$$   
is a semistable vector bundle. 
\item  \label{L:VBsm1:3}  Fix $(r,d)\in \bN_{>0}\times \bZ$. For any semistable vector bundle $E$ of rank $r$ and degree $d$, and any $p\in C$, the general elementary transformation of $E$ at $p$
$$
0\to \ker{\phi}\to E\xrightarrow{\phi} \cO_p\to 0
$$ 
is a semistable vector bundle. 
\end{enumerate}
\end{enumerate}
\end{lemma}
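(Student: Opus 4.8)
The plan is to prove the six assertions by grouping them into three families --- the non-emptiness and irreducibility of the moduli space \eqref{L:VBsm2:1} and \eqref{L:VBsm1:1}, the (semi)stability of a general extension \eqref{L:VBsm2:2} and \eqref{L:VBsm1:2}, and the (semi)stability of a general elementary transformation \eqref{L:VBsm2:3} and \eqref{L:VBsm1:3} --- and in each family to separate the case $g(C)\geq 2$ from the case $g(C)=1$. For \eqref{L:VBsm2:1} and \eqref{L:VBsm1:1} I would only appeal to the classical theory of moduli of bundles on a curve: when $g(C)\geq 2$ the moduli space of stable bundles of rank $r>0$ and degree $d$ is a non-empty, smooth and irreducible quasi-projective variety of dimension $r^2(g(C)-1)+1$; when $g(C)=1$ I would invoke the classification of Atiyah and Tu, by which semistable bundles of rank $r$ and degree $d$ always exist and the moduli space of their $S$-equivalence classes is isomorphic to $\mathrm{Sym}^{h}(C)$ with $h=\gcd(r,d)$, which is irreducible.

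For the extension statements \eqref{L:VBsm2:2} and \eqref{L:VBsm1:2} I would work inside $\Ext{1}(E_2,E_1)$. Since $\mu(E_1)<\mu(E_2)$ and both bundles are (semi)stable, any nonzero map $E_2\to E_1$ would have image of slope both $\geq\mu(E_2)$ and $\leq\mu(E_1)$, so $\Hom(E_2,E_1)=0$, and Riemann--Roch gives $\dim\Ext{1}(E_2,E_1)=r_1r_2(g(C)-1)+(r_1d_2-r_2d_1)>0$. Every extension $E$ satisfies $\mu(E_1)<\mu(E)<\mu(E_2)$, so a subbundle $F\subseteq E$ destabilizing $E$ must surject nontrivially onto a subsheaf $F_2\subseteq E_2$; setting $F_1=F\cap E_1$, a diagram chase shows that the existence of $F$ forces the restricted class $e|_{F_2}\in\Ext{1}(F_2,E_1)$ to come from $\Ext{1}(F_2,F_1)$. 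For each admissible pair $(F_1,F_2)$ this cuts out a proper linear subspace of $\Ext{1}(E_2,E_1)$, and I would estimate its dimension by means of the slope inequalities, so that the union over the (bounded) family of such pairs is a proper subvariety; the general class then yields a (semi)stable $E$. The same computation settles the case $g(C)=1$, where one only concludes semistability.

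For the elementary-transformation statements \eqref{L:VBsm2:3} and \eqref{L:VBsm1:3} I would identify $E'=\ker(\phi)$ with the hyperplane $H=\ker(\phi_p)\subseteq E_p$, i.e. with a point of $\bP(E_p^{\vee})$, and note that $E'$ is automatically locally free with $\mu(E')=\mu(E)-\tfrac1r$. A subbundle $F$ destabilizing $E'$ satisfies $\mu(E')\le\mu(F)\le\mu(E)$, so for each rank its degree is forced, and $F\subseteq E'$ holds if and only if $F_p\subseteq H$, a condition of codimension $\rk{F}$ on $H$. For $g(C)\geq 2$ and $E$ general I would bound the dimension of the family of such subbundles of $E$ through the Segre-invariant estimates already used in the proof of Theorem \ref{T:def-rig} (see \cite{BPL,RT}); for a general $E$ the generic Segre invariants are large enough that either no such $F$ exists at all, or the maximal-degree ones form a family avoided by a general $H$. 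For $g(C)=1$ I would instead use the Atiyah--Tu description, reducing to a general semistable bundle, which is a direct sum of pairwise non-isomorphic stable bundles of the same slope; its maximal-slope subbundles are then the finitely many partial direct sums, whose fibers a general $H$ avoids, so that $E'$ is semistable.

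The step I expect to be the main obstacle is the dimension count in the extension case: one must control, uniformly over all admissible ranks and degrees of the pair $(F_1,F_2)$, the dimension of the locus of extension classes admitting a destabilizing sub-extension, and compare it with $\dim\Ext{1}(E_2,E_1)$. This requires the boundedness of the relevant families of subsheaves of bounded slope, together with a careful use of the (semi)stability of $E_1$ and $E_2$ and of the strict slope inequality $\mu(E_1)<\mu(E_2)$.
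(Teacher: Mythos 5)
The paper does not prove this lemma at all: it is established purely by citation --- part \ref{L:VBsm2:1} is classical, part \ref{L:VBsm2:2} is the Lange conjecture (quoted as \cite[Prop.~1.11]{RT}), part \ref{L:VBsm1:1} is \cite[Thm.~1]{T}, part \ref{L:VBsm1:2} is \cite[Thm.~1]{BR}, and parts \ref{L:VBsm2:3}, \ref{L:VBsm1:3} are attributed to the end of the proof of \cite[Thm.~0.3]{RT}. Your outline reconstructs the standard strategies \emph{behind} those references, which is valuable, but as a proof it has a genuine gap exactly where you anticipate one. For \ref{L:VBsm2:2} and \ref{L:VBsm1:2}, the uniform estimate on the locus of extension classes $e\in\Ext{1}(E_2,E_1)$ admitting a destabilizing sub-extension is not a routine verification: when $\mu(E_2)-\mu(E_1)$ is small the naive parameter count (codimension of each linear condition versus dimension of the family of pairs $(F_1,F_2)$) does not close up, and making it work is precisely the content of the Lange conjecture, which was open for some years before \cite{BPL,RT}; on an elliptic curve $\dim\Ext{1}(E_2,E_1)=r_1d_2-r_2d_1$ can be as small as $1$, and \cite{BR} proceeds instead through the Atiyah--Tu classification. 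So either you carry out that estimate in full (essentially re-proving \cite[Prop.~1.11]{RT} and \cite[Thm.~1]{BR}), or you should simply cite these results, as the paper does.

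Two smaller points on the elementary-transformation parts. First, your genus-one argument for \ref{L:VBsm1:3} reduces to a \emph{general} semistable $E$ (a direct sum of pairwise non-isomorphic stable bundles), whereas the statement is phrased for \emph{any} semistable $E$; note that for special $E$ the conclusion can genuinely fail (e.g.\ $E=L^{\oplus 2}$ on an elliptic curve: every surjection $E\to\cO_p$ kills some sub-line bundle $F_\lambda\cong L$, so every elementary transformation contains $L$ with $\mu(L)>\mu(E)-\tfrac12$ and is unstable). What the paper actually needs from this part is only the existence of one semistable bundle with a semistable elementary transformation, so your "general $E$" version is the usable one, but you should say explicitly that this is what you are proving. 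Second, even after restricting to maximal-slope subbundles, on an elliptic curve one must also exclude destabilizing subsheaves $F\subset\ker\phi$ with $\mu(E)-\tfrac1r<\mu(F)<\mu(E)$ (these exist numerically whenever $\lfloor \rk{F}\,d/r\rfloor>\rk{F}(d-1)/r$ without equality of slopes), so the reduction to "finitely many partial direct sums" is not quite complete as stated.
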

\begin{proof}
This is well-known: we indicate some references. 

Part \eqref{L:VBsm2:1} is classical. 

Part \eqref{L:VBsm2:2} is the so-called Lange's conjecture, see \cite[Prop. 1.11]{RT}.


Part \eqref{L:VBsm1:1} follows from \cite[Thm. 1]{T}.

Part \eqref{L:VBsm1:2} is proved in \cite[Thm. 1]{BR}.

Parts \eqref{L:VBsm2:3} and \eqref{L:VBsm1:3} are well-known, see e.g. \cite[End of the proof of Thm. 0.3]{RT}. 
\end{proof}

\begin{lemma}\label{Lem:VarLemDr}
Let $R_1$, $R_2$, $R_3$, $R_4$, $R_ 5$ and $R_6$ be positive real numbers and let $D_1$, $D_2$, $D_3$, $D_4$, $D_5$ and $D_6$ be real numbers such that
\begin{gather*}
R_1=R_2+R_3,\; R_4=R_5+R_6,\\
D_1=D_2+D_3,\;D_4=D_5+D_6. 
\end{gather*}
Let $\mu_i=R_i/D_i$ for $i=1,\dotsc,6$.
Assume that $\mu_2\ge\mu_3$ (resp. $\mu_5\ge\mu_6$) and that $\mu_6\ge\mu_3$, $\mu_5\ge\mu_2$ and $R_4/R_1\ge R_6/R_3$.
Then it holds that $\mu_4\ge\mu_1$.\\ 
If, moreover, $\mu_2>\mu_3$ (resp. $\mu_5>\mu_6$) or $\mu_6>\mu_3$ or $\mu_5>\mu_2$, then $\mu_4>\mu_1$.
\end{lemma}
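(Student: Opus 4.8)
The plan is to prove the inequality by writing $\mu_1$ and $\mu_4$ as averages with \emph{positive} weights and then comparing them term by term. Throughout I read the slopes with the convention $\mu_i=D_i/R_i$, consistent with the global slope $\mu(\sF)=\Deg(\sF)/\Rk(\sF)$ and with the way the lemma is invoked in the proof of Theorem \ref{T:ss-strata}; the reason this is the workable reading is that the defining relations $R_1=R_2+R_3$, $D_1=D_2+D_3$ (and likewise for the index $4$) give $R_i\mu_i=D_i$ and hence
$$\mu_1=\frac{D_2+D_3}{R_1}=\frac{R_2\mu_2+R_3\mu_3}{R_1},\qquad \mu_4=\frac{R_5\mu_5+R_6\mu_6}{R_4},$$
so $\mu_1$ is the average of $\mu_2,\mu_3$ with the positive rank-weights $R_2,R_3$, and similarly $\mu_4$ is the average of $\mu_5,\mu_6$. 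The ranks are positive, which is exactly what makes these genuine convex combinations.

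First I would bound $\mu_4$ from below. Since $R_5,R_6>0$ and $\mu_5\ge\mu_2$, $\mu_6\ge\mu_3$, the weighted average only decreases when the entries are lowered, so
$$\mu_4=\frac{R_5\mu_5+R_6\mu_6}{R_4}\ge\frac{R_5\mu_2+R_6\mu_3}{R_4}.$$
It then remains to compare the right-hand side with $\mu_1$. Clearing the positive denominators $R_1,R_4$, the inequality $\dfrac{R_5\mu_2+R_6\mu_3}{R_4}\ge\dfrac{R_2\mu_2+R_3\mu_3}{R_1}=\mu_1$ is equivalent to
$$(R_1R_5-R_4R_2)\,\mu_2+(R_1R_6-R_4R_3)\,\mu_3\ge 0.$$
The elementary identity that makes everything collapse, obtained by substituting $R_5=R_4-R_6$ and $R_2=R_1-R_3$, is
$$R_1R_5-R_4R_2=R_4R_3-R_1R_6=-(R_1R_6-R_4R_3),$$
so the left-hand side equals $(R_4R_3-R_1R_6)(\mu_2-\mu_3)$. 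Here $\mu_2-\mu_3\ge 0$ by hypothesis, while $R_4R_3-R_1R_6\ge 0$ is precisely the hypothesis $R_4/R_1\ge R_6/R_3$ (recall $R_1,R_3>0$); the product is therefore $\ge 0$, and chaining the two displays gives $\mu_4\ge\mu_1$.

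For the parenthetical variant one argues symmetrically: assuming $\mu_5\ge\mu_6$ in place of $\mu_2\ge\mu_3$, I would instead bound $\mu_1\le (R_2\mu_5+R_3\mu_6)/R_1$ using $\mu_2\le\mu_5$, $\mu_3\le\mu_6$, after which the same identity reduces the comparison $(R_2\mu_5+R_3\mu_6)/R_1\le\mu_4$ to $(R_4R_3-R_1R_6)(\mu_5-\mu_6)\ge 0$, again nonnegative. The strict statement is read off the very same two displays: the first inequality is strict as soon as $\mu_5>\mu_2$ or $\mu_6>\mu_3$ (the weights $R_5,R_6$ being positive), while the second is strict when $\mu_2>\mu_3$ together with $R_4R_3>R_1R_6$. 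The only real obstacle is the sign bookkeeping in the second step: the entire argument hinges on spotting the identity $R_1R_5-R_4R_2=R_4R_3-R_1R_6$, which is exactly what turns the rank-proportion hypothesis $R_4/R_1\ge R_6/R_3$ into the sign needed to conclude; once that identity is in hand, the rest is the two one-line comparisons above.
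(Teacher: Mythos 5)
Your reading $\mu_i=D_i/R_i$ is the right one (the ``$\mu_i=R_i/D_i$'' in the statement is a typo, as both the application in Theorem \ref{T:ss-strata} and the paper's own computation $R_4R_1(\mu_4-\mu_1)=R_1D_4-R_4D_1$ confirm), and your proof of the non-strict inequality is correct and essentially coincides with the paper's: the paper expands $R_4R_1(\mu_4-\mu_1)$ in a single chain and lands on $(R_5R_3-R_2R_6)(\mu_5-\mu_6)$, which is exactly your $(R_4R_3-R_1R_6)(\mu_5-\mu_6)$ via the identity you isolate. The only organizational differences are that you pass through the intermediate averages $(R_5\mu_2+R_6\mu_3)/R_4$ and $(R_2\mu_5+R_3\mu_6)/R_1$, and that you treat the case $\mu_2\ge\mu_3$ directly where the paper cites Dr\'ezet.

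For the strict addendum there is a discrepancy worth flagging. What you actually prove is that $\mu_4>\mu_1$ holds if $\mu_5>\mu_2$, or $\mu_6>\mu_3$, or ($\mu_2>\mu_3$ \emph{and} $R_4R_3>R_1R_6$) --- note the extra hypothesis in the last clause. This is weaker than the lemma as stated, which asserts that $\mu_2>\mu_3$ (resp.\ $\mu_5>\mu_6$) alone suffices. You have not missed an argument: the stated strict claim is false. Take $R_2=R_3=R_5=R_6=1$, $D_2=D_5=1$, $D_3=D_6=0$; then all hypotheses hold (with $\mu_5=\mu_2$, $\mu_6=\mu_3$, $R_4/R_1=1=R_6/R_3$) and $\mu_2=1>0=\mu_3$, yet $\mu_1=\mu_4=1/2$. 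The paper's own proof of the strict part has the same defect: its step $(R_5R_3-R_2R_6)(\mu_5-\mu_6)\ge\mu_5-\mu_6$ silently assumes $R_5R_3-R_2R_6\ge 1$, which does not follow from $R_4/R_1\ge R_6/R_3$. So your more careful bookkeeping gives the correct version of the statement; one should then check, in the application in Case B of Theorem \ref{T:ss-strata} where strictness is extracted from $\mu_5>\mu_6$ alone, that one of the conditions in your corrected disjunction actually holds.
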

\begin{proof}
The case $\mu_2\ge\mu_3$ is proved in \cite[Lemme 5.1.1]{DR3}. 
The (similar) proof in the other case $\mu_5\ge\mu_6$ follows from the following chain of inequalities:
$$R_4R_1(\mu_4-\mu_1)=(R_2+R_3)(D_5+D_6)-(R_5+R_6)(D_2+D_3)\geq
$$
$$
\geq (R_2+R_3)(R_5\mu_5+R_6\mu_6) -(R_5+R_6)(R_2\mu_5+R_3\mu_6)=$$
$$=(R_5R_3-R_2R_6)(\mu_5-\mu_6)\geq \mu_5-\mu_6\geq 0,
$$
where in the first inequality we have used that $\mu_6\ge\mu_3$ and $\mu_5\ge\mu_2$, the second inequality follows from $R_5/R_2\ge R_6/R_3$ which is equivalent to 
$R_4/R_1\ge R_6/R_3$, and the last inequality follows from $\mu_5\ge \mu_6$.
The last inequality  is due to the fact that the first inequality is strict if either $\mu_6>\mu_3$ or $\mu_5>\mu_2$, while the last inequality is strict if $\mu_5>\mu_6$. 
\end{proof} 

\begin{lemma}\label{Lem:StimaPendenze}
Let $m_1>m_2>m_3$ and $m'_1>m'_2>m'_3$ be non-negative integers and let $q_1,q_2,q_3$ and $q'_1, q'_2, q'_3$ be real numbers. Assume $q_1\le q'_1$, $q_2\le q'_2$, $q_3\le q'_3$, $q'_1\le q'_2$ and $q'_3\le q'_2$, $m_1m'_3-m'_1m_3\le 0$ and $m_2m'_1-m'_2m_1\le 0$. Then we have that 
$$w:=\frac{m_3q_1+(m_2-m_3)q_2+(m_1-m_2)q_3}{m_1}\leq \frac{m'_3q'_1+(m'_2-m'_3)q'_2+(m'_1-m'_2)q'_3}{m'_1}:=w'$$ 
Moreover, if either $q_1'<q_2'$ and $m_1m'_3-m'_1m_3< 0$ or $q'_3<q'_2$ and  $m_2m'_1-m'_2m_1< 0$, then $w<w'$.
\end{lemma}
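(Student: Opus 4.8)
The plan is to interpolate between $w$ and $w'$ through an auxiliary quantity that keeps the unprimed weights but uses the primed values. Concretely, I would set
$$
w'':=\frac{m_3q'_1+(m_2-m_3)q'_2+(m_1-m_2)q'_3}{m_1},
$$
and prove the two inequalities $w\le w''$ and $w''\le w'$ separately, so that $w\le w''\le w'$.

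First I would dispatch $w\le w''$, which is routine. Since $m_1>m_2>m_3\ge 0$, the three coefficients $m_3$, $m_2-m_3$, $m_1-m_2$ are non-negative and $m_1>0$; hence, using $q'_i-q_i\ge 0$ for each $i$,
$$
w''-w=\frac{m_3(q'_1-q_1)+(m_2-m_3)(q'_2-q_2)+(m_1-m_2)(q'_3-q_3)}{m_1}\ge 0.
$$

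The crux is the second inequality $w''\le w'$, where both sides involve the same values $q'_1,q'_2,q'_3$ but different weights. I would clear denominators (both $m_1$ and $m'_1$ are positive) and study the sign of the linear form $L:=m_1\,N'-m'_1\,N''$ in $q'_1,q'_2,q'_3$, where $N''$ and $N'$ are the numerators of $w''$ and $w'$. A direct expansion gives the coefficients
$$
\alpha:=m_1m'_3-m'_1m_3,\qquad \beta:=m_1(m'_2-m'_3)-m'_1(m_2-m_3),\qquad \gamma:=m'_1m_2-m_1m'_2,
$$
so that $L=\alpha q'_1+\beta q'_2+\gamma q'_3$. Two observations make the signs fall into place. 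First, $\alpha\le 0$ and $\gamma\le 0$ are \emph{exactly} the two hypotheses $m_1m'_3-m'_1m_3\le 0$ and $m_2m'_1-m'_2m_1\le 0$. Second, evaluating $L$ at $q'_1=q'_2=q'_3$ gives $L=0$ identically, which forces the telescoping identity $\alpha+\beta+\gamma=0$, whence $\beta=-\alpha-\gamma\ge 0$. Substituting $\beta=-\alpha-\gamma$ rewrites
$$
L=\alpha(q'_1-q'_2)+\gamma(q'_3-q'_2).
$$
Now $q'_1-q'_2\le 0$ and $q'_3-q'_2\le 0$ by the hypotheses $q'_1\le q'_2$ and $q'_3\le q'_2$, while $\alpha\le 0$ and $\gamma\le 0$, so both products are non-negative; thus $L\ge 0$, giving $w''\le w'$.

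For the strict statement I would simply track which factors are strict in the final expression for $L$: if $q'_1<q'_2$ and $\alpha<0$ then $\alpha(q'_1-q'_2)>0$, and if $q'_3<q'_2$ and $\gamma<0$ then $\gamma(q'_3-q'_2)>0$; in either case $L>0$, so $w''<w'$ and therefore $w<w'$. I do not anticipate any genuine obstacle here: the only non-obvious ingredient is the identity $\beta=-\alpha-\gamma$, obtained for free by evaluating $L$ on constant vectors, and once that is in hand every inequality reduces to a product of two non-positive quantities.
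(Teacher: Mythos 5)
Your proposal is correct and follows essentially the same route as the paper: the paper's first inequality is precisely your step $w\le w''$ (replacing each $q_i$ by $q'_i$ under the non-negative weights $m_3$, $m_2-m_3$, $m_1-m_2$), and its final identity is your reduction of $m_1N'-m'_1N''$ to $\alpha(q'_1-q'_2)+\gamma(q'_3-q'_2)$ via the telescoping relation $\alpha+\beta+\gamma=0$. Your write-up merely makes the intermediate quantity $w''$ and that relation explicit, which is a presentational difference, not a mathematical one.
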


\begin{proof}
We compute 
$$w'-w=\frac{m'_3}{m'_1}q'_1-\frac{m_3}{m_1}q_1 + \frac{m'_2-m'_3}{m'_1}q'_2-\frac{m_2-m_3}{m_1}q_2+\frac{m'_1-m'_2}{m'_1}q'_3-\frac{m_1-m_2}{m_1}q_3\ge$$
$$\ge \frac{q'_1(m_1m'_3-m'_1m_3)+q'_2(m'_2m_1-m'_3m_1-m_2m'_1+m_3m'_1)+q'_3(m'_1m_1-m'_2m_1-m_1m'_1+m_2m'_1)}{m'_1m_1}=$$
$$=\!\frac{(m_1m'_3-m'_1m_3)(q'_1-q'_2)+(m_2m'_1-m'_2m_1)(q'_3-q'_2)}{m'_1m_1} \ge 0.$$
 The last assertion is clear from the above last inequality. 
\end{proof}

\begin{rmk}\label{R:ss-strata}
\noindent

\begin{enumerate}

\item\label{R:ss-strata1}
Using \eqref{E:d->b}, Theorem \ref{T:ss-strata} for generalized vector bundles can be reformulated using the index as it follows: if $\sF$ is a semistable (resp. stable) generalized vector bundle with $R(\sF)=2r$ then $\iota(\sF)\le r\delta$ (resp. $\iota(\sF)<r\delta$) and such semistable (resp. stable) generalized vector bundles exist if $\ov g\geq 1$ (resp. $\ov g\geq 2$).

In the case of generalized line bundles, more is true: any generalized line bundle $\sF$ is semistable (resp. stable) if and only if $\iota(\sF) \le\delta$ (resp. $\iota(\sF)<\delta$), and such semistable (resp. stable) generalized line bundles always exist (see \cite[Thm. A]{CK}).

\item \label{R:ss-strata2}

The second part of Theorem \ref{T:ss-strata}\eqref{T:ss-strata1} (resp. \ref{T:ss-strata}\eqref{T:ss-strata2}) is false, for arbitrary $(r_\bullet;d_\bullet)$,  for $\ov g=0$ (resp. $\ov g=0,1$). 
For example:
\begin{itemize}
\item If $\ov g=0$ then 
$$\begin{sis}
& \cM_X(R,0;D,0)\cap \cM_X(R,D)^{ss}\neq \emptyset \text{ if and only if } R\vert D, \\
& \cM_X(R,0;D,0)\cap \cM_X(R,D)^{s}\neq \emptyset \text{ if and only if } R=1.
\end{sis}$$
\item If $\ov g=1$ then $\cM_X(R,0;D,0)\cap \cM_X(R,D)^{s}\neq \emptyset$  if and only if  $R$ and $D$ are coprime (see \cite[p. 3]{T}).
\end{itemize}

\end{enumerate}

\end{rmk}

\begin{cor}\label{C:neg-delta}(see \cite[\S 1.2]{DR3})
If $\delta <0$ (resp. $\delta \leq 0$) then $\cM_X(R,D)^{ss}\subseteq \cM_{X}(R,0;D,0)$ (resp. $\cM_X(R,D)^{s}\subseteq \cM_{X}(R,0;D,0)$). 
\end{cor}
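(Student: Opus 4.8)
The plan is to deduce the statement purely formally from the necessary numerical conditions recorded in Theorem \ref{T:ss-strata}, without constructing or analysing any sheaf directly. Since $\cM_X(R,D)^{ss}$ is the disjoint union of the loci $\cM_X(r_\bullet;d_\bullet)\cap \cM_X(R,D)^{ss}$ over $(r_\bullet;d_\bullet)\in \cS_{adm}(R,D)$, and the only admissible stratum with $r_1=0$ is precisely $\cM_X(R,0;D,0)$ (admissibility forces $d_1=0$, cf. Remark \ref{R:strata}\ref{R:strata1}), it suffices to show that when $\delta<0$ neither case (b) nor case (c) of Theorem \ref{T:ss-strata}\eqref{T:ss-strata1} can hold, so that only the possibility $r_1=0$ survives. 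The stable inclusion will then follow by the identical bookkeeping, replacing the weak inequalities of Theorem \ref{T:ss-strata}\eqref{T:ss-strata1} by the strict ones of Theorem \ref{T:ss-strata}\eqref{T:ss-strata2} and the hypothesis $\delta<0$ by $\delta\le 0$.

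First I would rule out case (b). Here $r_0>r_1>0$, and the two outer terms of the chain in Theorem \ref{T:ss-strata}\eqref{T:ss-strata1}(b) give
\[
\frac{d_0-(r_0+r_1)\delta}{r_0}\le \frac{d_1}{r_1}\le \frac{d_0}{r_0},
\]
whence $d_0-(r_0+r_1)\delta\le d_0$, i.e.\ $(r_0+r_1)\delta\ge 0$. As $r_0+r_1>0$ this forces $\delta\ge 0$, contradicting $\delta<0$; so case (b) is empty. The step requiring attention is case (c), where $r_0=r_1=:r>0$ and Theorem \ref{T:ss-strata}\eqref{T:ss-strata1}(c) supplies merely the \emph{upper} bound $d_0\le d_1+2r\delta$, which by itself does not pin down the sign of $\delta$. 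The key is to pair it with the admissibility condition of Definition \ref{D:seq}, which for $r_0=r_1=r$ reads $d_0\ge d_1+r\delta$ (equivalently $\iota(\sF)=d_0-d_1-r\delta\ge 0$, by Lemma \ref{L:genVB}). Combining the two yields $d_1+r\delta\le d_0\le d_1+2r\delta$, hence $r\delta\le 2r\delta$ and so $\delta\ge 0$, again contradicting $\delta<0$. Thus for $\delta<0$ every admissible stratum meeting the semistable locus has $r_1=0$, giving $\cM_X(R,D)^{ss}\subseteq \cM_X(R,0;D,0)$.

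The stable statement runs along the same two lines with strict inequalities. In case (b), Theorem \ref{T:ss-strata}\eqref{T:ss-strata2}(b) forces $(r_0+r_1)\delta>0$, hence $\delta>0$; in case (c), combining the strict upper bound $d_0<d_1+2r\delta$ with the admissibility lower bound $d_0\ge d_1+r\delta$ forces $r\delta>0$, hence $\delta>0$. Both are excluded as soon as $\delta\le 0$, leaving only $r_1=0$ and giving $\cM_X(R,D)^{s}\subseteq \cM_X(R,0;D,0)$. I expect no genuine obstacle: the entire content is already packaged in Theorem \ref{T:ss-strata}, and the single point needing care is that case (c) must be closed by invoking admissibility (equivalently the non-negativity of the index) rather than by the (semi)stability bound in isolation.
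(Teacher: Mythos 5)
Your argument is correct and follows essentially the same route as the paper: the corollary is deduced formally from the necessary numerical conditions of Theorem \ref{T:ss-strata}, with the case $r_0=r_1$ closed by combining the (semi)stability bound $d_0\le d_1+2r\delta$ (resp.\ strict) with the admissibility inequality $d_0\ge d_1+r\delta$, exactly as the paper's one-line proof indicates. Your write-up merely makes the case analysis explicit.
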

\begin{proof}
This follows from the previous Theorem together with the fact that if $(r_0,r_0;d_0,d_1)$ is admissible then $d_1+r\delta\leq d_0$: indeed, if $\delta <0$ (resp. $\delta \leq 0$) then the inequality in Theorem \ref{T:ss-strata}\eqref{T:ss-strata1} (resp. Theorem \ref{T:ss-strata}\ref{T:ss-strata2}) are satisfied if and only if $r_1=0$. 
\end{proof}

\subsection{Example: semistability of torsion-free sheaves of type $(n,1)$}\label{sub:ss(n,1)}

In this subsection, we discuss the semistability of torsion-free sheaves of type $(n,1)$ (for $n\geq 0$), i.e.   those torsion-free sheaves $\sF$ on $X$ such that 
$$\sF_\eta\cong\cO_{\Xred,\eta}^{\oplus n}\oplus\cO_{X,\eta}, \: \text{ or, equivalently, such that } (r_0(\sF),r_1(\sF))=(n+1,1).$$

First of all, we show that these sheaves are the push-forward of quasi-locally free sheaves of the same type on some blow-up of the ribbon.


\begin{prop}\label{P:BlowUp}
Let $\sF$ be a torsion-free sheaf on $X$ of type $(n,1)$, for some $n\geq 0$. There is a unique divisor $D\subset \Xred$ such that $\cT(\sF|_{\Xred})$ is isomorphic to $\cO_D$ and a unique quasi-locally free sheaf $\sF'$ of type $(n,1)$ on the blow up $q:\Bl_D(X)\to X$ of $X$ at $D$ such that $q_*\sF'\simeq\sF$.
\end{prop}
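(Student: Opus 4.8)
The plan is to treat the three assertions in turn: the identification of $D$, the local structure of the blow-up $q\colon\Bl_{D}(X)\to X$, and finally a canonical construction of $\sF'$ that yields existence and uniqueness simultaneously. First I would dispose of $D$. Since $\sF$ has type $(n,1)$ we have $b(\sF)=1$, so Fact \ref{F:structure} gives $\sF_{p}\cong\cO_{\Xred,p}^{\oplus n}\oplus\cI_{n_{1}(\sF,p)p}$ at every closed point $p$, and by Remark \ref{Rmk:Def:index} the stalk of the torsion subsheaf $\cT(\sF|_{\Xred})$ at $p$ is the cyclic module $\cO_{\Xred,p}/\mathfrak{m}_{p}^{n_{1}(\sF,p)}$. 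Hence $\cT(\sF|_{\Xred})\cong\cO_{D}$ for $D:=\sum_{p}n_{1}(\sF,p)\,p$, and $D$ is unique because an effective divisor on the smooth curve $\Xred$ is determined by the isomorphism class of its structure sheaf.

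The technical heart is the local analysis of $q$ over a point $p\in\supp D$ of multiplicity $m:=n_{1}(\sF,p)$. Writing the complete local ring as $\cO_{X,p}\cong k[[t,s]]/(s^{2})$ with $\sN_{p}=(s)$ and $D$ cut out by $t^{m}$, the ideal of $D$ in $X$ is $(t^{m},s)$, and I would compute the Rees algebra $\bigoplus_{d\ge 0}(t^{m},s)^{d}$. Because $s^{2}=0$, the degree-one element attached to the generator $s$ is nilpotent, so its chart is empty; the remaining chart has ring $k[[t,v]]/(v^{2})$ with $q^{\#}(s)=t^{m}v$. Thus $\Bl_{D}(X)$ is again a ribbon, $q$ restricts to an isomorphism over $X\setminus\supp D$, and $\Bl_{D}(X)_{\mathrm{red}}\cong\Xred$. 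The two push-forward identities I would extract from this model are $q_{*}\cO_{\Bl_{D}(X)}\cong\cI_{D}$ and $q_{*}\cO_{\Bl_{D}(X)_{\mathrm{red}}}\cong\cO_{\Xred}$; the first is the $\cO_{X}$-linear isomorphism $(t^{m},s)\cong k[[t,v]]/(v^{2})$ sending $t^{m}\mapsto 1$ and $s\mapsto v$.

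For existence I would introduce the canonical sheaf $\sF':=(q^{*}\sF)/\cT(q^{*}\sF)$, the quotient of the pull-back by its dimension-zero torsion subsheaf. Over $X\setminus\supp D$ it coincides with $\sF$ and is therefore quasi-locally free of type $(n,1)$; over a point of $\supp D$ the local model gives $(q^{*}\cO_{\Xred})/\cT\cong\cO_{\Bl_{D}(X)_{\mathrm{red}}}$ and $(q^{*}\cI_{mp})/\cT\cong\cO_{\Bl_{D}(X)}$, so that $\sF'$ has stalk $\cO_{\Bl_{D}(X)_{\mathrm{red}}}^{\oplus n}\oplus\cO_{\Bl_{D}(X)}$ and is quasi-locally free of type $(n,1)$ everywhere. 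Finally, the composite of the adjunction unit $\sF\to q_{*}q^{*}\sF$ with $q_{*}$ of the torsion projection is an isomorphism $\sF\xrightarrow{\sim}q_{*}\sF'$: it is the identity away from $D$, and over $D$ it is precisely the identification $\cO_{\Xred}^{\oplus n}\oplus\cI_{mp}=q_{*}(\cO_{\Bl_{D}(X)_{\mathrm{red}}}^{\oplus n}\oplus\cO_{\Bl_{D}(X)})$ furnished by $q_{*}\cO_{\Bl_{D}(X)}\cong\cI_{D}$.

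For uniqueness, let $\sF''$ be any quasi-locally free sheaf of type $(n,1)$ with $q_{*}\sF''\cong\sF$. The counit $q^{*}q_{*}\sF''\to\sF''$ is surjective, and since $\sF''$ is torsion-free and $q$ is an isomorphism off the finite set $\supp D$, its kernel is supported in dimension zero; hence $\sF''\cong(q^{*}q_{*}\sF'')/\cT\cong(q^{*}\sF)/\cT=\sF'$, which proves both uniqueness and the compatibility of the construction. The main obstacle is the local step of the second paragraph: verifying that $\Bl_{D}(X)$ is a ribbon with the explicit chart $k[[t,v]]/(v^{2})$ and establishing the identities $q_{*}\cO_{\Bl_{D}(X)}\cong\cI_{D}$ and $(q^{*}\cI_{mp})/\cT\cong\cO_{\Bl_{D}(X)}$. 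Once these module computations are in hand, the definition $\sF'=(q^{*}\sF)/\cT$ together with the unit and counit maps makes existence and uniqueness essentially formal.
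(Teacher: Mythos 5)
Your proof is correct, but it reaches the statement by a genuinely different route than the paper. The paper follows Eisenbud--Green \cite[Thm.~1.1]{EG} verbatim: it identifies $D$ through the saturation $\sK=\ker{\sF\to\sF|_{\Xred}/\cT(\sF|_{\Xred})}$ of $\sN\sF$ (both are line bundles on $\Xred$, and $\sK/\sN\sF\cong\cT(\sF|_{\Xred})\cong\cO_D$), and then --- using that $X$ and $\Bl_D(X)$ are homeomorphic and that a local section of $\cO_{\Bl_D(X)}$ has the form $\sigma+f^{-1}\tau$ with $\tau\in\sN$ and $f$ a local equation of $D$ --- it endows the sheaf $\sF$ itself with the $\cO_{\Bl_D(X)}$-module structure $\sigma'\cdot m:=\sigma m+f^{-1}(\tau m)$, which is well defined because $f^{-1}(\sN\sF)\subseteq\sK\subseteq\sF$; uniqueness is then immediate because this structure is forced by the $\cO_X$-module structure ($q$ being affine, a sheaf $\sF''$ with $q_*\sF''\cong\sF$ is the same datum as such an extended module structure on $\sF$). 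You instead compute $\Bl_D(X)$ explicitly via the Rees algebra of $(t^m,s)$ and construct $\sF'$ functorially as $q^*\sF/\cT(q^*\sF)$, getting existence from the adjunction unit and uniqueness from the surjectivity of the counit together with purity of $\sF''$. Your route requires the local verifications $q_*\cO_{\Bl_D(X)}\cong\cI_D$ and $(q^*\cI_{mp})/\cT\cong\cO_{\Bl_D(X)}$, which you rightly flag as the crux and which do hold (the first is the $\cO_X$-linear isomorphism $k[[t,v]]/(v^2)\cong(t^m,s)$ you describe; the second follows from the presentation of $\cI_{mp}$ in \eqref{E:pres-Inp} after killing the torsion class $ve-f$). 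In exchange you obtain a closed formula for $\sF'$ and a uniqueness statement valid for any torsion-free $\sF''$ with $q_*\sF''\cong\sF$, not only for quasi-locally free sheaves of type $(n,1)$; the paper's route avoids computing $q^*$ altogether (delicate, since $q$ is not flat) and makes uniqueness transparent. The two identifications of $D$ agree: yours via Fact \ref{F:structure} and Remark \ref{Rmk:Def:index}, the paper's via the filtration $\sN\sF\subseteq\sK\subseteq\sF$.
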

This theorem generalizes \cite[Theorem 1.1]{EG} which deals with generalized line bundles (i.e. with the case $n=0$).
\begin{proof}
The proof is essentially the same as the proof of \cite[Theorem 1.1]{EG}, as we now indicate. 

First of all, observe that  $\sN\sF$ and its saturation $\sK:=\ker{\sF\to \frac{\sF_{|\Xred}}{\cT(\sF|_{\Xred})}}$ are line bundles on $\Xred$ (being torsion-free of rank $r_1(\sF)=1$ by Remark \ref{R:type}) and that 
$\sK/\sN\sF\cong \cT(\sF|_{\Xred})$ can be written as $\cO_D$ for a unique effective divisor $D$ of $\Xred$.

At this point the proof is \emph{verbatim} the same of \cite[Theorem 1.1]{EG}: it is possible to give to $\sF$ a structure of $\cO_{\Bl_D(X)}$-module (which is unique because it is derived only from the $\cO_X$-module structure of $\sF$) in the following way. 

Let $f\in \operatorname{H}^0(\cO_{\Xred}(D))$ be a section vanishing on $D$, let $\sigma'$ be a section of $\cO_{\Bl_D(X)}$ defined over an open set $U$ of $X$ (recall that $X$ and $\Bl_D(X)$ are homeomorphic) and $m$ a section of $\sF(U)$. Shrinking $U$, if necessary, it is possible to find a section $\sigma$ of $\cO_X(U)$ with the same image of $\sigma'$ in $\cO_{\Xred}(U)$. Hence, $\sigma'=\sigma+f^{-1}\tau$, where $\tau$ is an appropriate section of $\sN(U)$. The sheaf $\sF$ admits a structure of $\cO_{\Bl_D(X)}$-module by setting $\sigma'm:=\sigma m+ f^{-1}(\tau m)$; the latter is well defined because $\tau m\in\sN\sF$ and $\sK=\cO_{\Xred}(\sF)\otimes\sN\sF$ and it is independent of the choice of $\sigma$. By construction, we have that $q_*\sF'\simeq\sF$.
\end{proof}


For torsion-free sheaves of type $(n,1)$, we have the following strong (semi)stability criterion.

\begin{prop}\label{P:ss(n,1)}
Let $\sF$ be a torsion-free sheaf on $X$ of type $(n,1)$, for some $n>0$. Then $\sF$ is semistable if and only if the following two conditions are verified:
\begin{enumerate}
\item\label{P:ss(n,1)1} for any sub-bundle $\cE\subseteq\sF^{(1)}$ of rank less than $n+1$ we have $\mu(\cE)\le\mu(\sF)$;
\item\label{P:ss(n,1)2} for any torsion-free quotient $(\sF|_{\Xred})^{\vee\vee}\surj \cG$ of rank less than $n+1$ it holds that $\mu(\cG)\ge\mu(\sF)$. 
\end{enumerate}  
Furthermore, $\sF$ is stable if and only if the inequalities in \ref{P:ss(n,1)1} and \ref{P:ss(n,1)2} are strict.
\end{prop}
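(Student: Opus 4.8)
The plan is to check (semi)stability directly against arbitrary subsheaves and to collapse that check onto the two canonical filtrations of $\sF$, using the second canonical filtration to control which subsheaves can occur. Throughout, recall that since $\sF$ is torsion-free the sheaf $\sF^{(1)}$ is a vector bundle on $\Xred$ of rank $r_0(\sF)=n+1$ (Fact \ref{F:Dual}\ref{F:Dual:1}), and $(\sF|_{\Xred})^{\vee\vee}$ is a vector bundle of rank $n+1$. The forward implication is formal: if $\sF$ is semistable then any sub-bundle $\cE\subseteq\sF^{(1)}\subseteq\sF$ is a subsheaf of $\sF$, giving $\mu(\cE)\le\mu(\sF)$ (condition \ref{P:ss(n,1)1}); and any torsion-free quotient $(\sF|_{\Xred})^{\vee\vee}\surj\cG$ is also a quotient of $\sF$ via $\sF\surj \sF|_{\Xred}\surj(\sF|_{\Xred})^{\vee\vee}\surj\cG$, so the quotient form of semistability (Remark \ref{R:SStab}\ref{R:SStab:1}) gives $\mu(\cG)\ge\mu(\sF)$ (condition \ref{P:ss(n,1)2}). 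The strict inequalities for stability follow verbatim.

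For the converse, assume \ref{P:ss(n,1)1} and \ref{P:ss(n,1)2}. By Remark \ref{R:SStab}\ref{R:SStab:1} it is enough to bound $\mu(\sG)$ for a saturated subsheaf $0\neq\sG\subsetneq\sF$, so that $\sH:=\sF/\sG$ is torsion-free. I would split into two cases according to whether $\sG\subseteq\sF^{(1)}$. If $\sG\subseteq\sF^{(1)}$, then $\sF^{(1)}/\sG\hookrightarrow\sH$ is torsion-free on $\Xred$, so $\sG$ is a sub-bundle of $\sF^{(1)}$. When $\rk{\sG}<n+1$ this is exactly condition \ref{P:ss(n,1)1}; when $\rk{\sG}=n+1$ we have $\sG=\sF^{(1)}$, and applying condition \ref{P:ss(n,1)2} to the rank-one quotient $(\sF|_{\Xred})^{\vee\vee}\surj \sF/\sF^{(1)}=\sN\sF\otimes\sN^{-1}$ (the tail of the canonical sequence of Fact \ref{F:FiltrCan}\ref{F:FiltrCan:1}, which is torsion-free and hence factors through $(\sF|_{\Xred})^{\vee\vee}$) gives $\mu(\sF/\sF^{(1)})\ge\mu(\sF)$, equivalently $\mu(\sF^{(1)})\le\mu(\sF)$; here we use $n\geq 1$, so that $1<n+1$.

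The crux is the case $\sG\not\subseteq\sF^{(1)}$, where I would show that $\sH$ is necessarily a vector bundle on $\Xred$. Suppose instead $r_1(\sH)=1$. Then the induced surjection $\sN\sF\surj\sN\sH$ (Fact \ref{F:FiltrCan}\ref{F:FiltrCan:4:1}) is a surjection of line bundles on $\Xred$, hence an isomorphism, and by functoriality of the canonical identification $\sF/\sF^{(1)}=\sN\sF\otimes\sN^{-1}$ the induced map $\sF/\sF^{(1)}\to\sH/\sH^{(1)}$ is an isomorphism as well. Applying the snake lemma to the resulting morphism of second-canonical-filtration extensions of $\sF$ and $\sH$ then forces $\sG=\ker(\sF^{(1)}\to\sH^{(1)})\subseteq\sF^{(1)}$, a contradiction. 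Therefore $r_1(\sH)=0$, so $\sH$ is a vector bundle on $\Xred$ and $\sF\surj\sH$ factors through $(\sF|_{\Xred})^{\vee\vee}$; moreover $\rk{\sH}<n+1$, since $\rk{\sH}=n+1$ would give $\sH=(\sF|_{\Xred})^{\vee\vee}$ and hence $\sG=\ker(\sF\to(\sF|_{\Xred})^{\vee\vee})\subseteq\sF^{(1)}$, again a contradiction. Condition \ref{P:ss(n,1)2} now yields $\mu(\sH)\ge\mu(\sF)$, i.e. $\mu(\sG)\le\mu(\sF)$.

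For stability one repeats the same dichotomy with strict inequalities throughout; a non-saturated proper subsheaf has slope at most that of its saturation, which is either a proper saturated subsheaf (covered above) or all of $\sF$, in which case $\sF/\sG$ is a nonzero torsion sheaf and $\mu(\sG)<\mu(\sF)$ directly. I expect the only real obstacle to be the case $\sG\not\subseteq\sF^{(1)}$: the naive bound obtained by writing $\mu(\sG)$ as a convex combination of $\mu(\sG\cap\sF^{(1)})$ and $\mu(\sG/(\sG\cap\sF^{(1)}))$ is useless, because the second slope can exceed $\mu(\sF)$; it is precisely the snake-lemma observation, that such a $\sG$ can only occur when the quotient $\sF/\sG$ is supported on $\Xred$, that reduces this case to condition \ref{P:ss(n,1)2}.
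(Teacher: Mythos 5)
Your proof is correct and follows essentially the same strategy as the paper's: necessity is treated as formal, and sufficiency is handled by splitting saturated subsheaves $\sG$ according to whether $\sG\subseteq\sF^{(1)}$, reducing the first case to condition (i) (together with condition (ii) applied to the rank-one quotient $\sF/\sF^{(1)}$ when $\sG=\sF^{(1)}$) and the second case to condition (ii). The only difference is in one step: where the paper shows that $\sG\not\subseteq\sF^{(1)}$ forces $\sF/\sG$ to be a vector bundle on $\Xred$ of rank at most $n$ by inspecting generic stalks, you derive the same fact globally from the surjection of line bundles $\sN\sF\surj\sN(\sF/\sG)$ and the snake lemma applied to the second canonical filtrations -- the same underlying observation, differently packaged.
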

The above Proposition extends \cite[Lemme 9.1.2]{DR1}, which deals with quasi-locally free sheaves of type $(1,1)$. 
\begin{proof}
Necessity is obvious, so we have to only prove sufficiency. We will prove only the semistable case, because the stable one is similar. 

Let $\sE\subset \sF$ be a saturated subsheaf. If $\sE$ is defined over $\Xred$, then $\sE\subseteq\sF^{(1)}$. If it has rank $\le n$, then $\mu(\sE)\le\mu(\sF)$ by \ref{P:ss(n,1)1}. On the other hand, if it has rank $n+1$, it has the same rank of $\sF^{(1)}$ and it is contained in it. Hence, $\mu(\sE)\le\mu(\sF^{(1)})$ and it suffices to check that $\mu(\sF^{(1)})\le\mu(\sF)$. This follows from condition \ref{P:ss(n,1)2}, because $\sF/\sF^{(1)}$ is a torsion-free quotient of $(\sF|_{\Xred})^{\vee\vee}$ of rank $1$ and, thus, $\mu(\sF/\sF^{(1)})\ge\mu(\sF)$.  

So, assume that $\sE$ is not defined over $\Xred$; this means that $\sE$ is generically isomorphic to $\cO_X\oplus \cO_{\Xred}^{\oplus m}$ with $0\le m<n$. Hence, $\sF/\sE$ is generically isomorphic to $\cO_{\Xred}^{\oplus(n-m)}$. This implies (using that $\sE$ is saturated in $\sF$) that $\sF/\sE$ is a rank $n-m$ vector bundle on $\Xred$. Thus, $\sF/\sE$ is a pure quotient of $(\sF|_{\Xred})^{\vee\vee}$ of rank $\le n$ and so, by \ref{P:ss(n,1)2}, $\mu(\sF/\sE)\ge\mu(\sF)$, which is equivalent to $\mu(\sE)\le\mu(\sF)$.   
\end{proof} 

\begin{rmk}\label{R:ss(0,1)}
The case $n=0$, i.e. that of generalized line bundles, is not covered by the previous Proposition. In order to cover also this case, one should drop the hypothesis \emph{of rank $\le n$} in the two conditions, which moreover becomes equivalent to each other. Indeed if $\sI$ is a generalized line bundle, it holds that $(\sI|_{\Xred})^{\vee\vee}=\sI/\sI^{(1)}$ and the two conditions (without the cited hypothesis) are both equivalent to $b(\sI)\le-\deg(\sN)$, which is equivalent to the semistability of $\sI$ (see \cite[Lemma 3.2]{CK}).
\end{rmk}

\begin{cor}\label{C:ss(n,1)}
Let $\sF$ be a torsion-free sheaf of type $(n,1)$, for some $n\geq 0$. Let $q:\Bl_D(X)\to X$ be unique the blow up of $X$ at a divisor $D\subset \Xred$ such that $\sF=q_*(\sF')$ for a quasi-locally free sheaf $\sF'$ on $X'$, as in Proposition \ref{P:BlowUp}. Then $\sF$ is (semi)stable if and only if $\sF'$ is (semi)stable. 
\end{cor}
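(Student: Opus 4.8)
The plan is to compare subsheaves of $\sF$ on $X$ with subsheaves of $\sF'$ on $X'=\Bl_D(X)$ by exploiting the morphism $q$. First I would record the basic properties of $q$: since $X$ and $X'$ are homeomorphic and $q$ is the blow-up of a finite subscheme of $\Xred$, the morphism $q$ is projective with finite fibres, hence finite (in particular affine), and it is an isomorphism over the generic point $\eta$ of $X$. Thus $q_*$ is exact and faithful and $R^iq_*=0$ for $i>0$. This yields the crucial numerical fact that $q_*$ preserves both the generalized rank and the generalized degree of any coherent sheaf $\sH$ on $X'$: the generic length is unchanged because $q$ is an isomorphism over $\eta$, and then $\chi(X',\sH)=\chi(X,q_*\sH)$ together with the Riemann--Roch formula of Fact~\ref{F:GenRkDeg}\ref{F:GenRkDeg:4} (the two ribbons share the same $\ov g$) forces $\Deg{q_*\sH}=\Deg{\sH}$. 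Applying this to $\sH=\sF'$ gives $\mu(\sF)=\mu(\sF')$.

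For the implication ``$\sF$ (semi)stable $\Rightarrow\sF'$ (semi)stable'' I would argue directly: any nonzero $\cO_{X'}$-subsheaf $\sG'\subseteq\sF'$ is in particular an $\cO_X$-subsheaf $q_*\sG'\subseteq q_*\sF'=\sF$ of the same slope by the numerical fact above, and exactness and faithfulness of $q_*$ guarantee that $\sG'\subsetneq\sF'$ stays proper after pushforward. The (semi)stability inequality $\mu(q_*\sG')\le\mu(\sF)$ (resp. $<$) then transports to $\mu(\sG')\le\mu(\sF')$ (resp. $<$), which is exactly what is required.

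The converse is the delicate direction, and the main obstacle is that an $\cO_X$-subsheaf $\sG\subseteq\sF$ need not be stable under the larger structure sheaf $\cO_{X'}$. To get around this I would replace $\sG$ by the $\cO_{X'}$-submodule $\sG':=\cO_{X'}\cdot\sG\subseteq\sF'$ it generates (working on the common underlying space). Since $q$ is an isomorphism over $\eta$, the sheaves $\sG$ and $\sG'$ have the same generalized rank, while $\sG'/\sG$ is a torsion sheaf; hence by additivity (Fact~\ref{F:GenRkDeg}\ref{F:GenRkDeg:2}) and $\Deg{\sG'/\sG}=\operatorname{length}(\sG'/\sG)\ge 0$ one gets $\mu(\sG)\le\mu(\sG')$, with equality precisely when $\sG=\sG'$. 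Now $\sG'$ is a genuine $\cO_{X'}$-subsheaf of $\sF'$, so (semi)stability of $\sF'$ yields $\mu(\sG')\le\mu(\sF')=\mu(\sF)$, and chaining the two inequalities gives $\mu(\sG)\le\mu(\sF)$; this settles the semistable case. For the stable case I would split according to whether $\sG=\sG'$: if $\sG\subsetneq\sG'$ then $\mu(\sG)<\mu(\sG')\le\mu(\sF)$ is already strict, while if $\sG=\sG'$ then $\sG$ is itself a proper $\cO_{X'}$-subsheaf of $\sF'$, so strict stability of $\sF'$ applies directly. Either way $\mu(\sG)<\mu(\sF)$, completing the proof.
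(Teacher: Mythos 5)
Your argument is correct, but it takes a genuinely different route from the paper's. The paper deduces the corollary in two lines from the (semi)stability criterion of Proposition \ref{P:ss(n,1)} (and Remark \ref{R:ss(0,1)} for $n=0$): that criterion tests only sub-bundles of $\sF^{(1)}$ and torsion-free quotients of $(\sF|_{\Xred})^{\vee\vee}$, and these data, together with the slope, are literally the same for $\sF$ and $\sF'$ because $\sF^{(1)}=\sF'^{(1)}$ and $(\sF|_{\Xred})^{\vee\vee}=\sF'|_{\Xred}$. You instead compare arbitrary subsheaves across the finite birational morphism $q$: the pushforward preserves generalized rank and degree (your Riemann--Roch argument is fine, since $\Bl_D(X)$ is again a ribbon with the same reduced curve and $q$ is affine, hence preserves Euler characteristics), which gives the direction ``$\sF$ (semi)stable $\Rightarrow$ $\sF'$ (semi)stable''; for the converse you replace an $\cO_X$-subsheaf $\sG\subseteq\sF$ by the $\cO_{X'}$-submodule $\sG'=\cO_{X'}\cdot\sG$ it generates, noting that $\sG'/\sG$ is torsion so $\mu(\sG)\le\mu(\sG')$ with equality iff $\sG=\sG'$, and your case split correctly disposes of the possible degeneration $\sG'=\sF'$. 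What your route buys is generality: it nowhere uses the type $(n,1)$ hypothesis, so it shows that $q_*$ preserves (semi)stability for arbitrary torsion-free sheaves under any such blow-up, whereas the paper's argument is shorter but is only available once Proposition \ref{P:ss(n,1)} is in place and only for type $(n,1)$. The price is that you must supply the intermediate numerical facts ($\chi$-preservation under the affine map $q$, and that the generalized degree of a torsion sheaf is its length, hence nonnegative), which the paper's proof sidesteps entirely.
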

\begin{proof}
It holds by definition that $\Deg{\sF}=\Deg{\sF'}$ and $\Rk{\sF}=\Rk{\sF'}$; hence, $\mu(\sF)=\mu(\sF')$. The construction of $\sF'$ implies that $\sF^{(1)}=\sF'^{(1)}$ and $(\sF|_{\Xred})^{\vee\vee}=(\sF'|_{\Xred})^{\vee\vee}=\sF'|_{\Xred}$. Therefore, the assertion follows from Proposition \ref{P:ss(n,1)} for $n>0$  and from Remark \ref{R:ss(0,1)} for $n=0$.
\end{proof}

\section{Tangent spaces}\label{Sec:tangent}

The aim of this section is to compute the infinitesimal automorphism space $\Inf_{\sF}\cM_X(R,D)$ and the tangent space $T_{\sF}\cM_X(R,D)$ of $\cM_X(R,D)$ at a point $\sF$ (see \cite[\href{https://stacks.math.columbia.edu/tag/07WY}{Tag 07WY}]{stacks}). Since $\cM_X(R,D)$ is an open substack of the stack of coherent sheaves on $X$, we have the following identifications (see e.g. the proof of  
\cite[\href{https://stacks.math.columbia.edu/tag/08W8}{Tag 08W8}]{stacks})
\begin{equation}\label{E:Inf-T}
\Inf_{\sF}\cM_X(R,D)=\Ext{0}(\sF,\sF) \quad \text{ and } \quad T_{\sF}\cM_X(R,D)=\Ext{1}(\sF,\sF).
\end{equation}
Using the local-to-global spectral sequence 
$$
E_2^{p,q}=H^p(X,\cExt{q}(\sF,\sF))\Rightarrow \Ext{p+q}(\sF,\sF),
$$
and the fact that $X$ is a curve, we deduce the following identification and short exact sequence:
\begin{equation}\label{E:Ext}
\begin{sis}
& \Ext{0}(\sF,\sF)=H^0(X, \cExt{0}(\sF,\sF)), \\ 
& 0\to H^1(X,\cExt{0}(\sF,\sF)) \to \Ext{1}(\sF,\sF) \to H^0(X,\cExt{1}(\sF,\sF)) \to 0.
\end{sis}
\end{equation}

\begin{rmk}\label{R:T-strata}
In the  exact sequence of \eqref{E:Ext}, the first term $H^1(X,\cExt{0}(\sF,\sF))$ parametrizes infinitesimal deformations of $\sF$ that are locally trivial, i.e. such that the induced infinitesimal deformation of $\sF_p$ is trivial for each point $p\in X$, and hence they do not change  the complete type of $\sF$. Therefore we get the inclusions 
\begin{equation}\label{E:inc-T}
H^1(X,\cExt{0}(\sF,\sF))\subseteq T_{\sF}\cM_X(r_\bullet;d_\bullet)\subseteq   T_{\sF}\cM_X(R,D)=\Ext{1}(\sF,\sF).
\end{equation}
On the other hand, we clearly have the equalities
\begin{equation}\label{E:inc-Inf}
H^0(X,\cExt{0}(\sF,\sF)) = \Inf_{\sF}\cM_X(r_\bullet;d_\bullet)=  \Inf_{\sF}\cM_X(R,D)=\Ext{0}(\sF,\sF).
\end{equation}
\end{rmk}

We now study the sheaves $\cExt{0}(\sF,\sF)$ and $\cExt{k}(\sF,\sF)$ for $k\geq 1$.

\begin{thm}\label{T:Ext0}
Let $\sF\in \cM_X(R,D)$ and set $\cT:=\cT(\sF_{|\Xred})$. Then $\cExt{0}(\sF,\sF)=\cEnd(\sF)$ is a torsion-free sheaf on $X$ that fits into an exact sequence
\begin{equation}\label{E:seqExt0}
0\to \cHom((\sF_{|\Xred})^{**}, \sF^{(1)})\xrightarrow{\alpha} \cEnd(\sF)\xrightarrow{\beta} \cEnd(\sN \sF)\to \bigoplus_{p\in \supp(\cT)}
 \bigoplus_{1\leq i<j\leq b(\sF)}\cO_{(n_i(\sF,p)-n_j(\sF,p))p}\to 0.
\end{equation}
Moreover, the short exact sequence 
\begin{equation}\label{E:seqExtbis}
0\to \cHom((\sF_{|\Xred})^{**}, \sF^{(1)})\xrightarrow{\alpha} \cEnd(\sF)\xrightarrow{\beta}  \Im(\beta) \to 0
\end{equation}
is the second canonical filtration of $\cEnd(\sF)$. 
\end{thm}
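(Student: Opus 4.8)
The plan is to realize the endomorphism sheaf as an extension whose sub- and quotient parts are controlled by the two canonical filtrations: I will build $\beta$ as ``restriction to $\sN\sF$'', identify $\ker(\beta)$ both as the prescribed $\cHom$-sheaf and as the $\sN$-annihilated subsheaf $\cEnd(\sF)^{(1)}$, and then compute $\coker(\beta)$ by a purely local calculation at the finitely many points of $\supp(\cT)$. First, torsion-freeness of $\cEnd(\sF)=\cExt{0}(\sF,\sF)$ is immediate: if a local section $s$ is killed by a non-zerodivisor $f$ of $\cO_X$, then $f\cdot s(m)=0$ for every local section $m$ of $\sF$, and since $\sF$ is torsion-free (Fact \ref{F:Dual}\ref{F:Dual:1}) the element $f$ is a non-zerodivisor on $\sF$, forcing $s(m)=0$ and hence $s=0$.

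Next I would introduce $\beta$. For any local endomorphism $\varphi$ we have $\varphi(\sN\sF)=\sN\varphi(\sF)\subseteq\sN\sF$, so restriction to $\sN\sF$ gives $\beta\colon\cEnd(\sF)\to\cEnd(\sN\sF)$. Letting $y$ be a local generator of $\sN$, the identity $(y\varphi)(m)=y\varphi(m)=\varphi(ym)$ shows that $y\varphi=0$ is equivalent both to $\varphi(\sN\sF)=0$ and to $\varphi(\sF)\subseteq\ker(y\cdot)=\sF^{(1)}$, the last equality because $\sF^{(1)}$ is by definition the subsheaf annihilated by $\sN$. Therefore
$$
\ker{\beta}=\{\varphi:\varphi(\sN\sF)=0\}=\cEnd(\sF)^{(1)},
$$
which is exactly the last assertion: the short exact sequence \eqref{E:seqExtbis} is $0\to\cEnd(\sF)^{(1)}\to\cEnd(\sF)\to\Im(\beta)\to 0$, i.e.\ the second canonical filtration of the torsion-free sheaf $\cEnd(\sF)$. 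For the identification of the kernel, an endomorphism killing $\sN\sF$ factors through $\sF/\sN\sF=\sF|_{\Xred}$ with image in $\sF^{(1)}$, and conversely every $\psi\colon\sF|_{\Xred}\to\sF^{(1)}$ gives such an endomorphism by precomposition with $\sF\twoheadrightarrow\sF|_{\Xred}$; since $\sF^{(1)}$ is a vector bundle on $\Xred$ (Fact \ref{F:Dual}\ref{F:Dual:1}) the torsion $\cT$ of $\sF|_{\Xred}$ contributes nothing, so $\ker{\beta}=\cHom((\sF|_{\Xred})^{**},\sF^{(1)})$, producing $\alpha$.

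It then remains to compute $\coker{\beta}=\cEnd(\sN\sF)/\Im(\beta)$. This sheaf is supported on the finite set $\supp(\cT)$ (away from it all $n_i=0$, $\sF$ is quasi-locally free, and $\beta$ is surjective), so I would work in $\cO_{X,p}=\cO_{\Xred,p}[y]/(y^2)$ and use the local form $\sF_p=\cO_{\Xred,p}^{\oplus a}\oplus\bigoplus_i\cI_{n_ip}$ from Fact \ref{F:structure}. Writing $\cEnd(\sF)_p$ as a matrix of homomorphisms between summands, the $\cO_{\Xred,p}$-summands are annihilated by $\sN$ and feed only into $\ker{\beta}$, while $\beta$ is block-diagonal on the $\cI$-summands; hence $\coker{\beta}$ is the direct sum of the block cokernels. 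A direct computation of $\cHom_{\cO_{X,p}}(\cI_{n_ip},\cI_{n_jp})$, using the relations $y\cdot x^{n_i}=x^{n_i}\cdot y$ and $y^2=0$, shows that the induced map on $\sN$-parts $\sN\cI_{n_ip}\to\sN\cI_{n_jp}$ has image $\mathfrak{m}_p^{\max(n_i-n_j,0)}$ inside the free rank-one module $\cHom(\sN\cI_{n_ip},\sN\cI_{n_jp})$, so its cokernel is $\cO_{(n_i-n_j)p}$ when $n_i>n_j$ and zero otherwise. Summing over blocks (with $n_1\ge\cdots\ge n_b$) gives $\coker{\beta}=\bigoplus_{p\in\supp(\cT)}\bigoplus_{1\le i<j\le b(\sF)}\cO_{(n_i-n_j)p}$, and splicing $0\to\ker{\beta}\to\cEnd(\sF)\to\Im(\beta)\to 0$ with $0\to\Im(\beta)\to\cEnd(\sN\sF)\to\coker{\beta}\to 0$ yields \eqref{E:seqExt0}.

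The main obstacle is the local $\cHom$ computation in the last step: one must carefully determine the generators and relations of the modules $\cHom_{\cO_{X,p}}(\cI_{n_ip},\cI_{n_jp})$ and then track the induced action on $\sN$-parts. The asymmetry between the cases $n_i\ge n_j$ and $n_i<n_j$ is precisely what breaks the symmetry of the matrix indexing and produces the triangular range $1\le i<j\le b(\sF)$ in the cokernel.
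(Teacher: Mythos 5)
Your proposal is correct and follows essentially the same route as the paper's proof: the same maps $\alpha$ (precomposition with $\sF\twoheadrightarrow\sF|_{\Xred}$, using that $\sF^{(1)}$ is torsion-free on $\Xred$ to pass to the reflexive hull) and $\beta$ (restriction to $\sN\sF$), the same identification $\ker\beta=\Im\alpha=\cEnd(\sF)^{(1)}$ via the relation $y\varphi(m)=\varphi(ym)$, and the same local computation of $\coker\beta$ from the stalk decomposition of Fact \ref{F:structure}, whose key step (the cokernel of $\Hom(\wh\cI_{n_ip},\wh\cI_{n_jp})\to\Hom(\wh\cN_p\wh\cI_{n_ip},\wh\cN_p\wh\cI_{n_jp})$ being $\wh\cO_{\max\{n_i-n_j,0\}p}$) is exactly the paper's Lemma \ref{L:locHom}. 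The one step you leave as a sketch is precisely that local $\cHom$ computation, and the answer you state for it agrees with the paper's.
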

Note that the cokernel of $\beta$ is a torsion sheaf supported on $\supp(\cT(\sF_{|\Xred}))$ and whose length is equal to (by Definition \ref{Def:index} and Remark \ref{Rmk:Def:index})
\begin{equation}\label{E:cokerbeta}
\begin{aligned}
&l(\coker \beta)=\sum_{p\in \supp(\cT)}\sum_{1\leq i<j\leq b(\sF)} [(n_i(\sF,p)-n_j(\sF,p)]=\\
&=\sum_{p\in \supp(\cT)}\sum_{1\leq i\leq b(\sF)} (b(\sF)-2i+1) n_i(\sF,p)=b(\sF)\iota(\sF)-\sum_{p\in \supp(\cT)}\sum_{1\leq i\leq b(\sF)} (2i-1) n_i(\sF,p).
\end{aligned}
\end{equation}
The exact sequence \eqref{E:seqExt0} is proved for quasi-locally free sheaves (in which case $\beta$ is surjective) in \cite[Prop. 3.12]{DR2}.
\begin{proof}
The fact that $\cEnd(\sF)$ is a torsion-free sheaf follows from the fact that $\sF$ is a torsion-free sheaf.

Let us first define the maps $\alpha$ and $\beta$. Observe that  the natural morphism $\cHom((\sF_{|\Xred})^{**}, \sF^{(1)})\to \cHom(\sF_{|\Xred}, \sF^{(1)})$ is an isomorphism, since  $\sF^{(1)}$ is a torsion-free sheaf on $\Xred$ and hence any morphism $\phi:\sF_{|\Xred}\to \sF^{(1)}$ factors through the torsion-free quotient $\sF_{|\Xred}\twoheadrightarrow (\sF_{|\Xred})^{**}$. The morphism $\alpha$ is defined by 
$$
\begin{aligned}
\alpha: \cHom((\sF_{|\Xred})^{**}, \sF^{(1)})=\cHom(\sF_{|\Xred}, \sF^{(1)})& \longrightarrow  \cEnd(\sF)\\
\phi:\sF_{|\Xred}\to \sF^{(1)} &\mapsto \alpha(\phi):\sF\twoheadrightarrow  \sF_{|\Xred}\xrightarrow{\phi}  \sF^{(1)}\hookrightarrow \sF.
\end{aligned}
$$
The map $\beta$ is the restriction to $\sN\sF$ 
$$
\begin{aligned}
\beta:  \cEnd(\sF) & \longrightarrow \cEnd(\sN \sF)\\
\Psi & \mapsto \beta(\Psi):=\Psi_{|\sN\sF} 
\end{aligned}
$$ 
which is well-defined (i.e $\Psi(\sN\sF)\subseteq \sN\sF$) because $\Psi$ is a homomorphism of $\cO_X$-modules and $\sN$ is an ideal sheaf.

Let us now show that the sequence \eqref{E:seqExt0} is exact. The map $\alpha$ is obviously injective and its image is identified with the endomorphisms of $\sF$ which factors through the restriction $\sF\twoheadrightarrow \sF_{\Xred}$, since such endomorphisms have image supported on $\Xred$ and hence contained in $\sF^{(1)}$. Since an endomorphism of $\sF$ factors through the restriction $\sF\twoheadrightarrow \sF_{\Xred}$ if and only if its restriction to $\sN\sF$ is zero, we conclude that $\Im(\alpha)=\ker{\beta}$. In order to describe the cokernel of  $\beta$, fix $p\in X$ and write 
$$
\sF_p= \cO_{\Xred,p}^{\oplus a(\sF)}\oplus  \bigoplus_{i=1}^{b(\sF)} \cI_{n_ip},
$$
with $n_\bullet(\sF,p):=\{n_1=n_1(\sF,p)\geq \ldots \geq n_b=n_b(\sF,p)\}$, as in Fact \ref{F:structure}. Therefore, we have that 
$$
\coker{\beta}_p= \bigoplus_{1\leq i,j\leq b(\sF)}\coker{\Hom(\wh \cI_{n_ip}, \wh \cI_{m_ip}) \xrightarrow{\res} \Hom(\wh \cN_p \wh \cI_{n_ip}, \wh \cN_p \wh \cI_{m_ip})}.
$$
We now conclude using Lemma \ref{L:locHom} below.

It remains to show that \eqref{E:seqExtbis} is the second canonical filtration of $\cEnd(\sF)$. This follows from the fact that an endomorphism of $\sF$ is annihilated by $\sN$ if and only if it factors through the restriction $\sF\twoheadrightarrow \sF_{|\Xred}$, i.e. if and only it belongs to $\Im(\alpha)$. 
\end{proof}

\begin{lemma}\label{L:locHom}
Let $\wh \cO_{X,p}$ be the completion of the local ring of a ribbon $X$ at a closed point $p$. Denote by $\wh \cN_p$ the nilradical of $\wh \cO_{X,p}$ and, for any $n\geq 0$, consider the ideal $\wh \cI_{np}$ of $np$ inside $\wh \cO_{X,p}$ and the structure sheaf $\wh \cO_{np}$ of $np$. Then for any $n,m\geq 0$, the cokernel of the restriction map
$$
\res: \Hom(\wh \cI_{np}, \wh \cI_{mp}) \to \Hom(\wh \cN_p \wh \cI_{np}, \wh \cN_p \wh \cI_{mp}) 
$$
is isomorphic to $\wh \cO_{\max\{n-m,0\}p}$. 
\end{lemma}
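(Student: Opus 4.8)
The plan is to reduce everything to an explicit computation in the standard local model of a ribbon. Set $A:=\wh\cO_{X,p}$ and $R:=\wh\cO_{\Xred,p}$. Since $\Xred$ is smooth at $p$ we have $R\cong k[[x]]$, and since $\wh\cN_p$ is a square-zero ideal which is free of rank one as an $R=A/\wh\cN_p$-module (the nilradical $\sN$ being a line bundle on $\Xred$), the square-zero extension $0\to \wh\cN_p\to A\to R\to 0$ of $k$-algebras splits (as $R$ is formally smooth over $k$). Choosing a generator $y$ of $\wh\cN_p$ I would thus fix the identifications
$$
A\cong R[y]/(y^2),\qquad \wh\cN_p=Ry,\qquad \wh\cI_{np}=(x^n,y)=x^nR\oplus Ry,
$$
the last one because $np\subset\Xred$ is cut out by $x^n$ and $\Xred\subset X$ by $y$. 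A one-line computation then gives $\wh\cN_p\,\wh\cI_{np}=(x^ny)=x^nRy$; this module is killed by $y$, hence is free of rank one over $R$ with generator $x^ny$, and therefore
$$
\Hom_A(\wh\cN_p\wh\cI_{np},\wh\cN_p\wh\cI_{mp})=\Hom_R(x^nRy,x^mRy)\cong R,
$$
where $\psi$ corresponds to the unique $c\in R$ with $\psi(x^ny)=x^mc\,y$.

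Next I would describe the source of the restriction map. The ideal $\wh\cI_{np}$ is generated over $A$ by $x^n$ and $y$, with relations exactly $y\cdot x^n=x^n\cdot y$ and $y\cdot y=0$; applying $\Hom_A(-,\wh\cI_{mp})$ to the resulting presentation identifies $\Hom_A(\wh\cI_{np},\wh\cI_{mp})$ with the pairs $(\alpha,\beta)\in\wh\cI_{mp}\times\wh\cI_{mp}$, $\alpha=\phi(x^n)$ and $\beta=\phi(y)$, satisfying $y\alpha=x^n\beta$ and $y\beta=0$. Writing $\alpha=\alpha_r+\alpha_\nu y$, $\beta=\beta_r+\beta_\nu y$ with $\alpha_r,\beta_r\in x^mR$ and $\alpha_\nu,\beta_\nu\in R$, the condition $y\beta=0$ forces $\beta_r=0$, and then $y\alpha=x^n\beta$ reads $\alpha_r=x^n\beta_\nu$; the parameter $\alpha_\nu$ remains free.

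Finally I would compute the restriction map explicitly. Using $x^ny=y\cdot x^n$, for $\phi=(\alpha,\beta)$ as above we get
$$
\res(\phi)(x^ny)=y\,\phi(x^n)=y\alpha=\alpha_r\,y=x^n\beta_\nu\,y,
$$
so under the identification of the first paragraph $\res(\phi)$ corresponds to the $c\in R$ determined by $x^mc=x^n\beta_\nu$. Hence the image of $\res$ is $\{\,c\in R:\ x^mc=x^n\beta_\nu\ \text{for some }\beta_\nu\in R\,\}$. If $n\ge m$ this equals $x^{\,n-m}R$, whence $\operatorname{coker}(\res)\cong R/x^{\,n-m}R=\wh\cO_{(n-m)p}$; if $n<m$ one can solve for every $c$ (take $\beta_\nu=x^{\,m-n}c$), so $\res$ is surjective and $\operatorname{coker}(\res)=0=\wh\cO_{0\cdot p}$. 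In both cases $\operatorname{coker}(\res)\cong\wh\cO_{\max\{n-m,0\}p}$, as asserted.

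Once the local model is fixed the computation is elementary, so the step I expect to require the most care—and which I would treat as the crux—is the bookkeeping in the middle paragraph: verifying that $y\cdot x^n=x^n\cdot y$ and $y^2=0$ form a complete set of relations for $\wh\cI_{np}$ (so that no homomorphism is overlooked) and correctly propagating the two free parameters $\alpha_\nu,\beta_\nu$ through $\res$, noting that only $\beta_\nu$ affects the restriction. I would also check the degenerate cases $n=0$ or $m=0$ (where $\wh\cI_{0p}=A$), which are subsumed by the same formulas.
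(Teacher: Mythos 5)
Your proof is correct and takes essentially the same route as the paper's: both work in the standard local model $\wh\cO_{X,p}\cong k[[x,y]]/(y^2)$, describe $\Hom(\wh\cI_{np},\wh\cI_{mp})$ via the two-generator, two-relation presentation of $\wh\cI_{np}$, and read off the image of the restriction map on the rank-one free module $\wh\cN_p\wh\cI_{np}=x^nRy$ to get cokernel $\wh\cO_{(n-m)p}$ for $n\ge m$ and surjectivity for $n<m$. The only cosmetic difference is that you keep the two cases unified until the very last step, whereas the paper splits into $n\le m$ and $n\ge m$ when writing out the general homomorphism.
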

\begin{proof}
First of all, observe that the rings $ \wh \cO_{X,p}$ and the $\wh \cO_{X,p}$-modules $\wh \cN_p$, $\wh \cO_{\Xred,p}$ and $\wh \cO_{np}$ can be written as 
\begin{equation}\label{E:pres-mod}
\begin{sis}
& \wh \cO_{X,p}=k[[s,\epsilon]]/(\epsilon^2),\\
&\wh \cN_p=(\epsilon) \subset  \wh \cO_{X,p} \Rightarrow  \wh \cO_{\Xred,p}=\wh \cO_{X,p}/\wh \cN_p=k[[s]],\\
& \wh \cO_{np}=\frac{\wh \cO_{X,p}}{(\epsilon, s^n)}\cong \frac{k[[s]]}{(s^n)}.\\
\end{sis}
\end{equation}
On the other hand, the ideal $\wh \cI_{np}$ admits the following presentation as a $ \wh \cO_{X,p}$-module:
\begin{equation}\label{E:pres-Inp}
\begin{aligned}
\wh \cI_{np}=(\epsilon, s^n) & \xrightarrow{\cong} \frac{ \wh \cO_{X,p}e\oplus  \wh \cO_{X,p}f}{(\epsilon f, \epsilon e-s^nf)}\\
\epsilon & \mapsto f\\
s^n & \mapsto e,
\end{aligned}
\end{equation}
which implies, in particular, that any element of $\wh \cI_{np}$ can be written uniquely as $a(s)e+b(s)f$, where $a(s),b(s)\in k[[s]]$. Therefore, the module $\wh \cN_p\wh \cI_{np}$ is isomorphic to 
$$
\begin{aligned}
\wh \cO_{\Xred,p}=k[[s]]&  \xrightarrow{\cong}\wh \cN_p\wh \cI_{np}=\langle\epsilon e, \epsilon f\rangle\\
1 & \mapsto \epsilon e=s^n f.
\end{aligned}
$$

We now distinguish two cases:

$\bullet$ If $n\leq m$ then any element $\phi\in \Hom(\wh \cI_{np}, \wh \cI_{mp})$ can be written as
$$
\phi(e)=a_1(s) e+b_1(f) \quad \text{ and } \quad \phi(f)=s^{m-n}a_1(s) f \quad \text{ for some } a_1(s),b_1(s)\in k[[s]].
$$
The restriction of $\phi$ to $\Hom(\wh \cN_p \wh \cI_{np}, \wh \cN_p \wh \cI_{mp}) $ is given by 
$$
\res(\phi)(s^nf)=a_1(s) s^mf, 
$$
which shows that $\res$ is surjective. 

$\bullet$ If $n\geq m$ then any element $\phi\in \Hom(\wh \cI_{np}, \wh \cI_{mp})$ can be written as
$$
\phi(e)=s^{n-m} b_2(s) e+b_1(f) \quad \text{ and } \quad \phi(f)=b_2(s) f \quad \text{ for some } b_1(s),b_2(s)\in k[[s]].
$$
The restriction of $\phi$ to $\Hom(\wh \cN_p \wh \cI_{np}, \wh \cN_p \wh \cI_{mp}) $ is given by 
$$
\res(\phi)(s^nf)=b_2(s) s^nf=b_2(s) s^{n-m} (s^m f). 
$$
which shows that the image of $\res$ is equal to 
$$s^{n-m} \Hom(\wh \cN_p \wh \cI_{np}, \wh \cN_p \wh \cI_{mp})\subset  \Hom(\wh \cN_p \wh \cI_{np}, \wh \cN_p \wh \cI_{mp})=\wh\cO_{\Xred, p},$$
and hence that the cokernel of $\res$ is isomorphic to $\wh \cO_{(n-m)p}$.

\end{proof}

\begin{cor}\label{C:Ext0}
Let $\sF\in \cM_X(r_\bullet;d_\bullet)$ and set $\cT:=\cT(\sF_{|\Xred})$. 
\begin{enumerate}
\item  \label{C:Ext0-1} The complete type of $\cEnd(\sF)$ is equal to 
$$
\begin{sis}
& (r_0(\cEnd(\sF)), r_1(\cEnd(\sF)))=(r_0^2,r_1^2),\\
& d_0(\cEnd(\sF))=  -\delta r_1(r_0-r_1)+r_0\iota(\sF), \\
& d_1(\cEnd(\sF))=   -\delta r_1^2-b(\sF)\iota(\sF)+\sum_{p\in \supp(\cT)}\sum_{1\leq i\leq b(\sF)} (2i-1) n_i(\sF,p).
\end{sis}
$$
\item \label{C:Ext0-2} The generalized rank and degree of $\cEnd(\sF)$ are equal to 
$$
\begin{sis}
& R(\cEnd(\sF))=r_0^2+r_1^2,\\
& D(\cEnd(\sF))=  -\delta r_1r_0+a(\sF) \iota(\sF)+\sum_{p\in \supp(\cT)}\sum_{1\leq i\leq b(\sF)} (2i-1) n_i(\sF,p).
\end{sis}
$$
\item \label{C:Ext0-3} The Euler characteristic of $\cEnd(\sF)$ is equal to
$$
\begin{aligned}
& \chi(\cEnd(\sF))=  -\delta r_1r_0+a(\sF) \iota(\sF)+\sum_{p\in \supp(\cT)}\sum_{1\leq i\leq b(\sF)} (2i-1) n_i(\sF,p)+(r_0^2+r_1^2)(1-\ov g)=\\
& =-\dim \cM_X(r_\bullet; d_\bullet)+a(\sF)\iota(\sF)+\sum_{p\in \supp(\cT)}\sum_{1\leq i\leq b(\sF)} (2i-1) n_i(\sF,p).
\end{aligned}
$$
\end{enumerate}
\end{cor}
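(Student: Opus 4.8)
The plan is to read off the complete type of $G:=\cEnd(\sF)$ directly from the second canonical filtration furnished by Theorem \ref{T:Ext0}, and then to obtain the generalized invariants and the Euler characteristic by purely numerical manipulations.

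For part \ref{C:Ext0-1}, I would begin with the identification \eqref{E:seqExtbis} of the second canonical filtration of $G$, namely $G^{(1)}=\cHom((\sF_{|\Xred})^{**},\sF^{(1)})$ and $G/G^{(1)}=\Im(\beta)$. Both $(\sF_{|\Xred})^{**}$ and $\sF^{(1)}$ are vector bundles on $\Xred$ of rank $r_0$ (the former because the reflexive hull preserves the generic rank, the latter by Fact \ref{F:Dual}\ref{F:Dual:1} and Remark \ref{R:inv-F1}), so $G^{(1)}$ is a vector bundle of rank $r_0^2$; likewise $\cEnd(\sN\sF)$ has rank $r_1^2$ and $\Im(\beta)$ shares this rank, since its cokernel in \eqref{E:seqExt0} is torsion. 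By Remark \ref{R:inv-F1} this already yields $(r_0(G),r_1(G))=(r_0^2,r_1^2)$.

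To compute the degrees I would determine $\deg G^{(1)}$ and $\deg(G/G^{(1)})$ and then invoke the formulas $d_0(G)=\deg G^{(1)}+r_1(G)\delta$ and $d_1(G)=\deg(G/G^{(1)})-r_1(G)\delta$ of Remark \ref{R:inv-F1}. For the first, the standard identity $\deg\cHom(\cE,\cF)=\rk{\cE}\deg(\cF)-\rk{\cF}\deg(\cE)$, combined with $\deg\sF^{(1)}=d_0-r_1\delta$ (Remark \ref{R:inv-F1}) and $\deg(\sF_{|\Xred})^{**}=d_0-\iota(\sF)$ --- the latter because on the smooth curve $\Xred$ the reflexive hull is the quotient by the torsion $\cT$, which has length $\iota(\sF)$ by Definition \ref{Def:index} --- gives $\deg G^{(1)}=-r_0r_1\delta+r_0\iota(\sF)$. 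For the second, I would use that $\deg\cEnd(\sN\sF)=0$ (the endomorphism bundle of any vector bundle has degree zero) together with the length of $\coker\beta$ recorded in \eqref{E:cokerbeta}, so that $\deg\Im(\beta)=-b(\sF)\iota(\sF)+\sum_{p\in\supp(\cT)}\sum_{1\leq i\leq b(\sF)}(2i-1)n_i(\sF,p)$. Substituting into the two formulas of Remark \ref{R:inv-F1} produces exactly the claimed expressions for $d_0(\cEnd(\sF))$ and $d_1(\cEnd(\sF))$.

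Parts \ref{C:Ext0-2} and \ref{C:Ext0-3} are then straightforward bookkeeping. Summing $d_0$ and $d_1$ and using the relation $r_0-b(\sF)=a(\sF)$ from Remark \ref{R:type} gives the generalized degree, while the generalized rank $r_0^2+r_1^2$ is immediate; the Euler characteristic then follows from the Riemann--Roch formula of Fact \ref{F:GenRkDeg}\ref{F:GenRkDeg:4}. The alternative expression for $\chi(\cEnd(\sF))$ is obtained by inserting the dimension formula $\dim\cM_X(r_\bullet;d_\bullet)=(r_0^2+r_1^2)(\ov g-1)+r_0r_1\delta$ of Theorem \ref{T:irr-dim} and cancelling the common term $-r_0r_1\delta+(r_0^2+r_1^2)(1-\ov g)$. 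There is no genuine obstacle here; the only delicate points are remembering to subtract the torsion length when computing $\deg(\sF_{|\Xred})^{**}$, and consistently translating between the complete type and the second canonical filtration via the sign conventions of Remark \ref{R:inv-F1}.
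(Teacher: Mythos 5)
Your proposal is correct and follows exactly the route of the paper's (very terse) proof: read off the second canonical filtration of $\cEnd(\sF)$ from Theorem \ref{T:Ext0} and \eqref{E:seqExtbis}, convert to the complete type via Remark \ref{R:inv-F1} and \eqref{E:cokerbeta}, then sum and apply Riemann--Roch and Theorem \ref{T:irr-dim}. All the numerical details you supply (in particular $\deg(\sF_{|\Xred})^{**}=d_0-\iota(\sF)$ and $\deg\cEnd(\sN\sF)=0$) check out.
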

\begin{proof}
Part \ref{C:Ext0-1} follows from Theorem \ref{T:Ext0} together with Remark \ref{R:inv-F1} and formula \eqref{E:cokerbeta}.

Part \ref{C:Ext0-2} follows from part \ref{C:Ext0-1} together with Definition \ref{D:GenRkDeg} and Remark \ref{R:GenRkDeg}.

Part \ref{C:Ext0-3} follows from part \ref{C:Ext0-2} together with the Riemann-Roch formula (see Fact \ref{F:GenRkDeg}\ref{F:GenRkDeg:4}) and Theorem \ref{T:irr-dim}.
\end{proof}

\begin{cor}\label{C:Tan-lt}
Let $\sF\in \cM_X(r_\bullet;d_\bullet)$ and set $\cT:=\cT(\sF_{|\Xred})$. The inclusion 
$$
H^1(X,\cEnd(\sF))\subseteq \dim T_{\sF}\cM_X(r_\bullet;d_\bullet)
$$
has codimension equal to 
$$
a(\sF)\iota(\sF)+\sum_{p\in \supp(\cT)}\sum_{1\leq i\leq b(\sF)} (2i-1) n_i(\sF,p).
$$
In particular:
\begin{enumerate}
\item \label{C:Tan-lt1} $\sF$ is quasi-locally free if and only if  $H^1(X,\cExt{0}(\sF,\sF))=T_{\sF}\cM_X(r_\bullet;d_\bullet)$.
\item \label{C:Tan-lt2} If $\sF$ is a generalized vector bundle of rank $r$ then the codimension of the inclusion $H^1(X,\cEnd(\sF))\subseteq \dim T_{\sF}\cM_X(r_\bullet;d_\bullet)$ is equal to 
$$
\sum_{p\in \supp(\cT)}\sum_{1\leq i\leq r} (2i-1) n_i(\sF,p).
$$
Furthermore, if $\sF$ is a general element of $\cM_X(r,r;d_\bullet)$ then the codimension of the inclusion $H^1(X,\cEnd(\sF))\subseteq \dim T_{\sF}\cM_X(r_\bullet;d_\bullet)$ is equal to 
$\iota(\sF)$.
\end{enumerate}
\end{cor}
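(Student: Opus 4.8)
The plan is to reduce the whole statement to the Euler-characteristic computation of Corollary \ref{C:Ext0}, using only the dimension bookkeeping already set up in Remark \ref{R:T-strata}. First I would record that, by \eqref{E:inc-T}, the inclusion $H^1(X,\cEnd(\sF))\subseteq T_{\sF}\cM_X(r_\bullet;d_\bullet)$ is genuine, so its codimension is $\dim T_{\sF}\cM_X(r_\bullet;d_\bullet)-h^1(X,\cEnd(\sF))$. Since the stratum $\cM_X(r_\bullet;d_\bullet)$ is smooth by Theorem \ref{T:irr-dim}, formula \eqref{E:Tan-strat} applies; combining it with the identification $\dim\Inf_{\sF}\cM_X(r_\bullet;d_\bullet)=\dim\Ext{0}(\sF,\sF)=h^0(X,\cEnd(\sF))$ coming from \eqref{E:inc-Inf} and \eqref{E:Ext} (together with $\cExt{0}(\sF,\sF)=\cEnd(\sF)$ of Theorem \ref{T:Ext0}), I get
\[
\dim T_{\sF}\cM_X(r_\bullet;d_\bullet)=\dim\cM_X(r_\bullet;d_\bullet)+h^0(X,\cEnd(\sF)).
\]
Subtracting $h^1(X,\cEnd(\sF))$ and writing $\chi(\cEnd(\sF))=h^0(X,\cEnd(\sF))-h^1(X,\cEnd(\sF))$, the codimension equals $\dim\cM_X(r_\bullet;d_\bullet)+\chi(\cEnd(\sF))$. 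Now Corollary \ref{C:Ext0}\ref{C:Ext0-3} computes $\chi(\cEnd(\sF))=-\dim\cM_X(r_\bullet;d_\bullet)+a(\sF)\iota(\sF)+\sum_{p}\sum_{i}(2i-1)n_i(\sF,p)$, and the two occurrences of $\dim\cM_X(r_\bullet;d_\bullet)$ cancel, leaving exactly the asserted formula.

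For the particular cases I would argue numerically. Both summands are nonnegative, and since $2i-1\ge 1$ one has $\sum_{p}\sum_i(2i-1)n_i(\sF,p)\ge\sum_p\sum_i n_i(\sF,p)=\iota(\sF)\ge 0$ by Remark \ref{Rmk:Def:index}; hence the codimension vanishes if and only if every $n_i(\sF,p)$ vanishes, i.e. iff $\iota(\sF)=0$, which by Remark \ref{Rmk:Def:index} means precisely that $\sF$ is quasi-locally free, giving \ref{C:Tan-lt1}. For \ref{C:Tan-lt2}, a generalized vector bundle of rank $r$ has $r_0=r_1=r$, so $a(\sF)=r_0-r_1=0$ and $b(\sF)=r$; the first summand drops out and the codimension reduces to $\sum_p\sum_{1\le i\le r}(2i-1)n_i(\sF,p)$. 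Finally, for a general $\sF\in\cM_X(r,r;d_\bullet)$, Corollary \ref{C:gengvb} gives $n_1(\sF,p)=1$ and $n_i(\sF,p)=0$ for $i\ge 2$ at each of the $\iota(\sF)$ reduced points of $\supp\cT$, so each inner sum equals $1$ and the total is $\iota(\sF)$.

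The computation is essentially bookkeeping once Corollary \ref{C:Ext0} is available, so the genuine difficulty sits upstream, in Theorem \ref{T:Ext0}: the exactness of the sequence for $\cEnd(\sF)$ and the precise length of $\coker\beta$ (hence the weighted sum $\sum(2i-1)n_i$) are what make the final answer clean. Within the present corollary the only points needing care are ensuring the cancellation of $\dim\cM_X(r_\bullet;d_\bullet)$ is exact — which is built into the statement of Corollary \ref{C:Ext0}\ref{C:Ext0-3}, written with the leading term $-\dim\cM_X(r_\bullet;d_\bullet)$ — and, for the last assertion, correctly invoking the genericity of Corollary \ref{C:gengvb} so that the torsion divisor is reduced and each $n_\bullet(\sF,p)$ has the single nonzero entry $n_1=1$.
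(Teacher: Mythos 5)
Your proposal is correct and follows essentially the same route as the paper: both deduce $\dim T_{\sF}\cM_X(r_\bullet;d_\bullet)=\dim\cM_X(r_\bullet;d_\bullet)+h^0(X,\cEnd(\sF))$ from the smoothness of the stratum (Theorem \ref{T:irr-dim}) and \eqref{E:inc-Inf}, then cancel $\dim\cM_X(r_\bullet;d_\bullet)$ against the leading term of $\chi(\cEnd(\sF))$ in Corollary \ref{C:Ext0}\ref{C:Ext0-3}, and handle the special cases via $\iota(\sF)=0$, the identities $a(\sF)=0$, $b(\sF)=r$, and Corollary \ref{C:gengvb}. No gaps.
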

The only if part of \ref{C:Tan-lt1} follows from \cite[Cor. 6.7]{DR2} (using Remark \ref{R:T-strata}).
\begin{proof}
The main assertion follows from Corollary \ref{C:Ext0}\ref{C:Ext0-3} together with the fact that, since $\cM_X(r_\bullet;d_\bullet)$ is smooth by Theorem \ref{T:irr-dim}, we have that 
$$
\begin{aligned}
& \dim \cM_X(r_\bullet;d_\bullet)=\dim T_{\sF}\cM_X(r_\bullet;d_\bullet)-\dim \Inf_{\sF}\cM_X(r_\bullet;d_\bullet)=\\
&=\dim T_{\sF}\cM_X(r_\bullet;d_\bullet)-\dim H^0(X,\cEnd(\sF)),
\end{aligned}
$$
where we used \eqref{E:inc-Inf} in the last equality.

Parts \ref{C:Tan-lt1} and  \ref{C:Tan-lt1}  follows from the main assertion together with the facts that
\begin{itemize}
\item $\sF$ is quasi-locally free if and only if $\iota(\sF)=0$ and $\cT=0$.
\item If $\sF$ is a generalized vector bundle or rank $r$, then $a(\sF)=0$ and $b(\sF)=r$.
\item If $\sF$ is a general element of $\cM_X(r,r;d_\bullet)$, then we have that 
$$
n_\bullet(\sF,p)=\{n_1(\sF,p)=1>n_2(\sF,p)=\ldots=n_r(\sF,p)=0\} \quad  \text{ for any } p\in \supp \cT(\sF_{|\Xred}),
$$
as it follows from Corollary \ref{C:gengvb}. 
\end{itemize}
\end{proof}

\begin{thm}\label{T:Ext1}
Let $\sF\in \cM_X(R,D)$. Then $\cExt{k}(\sF,\sF)$, for any $k\geq 1$, is a coherent sheaf on $\Xred$ whose torsion subsheaf and torsion-free quotient are, respectively (using the notation of Fact \ref{F:structure}):
$$
\begin{sis}
& \cT(\cExt{k}(\sF, \sF))=\bigoplus_{p\in \supp(\cT(\sF_{|\Xred}))} \bigoplus_{i=1}^{b(\sF)}\cO_{n_i(\sF,p)p}^{\oplus 2(2i-1+a(\sF))}. \\
&  \cExt{k}(\sF, \sF)^{**}= \cEnd\left(\left(\frac{\sF^{(1)}}{\sN\sF}\right)^{**}\right)\otimes \sN^{-1}.
\end{sis}
$$
In particular, we have that:
\begin{enumerate}[(i)]
\item the torsion subsheaf  $\cT(\cExt{k}(\sF, \sF))$ has length equal to 
$$2\iota(\sF)a(\sF)+2\sum_{p\in \supp(\cT(\sF_{|\Xred}))}(2i-1)n_i(\sF,p)\geq 2\iota(\sF)[a(\sF)+1].$$
\item the torsion-free quotient $\cExt{k}(\sF, \sF)^{**}$ has  rank  $(r_0(\sF)-r_1(\sF))^2$ and slope $\delta$.
\end{enumerate}
\end{thm}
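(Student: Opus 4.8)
The plan is to compute $\cExt{k}(\sF,\sF)$ stalk by stalk and then reassemble it globally, following the template already used for $\cExt{0}(\sF,\sF)=\cEnd(\sF)$ in Theorem \ref{T:Ext0} and Lemma \ref{L:locHom}. Since $\cExt{k}(\sF,\sF)$ is computed from a local free resolution of $\sF$, it depends only on the stalks, which by Fact \ref{F:structure} split as $\sF_p=\cO_{\Xred,p}^{\oplus a(\sF)}\oplus\bigoplus_{i=1}^{b(\sF)}\cI_{n_i(\sF,p)p}$. Thus each stalk $\cExt{k}(\sF,\sF)_p$ is a direct sum, over ordered pairs of summands, of the four local modules $\Ext{k}(\cO_{\Xred,p},\cO_{\Xred,p})$, $\Ext{k}(\cO_{\Xred,p},\cI_{mp})$, $\Ext{k}(\cI_{np},\cO_{\Xred,p})$ and $\Ext{k}(\cI_{np},\cI_{mp})$ over the complete local ring $\wh\cO_{X,p}=k[[s,\epsilon]]/(\epsilon^2)$ of \eqref{E:pres-mod}. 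A first consequence of this local analysis will be that, for $k\ge 1$, every such module is annihilated by $\epsilon$, so that $\cExt{k}(\sF,\sF)$ is indeed a sheaf on $\Xred$; this is consistent with the vanishing $\cExt{i}(\sF,\cO_X)=0$ for $i\ge 2$ recorded in Fact \ref{F:Dual}\ref{F:Dual:2}.

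The technical heart is the higher-$\Ext{}$ analogue of Lemma \ref{L:locHom}. Starting from the presentation \eqref{E:pres-Inp} of $\cI_{np}$, I would write down its minimal free resolution over $\wh\cO_{X,p}$; since $\wh\cO_{X,p}$ is the hypersurface $k[[s,\epsilon]]/(\epsilon^2)$, this resolution is eventually $2$-periodic and governed by a matrix factorization of $\epsilon^2$ with differentials alternating between $\left(\begin{smallmatrix}0&\epsilon\\ \epsilon&-s^n\end{smallmatrix}\right)$ and $\left(\begin{smallmatrix}\epsilon&s^n\\ 0&\epsilon\end{smallmatrix}\right)$. Applying $\Hom(-,\cI_{mp})$ (resp. $\Hom(-,\cO_{\Xred,p})$) and taking cohomology then yields, for every $k\ge 1$: that $\Ext{k}(\cO_{\Xred,p},\cO_{\Xred,p})$ is free of rank one; that the mixed terms $\Ext{k}(\cO_{\Xred,p},\cI_{mp})$ and $\Ext{k}(\cI_{np},\cO_{\Xred,p})$ are torsion of length $m$ and $n$ respectively; and that $\Ext{k}(\cI_{np},\cI_{mp})$ is torsion isomorphic to $\cO_{\min(n,m)p}^{\oplus 2}$. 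The key point, compared with the $\cExt{0}$ computation, is that $\min$ replaces $\max$ and the rank-two resolution produces the extra factor $2$.

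Assembling these local answers produces the second canonical filtration of $\cExt{k}(\sF,\sF)$. The torsion-free quotient comes only from the reduced directions, which are generically the $a(\sF)=r_0-r_1$ copies of $\cO_{\Xred}$ (the $\cI_{n_ip}$ are generically $\cO_X$, hence contribute nothing to $\Ext{k}$ for $k\ge 1$); these assemble into $\cEnd\big((\sF^{(1)}/\sN\sF)^{**}\big)$, of rank $(r_0-r_1)^2$, up to a line-bundle twist, and the twist is identified as $\sN^{-1}$ from the conormal identification $\cExt{1}(\cO_{\Xred},\cO_{\Xred})\cong\cHom_{\cO_{\Xred}}(\sN,\cO_{\Xred})=\sN^{-1}$ obtained by applying $\cHom(-,\cO_{\Xred})$ to $0\to\sN\to\cO_X\to\cO_{\Xred}\to 0$; this gives slope $\delta$ as in (ii). For the torsion subsheaf, the local lengths must be reorganized by the smaller index: at a point $p$ the copies of $\cO_{n_ip}$ coming from the $\Ext{k}(\cI,\cI)$-terms are exactly those ordered pairs with maximal index $i$, of which there are $2i-1$, each contributing $\cO_{n_ip}^{\oplus 2}$, while the $2a(\sF)$ mixed pairs contribute $\cO_{n_ip}^{\oplus 2a(\sF)}$, giving the stated $\cO_{n_i(\sF,p)p}^{\oplus 2(2i-1+a(\sF))}$ — the same kind of telescoping that turns the sum in \eqref{E:cokerbeta} into a weighted sum of the $n_i$.

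The main obstacle is pinning down the line-bundle twist of the torsion-free quotient globally: the local computation only sees an abstract rank-one $\cO_{\Xred,p}$, so the identification of the twist as $\sN^{-1}$ requires the normal-bundle input above rather than a local argument, and propagating it correctly across the $2$-periodicity is the delicate step. Once the local $\Ext{}$ lengths and this twist are established, part (i) follows by adding up the torsion lengths exactly as in \eqref{E:cokerbeta} (using Definition \ref{Def:index} and Remark \ref{Rmk:Def:index} to recognize $\iota(\sF)=\sum_{p,i}n_i(\sF,p)$), and part (ii) follows since the torsion-free quotient has rank $(r_0-r_1)^2$ and, being $\cEnd$ of a bundle twisted by $\sN^{-1}$, has slope $\delta$, just as in the length and Riemann--Roch bookkeeping of Corollary \ref{C:Ext0}.
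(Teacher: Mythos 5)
Your local analysis coincides with the paper's: the stalkwise computation over $\wh \cO_{X,p}=k[[s,\epsilon]]/(\epsilon^2)$ of the four kinds of local $\operatorname{Ext}$ groups (carried out in the paper, Lemma \ref{L:locExt}, via the periodic free resolutions \eqref{E:res-OXred} and \eqref{E:res-Inp}) gives exactly the values you predict, and your bookkeeping for the torsion part --- the $2i-1$ ordered pairs with maximal index $i$ each contributing $\cO_{n_ip}^{\oplus 2}$, plus the $2a(\sF)$ mixed pairs contributing $\cO_{n_ip}$ --- reproduces the stated $\cO_{n_i(\sF,p)p}^{\oplus 2(2i-1+a(\sF))}$ and hence part (i). (A minor slip: your two alternating matrices do not compose to zero in both orders; the paper uses the single square-zero matrix $\left(\begin{smallmatrix}\epsilon&s^n\\0&-\epsilon\end{smallmatrix}\right)$ repeated.)

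The genuine gap is the global identification of the reflexive hull. The stalk computation only shows that $\cExt{k}(\sF,\sF)^{**}$ is a vector bundle on $\Xred$ of rank $a(\sF)^2$; a vector bundle is not determined by its stalks, so ``assembling'' the local summands $\cO_{\Xred,p}^{\oplus a(\sF)^2}$ into $\cEnd\left(\left(\frac{\sF^{(1)}}{\sN\sF}\right)^{**}\right)\otimes \sN^{-1}$ requires a canonical global comparison map, which you do not construct: you correctly flag this as the delicate step, but the conormal identification $\cExt{1}(\cO_{\Xred},\cO_{\Xred})\cong\sN^{-1}$ only settles the matter when the local splitting of Fact \ref{F:structure} is global. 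The paper closes this gap by reducing to the quasi-locally free case: Fact \ref{Fact:Nonqll} provides an exact sequence $0\to\sE\to\sF\to\cT(\sF_{|\Xred})\to 0$ with $\sE$ quasi-locally free and $\sN\sE=\sN\sF$, whence $\left(\frac{\sF^{(1)}}{\sN\sF}\right)^{**}\cong \frac{\sE^{(1)}}{\sN\sE}$; the formula $\cExt{k}(\sE,\sE)\cong\cEnd\left(\frac{\sE^{(1)}}{\sN\sE}\right)\otimes\sN^{-1}$ is quoted from \cite[Cor. 6.2.2]{DR1}; and the two long exact sequences in $\cExt{}$ attached to that short exact sequence are shown (using that $\cExt{k}(\cT,\sF)$ and $\cExt{k}(\sE,\cT)$ are torsion, together with local injectivity/surjectivity statements deduced from $0\to\cO_{X,p}\to\cI_{n_ip}\to\cO_{n_ip}\to 0$) to induce isomorphisms $\cExt{k}(\sF,\sF)^{**}\cong\cExt{k}(\sE,\sF)^{**}\cong\cExt{k}(\sE,\sE)$. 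Some such functorial argument is needed to complete your proof; without it the second displayed identity, and hence part (ii), is not established.
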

This was proved for 
 quasi locally free sheaves in \cite[Cor. 6.2.2]{DR1} and for generalized line bundles in \cite[Lemma 4.12]{CK} (see also the proof of \cite[Prop. 8.1.3]{DR1}).
\begin{proof}
Fix $p\in X$ and write 
\begin{equation}\label{E:stalkF}
\sF_p= \cO_{\Xred,p}^{\oplus a(\sF)}\oplus  \bigoplus_{i=1}^{b(\sF)} \cI_{n_ip},
\end{equation}
with $n_\bullet(\sF,p):=\{n_1=n_1(\sF,p)\geq \ldots \geq n_b=n_b(\sF,p)\}$, as in Fact \ref{F:structure}. Lemma \ref{L:locExt} implies that, for any $k\geq 2$, 
$$
\cExt{k}(\sF, \sF)_p=\cO_{\Xred,p}^{\oplus a(\sF)^2}\bigoplus_{i=1}^{b(\sF)} \cO_{n_ip}^{2a(\sF)}\bigoplus_{i=1}^{b(\sF)}\cO_{n_ip}^{2(2i-1)}.
$$
This implies that $\cExt{k}(\sF,\sF)$ is supported on $\Xred$ (which is also proved in \cite[Thm. 5.6]{DR2}), that the torsion subsheaf $\cT(\cExt{k}(\sF, \sF))$ is given by the formula in the Theorem and that the torsion-free quotient  $\cExt{k}(\sF, \sF)^{**}$ is a vector bundle on $\Xred$ of rank $a(\sF)^2$.

In order to compute $\cExt{k}(\sF, \sF)^{**}$, we apply Fact \ref{Fact:Nonqll} to get an exact sequence 
\begin{equation}\label{E:seqEF}
0\to \sE \xrightarrow{p} \sF \xrightarrow{q} \cT:=\cT(\sF_{|\Xred})\to 0
\end{equation}
with the property that $\sE$ is quasi-locally free and $\sN\sE=\sN\sF$. By comparing the canonical exact sequences \eqref{Eq:ExSeq1} of $\sE$ and $\sF$, we deduce that 
\begin{equation}\label{E:equaz1}
\frac{\sF^{(1)}}{\sN\sF}\cong\frac{\sE^{(1)}}{\sN\sE}\oplus \cT(\sF_{|\Xred}) \Rightarrow
\left(\frac{\sF^{(1)}}{\sN\sF}\right)^{**}\cong \frac{\sE^{(1)}}{\sN\sE}.
\end{equation}
We can now apply \cite[Cor. 6.2.2]{DR1} to the quasi-locally free sheaf $\sE$ in order to get that 
\begin{equation}\label{E:equaz2}
\cExt{k}(\sE, \sE)= \cEnd\left(\frac{\sE^{(1)}}{\sN\sE}\right)\otimes \sN^{-1} \quad \text{ for any } k\geq 1.
\end{equation}
Using \eqref{E:equaz1} and \eqref{E:equaz2}, it remains to prove  that 
\begin{equation}\label{E:equaz3}
\cExt{k}(\sF, \sF)^{**}\cong \cExt{k}(\sE, \sE) \quad \text{ for any } k\geq 1.
\end{equation}
This will follow from the next two Claims.

\vspace{0.1cm}

\un{Claim 1:} We have an isomorphism $p^*:\cExt{k}(\sF, \sF)^{**}\xrightarrow{\cong} \cExt{k}(\sE, \sF)^{**}$. 

\vspace{0.1cm}

Indeed, by taking the stalk of \eqref{E:seqEF} at a given $p\in X$ and using \eqref{E:stalkF}, we deduce the exact sequence
\begin{equation}\label{E:seqstalk}
0\to \sE_p= \cO_{\Xred,p}^{\oplus a(\sF)}\oplus  \cO_{X,p}^{\oplus b(\sF)}  \to \sF_p= \cO_{\Xred,p}^{\oplus a(\sF)}\oplus  \bigoplus_{i=1}^{b(\sF)} \cI_{n_ip}\to  \cT_p=\bigoplus_{i=1}^{b(\sF)} \cO_{n_ip} \to 0
\end{equation}
Using the exact sequence $0\to   \cO_{X,p}\to  \cI_{n_ip}\to \cO_{n_ip}\to 0$
and the vanishing $\Ext{k}(\cO_{X,p}, M)=0$ for any $k\geq 1$ and any module $M$ over $\cO_{X,p}$, we get the isomorphism
$$
\Ext{k+1}(\cO_{n_ip}, M)\xrightarrow{\cong} \Ext{k+1}(\cI_{n_ip}, M) \quad \text{ for any } k\geq 1 \text{ and any $\cO_{X,p}$-module } M.  
$$
Applying this isomorphism to the exact sequence \eqref{E:seqstalk}, we deduce that 
$$
\Ext{k+1}(\cT_p, M)\hookrightarrow \Ext{k+1}(\sF_p, M) \quad \text{ is injective for any } k\geq 1 \text{ and any $\cO_{X,p}$-module } M.  
$$
This implies that 
\begin{equation}\label{E:injExt}
\cExt{k+1}(\cT, \sF)\stackrel{q^*}{\hookrightarrow} \cExt{k+1}(\sF, \sF) \quad \text{ is injective for any } k\geq 1.
\end{equation}
From the long exact sequence associated to the short exact sequence  \eqref{E:seqEF} and the injectivity \eqref{E:injExt}, we deduce an exact sequence
\begin{equation}\label{E:rexExt}
\ldots \to \cExt{k}(\cT, \sF)\xrightarrow{q^*}\cExt{k}(\sF, \sF)\xrightarrow{p^*} \cExt{k}(\sE, \sF)\rightarrow 0 \quad \text{ for any } k\geq 1. 
\end{equation}
Since $\cExt{k}(\cT, \sF)$ is a torsion sheaf (being supported on $\supp(\cT)$), by taking the reflexive hull of \eqref{E:rexExt} we get the desired isomorphism of Claim 2.

\vspace{0.1cm}

\un{Claim 2:} We have an isomorphism $p_*:\cExt{k}(\sE, \sE)\xrightarrow{\cong} \cExt{k}(\sE, \sF)^{**}$. 

\vspace{0.1cm}

Indeed, from the long exact sequence associated to the short exact sequence  \eqref{E:seqEF}, and using that the connecting homomorphisms $\cExt{k-1}(\sE,\cT)\to \cExt{k}(\sE,\sE)$ are zero for any $k\geq 1$ because $\cExt{k-1}(\sE,\cT)$ is torsion sheaf (being supported on $\supp(\cT)$) and $\cExt{k}(\sE,\sE)$ is a torsion-free sheaf by what already proved (using that $\sE$ is quasi-locally free), we get the  exact sequence
\begin{equation}\label{E:exExt}
0 \to \cExt{k}(\sE, \sE)\xrightarrow{p_*}\cExt{k}(\sE, \sF)\xrightarrow{q_*} \cExt{k}(\sE, \cT)\rightarrow 0 \quad \text{ for any } k\geq 1. 
\end{equation}
Consider now the exact sequence \eqref{E:seqstalk}. Using the exact sequence $0\to   \cO_{X,p}\to  \cI_{n_ip}\to \cO_{n_ip}\to 0$
and the vanishing $\Ext{k}(\cO_{X,p}, M)=0$ for any $k\geq 1$ and any module $M$ over $\cO_{X,p}$, we get the isomorphism
$$
\Ext{k}(M, \cI_{n_ip})\xrightarrow{\cong} \Ext{k}(M, \cO_{n_ip}) \quad \text{ for any } k\geq 1 \text{ and any $\cO_{X,p}$-module } M.  
$$
Applying this isomorphism to the exact sequence \eqref{E:seqstalk}, we deduce that 
$$
\Tors(\Ext{k}(M, \sF_p))\twoheadrightarrow \Ext{k}(M, \cT_p) \quad \text{ is surjective for any } k\geq 1 \text{ and any $\cO_{X,p}$-module } M,
$$
where $\Tors(-)$ denote the torsion submodule. 
This implies that 
\begin{equation}\label{E:surExt}
\cT(\cExt{k}(\sE, \sF))\stackrel{q_*}{\twoheadrightarrow} \cExt{k}(\sE, \cT) \quad \text{ is surjective for any } k\geq 1,  
\end{equation}
where $\cT(-)$ denote the torsion subsheaf. Combining \eqref{E:exExt} and \eqref{E:surExt},  and recalling that $\cExt{k}(\sE, \sE)$ is locally free and $\cExt{k}(\sE, \cT)$ is torsion, we deduce that 
\begin{equation}\label{E:splitExt}
\cExt{k}(\sE, \sF)= \cExt{k}(\sE, \sE)\oplus \cExt{k}(\sE, \cT) \quad \text{ and } \quad \cT(\cExt{k}(\sE, \sF))= \cExt{k}(\sE, \cT). 
\end{equation}
This implies that $ \cExt{k}(\sE, \sE)$ is the reflexive hull of $\cExt{k}(\sE, \sF)$, and Claim 2 is proved. 
\end{proof}

\begin{lemma}\label{L:locExt}
Let $\wh \cO_{X,p}$ be the completion of the local ring of a ribbon $X$ at a closed point $p$. Consider the ring $\wh \cO_{\Xred,p}$ and, for any $n\geq 0$, the ideal $\wh \cI_{np}$ and the ring $\wh \cO_{np}$, all of them considered as $\wh \cO_{X,p}$-modules. Then we have
\begin{enumerate}[(i)]
\item \label{L:locExt1} $\Ext{k}(\wh \cO_{\Xred,p}, \wh \cO_{\Xred,p})=\wh \cO_{\Xred,p}$ for any $k\geq 0$.
\item \label{L:locExt2} $\Ext{k}(\wh \cO_{\Xred,p},\wh \cI_{np})=
\begin{cases}
\wh \cO_{\Xred,p} &\text{  if } k= 0,\\
\wh \cO_{np} & \text{ if } k>0.
\end{cases}$
\item \label{L:locExt3} $\Ext{k}(\wh \cI_{np}, \wh \cO_{\Xred,p})=
\begin{cases}
\wh \cO_{\Xred,p} &\text{  if } k= 0,\\
\wh \cO_{np} & \text{ if } k>0.
\end{cases}$
\item \label{L:locExt4} $\Ext{k}(\wh \cI_{mp}, \wh \cI_{np})=
\begin{cases}
\wh \cI_{\max\{n,m\}p} &\text{  if } k= 0,\\
\wh \cO_{\min\{n,m\}p}^{\oplus 2} & \text{ if } k>0.
\end{cases}$
\end{enumerate}
\end{lemma}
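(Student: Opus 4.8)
The plan is to reduce everything to explicit homological algebra over the complete local ring $R:=\wh\cO_{X,p}\cong k[[s,\epsilon]]/(\epsilon^2)$, exploiting that $R$ is a one-dimensional Gorenstein (hypersurface) ring, so that the relevant modules admit eventually periodic free resolutions. Using the presentations \eqref{E:pres-mod} and \eqref{E:pres-Inp}, I would first record two free resolutions. Since $\mathrm{Ann}_R(\epsilon)=(\epsilon)$, the module $\wh\cO_{\Xred,p}=R/(\epsilon)$ has the $1$-periodic resolution
\[
\cdots\xrightarrow{\ \epsilon\ }R\xrightarrow{\ \epsilon\ }R\xrightarrow{\ \epsilon\ }R\longrightarrow \wh\cO_{\Xred,p}\to 0.
\]
For $\wh\cI_{np}=(\epsilon,s^n)$, the syzygy computation starting from \eqref{E:pres-Inp} yields the $2$-periodic matrix-factorization resolution
\[
\cdots\xrightarrow{\ \phi_2\ }R^2\xrightarrow{\ \phi_1\ }R^2\xrightarrow{\ \phi_2\ }R^2\xrightarrow{\ \phi_1\ }R^2\longrightarrow \wh\cI_{np}\to 0,\qquad \phi_1=\begin{pmatrix}\epsilon&0\\-s^n&\epsilon\end{pmatrix},\ \phi_2=\begin{pmatrix}\epsilon&0\\ s^n&\epsilon\end{pmatrix},
\]
where $\phi_1\phi_2=\phi_2\phi_1=0$ because $\epsilon^2=0$.

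Parts (i)--(iii) then collapse. For (i) I would apply $\Hom_R(-,\wh\cO_{\Xred,p})$ to the first resolution; as $\epsilon$ acts as $0$ on $\wh\cO_{\Xred,p}$ all differentials vanish and $\Ext{k}(\wh\cO_{\Xred,p},\wh\cO_{\Xred,p})=\wh\cO_{\Xred,p}$ for every $k$. For (ii) I apply $\Hom_R(-,\wh\cI_{np})$ to the same resolution, reducing to the kernel and image of multiplication by $\epsilon$ on $\wh\cI_{np}\cong (s^n)k[[s]]\oplus k[[s]]\epsilon$; since $\epsilon\cdot(x+y\epsilon)=x\epsilon$ one gets $\ker(\epsilon)=k[[s]]\epsilon\cong\wh\cO_{\Xred,p}$ and $\ker(\epsilon)/\im(\epsilon)=k[[s]]\epsilon/(s^n)k[[s]]\epsilon\cong \wh\cO_{np}$, which is the stated answer. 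For (iii) I apply $\Hom_R(-,\wh\cO_{\Xred,p})$ to the $2$-periodic resolution of $\wh\cI_{np}$; again $\epsilon=0$ on the target, so $\phi_1^{\mathsf T}$ and $\phi_2^{\mathsf T}$ reduce to the single entry $\mp s^n$, and the cohomology of the resulting periodic complex on $\wh\cO_{\Xred,p}^{\,2}$ is $\wh\cO_{\Xred,p}$ in degree $0$ and $k[[s]]/(s^n)=\wh\cO_{np}$ in every positive degree.

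The substance is (iv). Here I would apply $\Hom_R(-,N)$ with $N:=\wh\cI_{np}$ to the $2$-periodic resolution of $\wh\cI_{mp}$, obtaining the periodic complex $N^2\xrightarrow{\phi_1^{\mathsf T}}N^2\xrightarrow{\phi_2^{\mathsf T}}N^2\to\cdots$ with $\phi_1^{\mathsf T}=\begin{pmatrix}\epsilon&-s^m\\0&\epsilon\end{pmatrix}$ and $\phi_2^{\mathsf T}=\begin{pmatrix}\epsilon&s^m\\0&\epsilon\end{pmatrix}$. Writing each entry of $N$ as $x+y\epsilon$ with $x\in(s^n)$, $y\in k[[s]]$, and using $\epsilon\cdot(x+y\epsilon)=x\epsilon$, $s^m\cdot(x+y\epsilon)=s^mx+s^my\epsilon$, I would compute the kernels and images of the two maps explicitly, distinguishing the cases $m\ge n$ and $m<n$ (which govern whether the constraint $s^my\in(s^n)$ forces $y\in(s^{(n-m)_+})$). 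This gives $\Ext{0}=\ker\phi_1^{\mathsf T}\cong\wh\cI_{\max\{m,n\}p}$, recovering $\Hom$ consistently with Lemma \ref{L:locHom}, while for $k\ge 1$ the quotient $\ker\phi_2^{\mathsf T}/\im\phi_1^{\mathsf T}$ (and its degree-shifted analogue) is
\[
\big[(s^{(n-m)_+})/(s^n)\big]\oplus\big[k[[s]]/(s^{\min\{m,n\}})\big]\cong \wh\cO_{\min\{m,n\}p}^{\oplus 2},
\]
the two summands collapsing to the same $\wh\cO_{\min\{m,n\}p}$ in both cases. By the $2$-periodicity of the resolution, and since replacing $\phi_1^{\mathsf T}$ by $\phi_2^{\mathsf T}$ only changes the sign of $s^m$ (a unit multiple), the answer is the same in every degree $k\ge 1$.

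The main obstacle is precisely (iv): unlike (i)--(iii), here the nilpotent $\epsilon$ acts nontrivially on both the source and the target, so the periodic complex does not degenerate, and one must carefully bookkeep the kernels and images of the $2\times 2$ matrices over $\wh\cI_{np}$, tracking the submodule $(s^n)\subseteq k[[s]]$ and the case split controlling $\min\{m,n\}$ and $\max\{m,n\}$. Everything else is formal once the two explicit resolutions and the description of multiplication by $\epsilon$ and $s^m$ on $\wh\cI_{np}$ are in place.
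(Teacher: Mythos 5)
Your proposal is correct and follows essentially the same route as the paper: both compute the $\Ext$ groups from explicit periodic free resolutions of $\wh\cO_{\Xred,p}$ and $\wh\cI_{np}$ over $k[[s,\epsilon]]/(\epsilon^2)$ and then take cohomology of the transposed-matrix complexes, with (i)--(iii) degenerating because $\epsilon$ kills one side and (iv) requiring the explicit kernel/image bookkeeping you describe (your identification of $\Ext^{k\geq 1}$ with $\bigl[(s^{(n-m)_+})/(s^n)\bigr]\oplus\bigl[k[[s]]/(s^{\min\{m,n\}})\bigr]\cong\wh\cO_{\min\{m,n\}p}^{\oplus 2}$ checks out). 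The only cosmetic difference is that the paper uses a single square-zero matrix giving a $1$-periodic resolution of $\wh\cI_{np}$, whereas you package the same data as a $2$-periodic matrix factorization.
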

\begin{proof}
Recall the presentation of the various rings and modules given in \eqref{E:pres-mod} and \eqref{E:pres-Inp}.  

The modules $ \wh \cO_{\Xred,p}$ and $\wh \cI_{np}$ admit the following resolutions via free modules
\begin{equation}\label{E:res-OXred}
\ldots \xrightarrow{\cdot \epsilon}   \cO_{X,p} \xrightarrow{\cdot \epsilon} \cO_{X,p} \xrightarrow{\cdot \epsilon} \cO_{X,p} \twoheadrightarrow  \cO_{\Xred,p},
\end{equation}
\begin{equation}\label{E:res-Inp}
\begin{aligned}
\ldots \xrightarrow{\begin{pmatrix} \epsilon & s^n\\0 & -\epsilon \end{pmatrix}}   \cO_{X,p}^{\oplus 2} \xrightarrow{\begin{pmatrix} \epsilon & s^n\\0 & -\epsilon \end{pmatrix}} \cO_{X,p}^{\oplus 2} \xrightarrow{\begin{pmatrix} \epsilon & s^n\\0 & -\epsilon \end{pmatrix}} \cO_{X,p}^{\oplus 2} & \twoheadrightarrow \wh \cI_{np},\\
\begin{pmatrix} a\\b\end{pmatrix} & \mapsto af+be.
\end{aligned}
\end{equation}
We now use the above resolutions to compute the desired Ext groups:

$\bullet$ 
$
\Ext{k}(\wh \cO_{\Xred,p}, \wh \cO_{\Xred,p})=H^k(\cO_{\Xred,p} \xrightarrow{\cdot \epsilon=0} \cO_{\Xred,p} \xrightarrow{\cdot \epsilon=0} \cO_{\Xred,p} \xrightarrow{\cdot \epsilon=0} \ldots)=\cO_{\Xred,p}
$ 
for any $k\geq 0$.

$\bullet$ 
$
\Ext{k}( \wh \cO_{\Xred,p}, \wh \cI_{np})=H^k(\wh \cI_{np} \xrightarrow{\cdot \epsilon} \wh \cI_{np} \xrightarrow{\cdot \epsilon} \wh \cI_{np} \xrightarrow{\cdot \epsilon} \ldots).$ 
The maps in the above complex are equal to 
$$
\begin{aligned}
\cdot \epsilon: \wh \cI_{np}& \to \wh \cI_{np}\\
a(s)e+b(s)f& \mapsto \epsilon(a(s)e+b(s)f)=s^na(s)f. 
\end{aligned}
$$ 
Therefore, the image and kernel of the above morphism are given by 
$$
\Im(\cdot \epsilon)=\{s^na(s)f\: : a(s)\in k[[s]]\}\subseteq \ker{\cdot \epsilon}=\{b(s)f\: : b(s)\in k[[s]]\}\subset \wh \cI_{np}.
$$
From this, part \eqref{L:locExt2} follows. 

$\bullet$ $\Ext{k}(\wh \cI_{np}, \wh \cO_{\Xred,p})=H^k\left( \cO_{\Xred,p}^{\oplus 2}  \xrightarrow{\begin{pmatrix} 0 & 0\\s^n & 0 \end{pmatrix}} \cO_{\Xred,p}^{\oplus 2}  \xrightarrow{\begin{pmatrix} 0 & 0\\s^n & 0 \end{pmatrix}} \cO_{\Xred,p}^{\oplus 2}  \xrightarrow{\begin{pmatrix} 0 & 0\\s^n & 0 \end{pmatrix}}  \ldots \right)$.

The image and kernel of the morphism appearing in the above complex are equal to 
$$
\Im(\begin{pmatrix} 0 & 0\\s^n & 0 \end{pmatrix})=\langle \begin{pmatrix} 0 \\ s^n\end{pmatrix} \rangle \subseteq \ker{\begin{pmatrix} 0 & 0\\s^n & 0 \end{pmatrix}}=\langle \begin{pmatrix} 0 \\ 1\end{pmatrix} \rangle
$$
From this, part \eqref{L:locExt3} follows. 

$\bullet$ $\Ext{k}(\wh \cI_{np}, \wh \cI_{mp})=H^k\left( \wh \cI_{mp}^{\oplus 2}  \xrightarrow{\begin{pmatrix} \epsilon & 0\\s^n & -\epsilon \end{pmatrix}}  \wh \cI_{mp}^{\oplus 2}  \xrightarrow{\begin{pmatrix} \epsilon & 0\\s^n & -\epsilon \end{pmatrix}}  \wh \cI_{mp}^{\oplus 2}  \xrightarrow{\begin{pmatrix} \epsilon & 0\\s^n & -\epsilon \end{pmatrix}}  \ldots \right)$.
 
 The map in the above complex is given by 
$$
\begin{aligned}
\begin{pmatrix} \epsilon & 0\\s^n & -\epsilon \end{pmatrix}: \cI_{mp}^{\oplus 2} & \longrightarrow \cI_{mp}^{\oplus 2}\\
\begin{pmatrix} a_1(s)e+b_1(s) f \\ a_2(s) e+b_2(s) f\end{pmatrix} & \mapsto \begin{pmatrix} a_1(s)s^m f \\ a_1(s)s^n e+[s^nb_1(s)-s^ma_2(s)]f\end{pmatrix}
\end{aligned}
$$

The image and kernel of the morphism appearing in the above complex are equal to 
$$
\begin{sis}
\Im(\begin{pmatrix} \epsilon & 0\\s^n & -\epsilon \end{pmatrix})=\langle \begin{pmatrix} s^m f \\ s^n e\end{pmatrix}, s^{\min\{n,m\}} \begin{pmatrix} 0 \\ f \end{pmatrix} \rangle, \\
 \ker{\begin{pmatrix} \epsilon & 0\\s^n & -\epsilon \end{pmatrix}}=
 \begin{cases}
 \langle  \begin{pmatrix}s^{m-n} e\\ f \end{pmatrix}, \begin{pmatrix} 0 \\ f\end{pmatrix} \rangle\cong \wh \cI_{mp} & \text{ if } n\leq m, \\
  \langle  \begin{pmatrix} e\\ s^{n-m} f \end{pmatrix}, \begin{pmatrix} 0 \\ f\end{pmatrix} \rangle \cong \wh \cI_{np}& \text{ if } n\geq m, \\

 \end{cases}
 \end{sis}
$$
From this, part \eqref{L:locExt4} follows. 
\end{proof}

\begin{cor}\label{C:Ext1}
Let $\sF\in \cM_X(r_\bullet;d_\bullet)$. Then we have that:
\begin{enumerate}[(i)]
\item $\sF$ is a generalized vector bundle (i.e. if $r_0=r_1$) if and only if $\cExt{k}(\sF,\sF)$ is a torsion sheaf for any $k\geq 1$.
\item  $\sF$ is quasi-locally free (i.e. if $\iota(\sF)=0$) if and only if $\cExt{k}(\sF,\sF)$ is a torsion-free sheaf for any $k\geq 1$.
\end{enumerate}
\end{cor}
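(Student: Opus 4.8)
The plan is to deduce both equivalences directly from Theorem \ref{T:Ext1}, which for every $k\geq 1$ exhibits $\cExt{k}(\sF,\sF)$ as a coherent sheaf on $\Xred$ and computes in closed form both its torsion subsheaf and its reflexive hull (the torsion-free quotient). Since these two pieces are given explicitly, each statement of the corollary reduces to reading off exactly when one of the two pieces vanishes.

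For part (i), I would use that a coherent sheaf on the smooth curve $\Xred$ is a torsion sheaf precisely when its torsion-free quotient (equivalently, its reflexive hull) is zero, i.e. precisely when it has generic rank zero. By Theorem \ref{T:Ext1}(ii) the reflexive hull
$$
\cExt{k}(\sF,\sF)^{**}=\cEnd\left(\left(\frac{\sF^{(1)}}{\sN\sF}\right)^{**}\right)\otimes\sN^{-1}
$$
has rank $(r_0(\sF)-r_1(\sF))^2$. Hence it vanishes if and only if $r_0(\sF)=r_1(\sF)$, which is exactly the condition that $\sF$ be a generalized vector bundle. Note that this condition is independent of $k$, as the formula in Theorem \ref{T:Ext1} does not depend on $k$.

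For part (ii), I would dually use that $\cExt{k}(\sF,\sF)$ is torsion-free precisely when its torsion subsheaf $\cT(\cExt{k}(\sF,\sF))$ is zero. By Theorem \ref{T:Ext1}(i) this torsion subsheaf has length bounded below by $2\iota(\sF)[a(\sF)+1]\geq 0$, with equality to zero if and only if $\iota(\sF)=0$; and by Remark \ref{Rmk:Def:index} the vanishing $\iota(\sF)=0$ is equivalent to $\sF$ being quasi-locally free. Again the relevant length formula is $k$-independent, so the condition holds for one $k\geq 1$ if and only if it holds for all.

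I expect essentially no obstacle: the entire content is already packaged in Theorem \ref{T:Ext1}, and the corollary amounts to the observation that, on the curve $\Xred$, being a torsion sheaf is detected by the vanishing of the reflexive hull while being torsion-free is detected by the vanishing of the torsion subsheaf. The only point requiring a word of care is to record that both characterizing conditions ($r_0=r_1$ and $\iota(\sF)=0$) are manifestly independent of $k$, so that the quantifier ``for any $k\geq 1$'' in the statement is consistent.
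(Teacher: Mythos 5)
Your argument is correct and is exactly the intended deduction: the paper states this corollary without proof as an immediate consequence of Theorem \ref{T:Ext1}, reading off that the torsion-free quotient vanishes iff $(r_0-r_1)^2=0$ and the torsion subsheaf vanishes iff all $n_i(\sF,p)=0$, i.e.\ iff $\iota(\sF)=0$. Your additional remarks on the $k$-independence of both conditions are accurate and harmless.
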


\begin{cor}\label{C:2Tan}
Let $\sF\in \cM_X(r_\bullet;d_\bullet)$. The inclusion 
$$T_{\sF}\cM_X(r_\bullet;d_\bullet)\subseteq T_{\sF}\cM_X(R,D)$$
has codimension equal to 
$$
h^0(\Xred, \cEnd\left(\left(\frac{\sF^{(1)}}{\sN\sF}\right)^{**}\right)\otimes \sN^{-1})+\iota(\sF)a(\sF)+\sum_{p\in \supp(\cT(\sF_{|\Xred}))}(2i-1)n_i(\sF,p).
$$
\end{cor}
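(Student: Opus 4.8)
The plan is to read off the codimension from the inclusion chain \eqref{E:inc-T} and to reduce everything to the cohomology of the single sheaf $\cExt{1}(\sF,\sF)$, whose structure is given by Theorem \ref{T:Ext1}. First I would exploit the short exact sequence in \eqref{E:Ext}, namely
$$
0\to H^1(X,\cExt{0}(\sF,\sF)) \to \Ext{1}(\sF,\sF) \to H^0(X,\cExt{1}(\sF,\sF)) \to 0,
$$
which shows that the codimension of $H^1(X,\cExt{0}(\sF,\sF))$ inside $\Ext{1}(\sF,\sF)=T_{\sF}\cM_X(R,D)$ equals $h^0(X,\cExt{1}(\sF,\sF))$. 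On the other hand, Corollary \ref{C:Tan-lt} computes the codimension of $H^1(X,\cExt{0}(\sF,\sF))$ inside $T_{\sF}\cM_X(r_\bullet;d_\bullet)$ to be
$$
\gamma(\sF):=a(\sF)\iota(\sF)+\sum_{p\in\supp(\cT(\sF_{|\Xred}))}\sum_{1\leq i\leq b(\sF)}(2i-1)n_i(\sF,p).
$$
Since we have the chain $H^1(X,\cExt{0}(\sF,\sF))\subseteq T_{\sF}\cM_X(r_\bullet;d_\bullet)\subseteq T_{\sF}\cM_X(R,D)$, additivity of codimensions along a flag of subspaces gives at once that the codimension we seek equals $h^0(X,\cExt{1}(\sF,\sF))-\gamma(\sF)$.

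The remaining task is to evaluate $h^0(X,\cExt{1}(\sF,\sF))$. Here I would apply Theorem \ref{T:Ext1} (with $k=1$) and decompose $\cExt{1}(\sF,\sF)$ through its torsion exact sequence on $\Xred$,
$$
0\to \cT(\cExt{1}(\sF,\sF)) \to \cExt{1}(\sF,\sF)\to \cExt{1}(\sF,\sF)^{**}\to 0.
$$
Because the torsion subsheaf $\cT(\cExt{1}(\sF,\sF))$ has $0$-dimensional support, its $H^1$ vanishes, so the associated long exact sequence in cohomology degenerates into
$$
h^0(X,\cExt{1}(\sF,\sF))=l\big(\cT(\cExt{1}(\sF,\sF))\big)+h^0\big(\Xred,\cExt{1}(\sF,\sF)^{**}\big).
$$
By Theorem \ref{T:Ext1}, the torsion-free quotient is $\cExt{1}(\sF,\sF)^{**}=\cEnd\left(\left(\frac{\sF^{(1)}}{\sN\sF}\right)^{**}\right)\otimes\sN^{-1}$, while the torsion subsheaf has length
$$
l\big(\cT(\cExt{1}(\sF,\sF))\big)=2\sum_{p}\sum_{i}(2i-1)n_i(\sF,p)+2a(\sF)\iota(\sF)=2\gamma(\sF).
$$
Substituting these two values and subtracting $\gamma(\sF)$ yields the codimension $2\gamma(\sF)+h^0(\Xred,\cEnd((\sF^{(1)}/\sN\sF)^{**})\otimes\sN^{-1})-\gamma(\sF)=\gamma(\sF)+h^0(\Xred,\cEnd((\sF^{(1)}/\sN\sF)^{**})\otimes\sN^{-1})$, which is exactly the asserted formula.

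The step requiring the most care is the bookkeeping identification $l(\cT(\cExt{1}(\sF,\sF)))=2\gamma(\sF)$: one must match the combinatorial length formula of Theorem \ref{T:Ext1} (coming from the multiplicities $2(2i-1+a(\sF))$ attached to each $\cO_{n_i(\sF,p)p}$) with the quantity $\gamma(\sF)$ as defined in Corollary \ref{C:Tan-lt}, and observe the precise cancellation of one copy of $\gamma(\sF)$ after subtraction. Everything else is formal: the vanishing of $H^1$ of a torsion sheaf is automatic, the additivity of codimensions along the flag \eqref{E:inc-T} is elementary linear algebra, and the two inputs \eqref{E:Ext} and Corollary \ref{C:Tan-lt} are already available.
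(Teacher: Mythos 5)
Your proposal is correct and follows exactly the route the paper intends: the paper's proof is the one-line citation of Theorem \ref{T:Ext1}, Corollary \ref{C:Tan-lt}, the identification $T_{\sF}\cM_X(R,D)=\Ext{1}(\sF,\sF)$ and the sequence \eqref{E:Ext}, and your write-up simply makes the additivity of codimensions along the flag \eqref{E:inc-T} and the computation $h^0(X,\cExt{1}(\sF,\sF))=2\gamma(\sF)+h^0(\Xred,\cEnd((\sF^{(1)}/\sN\sF)^{**})\otimes\sN^{-1})$ explicit. The bookkeeping $l(\cT(\cExt{1}(\sF,\sF)))=2\gamma(\sF)$ matches the length formula stated in Theorem \ref{T:Ext1}, so nothing is missing.
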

\begin{proof}
This follows from Theorem \ref{T:Ext1} and Corollary \ref{C:Tan-lt}, using the identification $ T_{\sF}\cM_X(R,D)=\Ext{1}(\sF,\sF)$ and the exact sequence \eqref{E:Ext}.
\end{proof}

\begin{cor}\label{C:Tan-qlf}
Let $\sF\in \cM_X(r_\bullet;d_\bullet)$ and assume that $\sF$ is quasi-locally free. Then we have an exact sequence
\begin{equation}\label{E:seq-qlf}
0\to T_{\sF}\cM_X(r_\bullet;d_\bullet)\to T_{\sF}\cM_X(R,D)\to H^0(\Xred,\cEnd\left(\frac{\sF^{(1)}}{\sN\sF}\right)\otimes \sN^{-1})\to 0.
\end{equation}
In particular, if $\sF$ is of rigid type (i.e. if $r_0=r_1$ or $r_1+1$) then we have that 
\begin{enumerate}[(i)]
\item \label{C:Tan-qlf1} If $r_0=r_1$ (i.e. if $\sF$ is a locally free sheaf) then $\cM_X(R,D)$ is smooth at $\sF$. 
\item \label{C:Tan-qlf2} If $r_0=r_1+1$ then the underlying reduced stack $\cM_X(R,D)_{\red}$ is smooth at $\sF$ and we have the identification
$$
\frac{T_\sF \cM_X(R,D)}{T_\sF \cM_X(R,D)_{\red}}=H^0(\Xred,\sN^{-1}).
$$
\end{enumerate}
\end{cor}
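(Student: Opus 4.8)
The plan is to read off the exact sequence \eqref{E:seq-qlf} from the general identifications of this section and then to settle the two rigid cases by a local analysis of $\cM_X(R,D)$ at $\sF$. Since $\sF$ is quasi-locally free, Corollary \ref{C:Tan-lt}\ref{C:Tan-lt1} identifies $H^1(X,\cExt{0}(\sF,\sF))$ with $T_\sF\cM_X(r_\bullet;d_\bullet)$, so the short exact sequence in \eqref{E:Ext} becomes
$$
0\to T_\sF\cM_X(r_\bullet;d_\bullet)\to T_\sF\cM_X(R,D)\to H^0(X,\cExt{1}(\sF,\sF))\to 0.
$$
By Theorem \ref{T:Ext1}, quasi-local freeness ($\iota(\sF)=0$) makes $\cExt{1}(\sF,\sF)$ torsion-free and equal to $\cEnd(\sF^{(1)}/\sN\sF)\otimes\sN^{-1}$, which is the asserted third term.

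For case \ref{C:Tan-qlf1} I would note that $r_0=r_1$ forces $\sF^{(1)}/\sN\sF$ to have rank $0$, so Theorem \ref{T:Ext1} gives $\cExt{k}(\sF,\sF)=0$ for all $k\geq 1$ (equivalently, $\sF$ is locally free, by Remark \ref{R:rigid}\ref{R:rigid3}). As $X$ is a curve, the local-to-global spectral sequence then forces $\Ext{2}(\sF,\sF)=0$, both $H^0(X,\cExt{2}(\sF,\sF))$ and $H^1(X,\cExt{1}(\sF,\sF))$ being zero. Since $\Ext{2}(\sF,\sF)$ is the obstruction space for deformations of $\sF$, the stack $\cM_X(R,D)$ is smooth at $\sF$.

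Case \ref{C:Tan-qlf2} is the substantive one. Here $\sF^{(1)}/\sN\sF$ is a line bundle, so $\cEnd(\sF^{(1)}/\sN\sF)=\cO_{\Xred}$ and the third term of \eqref{E:seq-qlf} is $H^0(\Xred,\sN^{-1})$; the claimed quotient will follow once I show that, locally at $\sF$, the reduced stack $\cM_X(R,D)_{\red}$ coincides with the closure $\ov{\cM_X(r_\bullet;d_\bullet)}$ of the stratum through $\sF$. Indeed that stratum is smooth by Theorem \ref{T:irr-dim} and locally closed, hence open in its closure, so $\ov{\cM_X(r_\bullet;d_\bullet)}$ is smooth at $\sF$ with tangent space $T_\sF\cM_X(r_\bullet;d_\bullet)$; identifying it with $\cM_X(R,D)_{\red}$ then yields both the smoothness of the reduced stack and the equality $T_\sF\cM_X(R,D)_{\red}=T_\sF\cM_X(r_\bullet;d_\bullet)$, whence the quotient $H^0(\Xred,\sN^{-1})$. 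To prove that $\ov{\cM_X(r_\bullet;d_\bullet)}$ is the \emph{only} component through $\sF$, I would use that $r_1=\tfrac{R-1}{2}$ is the maximal possible value of $r_1$ together with its lower semicontinuity (Fact \ref{F:FiltrCan}\ref{F:FiltrCan:5}): this makes the complete rank locally constant at $\sF$, so by Theorem \ref{T:specia} the only strata having $\sF$ in their closure are those of the same rank $r_\bullet$ with $\wt d_1\geq d_1$, all of the same dimension by Theorem \ref{T:irr-dim}. It then remains to exclude the strata with $\wt d_1>d_1$.

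The hard part will be exactly this exclusion: a quasi-locally free sheaf cannot be a limit of sheaves of the same rank with strictly larger $d_1$. To prove it I would assume $\sF\in\ov{\cM_X(r_\bullet;(D-\wt d_1,\wt d_1))}$ with $\wt d_1>d_1$ and, using the density of the quasi-locally free locus (Corollary \ref{C:qlf}), choose a family $\bF$ over a trait $\Spec R$, flat over $\Spec R$, whose generic fibre is quasi-locally free of that type and whose special fibre is $\sF$. Restricting to $\Xred$, the fibres $\bF_s|_{\Xred}$ have Euler characteristic jumping up by $\wt d_1-d_1$ at the special point, so $\bF|_{\Xred\times\Spec R}$ fails to be flat over $\Spec R$; dividing out its torsion over the base produces a flat family whose special fibre is a locally free quotient of $\sF|_{\Xred}$ of degree $D-\wt d_1$, and hence a torsion subsheaf of $\sF|_{\Xred}$ of length $\wt d_1-d_1>0$. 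This contradicts $\iota(\sF)=0$, so no such degeneration exists; therefore $\ov{\cM_X(r_\bullet;d_\bullet)}$ is the unique component through $\sF$, and the proof is complete.
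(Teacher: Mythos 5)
Your proof is correct. The derivation of the exact sequence \eqref{E:seq-qlf} and the treatment of case (i) coincide with the paper's argument (the paper routes the first step through Corollary \ref{C:2Tan} rather than directly through \eqref{E:Ext}, Corollary \ref{C:Tan-lt} and Theorem \ref{T:Ext1}, but the content is the same, and the paper itself notes that (i) also follows from $\Ext{2}(\sF,\sF)=0$). Where you genuinely diverge is case (ii). The paper disposes of it in one line by asserting that for rigid $(r_\bullet;d_\bullet)$ the stratum $\cM_X(r_{\bullet};d_{\bullet})$ is open in $\cM_X(R,D)$ by Theorem \ref{T:specia}, and it also refers to Dr\'ezet for this case. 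As literally stated that openness holds for $r_0=r_1$ (quasi-local freeness forces $d_1$ to be maximal among admissible values), but for $r_0=r_1+1$ with $r_1\geq 1$ Theorem \ref{T:specia} in fact shows that $\ov{\cM_X(r_\bullet;(d_0-1,d_1+1))}$ meets $\cM_X(r_\bullet;d_\bullet)$, so the full stratum is not open and one must check that this intersection avoids the quasi-locally free locus. This is exactly what your flatness argument supplies: restricting a degenerating family to $\Xred\times \Spec{R}$ makes the Euler characteristic of the fibres jump up at the special point, and killing the $R$-torsion exhibits a nonzero torsion subsheaf of $\sF|_{\Xred}$ of length $\wt d_1-d_1>0$, contradicting quasi-local freeness. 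Your route is therefore more self-contained and tightens the paper's own proof of (ii). Two small points to polish: the special fibre of the torsion-free quotient family need not be locally free (only its kernel matters, so this is harmless), and to pass from ``no other stratum contains $\sF$ in its closure'' to ``locally $\cM_X(R,D)_{\red}$ equals the stratum'' you should observe that only finitely many strata meet a quasi-compact open neighbourhood of $\sF$, since the degrees $d_1$ occurring in a bounded family of sheaves are bounded.
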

Part \eqref{C:Tan-qlf1} is well-known (and it also follows from the fact that $\Ext{2}(\sF,\sF) =0$ if $\sF$ is locally free), while part  \eqref{C:Tan-qlf2} of the above Corollary follows from \cite[Thm. E]{DR2} and \cite[Cor. 6.2.2]{DR1} (see also \cite[Sec. 1.1.2]{DR5}).
\begin{proof}
This follows from Corollary \ref{C:2Tan}, using that  if $\sF$ is quasi locally free then 
\begin{itemize}
\item $\displaystyle \frac{\sF^{(1)}}{\sN\sF}$ is torsion-free on $\Xred$ because it is a subsheaf of the torsion-free sheaf $\sF_{|\Xred}$;
\item  $\iota(\sF)=0$ and $\cT(\sF_{|\Xred})=0$.
\end{itemize}

For the last statement, observe that  if $\sF\in \cM_X(r_\bullet; d_\bullet)$ is of rigid type then $\cM_X(r_\bullet;d_\bullet)$ is open in $\cM_X(R,D)$ by Theorem \ref{T:specia} and hence it  coincides 
with the restriction of $\cM_X(R,D)_{\red}$ on this open subset. Therefore, we conclude using \eqref{E:seq-qlf} and the fact that 
\begin{itemize}
\item If $r_0=r_1$ then $\frac{\sF^{(1)}}{\sN\sF}=0$;
\item If $r_0=r_1+1$ then $\frac{\sF^{(1)}}{\sN\sF}$ is a line bundle and hence $\cEnd\left(\frac{\sF^{(1)}}{\sN\sF}\right)=\cO_{\Xred}$. 
\end{itemize}
\end{proof}

\begin{cor}\label{C:Tan-gvb}
Let $\sF\in \cM_X(r,r;d_\bullet)$, i.e. $\sF$ is a generalized vector bundle of rank $r$. The inclusion 
$$T_{\sF}\cM_X(r_\bullet;d_\bullet)\subseteq T_{\sF}\cM_X(R,D)$$
has codimension equal to 
$$
\sum_{p\in \supp(\cT(\sF_{|\Xred}))}(2i-1)n_i(\sF,p)\geq \iota(\sF).
$$
Furthermore, if $\sF$ is general in  $\cM_X(r,r;d_\bullet)$, the the above codimension is equal to $\iota(\sF)$. 
\end{cor}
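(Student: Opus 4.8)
The plan is to derive the statement directly from Corollary~\ref{C:2Tan}, which already computes the codimension of the inclusion $T_{\sF}\cM_X(r_\bullet;d_\bullet)\subseteq T_{\sF}\cM_X(R,D)$ as the sum of three terms
\[
h^0\!\left(\Xred,\cEnd\left(\left(\tfrac{\sF^{(1)}}{\sN\sF}\right)^{**}\right)\otimes\sN^{-1}\right)+\iota(\sF)a(\sF)+\sum_{p}\sum_{1\le i\le b(\sF)}(2i-1)n_i(\sF,p).
\]
First I would specialize this to a generalized vector bundle of rank $r$, for which $r_0=r_1=r$ and hence the type is $(a(\sF),b(\sF))=(0,r)$. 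Since $a(\sF)=0$, the middle term $\iota(\sF)a(\sF)$ vanishes at once.

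Next I would show that the first term vanishes as well. By Fact~\ref{F:Dual}\ref{F:Dual:1} both $\sF^{(1)}$ and $\sN\sF$ are vector bundles on the smooth curve $\Xred$, and when $r_0=r_1$ they have the same rank $r$; therefore the quotient $\tfrac{\sF^{(1)}}{\sN\sF}$ is a torsion sheaf on $\Xred$, so its reflexive hull $\left(\tfrac{\sF^{(1)}}{\sN\sF}\right)^{**}$ is the zero sheaf. Consequently $\cEnd\!\left(\left(\tfrac{\sF^{(1)}}{\sN\sF}\right)^{**}\right)\otimes\sN^{-1}=0$ and its $h^0$ is $0$. With both terms gone, Corollary~\ref{C:2Tan} gives that the codimension equals $\sum_{p}\sum_{1\le i\le r}(2i-1)n_i(\sF,p)$ (using $b(\sF)=r$), which is the asserted formula. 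This is the only place where a genuine verification is needed, and I expect it to be the main—though modest—obstacle: one must check that the rank-zero quotient really forces the reflexive-hull term to disappear.

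For the inequality $\sum_{p}\sum_{i}(2i-1)n_i(\sF,p)\ge\iota(\sF)$, I would note that $2i-1\ge 1$ for every $i\ge 1$, so each summand satisfies $(2i-1)n_i(\sF,p)\ge n_i(\sF,p)$; summing over $p$ and $i$ and invoking Remark~\ref{Rmk:Def:index}, which gives $\iota(\sF)=\sum_{p}\sum_{i}n_i(\sF,p)$, yields the claim.

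Finally, for the general element of $\cM_X(r,r;d_\bullet)$, I would appeal to Corollary~\ref{C:gengvb}: for such $\sF$ one has $n_1(\sF,p)=1$ and $n_i(\sF,p)=0$ for all $i\ge 2$ at each $p\in\supp\cT(\sF_{|\Xred})$. Substituting this into the codimension formula, every nonzero contribution equals $(2\cdot 1-1)\cdot 1=1$, so the codimension is the number of points of $\supp\cT(\sF_{|\Xred})$; the same substitution into $\iota(\sF)=\sum_{p}\sum_{i}n_i(\sF,p)$ produces the identical count, whence the two agree.
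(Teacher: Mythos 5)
Your proposal is correct and follows essentially the same route as the paper: the authors likewise deduce the first part from Corollary~\ref{C:2Tan} by noting that $a(\sF)=0$ kills the middle term and that $\sF^{(1)}/\sN\sF$ is a torsion sheaf (so the reflexive-hull term vanishes), and they obtain the statement about the general element from Corollary~\ref{C:gengvb}. The only difference is that you spell out the rank count and the elementary inequality $2i-1\geq 1$ in more detail than the paper does.
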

This was proved for generalized line bundles (in which case the codimension is always equal to $\iota(\sF)$) in \cite[Prop. 4.1]{CK}.
\begin{proof}
The first part follows from Corollary \ref{C:2Tan} using that $a(\sF)=0$ and  that $\displaystyle \frac{\sF^{(1)}}{\sN\sF}$ is a torsion sheaf. 
The second part follows from the first one and Corollary \ref{C:gengvb}. 
\end{proof}

\end{document}